\numberwithin{equation}{section}
\theoremstyle{plain}
\newtheorem{theor}{Theorem}[section]
\newtheorem{prop}[theor]{Proposition}
\newtheorem{cor}[theor]{Corollary}
\newtheorem{lemma}[theor]{Lemma}
\theoremstyle{remark}
\newtheorem{rem}[theor]{Remark}
\def\R{{\mathbb R}}
\def\C{{\mathbb C}}
\def\P{{\mathbb P}}
\def\N{{\mathbb N}}
\def\col{{\rm col}}
\def\row{{\rm row}}
\def\Prob{{\mathbb P}}
\def\Exp{{\mathbb E}}
\def\Event{{\mathcal E}}
\def\Im{{\rm Im}}
\def\Re{{\rm Re}}
\def\Proj{{\rm Proj}}
\def\spn{{\rm span}}
\def\Id{{\rm Id}}
\def\cf{{\mathcal L}}
\def\ker{{\rm ker}}
\def\Var{{\rm Var}}
\def\Am{{\mathcal A}}
\def\Ael{{\mathcal C}}
\def\ChainsSet{{\mathcal W}}
\def\IsoChains{{\mathcal I}}
\def\Gr{{\mathcal G}}
\def\inneigh{\partial_{in}}
\def\outneigh{\partial_{out}}
\def\grc{{\mathbf G}}
\def\grch{{\bar{\mathbf G}}}
\def\indicator{{\bf 1}}
\newcommand{\csubm}[2]
{
{#1}^{#2}_{col}
}
\newcommand{\subgr}[2]
{
{#1}^{#2}
}
\newcommand{\E} {\mathbb{E}}
\newcommand{\smallrefer}[1]{{\tiny\text{\ref{#1}}}}
\newcommand{\etc} {,\ldots,}
\newcommand{\EE} {\mathcal{E}}
\newcommand{\FF} {\mathcal{F}}
\newcommand{\LL} {\mathcal{L}}
\newcommand{\NN} {\mathcal{N}}
\renewcommand{\a} {\alpha}
\newcommand{\e} {\varepsilon}
\renewcommand{\d} {\delta}
\renewcommand{\t} {\tau}
\renewcommand{\l} {\lambda}
\DeclareMathOperator{\dist }{dist}
\DeclareMathOperator{\Span }{span}
\DeclareMathOperator{\supp}{supp}
\DeclareMathOperator{\Max}{Max}
\DeclareMathOperator*{\tr}{tr}
\newcommand{\norm}[1]{\left \| #1 \right \|}
\title
{
The sparse circular law under minimal assumptions
}
\author{Mark Rudelson}
\address[Mark Rudelson]{University of Michigan}
\email{rudelson@umich.edu}
\author{Konstantin Tikhomirov}
\address[Konstantin Tikhomirov]{Princeton University}
\email{kt12@math.princeton.edu}
\begin{document}

\maketitle

\begin{abstract}
The circular law asserts that the empirical distribution of eigenvalues
of appropriately normalized $n\times n$ matrix with i.i.d.\ entries
converges to the uniform measure on the unit disc as the dimension $n$ grows to infinity.
Consider an $n\times n$ matrix $A_n=(\delta_{ij}^{(n)}\xi_{ij}^{(n)})$, where $\xi_{ij}^{(n)}$
are copies of a real random variable of unit variance,
variables $\delta_{ij}^{(n)}$ are Bernoulli ($0/1$) with $\Prob\{\delta_{ij}^{(n)}=1\}=p_n$,
and $\delta_{ij}^{(n)}$ and $\xi_{ij}^{(n)}$, $i,j\in[n]$, are jointly
independent.
In order for the circular law to hold for the sequence $\big(\frac{1}{\sqrt{p_n n}}A_n\big)$,
one has to assume that $p_n n\to \infty$.
We derive the circular law under this minimal assumption.
\end{abstract}

\tableofcontents

\section{Introduction}

For any $n\times n$ matrix $B$, denote by $\mu_n(B)$ the spectral measure of $B$, that is,
the probability measure
$$\mu_n(B):=\frac{1}{n}\sum\limits_{i=1}^n \delta_{\lambda_i(B)},$$
where $\lambda_1(B),\dots,\lambda_n(B)$ are eigenvalues of $B$.

Let $(A_n)$ be a sequence of random matrices where for each $n$, the matrix $A_n$
has i.i.d.\ entries equidistributed with a real or complex random variable $\xi$ of unit variance.
The {\it circular law} for $(A_n)$ asserts that the sequence of spectral measures $\mu_n(\frac{1}{\sqrt{n}}A_n)$
converges weakly (in probability and almost surely) to the uniform measure on the unit disc of the complex plane \cite{Tao-Vu circ}.

The paper \cite{Tao-Vu circ} is a culmination of a line of research which includes works
\cite{Ginibre,Edelman,Girko, Bai, GoTi, PanZhou, Tao-Vu circ 2+e}, where the circular law was established under additional assumptions
on the distribution of the entries.
The case of Gaussian matrices, when an explicit formula for joint distribution of the matrix eigenvalues
is available, was treated in \cite{Ginibre,Edelman}.
For general distributions of entries, the known proofs of the circular law are based on the {\it Hermitization}
strategy introduced by Girko \cite{Girko} (see Section~\ref{s: overview} below).
Following the strategy, Bai \cite{Bai} established the law when the matrix entries have a uniformly bounded density
and satisfy some additional moment conditions.
The assumption of bounded density, which allows to easily overcome the problem of singularity for shifted
matrices $A_n-z\,\Id$, was removed in \cite{GoTi,PanZhou,Tao-Vu circ 2+e},
following a rapid progress in understanding invertibility of non-Hermitian random matrices
\cite{Rudelson annmath, Tao-Vu annmath,RV invertibility}.
We refer to survey \cite{BC circ} for further information on the history of the circular law.

A natural counterpart of the above setting are sparse non-Hermitian random matrices.
Now, for each $n$ let $A_n$ be a random $n\times n$ matrix with entries of the form $\delta_{ij}^{(n)}\xi_{ij}^{(n)}$,
where $\xi_{ij}^{(n)}$ are independent copies of a random variable with unit variance, and
$\delta_{ij}^{(n)}$ are Bernoulli random variables jointly independent with $\xi_{ij}^{(n)}$,
with $\Prob\{\delta_{ij}^{(n)}=1\}=p_n$,
for some numbers $(p_n)_{n=1}^\infty$.
Under some additional moment assumptions and under the condition that for some
(arbitrary) fixed $\varepsilon>0$ the sequence $p_n$ satisfies $p_n n\geq n^\varepsilon$,
the circular law for the sequence of spectral measures of
matrices $\frac{1}{\sqrt{p_n n}}A_n$ was established in \cite{Tao-Vu circ 2+e, GoTi, Wood}.
Compared to the dense regime, additional difficulties in the sparse setting
arise when bounding from below the smallest and ``smallish'' singular values of shifted matrices $A_n-z\,\Id$
(see Section~\ref{s: overview} for further discussion).
For $p_n n\geq C\log n$, strong lower bounds on $s_{\min}(A_n-z\,\Id)$ were obtained
in \cite{BR inv,BR circ}, which allowed to prove the circular law for $(\frac{1}{\sqrt{p_n n}}A_n)$
when $p_n n$ is at least polylogarithmic in dimension and $\xi$ is subgaussian of zero mean \cite{BR circ}.

Let us note that for a special model of random matrices -- adjacency matrices of random $d$--regular directed graphs
--- the circular law
was recently established in papers \cite{Cook circ, BCZ} (for degree $d$ at least polylogarithmic in dimension)
and in \cite{LLTTY circ} (for $d$ slowly growing to infinity with $n$).
Paper \cite{LLTTY circ} was the first to treat the case of non-Hermitian matrices with a sublogarithmic number of non-zero elements
in rows and columns.
One of key elements of the proof in \cite{LLTTY circ} is a lower bound on the smallest singular value of a shifted adjacency
matrix, derived in \cite{Cook circ,LLTTY smin}.
It was shown in \cite{LLTTY smin}
that, for $d$ slowly growing to infinity with $n$, the smallest singular value of the uniform random $d$--regular matrix
is bounded below by a constant (negative) power of $n$ with probability going to one as
$n\to\infty$.

Analogous assertion for $s_{\min}(A_n)$ is false for the sparse model with i.i.d.\ entries discussed above.
Indeed, if $p_n=\Prob\{\delta_{ij}^{(n)}=1\}\leq\log n/n$ then with constant (non-zero) probability the matrix $A_n$ is singular.
The presence of a large number of zero rows and columns in the very sparse regime
requires a completely different approach to studying invertibility of the shifted matrices $A_n-z\,\Id$, compared with \cite{Tao-Vu circ 2+e, GoTi, Wood,BR circ}
(see Section~\ref{s: overview}).
For a real random variable $\psi$, define the concentration function $\cf(\psi,t):=\sup_{r\in\R}\Prob\{|\psi-r|\leq t\}$, $t\geq 0$.
One of the main technical results of this paper is
\begin{theor}[{Bound on $s_{\min}$ of a shifted matrix; Theorem~\ref{th: bound on smin}}]\label{th: smin}
For any $\alpha> 1$ there are $C_\alpha,c_\alpha>0$ depending only on $\alpha$ with the following property.
Let $n\geq C_\alpha$, $p\in(0,1]$ with $C_\alpha\leq pn\leq n^{1/8}$,
and let $A=(a_{ij})$ be a random $n\times n$ matrix with i.i.d.\ real valued entries $a_{ij}=\delta_{ij}\xi_{ij}$,
where $\delta_{ij}$ is the Bernoulli ($0/1$) variable with $\Prob\{\delta_{ij}=1\}=p$ and $\xi_{ij}$ is a variable
of unit variance
independent of $\delta_{ij}$ and such that $\cf(\xi_{ij}, 1/\alpha)\leq 1-1/\alpha$.
Further, assume that $z\in\C$ is such that
$|z|\leq pn$ and $|\Im(z)|\geq 1/\alpha$.
Then
$$\Prob\big\{s_{\min}(A-z\,\Id)\leq e^{-C_\alpha\log^3 n}\big\}\leq (pn)^{-c_\alpha}.$$
\end{theor}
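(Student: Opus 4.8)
The plan is to follow the modern invertibility paradigm for non-Hermitian random matrices: reduce the lower bound on $s_{\min}(A-z\,\Id)$ to a bound on the distance from a random column to the span of the others, and control that distance via small-ball probability estimates for a random vector against a structured normal vector. Write $M := A - z\,\Id$. The standard reduction (going back to Rudelson--Vershynin) gives $s_{\min}(M) \geq n^{-1/2}\min_{j\in[n]}\dist(X_j, H_j)$, where $X_j$ is the $j$-th column of $M$ and $H_j = \Span(X_k : k\neq j)$; so by a union bound it suffices to show $\Prob\{\dist(X_1, H_1)\leq e^{-C_\alpha \log^3 n}\} \leq (pn)^{-c_\alpha}/n$ after adjusting constants (the extra factor of $n$ is harmless since $(pn)^{-c_\alpha}$ can absorb it using $pn \leq n^{1/8}$ and shrinking $c_\alpha$). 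Fixing a unit normal vector $v = v(H_1)$ to $H_1$, which is independent of $X_1$, we have $\dist(X_1,H_1) = |\pr{X_1}{v}| \geq |\Im \pr{X_1}{v}|$ (or the real part), so the task becomes a small-ball estimate $\cf(\pr{X_1}{v}, t)$ for the random combination $\sum_i \delta_{i1}\xi_{i1} v_i - z v_1$ at scale $t = e^{-C_\alpha\log^3 n}$, which we need to hold for the specific random $v$ arising here.

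The heart of the matter is therefore a quantitative structure theorem for the null vector $v$: we must rule out that $v$ is too "spread out" or too "concentrated on coordinates where the Bernoulli mask kills the contribution." In the dense regime one invokes a net argument showing that no vector of small least common denominator (LCD) can be nearly annihilated by $M^\top$; in the sparse regime this must be done much more carefully, because a typical column of $M$ has only $\approx pn$ nonzero entries off the diagonal, and with non-negligible probability many rows/columns vanish entirely. The key structural dichotomy I would establish — and I expect this to follow from the invertibility-over-compressible-vectors and over-incompressible-vectors machinery developed earlier in the paper, together with the hypothesis $|\Im z| \geq 1/\alpha$ which guarantees the diagonal shift cannot be cancelled — is: with probability at least $1 - (pn)^{-c_\alpha}$, every unit vector $w$ with $\|M^\top w\|$ extremely small must have a large "regular" sub-block of coordinates on which its behavior resembles that of a genuinely spread vector, so that conditionally on the Bernoulli masks (which on a good event leave $\gtrsim pn$ active coordinates interacting with that sub-block) the entries $\xi_{i1}$ supply, via a tensorization/Lévy-type argument over $\approx pn$ independent terms each with $\cf(\xi_{i1},1/\alpha)\leq 1-1/\alpha$, a small-ball bound of the form $\cf(\pr{X_1}{v}, e^{-C\log^3 n}) \leq e^{-c\, pn}$ or, more realistically at this sparsity, $\leq (pn)^{-c_\alpha}$.

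The main obstacle, and where essentially all the work lies, is handling the very sparse range $pn$ as small as $C_\alpha$: here one cannot afford exponential-in-$pn$ small-ball bounds, and the $e^{-C_\alpha\log^3 n}$ scale (rather than a polynomial scale) is forced precisely because the LCD-type structure of the null vector can only be controlled up to an exponentially large threshold when so few entries are available per row. Concretely, I would (i) first run the compressible/incompressible decomposition to reduce to incompressible $v$, absorbing the probability loss $(pn)^{-c_\alpha}$ there; (ii) then condition on the pattern of nonzero rows and columns, restricting to the large "core" submatrix where every row and column has $\gtrsim pn$ nonzero entries (outside the core one argues separately, using that isolated zero rows/columns do not affect $s_{\min}$ being merely exponentially small rather than zero once $z$ has imaginary part bounded below — this is where $|\Im z|\geq 1/\alpha$ is crucial, ensuring $M$ restricted to the complement of the core is itself well-invertible because it is essentially $-z\,\Id$ plus a tiny perturbation); (iii) on the core, combine a covering argument over LCD-bounded vectors at scale $e^{-C\log^3 n}$ with the anti-concentration input $\cf(\xi,1/\alpha) \leq 1 - 1/\alpha$ to get the final distance bound. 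Steps (ii) and (iii) — carving out the core and proving the net/LCD estimate is compatible with the $\log^3 n$ exponent at sparsity $pn \asymp 1$ — are the technically delicate heart of the argument; I expect the bookkeeping there, rather than any single slick inequality, to be the real difficulty, and I would organize it by first proving the bound for $z$ in a bounded region and then leveraging $|z|\leq pn$ together with a crude norm bound $\|A\|\lesssim \sqrt{pn}\cdot\mathrm{polylog}$ on a high-probability event to reduce the general case to it.
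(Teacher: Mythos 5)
Your opening reduction contains a fatal quantitative error. You propose $s_{\min}(M)\geq n^{-1/2}\min_j\dist(X_j,H_j)$ followed by a union bound over the $n$ columns, which requires a per-column estimate $\Prob\{\dist(X_1,H_1)\leq t\}\leq (pn)^{-c_\alpha}/n$; you claim the factor $n$ is ``harmless.'' It is not: since $pn\leq n^{1/8}$, the target $(pn)^{-c_\alpha}$ is at least $n^{-c_\alpha/8}$, i.e.\ polynomially \emph{larger} than $1/n$, so no shrinking of $c_\alpha$ absorbs the union bound. Worse, the per-column small-ball probability genuinely cannot be made smaller than roughly $(pn)^{-1/2}$ in this regime: the column $X_1$ has only $\approx pn$ nonzero off-diagonal entries, and even against a perfectly spread normal $v$ the L\'evy--Kolmogorov--Rogozin--Esseen bound over the $\approx n/\log(pn)$ large coordinates of $v$ gives $\cf\lesssim\sqrt{\log(pn)/(pn)}$ — a constant when $pn=O(1)$. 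The paper's proof of Theorem~\ref{th: bound on smin} is structured precisely to avoid this union bound: it shows that on the bad event the singular vector $X_{\min}$ has at least $M=\lfloor cn/\log(pn)\rfloor$ large coordinates, hence at least $M$ of the events $\Event_j=\{|\langle\nu_j,\col_j(\widetilde A)\rangle|\ \text{small}\}$ occur \emph{simultaneously}, and then bounds $\Prob(\Event_{smin})\leq M^{-1}\Exp\sum_j\indicator_{\Event_j}\leq C\sqrt{\alpha}\log^{3/2}(pn)/\sqrt{pn}$ by Markov. Without this averaging-over-coordinates step (or an equivalent device), your argument does not close.

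The second gap is in the structural input you invoke. You plan to run a compressible/incompressible (LCD) decomposition with a covering argument at scale $e^{-C\log^3 n}$, and to handle the ``non-core'' part by noting that rows and columns outside a core are ``essentially $-z\,\Id$ plus a tiny perturbation.'' Neither works here. First, the paper's example \eqref{eq: Jordan} shows that a matrix can have all rows and columns nonzero (indeed every entry pattern you would accept into your ``core'') and still satisfy $s_{\min}\leq 2^{-k+1}$; the true obstruction is the presence of long self-balancing chains in the associated bipartite graph, not zero rows/columns, and ruling these out is the content of Sections~\ref{s: graph section}--\ref{s: very sparse}. Second, the cardinality of any net of compressible or LCD-bounded vectors is exponential in $n$, while the best available pointwise lower bound on $\|\widetilde Ax\|_2$ in the range $pn=O(1)$ has failure probability only $e^{-cn}$ with constants that cannot beat the net's entropy uniformly in $pn$; the paper circumvents this with the graph/shell machinery for $\lceil c/p\rceil$-sparse vectors and a bespoke $\ell_\infty$-net (exploiting that a given row typically vanishes on the largest coordinates of $x$) for moderately sparse ones. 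Your proposal does not supply a substitute for either of these components, so steps (ii) and (iii) as described would fail.
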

%The proof of Theorem~\ref{th: smin} is based on analysis of an associated bipartite random graph
%(see below) and is different from the proofs in \cite{Cook circ,LLTTY smin}.

The above theorem, together with estimates of intermediate singular values, allows to prove the main result of the paper:
\begin{theor}\label{th: main}
Let $\xi$ be a real random variable with unit variance.
For each $n\geq 1$, let $p_n$ satisfy $p_n n\leq n^{1/8}$,
and assume additionally that $\lim\limits_{n\to\infty}p_n n=\infty$. Further, for every $n$ let
$A_n$ be an $n\times n$ random matrix with i.i.d.\ entries $a_{ij}=\delta_{ij}\,\xi_{ij}$, where $\delta_{ij}$
is a Bernoulli ($0/1$) random variable with $\Prob\{\delta_{ij}=1\}=p_n$ and $\xi_{ij}$
are i.i.d.\ random variables equidistributed with $\xi$ (and mutually independent with $\delta_{ij}$).
Then, as $n$ converges to infinity, the empirical spectral distribution of $\frac{1}{\sqrt{p_n n}}A_n$
converges weakly in probability to the uniform measure on the unit disc of the complex plane.
\end{theor}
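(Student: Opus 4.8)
The plan is to follow Girko's Hermitization scheme, in the logarithmic-potential formulation systematized in \cite{Tao-Vu circ, BC circ}. Write $B_n=\tfrac{1}{\sqrt{p_n n}}A_n$, let $\mu_\mathrm{circ}$ be the uniform measure on the unit disc, and let $U(z)=\int_{\C}\log|z-w|\,d\mu_\mathrm{circ}(w)$ be its logarithmic potential. Since $\tfrac1n\sum_i|\lambda_i(B_n)|^2\le\tfrac1n\|B_n\|_{HS}^2=\tfrac{1}{p_nn^2}\sum_{i,j}\delta_{ij}\xi_{ij}^2$ converges in probability to $\E\,\xi^2<\infty$ by the law of large numbers (here $p_nn^2\to\infty$ and $\E\,\xi^2<\infty$ because $\xi$ has unit variance), the sequence $\mu_n(B_n)$ is tight; it therefore suffices to show that for Lebesgue-almost every $z\in\C$ the quantity $\tfrac1n\log|\det(B_n-z\,\Id)|=\int_0^\infty\log s\,d\nu_n^z(s)$ converges in probability to $U(z)$, where $\nu_n^z$ is the empirical distribution of the singular values $s_1(B_n-z\,\Id)\ge\dots\ge s_n(B_n-z\,\Id)$. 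For this I would verify the two standard ingredients: (a) $\nu_n^z$ converges weakly in probability to the deterministic measure $\nu^z$ of the classical circular law, for which $\int_0^\infty\log s\,d\nu^z(s)=U(z)$; and (b) $\log s$ is uniformly integrable with respect to $\{\nu_n^z\}$, in probability. Granted (a) and (b), the theorem follows from the logarithmic-potential criterion (see \cite{BC circ}).

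For (a), write $B_n-z\,\Id=\tfrac{1}{\sqrt{p_nn}}\big(A_n-\sqrt{p_nn}\,z\,\Id\big)$. The entries of $\tfrac{1}{\sqrt{p_nn}}A_n$ form a row-wise i.i.d.\ array with per-entry variance $1/n$, and they satisfy the Lindeberg condition: for fixed $\varepsilon>0$,
$$\sum_{j=1}^{n}\E\Big[\big|\tfrac{1}{\sqrt{p_nn}}\delta_{1j}\xi_{1j}\big|^2\,\indicator_{\{|\delta_{1j}\xi_{1j}|>\varepsilon\sqrt{p_nn}\}}\Big]=\E\big[\xi^2\,\indicator_{\{|\xi|>\varepsilon\sqrt{p_nn}\}}\big],$$
which tends to $0$ as $n\to\infty$ because $p_nn\to\infty$ and $\E\,\xi^2<\infty$. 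Hence, by universality of the limiting spectral distribution of Hermitized i.i.d.\ matrices (via the Stieltjes-transform or moment method, as in \cite{Bai, Tao-Vu circ}), together with the observations that the diagonal shift $z\,\Id$ has Hilbert--Schmidt norm $O(1)$ for fixed $z$ and that a possibly non-zero mean $\E\,\xi$ only perturbs $\nu_n^z$ by a rank-one matrix (hence by $O(1/n)$ in L\'evy distance), $\nu_n^z$ converges weakly in probability to the same $\nu^z$ as in the dense regime; no truncation of $\xi$ is needed. The identity $\int_0^\infty\log s\,d\nu^z=U(z)$ is the classical circular-law computation.

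For (b), uniform integrability at $+\infty$ is elementary: for $q\in(0,2)$ and $s\ge R\ge1$ one has $\log s\le\tfrac1q s^q\le\tfrac{1}{q}R^{\,q-2}s^2$, whence $\int_{s>R}\log s\,d\nu_n^z\le\tfrac{1}{qR^{2-q}}\cdot\tfrac1n\|B_n-z\,\Id\|_{HS}^2$, and $\tfrac1n\|B_n-z\,\Id\|_{HS}^2\to\E\,\xi^2+|z|^2$ in probability, so the bound is $O(R^{\,q-2})$ uniformly in $n$. Uniform integrability at $0$ is the delicate point, and it is exactly here that Theorem~\ref{th: smin} and the paper's intermediate singular value estimates enter. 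Fix $z$ with $\Im z\neq0$ (a full-measure restriction) and pick $\alpha>1$ large enough that $\cf(\xi,1/\alpha)\le1-1/\alpha$; such $\alpha$ exists since $\xi$, having unit variance, is non-degenerate. For all large $n$ one has $C_\alpha\le p_nn\le n^{1/8}$ by hypothesis, $|\sqrt{p_nn}\,z|\le p_nn$ (because $|z|\le\sqrt{p_nn}\to\infty$), and $|\Im(\sqrt{p_nn}\,z)|=\sqrt{p_nn}\,|\Im z|\ge1/\alpha$, so Theorem~\ref{th: smin} applied to $A_n-\sqrt{p_nn}\,z\,\Id$ yields, with probability at least $1-(p_nn)^{-c_\alpha}\to1$, $s_{\min}(A_n-\sqrt{p_nn}\,z\,\Id)\ge e^{-C_\alpha\log^3 n}$, hence $s_{\min}(B_n-z\,\Id)\ge\tfrac{1}{\sqrt{p_nn}}e^{-C_\alpha\log^3 n}\ge e^{-C\log^3 n}$, so that $\tfrac1n|\log s_{\min}(B_n-z\,\Id)|\le C\log^3 n/n\to0$. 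This bound controls the smallest $n^{o(1)}$ singular values; combined with the polynomial lower bounds $s_{n-j}(B_n-z\,\Id)\ge c\,(j/n)^{C}$ (for $1\le j$ up to a small constant fraction of $n$) established elsewhere in the paper, it gives, for each $\varepsilon>0$ and with probability $1-o(1)$,
$$\int_{s<\varepsilon}|\log s|\,d\nu_n^z(s)=\frac1n\!\!\sum_{i:\,s_i<\varepsilon}\!\!|\log s_i|\ \le\ o(1)\ +\ \frac{C}{n}\sum_{1\le j\le\theta(\varepsilon)n}\log\frac{n}{j}\ \le\ o(1)+C'\,\theta(\varepsilon)\Big(1+\log\tfrac1{\theta(\varepsilon)}\Big),$$
where $\theta(\varepsilon):=\nu^z\big([0,\varepsilon)\big)+o(1)$ and the $o(1)$'s are uniform in $\varepsilon$; since $\nu^z$ has no atom at $0$, $\theta(\varepsilon)\to0$ as $\varepsilon\to0$, and letting $n\to\infty$ and then $\varepsilon\to0$ gives the uniform integrability at $0$.

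Putting (a) and (b) together, $\int_0^\infty\log s\,d\nu_n^z\to\int_0^\infty\log s\,d\nu^z=U(z)$ in probability for a.e.\ $z$, and the logarithmic-potential criterion then yields the weak convergence in probability of $\mu_n(B_n)$ to $\mu_\mathrm{circ}$. (The restriction $p_nn\le n^{1/8}$ serves only to invoke Theorem~\ref{th: smin}; the complementary range is covered by earlier work.) I expect the main obstacle to lie entirely within step (b) --- the lower bounds on the smallest and the smallish singular values of the shifted matrix in the genuinely sparse regime $p_nn=n^{o(1)}$, where the many near-zero rows and columns break the compressible/incompressible dichotomy and the $\varepsilon$-net arguments available for $p_nn\gtrsim\log n$, so that a different mechanism --- structural analysis of near-kernel vectors together with sharp anti-concentration, as encapsulated in Theorem~\ref{th: smin} --- is needed.
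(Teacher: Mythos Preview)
Your overall architecture is correct and matches the paper: Girko Hermitization via the logarithmic-potential criterion, with the two ingredients being convergence of the singular-value measures $\nu_{n,z}$ and uniform integrability of $\log$. Your treatment of uniform integrability at $+\infty$ and your verification of the hypotheses of Theorem~\ref{th: smin} are fine.

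The gap is in your treatment of uniform integrability at $0$. You invoke ``polynomial lower bounds $s_{n-j}(B_n-z\,\Id)\ge c\,(j/n)^{C}$ for $1\le j$ up to a small constant fraction of $n$, established elsewhere in the paper'' --- but the paper establishes no such bound. What the paper actually proves (Theorem~\ref{thm: intermediate singular}) is
\[
s_{n-k}\big(\tfrac{1}{\sqrt{p_nn}}A_n-z\,\Id\big)\ge \exp\Big(-C\log^{100}\tfrac{4n}{k}\cdot\log^{100}(p_nn)\Big),\qquad k\ge \frac{n}{\log^{100}n},
\]
which is far from polynomial in $k/n$ and carries the diverging factor $\log^{100}(p_nn)$. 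If you try to integrate this over $k$ up to $\varepsilon n$, you get a bound of order $\varepsilon\log^{100}(1/\varepsilon)\cdot\log^{100}(p_nn)$, which is \emph{not} small uniformly in $n$ for fixed $\varepsilon$. The paper therefore uses Theorem~\ref{thm: intermediate singular} only on the range $n/\log^4 n\le k\le n/\log^{200}(p_nn)$, where the integral is $O(\log^{-50}(p_nn))$; for the remaining range $n/\log^{200}(p_nn)\le k\lesssim e^{-T}n$ it needs a \emph{third}, entirely different, ingredient: the bound $\E\,\nu_{n,z}([0,\eta])\le C\eta$ for $\eta\ge(p_nn)^{-c}$ (Lemma~\ref{l: measure via comparison}), obtained by replacing the small entries of $A_n$ by Gaussians, comparing the Stieltjes transforms via Chatterjee's Lindeberg-type theorem, and then using the negative-second-moment identity for the partially-Gaussian matrix (Proposition~\ref{prop: shifted gauss} and Corollary~\ref{cor: shifted gauss}). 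This three-regime splitting is the content of Proposition~\ref{prop: uniform integrability}, and the Stieltjes-comparison step is not optional: without it, the available singular-value bounds do not close the uniform-integrability argument at $0$. Your sketch elides precisely this step.
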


Note that for finite $p_n n$, the multiplicity of zero eigenvalue is bounded from below by a constant proportion of $n$
with a large probability, so convergence to the uniform distribution on the disc does not hold. In that respect,
our theorem is proved under the minimal assumptions on the sparsity.

%%\begin{wrapfigure}{R}{0.4\textwidth}
%%\centering
%\includegraphics[width=0.4\textwidth]{16zerocolumns4pn.png}
%%\captionsetup{width=0.8\linewidth}
%%\captionof{figure}
%{Spectrum of a $1000\times 1000$ matrix with independent $0,\pm 1$--entries.
%The probability that an entry is non-zero is $1/250$. The realization of the matrix has $16$ zero columns.}
%%\end{wrapfigure}
%
%\bigskip

%A key idea in treating very sparse random matrices is to associate to every matrix $A_n=(\delta_{ij}\xi_{ij})$
%a random directed bipartite graph $\Gr$ defined as follows.
%The vertex set of the graph is $[n]\sqcup[n]$ (the union of {\it left} and {\it right} vertex sets).
%For every left vertex $i$ and a right vertex $j$, there is a directed edge from $i$ to $j$ ($i\to j$)
%if and only of $\delta_{ij}=1$, and a directed edge $i\leftarrow j$ iff $|\delta_{ij}\xi_{ij}|\geq 1/\alpha$.
%Alternatively, the graph can be described by introducing an auxiliary collection of i.i.d random Bernoulli variables $(\mu_{ij})$
%mutually independent with $\delta_{ij}$, such that $\Prob\{\mu_{ij}=1\}=\Prob\{\xi_{ij}\geq 1/\alpha\}$.
%Then $i\leftarrow j$ iff $\delta_{ij}\mu_{ij}=1$. The graph can be analyzed independently of the matrix $A_n$.

\subsection{Acknowledgement}

Part of this research was performed while the authors were in residence at the Mathematical Sciences Research Institute  (MSRI) in Berkeley, California, during the Fall semester of 2017,
and at the Institute for Pure and Applied Mathematics (IPAM) in Los Angeles, California, during May and June of 2018.
Both institutions are supported by the National Science Foundation.
Part of this research was performed while the first author visited Weizmann Institute of Science in Rehovot, Israel, where he held Rosy and Max Varon Professorship. We are grateful to all these institutes for their hospitality and for creating an excellent work environment.

The research of the first author was supported in part by the NSF grant DMS 1464514 and by a fellowship from the Simons Foundation.
The research of the second named author was supported by the Viterbi postdoctoral fellowship while in residence at the Mathematical Sciences Research Institute.

\section{Overview of the proof}\label{s: overview}

The circular law was initially proved by Ginibre \cite{Ginibre} for matrices with i.i.d. complex normal entries, and by Edelman \cite{Edelman} for i.i.d. real normal entries.
All known proofs of the circular law for more general classes of random matrices rely on the strategy put forward by Girko \cite{Girko}.
This strategy is based on using the logarithmic potentials of the empirical measures of the eigenvalues.
Namely, let $B_n, \ n \in \N$ be a sequence of matrices, and let
\[
 \mu_n= \frac{1}{n} \sum_{j=1}^n \delta_{\lambda_j(B_n)}
\]
be the empirical measures of their eigenvalues. The measures $\mu_n$ converge to a deterministic measure $\mu$ weakly in probability if for any bounded continuous function $f: \C \to \C$, $\int f \, d \mu_n \to \int f \, d \mu$ in probability.
To establish this convergence, it is enough to show that the logarithmic potentials of $\mu_n$,
\[
 F_n(z)= \int_{\C} \log |z-w| \, d \mu_n(w)
\]
converge to the logarithmic potential of $\mu$ a.e. The logarithmic potential can be rewritten as
\[
 F_n(z)= \frac{1}{n} \log | \text{det} (B_n- z \Id_n)| = \frac{1}{n} \sum_{j=1}^n \log |\lambda_j(B_n)-z|
 = \frac{1}{2n} \sum_{j=1}^{2n} \log |\lambda_j(H_n(z))|,
\]
where $H_n(z)=\left( \begin{smallmatrix} 0 & (B_n-z \Id_n) \\ (B_n-z \Id_n)^* & 0 \end{smallmatrix} \right)$ is a Hermitian matrix. The eigenvalues of $H_n(z)$ are the singular values of $B_n-z \Id_n$ and their negatives.
Denoting the empirical measures of the singular values of $B_n-z \Id_n$ by $\nu_{n,z}$, we have to establish the convergence of $\int_0^{\infty} \log x \,  d \nu_{n,z}(x)$ for almost any $z \in \C$.
This argument allows to pass from the empirical measures of the eigenvalues to more stable empirical measures of the singular values.
To establish the convergence of logarithmic potentials in the latter case, it would be sufficient to prove the weak convergence of the  measures $\nu_{n,z}$ to some limit measure as well as the uniform integrability of the function $\log x$ with respect to $\nu_{n,z}$.
The last step is needed as the function $\log x$ is unbounded at $0$ and $\infty$.

To prove the circular law for sparse random matrices, we set $B_n=(1/\sqrt{p_n n})A_n$.
In this case, the weak convergence of the measures $\nu_{n,z}$ can be derived following the methods already existing in the literature.
The main problem therefore is establishing the uniform integrability of the logarithm. It splits in two parts: checking the uniform integrability at $\infty$ and at $0$. The first one turns out to be simple due to the fact that the Hilbert-Schmidt norm of $(1/\sqrt{p_n n})A_n$ has a finite second moment:
\[
 \E \norm{(1/\sqrt{p_n n})A_n}_{HS}^2=\frac{1}{p_n n} \sum_{i,j=1}^n  \E |(A_n)_{i,j}|^2 %= \ \xi^2
 < \infty.
\]
Thus, the derivation of the circular law reduces to checking the uniform integrability of $\log x$ at $0$ with respect to  the measures $\nu_{n,z}$.
Although this looks like a minor technical issue, this  was a main step in the proof in all other
%previous
settings where the circular law was established.
Attacking it required developing a number of different methods ranging from additive combinatorics and harmonic analysis to measure concentration and convex geometry.
Yet, checking the uniform integrability for very sparse matrices present multiple new challenges which cannot be handled by these techniques.
This means that although Girko's strategy can be used in proving the circular law for very sparse random matrices, its implementation requires new ideas at each step.

Let us discuss these challenges in more details. The first, and usually the most difficult step is obtaining a lower bound for the smallest singular value of $\widetilde{A}_{n,z}:=(1/\sqrt{p_n n})A_n- z \Id_n$. Such bound frequently comes in the form $\P (s_n(\widetilde{A}_{n,z}) < n^{-c}) = o(1)$ for some absolute constant $c>0$. If proved, this bound allows to estimate $m(n)=o(n/\log n)$ smallest singular values of $\widetilde{A}_{n,z}$ by the minimal one and conclude that $(1/n)\sum_{j=n-m(n)}^n \log s_j(\widetilde{A}_{n,z}) =o(1)$ with probability $1-o(1)$.
In \cite{GoTi, PanZhou, Tao-Vu circ 2+e, Tao-Vu circ, Wood, BR circ,Cook circ, LLTTY smin}, the bound on the smallest singular value was uniform over $z$.
If we consider the range of sparsity $p_n n < \log n$, such uniform bound cannot hold as the matrix $\widetilde{A}_{n,z}$ contains a zero row with high probability whenever $z=0$.
It may seem that this problem has an easy fix. Since we have to bound the smallest singular value for a.e. $z \in \C$, we can  assume that $z \neq 0$. This would ensure the absence of entirely zero rows.
However, the zero rows is not the only obstacle we have to tackle to bound the smallest singular value. Consider, for example, the matrix of the form
\begin{equation}  \label{eq: Jordan}
B_n=
 \begin{pmatrix}
 Z_k & V_{n-k} \\
 0 & W_{n-k}
 \end{pmatrix},
  \quad \text{with } Z_k=
 \begin{pmatrix}
 z & 2z & 0  & \cdots & 0  \\
 0 & z & 2z  & \cdots & 0 \\
 & \cdots &   & \cdots&  \\
  0 & & \ldots & &  z  \\
 \end{pmatrix},
\end{equation}
where $Z_k$ is a $k \times k$ matrix, and $V_{n-k}, \ W_{n-k}$ is any $k \times (n-k)$ and $(n-k) \times (n-k)$ matrices respectively.
We can choose $V_{n-k}$ and $W_{n-k}$ so that all rows and columns of the matrix $B_n$ are non-zero. However, regardless of the value of $z$ and the choice of $V_{n-k}, \ W_{n-k}$, the smallest singular value of $B_n$ satisfies $s_n(B_n) \le 2^{-k+1} $. To be able to bound the smallest singular value from below in our setting, we have to identify the ``almost singular'' sparse deterministic matrices and show that the matrix $(1/\sqrt{p_n n})A_n- z I_n$ cannot be of this type with any significant probability.

On a more technical level, the estimates on the smallest singular value obtained in the papers mentioned above rely on discretization of the sphere $S^{n-1}$ using $\varepsilon$-nets and approximation of certain subsets of the sphere using these nets. A uniform estimate of $\norm{B_n x}_2$ over $x$ from the $\varepsilon$-net uses the union bound. However,
%the measure concentration and the large deviation estimates (Bernstein's inequality, Talagrand's inequality etc.) often hold with exceptional probability $\exp(-c p_n n)$. If we know only that $p_n n \to \infty$ without any prescribed rate, the union bound becomes essentially unavailable.
if we know only that $p_n n \to \infty$ without any prescribed rate, the union bound becomes largely unavailable.
A related problem appeared in \cite{LLTTY smin} where a lower bound on the smallest singular value of random $d$-regular matrices is derived for $d\to\infty$ with arbitrarily slow convergence,
however, absence of zero rows/columns and dependencies make that setting completely different.
These obstacles show that obtaining a smallest singular value bound would require developing a new method taking into account the structure of the non-zero entries of $A_n$ as well as replacing the classical $\varepsilon$-net argument with a more delicate discretization approach. We will discuss the details of our method below.

Besides the smallest singular value of the matrix $B_n$, the uniform integrability of the logarithmic potential requires the bound on the smallish ones. More precisely, we have to show that the contribution of these singular values
$(1/n)\sum \log s_j(\widetilde{A}_{n,z}) $, where the sum is taken over $j \le n-m(n)$ with $s_j(\widetilde{A}_{n,z}) \le \delta$ can be made arbitrarily small by choosing an appropriate $\delta$ independently of $n$.
In some previously considered settings this was a relatively easier step.
Following \cite{Tao-Vu circ}, one can use the negative second moment identity to obtain such bound. This identity allows to bound the singular value $s_{n-j}(B_n)$ of an $n \times n$ matrix $B_n$ in terms of the distances between one row of $B_n$ and the linear span of $n-j$ other rows. To obtain a small ball probability estimate for such distance, one uses the measure concentration. Then passing from the estimate of a single distance to the negative second moment uses the union bound over rows. Yet, as before, for very small $p_n$, the measure concentration estimate we can obtain this way is too weak to be combined with the union bound. Moreover, to guarantee the uniform integrability, we have to bound many intermediate singular values at once. In previous papers this was also achieved through using the union bound. In \cite{LLTTY circ}, which deals with the circular law for adjacency matrices of $d$--regular graphs in the very sparse regime (with $d\to\infty$ arbitrarily slowly)
a similar problem was resolved by deriving strong small ball probability bounds for those distances, however, the techniques are tailored to the $d$--regular setting and cannot be applied in our context.
In short, the weak probability estimates which preclude using the union bound is the main challenge in considering sub-logarithmic values of $p_n n$.

To overcome this obstacle, we introduce a new method. We will define a global event $\Event_{good}$ which occurs with probability $1-o(1)$. This event would reflect both the structure of the non-zero entries of the matrix and the magnitudes of the entries. The aim of this construction is to ensure that conditioned on $\Event_{good}$, we can obtain much better probability estimates allowing us to use the union bound whenever necessary.
 Construction of this event $\Event_{good}$ occupies a significant part of this paper.

To determine the obstacles to a good smallest singular value estimate, let us look at example \eqref{eq: Jordan} again. It immediately points to one of the possible problems, namely, the presence of the columns having a small support. Furthermore, if we replace the coefficient $2$ in this example, say, by $1/2$, the upper estimate for $s_n(B_n)$ is no longer true in general. This means that we have to pay a special attention to the support of the columns as well as to the distribution of entries having large absolute values. To account for both phenomena we associate to the random matrix $A_n=(\delta_{ij}\xi_{ij})$
a random directed bipartite graph $\Gr$ defined as follows.
The vertex set of the graph is $[n]\sqcup[n]$ (the union of {\it left} and {\it right} vertex sets).
For every left vertex $i$ and a right vertex $j$, there is a directed edge from $i$ to $j$ ($i\to j$)
if and only of $\delta_{ij}=1$, and a directed edge $i\leftarrow j$ iff $|\delta_{ij}\xi_{ij}|\geq 1/\alpha$,
where $\alpha>0$ is a parameter.
Alternatively, the graph can be described by introducing an auxiliary collection of i.i.d random Bernoulli variables $(\mu_{ij})$
mutually independent with $\delta_{ij}$, such that $\Prob\{\mu_{ij}=1\}=\Prob\{\xi_{ij}\geq 1/\alpha\}$.
Then $i\leftarrow j$ iff $\delta_{ij}\mu_{ij}=1$. The graph can be analyzed independently of the matrix $A_n$.

A column having too few large entries will correspond to a right vertex of a small out-degree in this encoding. We will regard these vertices as exceptional. After removing the exceptional vertices and all their left neighbors, we will get a subgraph, some of the right vertices of which can have a small out-degree. We will add these to the exceptional vertices and continue the process iteratively. The precise definition  of the set of exceptional vertices appears in Subsection \ref{subs: types}, where they are called vertices of a finite type. We analyze this set in Subsection \ref{subs: cardinality} and show that with probability close to $1$, the set of exceptional vertices has cardinality at most $\exp(-c p_n n) \cdot n$.

Note that for $z \neq 0$, the graph associated to the matrix $\widetilde{A}_{n,z}$ has all horizontal edges $j \to j$ and $j \leftarrow j, \ j \in [n]$.
After identifying the exceptional vertices, we will identify paths in the graph presence of which may result in a small least singular value. Here, we can also take guidance from example \eqref{eq: Jordan}, where the matrix $Z_k$ gives rise to a zig-zag path of the length $2k$ %all
whose edges going from right to left are horizontal. Such special paths called chains are introduced and studied in Subsection \ref{subs: chains}. In this subsection we prove that with high probability, the associated graph has no long bad (self-balancing) chains, and estimate  the number of short ones.

Subsection \ref{subs: shells} defines a notion of a shell which is crucial in connecting the properties of the matrix to the geometry of the associated graph. Roughly speaking, an $M$-shell $\Am=(\Ael_\ell)_{\ell=0}^{d}$  is a sequence of subsets of right vertices such that each vertex in each layer $\Ael_{\ell+1}$ is reachable from the previous layer $\Ael_{\ell}$ by a path of length $2$ avoiding some set $M$ of  left vertices. Using previously established properties of chains, we prove that the shells possess an expansion property and that the union of the first few layers contains many right vertices which are not exceptional.  These results are used in Section \ref{s: very sparse} to show that with high probability, almost null vectors of the matrix $\widetilde{A}_{n,z}$ cannot be $\lceil c p_n^{-1} \rceil$-sparse.

Section \ref{s: moderately sparse} is devoted to proving that with high probability, almost null vectors cannot be $\lceil cn/\log (p_n n) \rceil$-sparse. The strategy in this section is different and relies on nets instead of graphs. Because of Section \ref{s: very sparse},  we can assume at this point that at least $\lceil c p_n^{-1} \rceil$ coordinates of a vector $x \in S^{n-1}$ we consider are non-negligible. This means that for any row $i \in [n]$,
$\langle\row_i(\widetilde{A}_{n,z}),\bar x\rangle$ is non-negligible with probability bounded away from zero.
A standard tensorization argument yields that the probability that $\|\widetilde{A}_{n,z} x\|_2$ is small is at most $\exp(-cn)$. Yet, as we do not have a good control of $\|\widetilde{A}_{n,z}\|_2$, we cannot combine this with a straightforward $\e$-net argument. Instead, we introduce a new method based on approximating the vector restricted to the set of its small coordinates in the $\ell_{\infty}$-norm and dealing with each product $\langle\row_i(\widetilde{A}_{n,z}),\bar x\rangle$ separately.
At this step, we turn the sparsity of the matrix from a difficulty to an advantage which allows us to disregard the large coordinates of $x$.

Section \ref{s: moderately sparse} yields that to estimate $s_{\min}(\widetilde{A}_{n,z})$, it is enough to bound  $\|(\widetilde{A}_{n,z})x\|_2$ over the set of spread vectors. Such bound is obtained in Section \ref{sec: smallest} using the random normal method of \cite{RV invertibility} and the L\'evy--Kolmogorov--Rogozin--Esseen inequality.

Now, we are passing to the estimates of the intermediate singular values. Compared to the least one, the difficulty here is twofold. First, to derive uniform integrability of the logarithmic potential, the bound has to be significantly more precise. Second, the probability estimate has to be strong enough to allow taking the union bound. In Section \ref{sec: restricted inv}, we relate the bound on the $(n-k+1)$-th singular value to the magnitude of projection of columns of our matrix onto a subspace orthogonal to $n-k$ other columns. To be able to derive a lower bound for these magnitudes, we need to know that the projections  have sufficiently many vectors in their kernels. This should be done simultaneously for many submatrices since we cannot rely on the union bound at this point. To this end, we introduce a special operation --  a compression of the matrix and its associated graph. These compressions are introduced in Subsection \ref{subs: matrix comp} and used in Subsection \ref{subs: shells} and Section \ref{s: very sparse} to derive the required property. After this is done, getting a strong probability bound is based on randomized restricted invertibility. Restricted invertibility is a well-studied topic going back to the classical theorem of Bourgain and Tzafriri \cite{BT}. It is known that this theorem may not hold for a random submatrix with any significant probability. However, in Section \ref{sec: restricted inv} we show that it holds with a non-negligible, albeit exponentially small probability which turns out to be sufficient for our purposes. Restricted invertibility has been used in random matrix context in \cite{Cook smallest singular} and \cite{Nguyen}, but our approach is significantly different.
The combination of compressions and randomized restricted invertibility allows to obtain a good lower bound for all intermediate singular values. This is done in Section \ref{sec: intermediate}.

We derive the uniform integrability and complete the proof of the circular law in Section \ref{sec: proof circular}. To this end, we use the estimate of the least singular value obtained in Section \ref{sec: smallest} as well as that of the intermediate singular values obtained in Section \ref{sec: intermediate}. However, it turns out that we can use the estimate for $s_{n-k}((1/\sqrt{p_n n})A_n- z \Id_n)$ only for $k < \frac{n}{\log^C(p_n n)}$. For larger $k$, we need a tighter bound.
To this end, we use the idea of \cite{Cook circ}  based on the comparison of Stieltjes transforms of our matrix and some reference random matrix having nice properties. This reference random matrix is often chosen to be Gaussian.
However, in our case, the comparison with the Gaussian matrix does not seem to be feasible. Instead, we introduce a new random matrix obtained by replacing relatively small values of $(1/\sqrt{p_n n})A_n- z \Id_n$ by i.i.d. $N(0,1)$ variables.
This requires bounding the Stieltjes transform of %K:Gaussian the shifted Gaussian matrix
Gaussian matrices with partially frozen entries. Such bound is obtained in Subsection \ref{sec: shifted}. The uniform integrability is established in Subsection \ref{sec: uniform}. Finally, in Subsection \ref{sec: completion}, we  complete the proof of Theorem \ref{th: main}.

\section{Preliminaries}

Let us start with notation.
The complex conjugate of a complex number $z$ is denoted by $\overline{z}$.
Given a vector $x=(x_1,\dots,x_n)$ in $\C^n$ or $\R^n$, denote by $x^*$ the non-increasing
rearrangement of the vector of absolute values
$(|x_1|,\dots,|x_n|)$. Further, by $\supp(x)$ we denote the support of $x$.
For a real number $a$, by $\lfloor a\rfloor$ we denote the largest integer not exceeding $a$, and
by $\lceil a\rceil$ --- the smallest integer greater or equal to $a$.
Given a finite set $I$, let $|I|$ denote its cardinality.

The standard inner product in $\C^n$ and $\R^n$ is denoted by $\langle \cdot,\cdot\rangle$, and the standard unit vectors
--- by $e_1,e_2,\dots,e_n$.
For a $k\times m$ matrix $B$, let $\col_j(B)$, $j\leq m$ and $\row_i(B)$, $i\leq k$, be its columns and rows, respectively.
By $\|B\|_{HS}$ we denote the Hilbert--Schmidt norm of $B=(b_{ij})$, i.e.\
$\|B\|_{HS}=\sqrt{\sum_{i,j}|b_{ij}|^2}$.

For a random variable $\xi$ (real or complex), define its {\it L\'evy anti-concentration function} by
$$\cf(\xi,t):=\sup\limits_{\tau\in\C}\Prob\big\{|\xi-\tau|\leq t\big\},\quad t\geq0.$$

\medskip

Let $k,m$ be any positive integers. We introduce a collection $\grc_{k,m}$ of directed bipartite graphs
having $k$ left and $m$ right vertices, and with the property that $i\leftarrow j$ {\it only if} $i\to j$
(for any $i\in[k], j\in[m]$).

For a subset $I$ of the right vertices of $G\in\grc_{k,m}$, define in-neighbors of $I$ --- $\inneigh(I)$ ---
as the set of all left vertices of $i$ of $G$ such that
there is an edge emanating from $i$ and landing in $I$.
Similarly, the set of out-neighbors $\outneigh(I)$ is the collection of left vertices $i$ such that
$i\leftarrow j$ for some $j\in I$.
For a one-element set $\{j\}$, we will write $\inneigh(j)$, $\outneigh(j)$ instead of $\inneigh(\{j\})$, $\outneigh(\{j\})$.

Sets of in- and out-neighbors for collections of left vertices of $G$ are defined along the same lines.
In situations where confusion may arise, we specify explicitly if the vertices are left or right, by adding corresponding superscript:
$j^L$ stands for the left vertex $j$ of $G$, and $j^R$ --- the right vertex.
We use the same convention for sets of vertices.

\subsection{Assumptions on distributions and parameters}\label{subs: assump}

As the crucial step in the proof of the main result, we will derive estimates on the smallest and ``smallish''
singular values of shifted random matrices $A-z\,\Id$,
assuming that conditions \eqref{Asmp on p weak}-\eqref{Asmp on A}--\eqref{Asmp on z}
stated below are satisfied.
First, we fix a global parameter $\alpha\geq 1$
and let the dimension $n$ and sparsity parameter $p$ satisfy
\begin{equation}\tag{{\bf A1}}\label{Asmp on p weak}
\mbox{$C_{\alpha}n^{-1}\leq p\leq n^{-7/8}$,}
\end{equation}
where $C_{\alpha}>0$ depends only on $\alpha$ and is assumed to be sufficiently large
(the value of $C_\alpha$ could be computed explicitly but we prefer to reduce the amount of technical details).
We will consider random square matrices $A$ satisfying

\begin{equation}\tag{{\bf A2}}\label{Asmp on A}
\begin{split}
&\mbox{$A$ is $n\times n$, with i.i.d.\ entries $a_{ij}=\delta_{ij}\,\xi_{ij}$, where $\delta_{ij}$  is a Bernoulli random}\\
&\mbox{variable with $\Prob\{\delta_{ij}=1\}=p$;
$\xi_{ij}$ is nowhere zero complex variable with zero}\\
&\mbox{mean and unit variance independent from $\delta_{ij}$ such that
$\cf(\xi_{ij},1/\alpha)\leq 1-1/\alpha$.}
\end{split}
\end{equation}

The complex shift $z\in\C$ will be chosen so that
\begin{equation}\tag{{\bf A3}}\label{Asmp on z}
\mbox{$|z|\leq pn$\quad and\quad $|a_{ij}-z|
\geq 1/\alpha$ almost surely.}
\end{equation}

The assumption that $\xi_{ij}$'s are nowhere zero does not affect our estimates on the singular values
and can be discarded with help of a standard approximation argument.

As was already mentioned in the introduction, a considerable part of the paper is devoted to the study of
the random bipartite graph associated with our random matrix. Let us recall the definition.

\begin{equation}\tag{{\bf B1}}\label{Asmp on G}
\begin{split}
&\mbox{Let $(\delta_{ij})$ and $(\mu_{ij})$ be two collections of jointly independent Bernoulli random}\\
&\mbox{variables where $\Prob\{\delta_{ij}=1\}=p$ and $\Prob\{\mu_{ij}=1\}\geq 1/\alpha$, where $p$ satisfies \eqref{Asmp on p weak}.}\\
&\mbox{Then the directed bipartite graph $\Gr$ with the vertex set $[n]\sqcup[n]$ is defined by}\\
&\mbox{$i\to j$ iff ($\delta_{ij}=1$ or $i=j$) and $i\leftarrow j$ iff ($\delta_{ij}\mu_{ij}=1$ or $i=j$).}
\end{split}
\end{equation}

\subsection{Classical inequalities}

Let us recall the classical Bernstein inequality for sums of Bernoulli variables:
\begin{lemma}[Bernstein's inequality]\label{l: bernsteins}
Let $m$ be any positive integer, and let $\eta_1,\dots,\eta_m$ be i.i.d.\ Bernoulli ($0/1$) random variables
with $\Prob\{\eta_i=1\}=p$ for some $p\in[0,1]$. Then for any $t>0$ we have
$$\Prob\Big\{\sum\limits_{i=1}^m\eta_i\geq pm+t \Big\}\leq \exp\big(-c_{\smallrefer{l: bernsteins}}t^2/(pm+t)\big)$$
for a universal constant $c_{\smallrefer{l: bernsteins}}>0$.
\end{lemma}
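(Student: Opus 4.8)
The plan is to use the standard exponential moment (Chernoff) method followed by an optimization over the Laplace parameter. Write $S:=\sum_{i=1}^m\eta_i$, so $\Exp S=pm$. For any $\lambda>0$, Markov's inequality applied to $e^{\lambda S}$ together with independence gives
\[
\Prob\Big\{S\geq pm+t\Big\}\leq e^{-\lambda(pm+t)}\,\Exp e^{\lambda S}
= e^{-\lambda(pm+t)}\prod_{i=1}^m \Exp e^{\lambda\eta_i}.
\]
Since each $\eta_i$ is Bernoulli with $\Prob\{\eta_i=1\}=p$, we have $\Exp e^{\lambda\eta_i}=1-p+pe^{\lambda}=1+p(e^{\lambda}-1)\leq\exp\big(p(e^{\lambda}-1)\big)$, using $1+x\leq e^{x}$. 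Hence
\[
\Prob\Big\{S\geq pm+t\Big\}\leq\exp\Big(pm(e^{\lambda}-1)-\lambda(pm+t)\Big).
\]

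Next I would optimize the exponent $g(\lambda):=pm(e^{\lambda}-1)-\lambda(pm+t)$ over $\lambda>0$. Since $g'(\lambda)=pm\,e^{\lambda}-(pm+t)$, the minimum is attained at $e^{\lambda}=1+t/(pm)$, i.e.\ $\lambda=\log(1+t/(pm))>0$ (well-defined because $t>0$). Substituting this value, a short computation gives $g(\lambda)=t-(pm+t)\log(1+t/(pm))=-pm\,\varphi\big(t/(pm)\big)$, where $\varphi(x):=(1+x)\log(1+x)-x$ for $x\geq0$. This is Bennett's form of the estimate:
\[
\Prob\Big\{S\geq pm+t\Big\}\leq\exp\Big(-pm\,\varphi\big(t/(pm)\big)\Big).
\]

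It remains to convert this into the stated Bernstein-type bound, i.e.\ to show $pm\,\varphi(x)\geq c\,t^{2}/(pm+t)$ with $x=t/(pm)$. For this I would invoke the elementary scalar inequality $\varphi(x)\geq \dfrac{3x^{2}}{2(3+x)}$, valid for all $x\geq0$ (both sides vanish at $x=0$, and the inequality follows by comparing derivatives; it is the standard step linking Bennett's and Bernstein's inequalities). Plugging in $x=t/(pm)$,
\[
pm\,\varphi\big(t/(pm)\big)\geq pm\cdot\frac{3\,(t/(pm))^{2}}{2\big(3+t/(pm)\big)}
=\frac{3t^{2}}{2(3pm+t)}\geq\frac{t^{2}}{2(pm+t)},
\]
where the last step uses $3pm+t\leq3(pm+t)$. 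This yields the claim with the universal constant $c=1/2$. The only non-routine ingredient is the scalar inequality $\varphi(x)\geq\frac{3x^{2}}{2(3+x)}$, which I regard as the main (and entirely elementary) obstacle; everything else is the textbook Chernoff computation.
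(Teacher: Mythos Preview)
Your proof is correct: the Chernoff bound, the optimization leading to Bennett's form $\exp(-pm\,\varphi(t/pm))$, and the conversion via $\varphi(x)\ge 3x^2/(2(3+x))$ are all standard and accurately carried out, yielding the claim with $c=1/2$. The paper does not actually prove this lemma; it simply records it as the classical Bernstein inequality in the preliminaries, so there is nothing to compare against beyond noting that you have supplied the textbook argument the authors omit.
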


The next lemma (with certain variations) is due to L\'evy, Kolmogorov, Rogozin, Esseen:
\begin{lemma}[{L\'evy--Kolmogorov--Rogozin--Esseen, \cite{Kolmogorov, Rogozin, Esseen}}]\label{l: rogozin}
Let $m\in\N$ and let $\xi_1,\xi_2,\dots,\xi_m$ be independent complex random variables.
Then for any $t>0$ we have
$$\cf\Big(\sum\limits_{i=1}^m\xi_i,t\Big)\leq \frac{C_{\smallrefer{l: rogozin}}}{\big(\sum_{i=1}^m (1-\cf(\xi_i,t))\big)^{1/2}},$$
where $C_{\smallrefer{l: rogozin}}>0$ is a universal constant.
\end{lemma}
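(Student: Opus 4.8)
The plan is to prove the anti-concentration bound $\cf\big(\sum_{i=1}^m\xi_i,t\big)\leq C/\big(\sum_i(1-\cf(\xi_i,t))\big)^{1/2}$ via a smoothing/characteristic-function argument, which is the most robust route and adapts cleanly to the complex-valued setting. First I would reduce to a convenient normalization: after replacing each $\xi_i$ by $\xi_i/t$ we may assume $t=1$, so the goal becomes $\cf(S,1)\leq C\,\sigma^{-1/2}$ where $S=\sum_i\xi_i$ and $\sigma:=\sum_i(1-\cf(\xi_i,1))$. If $\sigma\leq 1$ there is nothing to prove (the right-hand side is $\geq C\geq 1$), so assume $\sigma>1$.

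The core step is to relate $\cf(S,1)$ to an integral of $\prod_i|\widehat{\mu_i}|$, where $\mu_i$ is the law of $\xi_i$ and $\widehat{\mu_i}(\theta)=\Exp e^{i\,\Re\langle\theta,\xi_i\rangle}$ (treating $\C$ as $\R^2$). Concretely, I would fix a smooth, nonnegative, compactly supported bump function $\varphi$ on $\R^2$ whose Fourier transform $\widehat\varphi$ is nonnegative and bounded below by a constant on the unit ball — such $\varphi$ exists, e.g. a suitable self-convolution of an indicator — and whose mass on the unit disc is a definite constant. Then for any $\tau\in\C$,
\begin{equation}
\Prob\{|S-\tau|\leq 1\}\leq c_1\int_{\R^2}\varphi(x-\tau)\,d\mu_S(x)=c_1\int_{\R^2}\widehat\varphi(\theta)\,\overline{\widehat{\mu_S}(\theta)}\,e^{i\Re\langle\theta,\tau\rangle}\,d\theta\leq c_2\int_{\R^2}\widehat\varphi(\theta)\prod_{i=1}^m|\widehat{\mu_i}(\theta)|\,d\theta,
\end{equation}
using Plancherel/Fourier inversion and $|\widehat{\mu_S}|=\prod_i|\widehat{\mu_i}|$. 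Since $\widehat\varphi$ is bounded and supported in (say) a ball of radius $R$, it suffices to bound $\int_{|\theta|\leq R}\prod_i|\widehat{\mu_i}(\theta)|\,d\theta$. The second ingredient is the elementary pointwise bound: for the symmetrized variable one has $|\widehat{\mu_i}(\theta)|^2=\Exp\cos\Re\langle\theta,\xi_i-\xi_i'\rangle\leq 1-c_3\big(1-\cf(\xi_i,1)\big)\,\min(1,|\theta|^2)$ for $|\theta|\leq R$, which follows by splitting the expectation according to whether $|\xi_i-\xi_i'|\leq 1$ or not and using $1-\cos u\geq c u^2$ on a bounded range together with the standard fact that $\Prob\{|\xi_i-\xi_i'|>1\}\geq c(1-\cf(\xi_i,1))$. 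Hence $\prod_i|\widehat{\mu_i}(\theta)|\leq \exp\big(-c_4\,\sigma\,\min(1,|\theta|^2)\big)$. Plugging this in and integrating over $\theta\in\R^2$ (the tail $|\theta|\geq 1$ contributes $\lesssim e^{-c\sigma}$, the bulk $|\theta|\leq1$ contributes $\lesssim \int_{|\theta|\leq1}e^{-c\sigma|\theta|^2}d\theta\lesssim \sigma^{-1}$) gives $\cf(S,1)\lesssim \sigma^{-1}+e^{-c\sigma}\lesssim\sigma^{-1/2}$ for $\sigma>1$, which is the claimed bound (in fact with the stronger exponent $\sigma^{-1}$, but $\sigma^{-1/2}$ is all that is asserted and all that survives in the sharp two-dimensional constant).

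The step I expect to be the main obstacle is the construction of the smoothing kernel $\varphi$ with simultaneously nonnegative, lower-bounded, compactly supported Fourier transform in dimension two, and making the passage between $\Prob\{|S-\tau|\le1\}$ and the characteristic-function integral fully rigorous uniformly in $\tau$ (one must be careful that $\varphi\geq c\,\mathbf 1_{\{|x|\leq 1\}}$ so that the probability is genuinely dominated, while $\widehat\varphi$ remains integrable and compactly supported). Everything else — the normalization, the symmetrization inequality relating $\Prob\{|\xi_i-\xi_i'|>1\}$ to $1-\cf(\xi_i,1)$, and the final Gaussian-type integration — is routine. As an alternative to the Fourier route, one could give a purely combinatorial/Esseen-type argument by induction on $m$ using the trivial inequality $\cf(\eta_1+\eta_2,t)\leq\min(\cf(\eta_1,t),\cf(\eta_2,t))$ together with a quantitative improvement $\cf(\eta_1+\eta_2,t)\leq\cf(\eta_1,t)-c(1-\cf(\eta_1,t))(1-\cf(\eta_2,t))$, but the characteristic-function proof is cleaner and directly yields the universal constant.
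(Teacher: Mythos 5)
The paper does not prove this lemma --- it is quoted as a classical result of Kolmogorov, Rogozin and Esseen --- so there is no internal proof to compare against; I am judging your sketch on its own. The overall strategy (Esseen's smoothing inequality reducing $\cf(S,t)$ to $\int\widehat\varphi(\theta)\prod_i|\widehat{\mu_i}(\theta)|\,d\theta$, then symmetrization) is indeed the standard analytic route. However, your ``second ingredient'' contains a genuine gap: the pointwise bound
$|\widehat{\mu_i}(\theta)|^2\le 1-c_3\big(1-\cf(\xi_i,1)\big)\min(1,|\theta|^2)$ is false, and with it the claimed decay $\prod_i|\widehat{\mu_i}(\theta)|\le\exp\big(-c_4\sigma\min(1,|\theta|^2)\big)$. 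The problem is the periodicity of the cosine: on the event $|\xi_i-\xi_i'|>1$ you cannot bound $1-\cos\Re\langle\theta,\xi_i-\xi_i'\rangle$ from below, since $\langle\theta,\xi_i-\xi_i'\rangle$ may land near $2\pi\Z$. Concretely, take $\xi_i$ uniform on $\{0,2\pi\}\subset\R\subset\C$ and $t=1$: then $1-\cf(\xi_i,1)=1/2$, so $\sigma=m/2$, yet $\widehat{\mu_i}(\theta)=\frac12(1+e^{2\pi i\theta_1})$ has modulus $1$ at $\theta=(1,0)$, so $\prod_i|\widehat{\mu_i}|=1$ there rather than $e^{-c m}$. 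The lemma's conclusion still holds in this example ($\cf(\sum\xi_i,1)\asymp m^{-1/2}$) because the Esseen integral is small --- the integrand is close to $1$ only on a set of measure $\asymp m^{-1/2}$ --- but this cannot be extracted from a pointwise exponential bound, which is exactly why the Kolmogorov--Rogozin--Esseen inequality is not a two-line consequence of Esseen's lemma. The correct argument must control the measure of the set where $\sum_i\Exp\big(1-\cos\langle\theta,\xi_i-\xi_i'\rangle\big)$ is small, e.g.\ via the bound $1-\cos u\ge c\,\mathrm{dist}(u,2\pi\Z)^2$ together with an averaged (Fej\'er-kernel) lower bound $\int_{|\theta|\le h}(1-\cos\langle\theta,x\rangle)\,d\theta\ge c\,h^2\min(1,h^2|x|^2)$ and a genuinely additional step passing from the average of $F$ to the integral of $e^{-F}$; alternatively one can follow Rogozin's combinatorial proof. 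You correctly identify the smoothing kernel as a technical point, but that is routine; the real obstacle is the one your sketch elides.

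A secondary remark: the ``alternative'' inductive route you mention at the end rests on the inequality $\cf(\eta_1+\eta_2,t)\le\cf(\eta_1,t)-c(1-\cf(\eta_1,t))(1-\cf(\eta_2,t))$ with a universal $c$, which is also false --- iterating it for i.i.d.\ Rademacher summands would force $\cf$ to decay geometrically, contradicting the true rate $m^{-1/2}$. So neither of the two proposed routes closes as written.
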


\subsection{Basic concentration and expansion properties of $A$ and $\Gr$}

In the following elementary statements we summarize some typical properties of
the matrix $A$ and the graph $\Gr$,
specifically, expansion (Proposition~\ref{p: expansion}),
statistics of in- and out-degrees of vertices in $\Gr$ (Proposition~\ref{p: supports}
and Lemma~\ref{l: union of supp}),
magnitude of $\ell_1$--norms of rows and columns of $A$
(Proposition~\ref{p: ell one norm}).
All statements and their proofs are elementary; the proofs are provided for
Reader's convenience.
We refer to \cite{LLTTY JMAA,Cook graphs} for some related results in the setting of random directed $d$--regular graphs,
and to \cite[Section~2.1]{Cook graphs} for the directed Erd\H os--Renyi setting.

\begin{prop}[Expansion in $\Gr$]\label{p: expansion}
For any $\varepsilon\in(0,1]$ there are $C_\varepsilon,c_\varepsilon>0$
depending only on $\varepsilon$ with the following property.
Let $p,n,\Gr$ be as in \eqref{Asmp on G}, and, additionally, assume $pn\geq C_\varepsilon$.
Then for each $k$ in the interval $2\leq k\leq c_\varepsilon/p$,
with probability at least $1-(\frac{n}{k})^{-k}$ we have
$$\big|\inneigh(I)\big|
\geq \sum_{i\in I}\big|\inneigh(i)\big|- \varepsilon pn\,|I|
\quad\mbox{for any set of right vertices $I$ with $|I|=k$.}
$$
In particular, the event
\begin{align*}
\Event_{\smallrefer{p: expansion}}(\varepsilon):=
\Big\{
&\big|\inneigh(I)\big|
\geq \sum_{i\in I}\big|\inneigh(i)\big|- \varepsilon pn\,|I|
\;\mbox{for every set of right vertices $I$,\,$2\leq |I|\leq \frac{c_\varepsilon}{p}$}
\Big\}
\end{align*}
has probability at least $1-1/n$.
\end{prop}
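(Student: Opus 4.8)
My plan is to fix the cardinality $k$ and a single set $I$ of $k$ right vertices, estimate the ``defect'' $D_I:=\sum_{i\in I}|\inneigh(i)|-|\inneigh(I)|$ for that one $I$ by a sharp exponential‑moment bound, and then take a union bound over the $\binom nk$ choices of $I$. The algebraic starting point is the identity
$$D_I=\sum_{\ell=1}^n\big(\deg_I(\ell)-1\big)^+,\qquad \deg_I(\ell):=\big|\{i\in I:\ell\to i\}\big|,$$
which holds because $\sum_{i\in I}|\inneigh(i)|$ counts the edges into $I$ with the multiplicity of their left endpoint while $|\inneigh(I)|$ counts only the distinct left endpoints. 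Next I would split $D_I=R'+R''$ according to whether the left vertex $\ell$ lies outside or inside $I$. Since in $\Gr$ the only forced edges are the diagonal ones $\ell\to\ell$, for $\ell\notin I$ one has $\deg_I(\ell)=\sum_{i\in I}\delta_{\ell i}$, so $R'$ is a sum of $n-k$ independent copies of $(B-1)^+$ with $B\sim\mathrm{Bin}(k,p)$; for $\ell\in I$ one has $(\deg_I(\ell)-1)^+=\sum_{i\in I\setminus\{\ell\}}\delta_{\ell i}$, so $R''\sim\mathrm{Bin}(k(k-1),p)$. Crucially $R'$ and $R''$ are functions of disjoint families of the $\delta_{ij}$'s, hence independent.

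The next step is the moment‑generating function estimate. For $B\sim\mathrm{Bin}(k,p)$, using $\binom km\le k^m/m!$ and $e^y-1-y\le\tfrac12y^2e^y$ one gets
$$\Exp\,e^{\lambda(B-1)^+}\le 1+\tfrac12k^2p^2e^\lambda e^{kpe^\lambda}\le\exp\!\big(\tfrac12k^2p^2e^\lambda e^{kpe^\lambda}\big),\qquad \Exp\,e^{\lambda R''}\le\exp\!\big(k^2pe^\lambda\big).$$
I would choose $e^\lambda=\varepsilon/(2kp)$, which is admissible because the hypothesis $k\le c_\varepsilon/p$ forces $kp\le c_\varepsilon$ to be small, hence $\lambda>0$ and $kpe^\lambda=\varepsilon/2\le\tfrac12$. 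Plugging this into the Chernoff bound $\Prob\{D_I>\varepsilon pnk\}\le e^{-\lambda\varepsilon pnk}\,\Exp e^{\lambda R'}\,\Exp e^{\lambda R''}$ and absorbing the two subordinate exponents $\tfrac{\varepsilon pnk}{2}+\tfrac{\varepsilon k}{2}$ into the leading one (here $pn\ge C_\varepsilon\ge1$ is used) gives
$$\Prob\big\{D_I>\varepsilon pnk\big\}\le\exp\!\Big(-\varepsilon pnk\,\ln\tfrac{\varepsilon}{2ekp}\Big).$$

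It then remains to check that the right‑hand side is at most $\big(\tfrac{k^2}{en^2}\big)^k\le(n/k)^{-k}\big/\binom nk$, so that the union bound over the $\binom nk\le(en/k)^k$ sets $I$ produces probability at least $1-(n/k)^{-k}$ for the displayed inequality; summing $(n/k)^{-k}=(k/n)^k$ over $2\le k\le c_\varepsilon/p$ (the summands are decreasing since $k\le c_\varepsilon n/C_\varepsilon<n/e$, and dominated by a constant times $(2/n)^2$) then gives the ``in particular'' claim for large $C_\varepsilon$. Writing $u:=kp\le c_\varepsilon$ and $v:=np\ge C_\varepsilon$, and using $n/k=v/u$, the required inequality becomes $\varepsilon v\ln\tfrac{\varepsilon}{2eu}\ge 2\ln(v/u)+1$; taking $c_\varepsilon\le\varepsilon^2/(4e^2)$ forces $\ln\tfrac{\varepsilon}{2eu}\ge\tfrac12\ln\tfrac1u\ge\tfrac12\ln\tfrac1{c_\varepsilon}$, and then choosing $C_\varepsilon$ large (depending on $\varepsilon$ and the already‑fixed $c_\varepsilon$) closes it, since $v\mapsto v/\ln v$ is increasing for $v\ge e$. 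This last step is where the calibration of $\lambda$ really matters: the exponent $\varepsilon pnk\ln\tfrac{\varepsilon}{2ekp}$ has to beat $2k\ln(n/k)$ for \emph{every} admissible pair $(p,k)$ — the tight case being $pn\approx C_\varepsilon$ with $k\approx c_\varepsilon/p$, where $n/k$ is still only a large constant, so one cannot afford to discard powers of $p$ — and it is precisely the smallness of $kp$ that supplies the needed logarithmic slack $\ln\tfrac1{kp}\ge\ln\tfrac1{c_\varepsilon}$; everything else is routine.
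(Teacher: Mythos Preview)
Your proof is correct and follows essentially the same approach as the paper: both express the defect $D_I$ via the identity $D_I=\sum_\ell(\deg_I(\ell)-1)^+$, bound its Laplace transform using $\Exp e^{\lambda(B-1)^+}\le 1+O((kp)^2e^\lambda)$ for $B\sim\mathrm{Bin}(k,p)$, choose $e^\lambda\asymp 1/(kp)$, and then take the union bound over $\binom nk$ sets. The only cosmetic difference is that you isolate the in-$I$ contribution $R''\sim\mathrm{Bin}(k(k-1),p)$ explicitly and use its MGF, whereas the paper absorbs it via the cruder inequality $\eta_i'\le\eta_i+1$ (costing an additive $k$); both are negligible against $\varepsilon pnk$.
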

\begin{proof}
Fix any $\varepsilon>0$, $p,n$ and a subset $I$ of $[n]$ with $2\leq |I|\leq e^{-2/\varepsilon}/p$.
Consider random variables
$$\eta_i':=\max(|\{j\in I:\,i^L\to j^R\}|-1,0), \quad\quad i\leq n.$$
Informally, $\eta_i'$ counts {\it non-unique} occurences of the left vertex $i$ among in-neighbors of right vertices from $I$.
Observe that
$$\big|\inneigh(I)\big|=
\sum_{i\in I}\big|\inneigh(i)\big|-\sum_{i=1}^n \eta_i'.
$$
On the other hand, taking into account non-random ``horizontal'' edges of $\Gr$, if we define
$\eta_i:=\max(|\{j\in I:\,\delta_{ij}=1\}|-1,0)$ then every $\eta_i'$ can be estimated as
$\eta_i'\leq \eta_i+1$ for $i\in I$ and $\eta_i'=\eta_i$ for $i\notin I$.
We will use the standard Laplace transform method to estimate probabilities of deviations for $\eta_i$'s.
Clearly, $\Prob\{\eta_i=\ell\}\leq {|I|\choose \ell+1}p^{\ell+1}$, $\ell\in\N$.
Thus, for any number $\lambda>0$ such that $e^\lambda p|I|\leq 1$, we have
$$\Exp\big(e^{\lambda \eta_i}\big)
\leq 1+\sum_{\ell=1}^\infty \big(e^\lambda\big)^{\ell}\,p^{\ell+1}|I|^{\ell+1}((\ell+1)!)^{-1}\leq 1+p|I|,$$
and hence
$$\Prob\Big\{\sum_{i=1}^n \eta_i\geq t\Big\}\leq \frac{\big(1+p|I|\big)^n}{\exp(\lambda t)},\quad\quad t>0.$$
In particular, taking $t:=\frac{\varepsilon}{2} pn|I|$ and $\lambda:=\log\frac{1}{p|I|}$, we get
$$\Prob\Big\{\sum_{i=1}^n \eta_i\geq \frac{\varepsilon}{2} pn|I|\Big\}
\leq \exp\big(pn|I|-\varepsilon\lambda pn|I|/2\big)\leq \exp\big(-\varepsilon\lambda pn|I|/4\big).$$
Taking the union bound over all subsets of cardinality $k$ (for some $2\leq k\leq e^{-4/\varepsilon}/p$), we get
\begin{align*}
\Prob\Big\{&\big|\inneigh(I)\big|\leq
\sum_{i\in I}\big|\inneigh(i)\big|-\varepsilon pnk\;\mbox{for some set of right vertices $I$,\; $|I|=k$}\Big\}\\
&\leq\Prob\Big\{\big|\inneigh(I)\big|\leq
\sum_{i\in I}\big|\inneigh(i)\big|-k-\frac{\varepsilon}{2} pnk\;\mbox{for some $I$,\; $|I|=k$}\Big\}\\
&=
\Prob\Big\{\sum\limits_{i=1}^n\eta_i'\geq k+\frac{\varepsilon}{2} pnk\;\mbox{for some $I$,\; $|I|=k$}\Big\}
\leq\Prob\Big\{\sum\limits_{i=1}^n\eta_i\geq \frac{\varepsilon}{2} pnk\;\mbox{for some $I$,\; $|I|=k$}\Big\}\\
&\leq\bigg(\frac{1}{pk}\bigg)^{-\frac{1}{4}\varepsilon pnk}\bigg(\frac{en}{k}\bigg)^k
\leq \bigg(\frac{n}{k}\bigg)^{-k},
\end{align*}
provided that $\frac{1}{4}\varepsilon pn\geq C\log(pn)\geq C^2$ for a large enough $C=C(\varepsilon)$.
\end{proof}

\begin{prop}[Statistics of in- and out-neighbors]
\label{p: supports}
Let $n\in\N$, $p\in(0,1]$ and $\Gr$ satisfy \eqref{Asmp on G}. Denote
\begin{align*}
\Event_{\smallrefer{p: supports}}:=\Big\{&\big|\big\{i\leq n:\;|\outneigh(i^L)|\geq 2pn+u\big\}\big|
\leq \exp(-c_{\smallrefer{p: supports}}(pn+u))n\mbox{ and}\\
&\big|\big\{j\leq n:\;|\inneigh(j^R)|\geq 2pn+u\big\}\big|\leq \exp(-c_{\smallrefer{p: supports}}(pn+u))n\;\;\forall\;u=0,1,\dots\Big\}.
\end{align*}
Then $\Prob(\Event_{\smallrefer{p: supports}})\geq 1-e^{-c_{\smallrefer{p: supports}}pn},$
where $c_{\smallrefer{p: supports}}>0$ is a universal constant.
\end{prop}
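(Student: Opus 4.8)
The plan is to prove the two clauses defining $\Event_{\smallrefer{p: supports}}$ — the statement about out‑degrees $|\outneigh(i^L)|$ of left vertices and the one about in‑degrees $|\inneigh(j^R)|$ of right vertices — separately, and then to combine them by a union bound; abbreviate $c:=c_{\smallrefer{p: supports}}$ for the constant to be chosen. The two clauses are symmetric, so I describe the out‑degree case. Two structural facts drive the argument. First, since $i\leftarrow j$ forces $\delta_{ij}\mu_{ij}=1$ or $i=j$, one has $\outneigh(i^L)\subseteq\{j:\delta_{ij}=1\}\cup\{i\}$, so $|\outneigh(i^L)|$ is stochastically dominated by $1+\mathrm{Bin}(n,p)$; likewise $|\inneigh(j^R)|=|\{i:\delta_{ij}=1\text{ or }i=j\}|$ is dominated by $1+\mathrm{Bin}(n,p)$. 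Second, $|\outneigh(i^L)|$ depends only on the $i$‑th rows of the arrays $(\delta_{ij})$ and $(\mu_{ij})$, so the variables $|\outneigh(i^L)|$, $i\le n$, are mutually independent; symmetrically, the variables $|\inneigh(j^R)|$, $j\le n$, are mutually independent because $|\inneigh(j^R)|$ depends only on the $j$‑th column of $(\delta_{ij})$. Applying Bernstein's inequality (Lemma~\ref{l: bernsteins}) to $\mathrm{Bin}(n,p)$ at level $2pn+u$ and using that $pn\ge C_\alpha$ is large (so $pn+u-1\ge\tfrac12(pn+u)$ and $2pn+u-1\le 2(pn+u)$), I obtain a universal $c_1>0$ with $\Prob\{|\outneigh(i^L)|\ge 2pn+u\}\le e^{-c_1(pn+u)}$ for all integers $u\ge 0$, and the same bound for $|\inneigh(j^R)|$.

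Next, fix $u\ge 0$ and set $N_u:=|\{i:|\outneigh(i^L)|\ge 2pn+u\}|$. By independence $N_u$ is stochastically dominated by $\mathrm{Bin}(n,\rho_u)$ with $\rho_u\le e^{-c_1(pn+u)}$, hence $\Prob\{N_u\ge k\}\le\binom{n}{k}\rho_u^{\,k}\le(en\rho_u/k)^{k}$ for every integer $k\ge 1$. Choosing $c:=c_1/C$ for a large universal constant $C$, I bound $\sum_{u\ge 0}\Prob\{N_u> ne^{-c(pn+u)}\}$ by splitting on the size of $s:=pn+u$. When $s>(\log n)/c$ we have $ne^{-cs}<1$, so the requirement is simply $N_u=0$, and $\Prob\{N_u\ge 1\}\le n\rho_u\le ne^{-c_1 s}$; summing this geometric tail over $s\ge\max\{pn,(\log n)/c\}$ gives at most $O(1)\cdot ne^{-c_1\max\{pn,(\log n)/c\}}$, which is $\le\tfrac14 e^{-cpn}$ once $C$ is large — this reduces to the elementary facts $ne^{-c_1 pn}\le e^{-c_1 pn/2}$ when $pn\ge(\log n)/c$, and $n\cdot n^{-C}\le\tfrac14 n^{-1}$ otherwise. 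When $s\le(\log n)/c$ there are at most $(\log n)/c$ values of $u$; for each I take $k=\lceil ne^{-cs}\rceil$, so $en\rho_u/k\le e\,e^{-(c_1-c)s}\le e^{-c_1 s/2}$ (using $pn$ large), whence $\Prob\{N_u\ge k\}\le e^{-(c_1/2)\,s\,ne^{-cs}}$. The map $s\mapsto s\,ne^{-cs}$ is decreasing on $[pn,\infty)$ since $pn\ge 1/c$, so on this range it is at least its value $(\log n)/c$ at $s=(\log n)/c$; hence every term is $\le n^{-C/2}$ and the sum over this range is $\le(\log n)n^{-C/2}/c\le\tfrac14 n^{-1}\le\tfrac14 e^{-cpn}$, because $pn\le(\log n)/c$ there. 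Thus the out‑degree clause fails with probability at most $\tfrac12 e^{-cpn}$.

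The identical argument applied to $|\inneigh(j^R)|$ bounds the failure probability of the in‑degree clause by $\tfrac12 e^{-cpn}$ as well, and a union bound over the two events yields $\Prob(\Event_{\smallrefer{p: supports}})\ge 1-e^{-cpn}$, which is the claim. The part demanding care is the bookkeeping across the wide admissible window $C_\alpha\le pn\le n^{1/8}$: when $pn$ is a large constant the expected number of ``heavy'' vertices $ne^{-\Theta(pn)}$ is polynomial in $n$ and one must rule out the count being a factor $e^{\Theta(pn)}$ larger than its mean, whereas once $pn\gtrsim\log n$ this expectation drops below $1$ and the statement degenerates to ``no heavy vertex exists''. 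The union bound over all $u$ is affordable only because all but $O(\log n)$ of its terms form a rapidly decaying geometric series, and the choice of $c$ small relative to $c_1$ is exactly what makes both the polynomial loss $(\log n)\,n^{-C/2}$ and the tail $ne^{-c_1\max\{pn,\log n\}}$ fit below $e^{-cpn}$.
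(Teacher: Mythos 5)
Your proof is correct. The first half coincides with the paper's: Bernstein's inequality applied to the individual degree, which is dominated by $1+\mathrm{Bin}(n,p)$, gives $\Prob\{|\outneigh(i^L)|\geq 2pn+u\}\leq e^{-c_1(pn+u)}$, and the two clauses are handled symmetrically and combined by a union bound over $u$. Where you diverge is in how the random count $N_u=|\{i:|\outneigh(i^L)|\geq 2pn+u\}|$ is controlled. The paper simply applies Markov's inequality to $N_u$: since $\Exp N_u\leq ne^{-c_1(pn+u)}$, the event $\{N_u\geq ne^{-c_1(pn+u)/2}\}$ has probability at most $e^{-c_1(pn+u)/2}$, and summing this geometric series over $u\geq 0$ gives $O(1)\cdot e^{-c_1pn/2}\leq e^{-cpn}$ once $pn\geq C_\alpha$ is large; no independence of the degrees is used. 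You instead observe that the degrees within each family are mutually independent (row-measurable for out-degrees, column-measurable for in-degrees), dominate $N_u$ by $\mathrm{Bin}(n,\rho_u)$, and run a binomial tail bound with a two-case analysis over $s=pn+u$ depending on whether the target level $ne^{-cs}$ is below $1$. This is heavier bookkeeping but yields a much stronger per-level failure probability (roughly $\exp(-c\,s\,ne^{-cs})$ rather than $e^{-cs/2}$), which would matter if one wanted the event to hold with probability $1-e^{-cn^{\varepsilon}}$; for the stated conclusion $1-e^{-cpn}$ the first-moment argument already suffices, which is why the paper's proof is so short. Both routes require $pn$ to exceed a large universal constant to absorb the constants, which is guaranteed by \eqref{Asmp on p weak}.
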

\begin{proof}
Applying Bernstein's inequality (Lemma~\ref{l: bernsteins}), together with the definition of $\Gr$, we get for any $i\leq n$:
\begin{align*}
\Prob\big\{|\outneigh(i^L)|\geq 2pn+t\big\}\leq \exp(-c\,(pn+t)),\quad\quad t>0,
\end{align*}
for a universal constant $c>0$. Hence, by Markov's inequality,
$$\Prob\big\{|\{i\leq n:\;|\outneigh(i^L)|\geq 2pn+t\}|\geq \exp(-c\,(pn+t)/2)n\big\}\leq \exp(-c\,(pn+t)/2).$$
Similar argument is carried out for $\inneigh(j^R)$, $j\leq n$.
It remains to take the union of respective events over all $t=0,1,2,\dots$.
\end{proof}

\begin{lemma}\label{l: union of supp}
Let $n,p,\Gr$ and event $\Event_{\smallrefer{p: supports}}$ be as in Proposition~\ref{p: supports},
and fix any subset $M$ of $[n]$. Then, conditioned on $\Event_{\smallrefer{p: supports}}$, we have
\begin{align*}
\big|\outneigh(M^L)\big|
&\leq \sum\limits_{i\in M^L}\big|\outneigh(i)\big|
\leq C_{\smallrefer{l: union of supp}}\Big(pn+\log\frac{n}{|M|}\Big)\,|M|,
\mbox{ and }\\
\big|\inneigh(M^R)\big|
&\leq \sum\limits_{j\in M^R}\big|\inneigh(j)\big|
\leq C_{\smallrefer{l: union of supp}}\Big(pn+\log\frac{n}{|M|}\Big)\,|M|,
\end{align*}
where $C_{\smallrefer{l: union of supp}}>0$ is a universal constant.
\end{lemma}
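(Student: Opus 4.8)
The plan is as follows. The two left-hand inequalities are immediate, since $\outneigh(M^L)=\bigcup_{i\in M}\outneigh(i^L)$ and $\inneigh(M^R)=\bigcup_{j\in M}\inneigh(j^R)$, so the cardinality of a union is at most the sum of the cardinalities; thus only the right-hand bounds need an argument, and I will treat the out-neighbor case (the in-neighbor case is verbatim the same using the second half of $\Event_{\smallrefer{p: supports}}$). Write $m:=|M|$ and let $D_1\ge D_2\ge\dots\ge D_n$ be the non-increasing rearrangement of the out-degrees $\big(|\outneigh(i^L)|\big)_{i\le n}$. Since the sum of any $m$ of the $D_i$ is at most the sum of the $m$ largest of them, $\sum_{i\in M^L}|\outneigh(i)|\le\sum_{k=1}^m D_k$, so it suffices to bound $\sum_{k=1}^m D_k$; note that after this reduction the claim is a deterministic consequence of $\Event_{\smallrefer{p: supports}}$, which controls the entire degree sequence.

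The key step is to turn the level-set bound in $\Event_{\smallrefer{p: supports}}$ into a pointwise estimate on the $D_k$. Conditioned on $\Event_{\smallrefer{p: supports}}$ we have, for every integer $u\ge 0$, $\big|\{i:D_i\ge 2pn+u\}\big|\le \exp(-c_{\smallrefer{p: supports}}(pn+u))\,n$. Fix $k\le n$; if $D_k>2pn$, applying this with $u:=\lfloor D_k-2pn\rfloor\ge 0$ gives $k\le e^{-c_{\smallrefer{p: supports}}(pn+u)}n$, i.e.\ $u\le c_{\smallrefer{p: supports}}^{-1}\log(n/k)-pn$, and hence $D_k<2pn+u+1\le pn+c_{\smallrefer{p: supports}}^{-1}\log(n/k)+1$; if instead $D_k\le 2pn$, the bound $D_k\le 2pn+c_{\smallrefer{p: supports}}^{-1}\log(n/k)+1$ is trivial since $k\le n$. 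So in all cases
$$D_k\le 2pn+c_{\smallrefer{p: supports}}^{-1}\log(n/k)+1,\qquad 1\le k\le n.$$

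Summing over $k=1,\dots,m$ and using $m!\ge(m/e)^m$ (so that $\sum_{k=1}^m\log(n/k)=\log\frac{n^m}{m!}\le m\log\frac{en}{m}$) yields
$$\sum_{k=1}^m D_k\le 2pn\,m+c_{\smallrefer{p: supports}}^{-1}\Big(m\log\frac{n}{m}+m\Big)+m.$$
Since $pn\ge 1$ (indeed $pn\ge C_\alpha$, which is part of \eqref{Asmp on G} via \eqref{Asmp on p weak}), the two terms $c_{\smallrefer{p: supports}}^{-1}m$ and $m$ are each bounded by a universal multiple of $pn\,m$, so the asserted inequality follows with a universal constant $C_{\smallrefer{l: union of supp}}$; the in-neighbor bound is identical. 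There is no genuine obstacle here — the only point requiring mild care is the passage from the level-set control in $\Event_{\smallrefer{p: supports}}$ to the pointwise bound on the sorted degree sequence, together with the integrality bookkeeping — after which the rest is a one-line summation and Stirling's estimate.
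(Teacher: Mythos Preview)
Your proof is correct and uses essentially the same idea as the paper: both reduce to bounding the sum of the $|M|$ largest out-degrees via the level-set control in $\Event_{\smallrefer{p: supports}}$. The only organizational difference is that the paper fixes a single threshold $w\approx c_{\smallrefer{p: supports}}^{-1}\log\frac{n}{|M|}-pn$, bounds the below-threshold contribution trivially by $(2pn+w)|M|$, and handles the above-threshold tail by summing the level-set estimates, whereas you convert the level-set bound into a pointwise estimate $D_k\le 2pn+c_{\smallrefer{p: supports}}^{-1}\log(n/k)+1$ and sum directly; both computations yield the same bound up to the universal constant.
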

\begin{proof}
Set
$$w:=\max\Big(0,\Big\lceil \frac{1}{c_{\smallrefer{p: supports}}}\log\frac{n}{|M|}-pn\Big\rceil\Big).$$
It is not difficult to see from the definition of $\Event_{\smallrefer{p: supports}}$
that
$$|\{i\leq n:\,|\outneigh(i^L)|\geq 2pn+w\}|\leq |M|,$$
and that
$$\sum\limits_{i\leq n:\,|\outneigh(i^L)|\geq 2pn+w}
\big|\outneigh(i^L)\big|
\leq C (2pn+w)e^{-c_{\smallrefer{p: supports}}(pn+w)}n.$$
Similar estimates hold for $M^R$.
The result follows.
\end{proof}

\begin{prop}\label{p: ell one norm}
Let $n,p,A$ satisfy assumptions \eqref{Asmp on p weak}--\eqref{Asmp on A}.
Define
\begin{align*}
\Event_{\smallrefer{p: ell one norm}}:=
\Big\{&\big|\big\{i\leq n:\;\|\row_i(A)\|_1\geq r\,pn\big\}\big|\leq n/r^{0.9}\mbox{ for all $r\geq pn$, and}\\
&\big|\big\{i\leq n:\;\|\col_i(A)\|_1\geq r\,pn\big\}\big|\leq n/r^{0.9}\mbox{ for all $r\geq pn$}\Big\}.
\end{align*}
Then $\Prob(\Event_{\smallrefer{p: ell one norm}})\geq 1-(pn)^{-c_{\smallrefer{p: ell one norm}}}$,
for a universal constant $c_{\smallrefer{p: ell one norm}}>0$.
\end{prop}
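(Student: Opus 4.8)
The plan is to obtain all thresholds $r$ simultaneously from a single first-moment estimate on the total $\ell_1$-norm $\|A\|_1:=\sum_{i,j}|a_{ij}|$, thereby sidestepping any union bound over $r$. A row-by-row route would be less efficient: since $\xi_{ij}$ has only two finite moments, the best tail bound available for one row is of order $1/r$, and combining it with a union bound over a dyadic net of thresholds $r\ge pn$ introduces an extra constant factor and some bookkeeping, both of which the global estimate below avoids entirely.

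First record that $\Exp|\xi_{ij}|\le(\Exp|\xi_{ij}|^2)^{1/2}=1$, since by \eqref{Asmp on A} the variable $\xi_{ij}$ has zero mean and unit variance. Using the independence of $\delta_{ij}$ and $\xi_{ij}$,
\[
\Exp\|A\|_1=\sum_{i,j}\Prob\{\delta_{ij}=1\}\,\Exp|\xi_{ij}|\le pn^2,
\]
so Markov's inequality gives
\[
\Prob\big\{\|A\|_1\ge(pn)^{0.1}\,pn^2\big\}\le(pn)^{-0.1}.
\]
On the complementary event, for every $r\ge pn$ each row with $\|\row_i(A)\|_1\ge r\,pn$ contributes at least $r\,pn$ to $\|A\|_1$, hence
\[
\big|\{i\le n:\ \|\row_i(A)\|_1\ge r\,pn\}\big|\le\frac{\|A\|_1}{r\,pn}\le\frac{(pn)^{0.1}\,n}{r}\le\frac{r^{0.1}\,n}{r}=\frac{n}{r^{0.9}},
\]
where the last step used $pn\le r$; when $n/r^{0.9}<1$ this simply forces the count to vanish, consistently with it being a non-negative integer.

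The estimate for columns follows verbatim after replacing $A$ by its transpose, whose entries are i.i.d.\ with the same distribution; equivalently, repeat the last display with $\col_i(A)$ in place of $\row_i(A)$, the bound on $\|A\|_1$ being symmetric in rows and columns. Taking the union of the two exceptional events yields $\Prob\big(\Event_{\smallrefer{p: ell one norm}}^{\,c}\big)\le 2(pn)^{-0.1}$, which, choosing $C_\alpha$ in \eqref{Asmp on p weak} large enough that $pn\ge 2^{20}$, is at most $(pn)^{-c_{\smallrefer{p: ell one norm}}}$ with, say, $c_{\smallrefer{p: ell one norm}}=0.05$. The only substantive point is the first one: a single first-moment bound on $\|A\|_1$ already dominates $r\,pn$ times the number of heavy rows for every $r$ at once, and the exponent $0.9<1$ in the statement leaves exactly enough room to absorb the factor $(pn)^{0.1}$ through the constraint $r\ge pn$.
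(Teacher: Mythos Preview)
Your proof is correct and follows the same idea as the paper's (first-moment bound on the $\ell_1$-norm followed by Markov's inequality); the paper's proof is a two-line sketch that records $\Exp\|\row_i(A)\|_1\le pn$ and invokes Markov without spelling out how all thresholds $r\ge pn$ are handled, while you make this explicit by passing to the global quantity $\|A\|_1$. One minor remark: since $\|A\|_1=\sum_i\|\row_i(A)\|_1=\sum_j\|\col_j(A)\|_1$, the row and column conditions in your argument are controlled by the \emph{same} event $\{\|A\|_1<(pn)^{0.1}pn^2\}$, so the factor $2$ in your final bound is unnecessary (though of course harmless).
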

\begin{proof}
By the assumption on the distribution of the matrix entries,
we have $\Exp\|\row_i(A)\|_1=\Exp\|\col_i(A)\|_1\leq pn$, so it remains to apply Markov's inequality.
\end{proof}

\section{Combinatorial structure of the associated random graph $\Gr$}\label{s: graph section}

\subsection{Vertex types: definition and basic properties}\label{subs: types}

Let $k,m$ be large integers, and fix any parameter $K>0$. Let $G$ be a graph in $\grc_{k,m}$.
We will inductively introduce a classification of the right vertices of $G$ as follows.
For an index $j\leq m$, we will say that the vertex $j$ is {\it of type $(K,1)$}
if $|\outneigh(j)|\leq K$.
Denote by $T_{K,1}(G)\subset[m]$ the subset of vertices of type $(K,1)$.
Further, assume that $\ell\geq 2$ and that types $(K,1), (K,2), \dots, (K,\ell-1)$ have been identified.
Take $j\leq m$ which is not any of the types $(K,1), (K,2), \dots, (K,\ell-1)$.
Then we say that the $j$ is {\it of type $(K,\ell)$} if
$$\big|\outneigh(j)\setminus \inneigh(T_{K,1}(G)\cup\dots\cup T_{K,\ell-1}(G))\big|\leq K.$$
The set of all vertices of type $(K,\ell)$ is denoted by $T_{K,\ell}(G)$.
A vertex $j$ is {\it of type $(K,\infty)$} (of {\it infinite} type) if it is not of any types $(K,\ell)$, $\ell\in\N$.

%\begin{wrapfigure}{R}{0.4\textwidth}
%\centering
%\includegraphics[width=0.4\textwidth]{types.png}
%\captionsetup{width=0.8\linewidth}
%\captionof{figure}{Example of partition into types in the setting of bipartite graphs (or, equivalently, $0/1$ matrices)}
%\end{wrapfigure}

A key point of our argument consists in establishing a correspondence between vertex types
of a graph in $\grc_{k,m}$ and its subgraphs.
Given a graph $G\in\grc_{k,m}$ and a subset $I\subset[m]$, denote by $\subgr{G}{I}$
the subgraph of $G$ obtained by removing the right vertices in $I$.
It will be convenient for us to assume that the right vertex set of $\subgr{G}{I}$
is indexed over $[m]\setminus I$ (so that we get a direct correspondence with vertices of $G$).
\begin{lemma}\label{l: hereditary prop}
Let $G\in\grc_{k,m}$, let $I\subset[m]$ be a subset, and let $\subgr{G}{I}$ be defined as before.
Then for any $K>0$ and $\ell\geq 1$ we have
$$T_{K,\ell}(\subgr{G}{I})\subset \bigcup_{h\leq \ell}T_{K,h}(G).$$
\end{lemma}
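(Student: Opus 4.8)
The plan is to prove the inclusion by (strong) induction on $\ell$, using two elementary observations about how deleting right vertices affects neighbourhoods. Throughout write $U_\ell(H):=\bigcup_{h\le\ell}T_{K,h}(H)$ for a graph $H$, with the convention $U_0(H):=\emptyset$; since $\inneigh(\emptyset)=\emptyset$, for every $\ell\ge1$ a right vertex $j$ is of type $(K,\ell)$ in $H$ exactly when $j\notin U_{\ell-1}(H)$ and $|\outneigh(j)\setminus\inneigh(U_{\ell-1}(H))|\le K$ (for $\ell=1$ this is just $|\outneigh(j)|\le K$). I would prove $T_{K,\ell}(\subgr{G}{I})\subset U_\ell(G)$ for all $\ell\ge1$; strong induction is needed because the argument for a given $\ell$ relies on $U_{\ell-1}(\subgr{G}{I})\subset U_{\ell-1}(G)$, which follows at once from the statements for $1,\dots,\ell-1$.

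First I would isolate the two observations. (i) An out-edge $i\leftarrow j$ is incident only to the right vertex $j$, so for $j\in[m]\setminus I$ the out-neighbourhood of $j$ in $\subgr{G}{I}$ is the same as $\outneigh(j)$ in $G$; likewise, for any set $S\subset[m]\setminus I$ of right vertices, the edges landing in $S$ are unchanged by the deletion, so the in-neighbourhood of $S$ in $\subgr{G}{I}$ equals $\inneigh(S)$ in $G$. (ii) $\inneigh$ is monotone: $S\subset S'$ implies $\inneigh(S)\subset\inneigh(S')$. Because $U_{\ell-1}(\subgr{G}{I})$ is a set of right vertices contained in $[m]\setminus I$, observation (i) lets me transcribe ``$j$ has type $(K,\ell)$ in $\subgr{G}{I}$'' into a statement about neighbourhoods computed in $G$: namely $j\notin U_{\ell-1}(\subgr{G}{I})$ and $|\outneigh(j)\setminus\inneigh(U_{\ell-1}(\subgr{G}{I}))|\le K$.

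Now the induction. For $\ell=1$, if $j\in T_{K,1}(\subgr{G}{I})$ then by (i) $|\outneigh(j)|\le K$ in $G$, so $j\in T_{K,1}(G)=U_1(G)$. For $\ell\ge2$, assume the statement for all smaller indices, so $U_{\ell-1}(\subgr{G}{I})\subset U_{\ell-1}(G)$, and take $j\in T_{K,\ell}(\subgr{G}{I})$. If $j\in U_{\ell-1}(G)$ we are done; otherwise $j\notin U_{\ell-1}(G)$, and to conclude $j\in T_{K,\ell}(G)$ it suffices to check $|\outneigh(j)\setminus\inneigh(U_{\ell-1}(G))|\le K$. By (ii) and the induction hypothesis, $\inneigh(U_{\ell-1}(\subgr{G}{I}))\subset\inneigh(U_{\ell-1}(G))$, hence
$$\outneigh(j)\setminus\inneigh\big(U_{\ell-1}(G)\big)\ \subset\ \outneigh(j)\setminus\inneigh\big(U_{\ell-1}(\subgr{G}{I})\big),$$
and the right-hand side has at most $K$ elements since $j$ has type $(K,\ell)$ in $\subgr{G}{I}$ (in the transcribed form above). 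Together with $j\notin U_{\ell-1}(G)$ this gives $j\in T_{K,\ell}(G)\subset U_\ell(G)$, closing the induction.

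I expect no real obstacle here; the proof is purely combinatorial/set-theoretic. The only points needing care are the bookkeeping of which graph each neighbourhood is taken in and the direction of the monotonicity: deleting right vertices can only shrink $U_{\ell-1}$, hence shrink $\inneigh(U_{\ell-1})$, hence can only \emph{raise} a vertex's type (or send it to infinite type) — which is precisely why the type in $\subgr{G}{I}$ is at least the type in $G$, so that the stated one-sided inclusion holds.
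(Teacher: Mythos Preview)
Your proof is correct and follows essentially the same inductive argument as the paper: both use (strong) induction on $\ell$, pass from the type condition in $\subgr{G}{I}$ to that in $G$ via the inclusion $U_{\ell-1}(\subgr{G}{I})\subset U_{\ell-1}(G)$ and monotonicity of $\inneigh$, and conclude $j\in U_\ell(G)$. You are somewhat more explicit than the paper in spelling out observation (i) (that $\outneigh(j)$ and $\inneigh(S)$ for $S\subset[m]\setminus I$ are unchanged by the deletion) and in making the case split on whether $j\in U_{\ell-1}(G)$, but these are presentational rather than substantive differences.
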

\begin{proof}
We will prove the statement by induction. The case $\ell=1$ is obvious; in fact $T_{K,1}(\subgr{G}{I})=T_{K,1}(G)\setminus I$.
Now, assume that $\ell\geq 2$ and that the statement has been verified for $\ell-1$.
Take any $j\in T_{K,\ell}(\subgr{G}{I})$. By definition, we have
$$
\big|\outneigh(j)\setminus \inneigh(T_{K,1}(\subgr{G}{I})\cup\dots\cup T_{K,\ell-1}(\subgr{G}{I}))\big|\leq K,$$
and therefore
$$\big|\outneigh(j)\setminus \inneigh(T_{K,1}(G)\cup\dots\cup T_{K,\ell-1}(G))\big|\leq K.$$
This last assertion immediately implies that $j\in \bigcup_{h\leq \ell}T_{K,h}(G)$.
\end{proof}

Note that the last lemma implies $T_{K,\infty}(\subgr{G}{I})\supset T_{K,\infty}(G)\setminus I$.
The opposite inclusion does not hold in general even in ``approximate'' sense. For example, it is not difficult to construct
a graph $G\in\grc_{k,m}$ and a subset $I\subset[m]$ of cardinality, say, $m/2$, such that $T_{K,\infty}(\subgr{G}{I})=[m]\setminus I$
while $T_{K,\infty}(G)\setminus I=\emptyset$.
Nevertheless, it turns out that under some assumptions (which hold with high probability in our random setting),
a kind of reverse inclusion can be observed. %The following two lemmas capture this property.

%\begin{lemma}\label{l: stability of finite types}
%Let $G\in\grc_{k,m}$, and let $I\subset[m]$ be a non-empty set.
%Assume that for any $j\in I$ we have
%\begin{enumerate}
%
%\item $\inneigh(j)\cap \inneigh\big(\bigcup\limits_{g\geq 1}T_{K,g}(\subgr{G}{I})\big)=\emptyset$;
%
%%\item $\inneigh(j)\cap \inneigh(I\setminus\{j\})=\emptyset$;
%
%\item $|\outneigh(j)|> K$.
%
%\end{enumerate}
%Then $T_{K,g}(G)=T_{K,g}(\subgr{G}{I})$ for all $g\geq 1$.
%\end{lemma}
%\begin{proof}
%We will prove the lemma by induction. First, by condition $(2)$
%we clearly have $I\cap T_{K,1}(G)=\emptyset$, whence $T_{K,1}(G)=T_{K,1}(\subgr{G}{I})$.
%Now, assume that for some $g\geq 2$ we have
%$T_{K,h}(G)=T_{K,h}(\subgr{G}{I})$, for all $h\leq g-1$.
%
%By conditions $(1)$ and $(2)$, we have
%$$\Big|\outneigh(j)\setminus \inneigh\Big(\bigcup\limits_{h\leq g-1}T_{K,h}(G)\Big)\Big|>K,$$
%and so $j\notin T_{K,g}(G)$ for all $j\in I$.
%Further, by Lemma~\ref{l: hereditary prop} and the induction hypothesis, for any $q\in T_{K,g}(\subgr{G}{I})$
%we have $q\in T_{K,g}(G)$.
%On the other hand, for any $q\in [m]\setminus
%\big(I\cup\bigcup\limits_{h\leq g}T_{K,h}(\subgr{G}{I})\big)$,
%by the induction hypothesis we have
%$$\Big|\outneigh(q)\setminus \inneigh\Big(\bigcup\limits_{h\leq g-1}T_{K,h}(G)\Big)\Big|>K,$$
%and hence $q\in [m]\setminus
%\bigcup\limits_{h\leq g}T_{K,h}(G)$.
%This implies $T_{K,g}(G)=T_{K,g}(\subgr{G}{I})$, and the result follows.
%\end{proof}

\begin{lemma}\label{l: local to global}
Let $G\in\grc_{k,m}$, and let $I\subset[m]$ be a set.
Assume that for some $K>0$ we have
$$\big|\outneigh(j)\,\cap\,\inneigh(I)\big|\leq K/2$$
for all $j\in [m]\setminus I$.
Then
$$T_{K,\infty}(\subgr{G}{I})\subset T_{K/2,\infty}(G).$$
\end{lemma}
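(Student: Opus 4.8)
The plan is to prove the equivalent reformulation: \emph{if a right vertex $j\in[m]\setminus I$ has some finite type $(K/2,\ell)$ in $G$, then $j$ has some finite type with parameter $K$ in $\subgr{G}{I}$}. Since the right vertex set of $\subgr{G}{I}$ is $[m]\setminus I$ and $[m]=T_{K/2,\infty}(G)\sqcup\bigcup_{\ell\geq 1}T_{K/2,\ell}(G)$ is a partition, taking the contrapositive of this statement gives exactly $T_{K,\infty}(\subgr{G}{I})\subset T_{K/2,\infty}(G)$. Before starting, I would record the elementary observation that since $\subgr{G}{I}$ is obtained by deleting only \emph{right} vertices, the in- and out-neighborhoods of objects living in $[m]\setminus I$ are unaffected by the passage to the subgraph: for every $j\in[m]\setminus I$ one has $\outneigh_{\subgr{G}{I}}(j)=\outneigh_G(j)$, and for every set $T$ of right vertices of $\subgr{G}{I}$ one has $\inneigh_{\subgr{G}{I}}(T)=\inneigh_G(T)$. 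Hence all neighborhoods appearing below may be computed inside $G$.

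I would then prove by induction on $\ell\geq 1$ that $T_{K/2,\ell}(G)\setminus I\subset\bigcup_{h\leq\ell}T_{K,h}(\subgr{G}{I})$. The base case $\ell=1$ is immediate and does not use the hypothesis: $j\in T_{K/2,1}(G)$ means $|\outneigh(j)|\leq K/2\leq K$, so $j\in T_{K,1}(\subgr{G}{I})$. For the inductive step, fix $j\in T_{K/2,\ell}(G)\setminus I$ and set $S:=T_{K/2,1}(G)\cup\dots\cup T_{K/2,\ell-1}(G)$, so that by definition $|\outneigh(j)\setminus\inneigh(S)|\leq K/2$. Writing $S=S'\sqcup(S\cap I)$ with $S':=S\setminus I$, every left vertex in $\outneigh(j)\setminus\inneigh(S')$ either misses $\inneigh(S)$ altogether or lies in $\inneigh(S\cap I)\subset\inneigh(I)$, so that
$$\outneigh(j)\setminus\inneigh(S')\subset\big(\outneigh(j)\setminus\inneigh(S)\big)\cup\big(\outneigh(j)\cap\inneigh(I)\big),$$
and the right-hand side has at most $K/2+K/2=K$ elements, where the bound on the second term is precisely the hypothesis of the lemma applied to $j\in[m]\setminus I$. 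By the induction hypothesis $S'\subset T:=T_{K,1}(\subgr{G}{I})\cup\dots\cup T_{K,\ell-1}(\subgr{G}{I})$, hence $\inneigh(S')\subset\inneigh(T)$ and therefore $|\outneigh(j)\setminus\inneigh(T)|\leq K$. Consequently $j$ either already belongs to $\bigcup_{h\leq\ell-1}T_{K,h}(\subgr{G}{I})$ or satisfies the defining inequality for type $(K,\ell)$ in $\subgr{G}{I}$; in either case $j\in\bigcup_{h\leq\ell}T_{K,h}(\subgr{G}{I})$, closing the induction. Taking the union over $\ell$ yields the reformulation above, and with it the lemma.

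I do not anticipate a serious obstacle: the argument is a short induction closely parallel to that of Lemma~\ref{l: hereditary prop}. The two points demanding care are keeping track of whether each neighborhood set is computed in $G$ or in $\subgr{G}{I}$ (handled once and for all by the opening observation) and correctly accounting for the portion $S\cap I$ of the lower-type set that is deleted when passing to $\subgr{G}{I}$ — and it is exactly this portion that forces the parameter to degrade from $K$ to $K/2$, via the assumed bound $|\outneigh(j)\cap\inneigh(I)|\leq K/2$.
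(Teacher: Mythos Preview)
Your proof is correct and follows essentially the same approach as the paper: both argue by induction on $\ell$ that $T_{K/2,\ell}(G)\setminus I\subset\bigcup_{h\le\ell}T_{K,h}(\subgr{G}{I})$, using the hypothesis to absorb the loss from $\inneigh(S\cap I)\subset\inneigh(I)$ into the parameter. Your exposition is slightly more explicit about the set-inclusion step and about neighborhoods being unchanged when only right vertices are deleted, but the argument is the same.
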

\begin{proof}
Note that it is sufficient to show that for any $\ell\geq 1$ we have
$$T_{K/2,\ell}(G)\,\setminus I\subset \bigcup_{h\leq \ell}T_{K,h}(\subgr{G}{I}).$$
We will verify the statement by induction.
The case $\ell=1$ is obvious. Now, fix $\ell\geq 2$ and assume the assertion is true for $1,2,\dots,\ell-1$.
Pick any $j\in T_{K/2,\ell}(G)\setminus I$.
Then, by the definition,
$$\Big|\outneigh(j)\,\setminus \inneigh\Big(\bigcup_{h\leq \ell-1}T_{K/2,h}(G)\Big)\Big|\leq K/2.$$
By the assumptions of the lemma, we have
$$\big|\outneigh(j)\,\cap\,\inneigh(I)\big|\leq K/2,$$
and so
$$\Big|\outneigh(j)\,\setminus \inneigh\Big(\bigcup_{h\leq \ell-1}T_{K/2,h}(G)\setminus I\Big)\Big|\leq K.$$
By the induction hypothesis, we have
$$\inneigh\Big(\bigcup_{h\leq \ell-1}T_{K/2,h}(G)\setminus I\Big)\subset
\inneigh\Big(\bigcup_{h\leq \ell-1}T_{K,h}(\subgr{G}{I})\Big).$$
That, together with the above relation, implies
$$j\in \inneigh\Big(\bigcup_{h\leq \ell}T_{K,h}(\subgr{G}{I})\Big).$$
The result follows.
\end{proof}

\subsection{Cardinality of the infinite type in random setting} \label{subs: cardinality}

The random graphs we consider in this section will be subgraphs of $\Gr$ introduced at the beginning.
Fix a positive integer $m$, a positive real $p$, and let $\delta_{ij}$, $\mu_{ij}$ ($(i,j)\in[m]\times[m]$) be jointly independent Bernoulli variables
with $\Prob\{\delta_{ij}=1\}=p$ and $\Prob\{\mu_{ij}=1\}\geq 1/\alpha$.
Consider a random directed bipartite graph $\Gr'$ with the vertex set $[m]\sqcup[m]$,
such that
\begin{itemize}
\item $i\to j$ iff $\delta_{ij}=1$ or $i=j$;

\item $i\leftarrow j$ iff $\delta_{ij}\mu_{ij}=1$ or $i=j$.
\end{itemize}

Additionally, let $K_0$ be a parameter such that
$$\frac{pm}{2\alpha}\leq K_0\leq \frac{2pm}{3\alpha}.$$

\noindent The purpose of this subsection is to prove that {\it typically the cardinality of $T_{K_0,\infty}(G')$ is very large ---
almost $m$.}

It will be convenient for us to define a filtration of sigma-algebras $\mathcal F_k$, $k\in\N$,
where for each natural $k$, $\mathcal F_k$ is generated by (random) sets $T_{K_0,\ell}(G')$, $\ell\leq k$
and by sets of in-neighbors $\inneigh(j)$, $j\in T_{K_0,1}(G')\cup\dots\cup T_{K_0,k}(G')$.

Everywhere in this subsection, by $\inneigh(\cdot),\outneigh(\cdot)$ we understand corresponding sets of
in- and out-neighbors for the graph $\Gr'$.
Also, we use shorter notation $T_{K_0,g}$ for types of vertices $T_{K_0,g}(\Gr')$.

\begin{lemma}\label{l: aux q3409thg}
Let $\Gr'$, $K_0$, and filtration $\mathcal F_k$, $k\in\N$ be as above.
Then for any $d\geq 2$ we have
$$\Exp\bigg(\frac{\big|T_{K_0,d}\big|\,\indicator_d}
{\big|\inneigh(T_{K_0,d-1})
\setminus \inneigh\big(\bigcup_{g\leq d-2}T_{K_0,g}\big)\big|}\;\Big|\;\mathcal F_{d-1}\bigg)
\leq e^{-c_{\smallrefer{l: aux q3409thg}}pm},$$
where $\indicator_d$ is the indicator of the event
$\big\{\big|\inneigh\big(\bigcup_{\ell\leq d-1}T_{K_0,\ell}\big)\big|\leq m/4\big\}$,
and $c_{\smallrefer{l: aux q3409thg}}>0$ depends only on $\alpha$.
\end{lemma}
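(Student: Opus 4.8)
The plan is to expose the conditional randomness that determines $T_{K_0,d}$ from $\mathcal F_{d-1}$, and apply Bernstein's inequality (Lemma~\ref{l: bernsteins}) together with a union bound. Condition on $\mathcal F_{d-1}$; this fixes the sets $T_{K_0,\ell}$ for $\ell\le d-1$, their in-neighbors, and in particular the set $S:=\inneigh(T_{K_0,d-1})\setminus\inneigh\big(\bigcup_{g\le d-2}T_{K_0,g}\big)$ of ``newly reached'' left vertices. By the definition of type, a right vertex $j\notin\bigcup_{\ell\le d-1}T_{K_0,\ell}$ belongs to $T_{K_0,d}$ precisely when $\big|\outneigh(j)\setminus\inneigh(\bigcup_{g\le d-1}T_{K_0,g})\big|\le K_0$, equivalently when the number of out-edges from $j$ landing outside the already-exposed left set $L:=\inneigh(\bigcup_{g\le d-1}T_{K_0,g})$ is at most $K_0$. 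The crucial point is that the edges $i\leftarrow j$ for $i\notin L$ have \emph{not} been examined in generating $\mathcal F_{d-1}$ (the filtration only reveals in-neighbor sets of vertices of type $\le d-1$, i.e.\ $\delta$-edges into those vertices, not the $\mu$-edges out of vertices outside $\bigcup_{g\le d-1}T_{K_0,g}$), so conditionally on $\mathcal F_{d-1}$ each such $j$ independently has $|\outneigh(j)\cap L^c|$ distributed as a sum of $n-|L|$ (or $m-|L|$) i.i.d.\ Bernoulli($p\cdot\Prob\{\mu=1\}$) variables, with mean at least $\frac{(m-|L|)p}{\alpha}$.

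On the event $\indicator_d=1$ we have $|L|\le m/4$, hence this conditional mean is at least $\frac{3pm}{4\alpha}>K_0$ by the choice $K_0\le \frac{2pm}{3\alpha}$; the gap between the mean and $K_0$ is a constant multiple of $pm$. Therefore Bernstein's inequality gives, for each fixed $j$ outside the already-typed vertices,
$$\Prob\big\{j\in T_{K_0,d}\;\big|\;\mathcal F_{d-1}\big\}\;\le\;\exp(-c pm)$$
with $c=c(\alpha)>0$. Summing over the at most $m$ candidate vertices, $\Exp(|T_{K_0,d}|\,\indicator_d\mid\mathcal F_{d-1})\le m\,e^{-cpm}$. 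It remains to divide by $|S|$. Here I would argue that $|S|\ge 1$ whenever $T_{K_0,d}$ can be nonempty — indeed, by the inductive structure a vertex can only acquire type $d$ (and not an earlier type) because $T_{K_0,d-1}$ contributed genuinely new left vertices, so on $\{|T_{K_0,d}|\,\indicator_d>0\}$ one has $|S|\ge 1$, and more quantitatively $|S|$ is $\mathcal F_{d-1}$-measurable. Thus
$$\Exp\bigg(\frac{|T_{K_0,d}|\,\indicator_d}{|S|}\;\Big|\;\mathcal F_{d-1}\bigg)\;=\;\frac{1}{|S|}\,\Exp\big(|T_{K_0,d}|\,\indicator_d\mid\mathcal F_{d-1}\big)\;\le\;\frac{m}{|S|}e^{-cpm},$$
which is already $\le e^{-(c/2)pm}$ unless $|S|$ is exponentially small in $pm$; to cover that case one uses the trivial bound $|T_{K_0,d}|\le |S|\cdot\max_j|\inneigh(j)^{-1}|$... more cleanly, one bounds $|T_{K_0,d}|\,\indicator_d$ by the number of $j$ with $\outneigh(j)\cap S\ne\emptyset$ among the candidates, which is at most $\sum_{i\in S}|\{j:\,i\in\outneigh(j)\}|$, and combines this with the per-$j$ rareness to get a ratio bound proportional to $e^{-cpm}$ times a factor polynomial in $pm$, absorbed into the exponent after shrinking $c$.

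\textbf{Main obstacle.} The delicate step is the independence/measurability bookkeeping: one must verify that conditioning on $\mathcal F_{d-1}$ leaves the out-edges $i\leftarrow j$ with $i\notin\inneigh(\bigcup_{g\le d-1}T_{K_0,g})$ genuinely fresh, so that the events $\{j\in T_{K_0,d}\}$ are conditionally independent across the candidate vertices $j$ and each has the claimed Bernstein tail. This requires a careful description of exactly which coordinates of $(\delta_{ij},\mu_{ij})$ are revealed when the sets $T_{K_0,\ell}$ and their in-neighbor sets for $\ell\le d-1$ are exposed, and an observation that membership in $T_{K_0,d}$ is monotone enough in the unexposed edges to apply Bernstein after fixing $L$. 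A secondary, lighter obstacle is handling the denominator $|S|$ when it is small, for which the counting bound on $|T_{K_0,d}|$ via $S$ described above suffices.
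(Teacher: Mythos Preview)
Your overall direction matches the paper's, but the two points you flag as obstacles are genuinely unresolved in your write-up, and the first one is more serious than you make it sound.

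\textbf{The independence claim is false as stated.} You assert that ``the edges $i\leftarrow j$ for $i\notin L$ have not been examined in generating $\mathcal F_{d-1}$,'' so that $|\outneigh(j)\cap L^c|$ is conditionally an unconditioned Bernoulli sum. But $\mathcal F_{d-1}$ also records the \emph{set} $T_{K_0,d-1}$, and for $j\in L_d$ this encodes the constraint $|\outneigh(j)\setminus\inneigh(\bigcup_{g\le d-2}T_{K_0,g})|>K_0$. Since $L^c\subset [m]\setminus\inneigh(\bigcup_{g\le d-2}T_{K_0,g})$, the out-edges of $j$ landing in $L^c$ (and in $S$) are constrained by this inequality; they are not fresh Bernoullis. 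The paper resolves this by introducing, for each candidate $u\in L_d$, an auxiliary event $\Event'_u\supset\widetilde\Event$ in which the ``not yet typed'' constraint on $u$ alone is dropped, so that on $\Event'_u$ the edges incident to $u$ \emph{are} unconditioned; it then shows $\Prob(\widetilde\Event\mid\Event'_u)>1/2$ and transfers the estimate back with a factor of $2$. Without some such decoupling your Bernstein step is not justified.

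\textbf{The denominator issue needs the two-condition argument carried out, not asserted.} Your first bound $\Exp(|T_{K_0,d}|\indicator_d\mid\mathcal F_{d-1})\le m\,e^{-cpm}$ is too weak, since $|S|$ is $\mathcal F_{d-1}$-measurable and can be of constant order, giving $m e^{-cpm}/|S|$ rather than $e^{-cpm}$. Your patch is the right idea --- any $j\in T_{K_0,d}$ must satisfy both (a) $\outneigh(j)\cap S\neq\emptyset$ and (b) $|\outneigh(j)\cap L^c|\le K_0$ --- but you need to actually exploit that (a) and (b) concern disjoint edge sets and are therefore (on the decoupled event $\Event'_u$) independent. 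The paper does exactly this, with a case split: if the horizontal edge $u\leftarrow u$ already lands in $S$ (so $u^L\in S$), condition (a) is automatic and you only get the $e^{-cpm}$ bound from (b), but there are at most $|S|$ such $u$; otherwise $\Prob\{(a)\}\le p|S|$ and $\Prob\{(b)\}\le e^{-cpm}$, and summing over at most $m$ vertices yields $pm\,|S|\,e^{-cpm}$. Either way the numerator picks up the factor $|S|$ needed to cancel the denominator. Your sketch gestures at this but never combines the two probabilities, and the line ``$|T_{K_0,d}|\le |S|\cdot\max_j|\inneigh(j)^{-1}|$'' is not a meaningful bound.
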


Let $j$ be a vertex of $\Gr'$ in the complement of $\bigcup_{\ell\leq d-1}T_{K_0,\ell}$.
Assume for a moment that the set of in- and -out neighbors of $j$ were independent of the set
$\bigcup_{\ell\leq d-1}T_{K_0,\ell}$ and their in-neighbors. Using the assumption that
$\big|\inneigh\big(\bigcup_{\ell\leq d-1}T_{K_0,\ell}\big)\big|\leq m/4$
(which defines the indicator $\indicator_d$ above), it would be easy to obtain a bound for the probability
$\Prob\big\{j\in T_{K_0,d}\big\}$ for each fixed $j$, and then sum up to bound the expectation of $|T_{K_0,d}|$.
In reality, the set of neighbors of $j$ depends on types $T_{K_0,\ell}$, $\ell\leq d-1$.
However, this dependence is almost negligible, and using a conditioning argument,
we will be able to show that a similar estimate for $|T_{K_0,d}|$ still holds.

\begin{proof}[Proof of Lemma~\ref{l: aux q3409thg}]
Fix a partition $L=(L_j)_{j=1}^d$ of $[m]$ and a collection of
subsets $M_{in}(h),$ $M_{out}(h)\subset[m]$, $h \in  L_1\cup\dots\cup L_{d-1}$,
such that
\begin{itemize}
\item $M_{out}(h)\subset M_{in}(h)$ for all $h$;

\item $|M_{out}(h)|\leq K_0$ whenever $h\in L_1$;

\item for any $2\leq \ell\leq d-1$
and $h\in L_\ell$, we have
$$\Big|M_{out}(h)\,
\setminus \bigcup_{r\in L_g,\,g\leq \ell-2}M_{in}(r)\Big|> K_0$$
and
$$\Big|M_{out}(h)\,\setminus \bigcup_{r\in L_g,\,g\leq \ell-1}M_{in}(r)\Big|\leq K_0;$$

\item $\big|\bigcup_{r\in L_g,\,g\leq d-1}M_{in}(r)\big|\leq m/4$.

\end{itemize}
The conditions on the sets $M_{in}(h),M_{out}(h)$ are designed so that the sets $L_g$, $g\leq d-1$, would play the role of
types $T_{K_0,g}$ in our random graph.
Further, for any $u\in L_d$ define event
\begin{align*}
\Event'_u:=\Big\{&\inneigh(h)=M_{in}(h)\mbox{ and }\outneigh(h)=M_{out}(h)\;\mbox{for all }h\in L_1\cup\dots\cup L_{d-1}, \mbox{and}\\
&\Big|\outneigh(j)\,
\setminus \bigcup_{r\in L_g,\,g\leq d-2}M_{in}(r)\Big|> K_0\mbox{ for all }j\in L_d\setminus\{u\}\Big\},
\end{align*}
and take
\begin{align*}
\widetilde\Event:=
\Big\{&\inneigh(h)=M_{in}(h)\mbox{ and }\outneigh(h)=M_{out}(h)\;\mbox{for all }h\in L_1\cup\dots\cup L_{d-1}, \mbox{and}\\
&\Big|\outneigh(j)\,
\setminus \bigcup_{r\in L_g,\,g\leq d-2}M_{in}(r)\Big|> K_0\mbox{ for all }j\in L_d\Big\},
\end{align*}
Note that if the event $\widetilde \Event$ occurs then $T_{K_0,g}=L_g$, $g\leq d-1$.
We are interested in bounding the probability that $u$ is of type $(K_0,d)$
on the $\mathcal F_{d-1}$--measurable event $\widetilde \Event$.
However, a direct computation is difficult due to dependencies, and
for that reason we have introduced the auxiliary event $\Event_u'$,
on which the sets of in- and out-neighbors of $u$ are defined by independent Bernoulli selectors.
We will bound the probability that $u$ is of type $(K_0,d)$ on the event $\Event_u'$, and
then compare the event $\Event_u'$ with $\widetilde \Event$.

\medskip

It is easy to see that $\widetilde \Event\subset \Event'_u$ for all $u\in L_d$.
Observe that, conditioned on $\Event_u'$, the event
$\{u\mbox{ is of type }(K_0,d)\}$ implies that
\begin{itemize}

\item[(a)] the set $\outneigh(u)$
has a non-empty intersection with
$M_{in}(h)\setminus \bigcup_{r\in L_g,\,g\leq d-2}M_{in}(r)$
for some $h\in L_{d-1}$,
and

\item[(b)] the intersection of $\outneigh(u)$
with the set $[m]\setminus \bigcup_{r\in L_g,\,g\leq d-1}M_{in}(r)$
has cardinality at most $K_0$.

\end{itemize}
In the case
$u\in \bigcup_{r\in L_{d-1}}M_{in}(r)\setminus \bigcup_{h\in L_g,\,g\leq d-2}M_{in}(h)$
the condition (a) is satisfied automatically
since $u\leftarrow u$ is in the edge set of the graph.
In this case, we simply estimate the probability
$\Prob\{u\mbox{ is of type }(K_0,d)\;|\;\Event_u'\}$, using condition (b), by the probability
$$\Prob\Big\{\Big|\outneigh(u)\setminus \bigcup_{r\in L_g,\,g\leq d-1}M_{in}(r)\Big|\leq K_0\Big\},$$
which in turn can be estimated by $\exp(-cpm)$, for some $c=c(\alpha)>0$,
using our assumption on the cardinality of $\bigcup_{r\in L_g,\,g\leq d-1}M_{in}(r)$.
In the case
$$u\notin \bigcup_{r\in L_{d-1}}M_{in}(r)\setminus \bigcup_{h\in L_g,\,g\leq d-2}M_{in}(h)$$
we use both (a) and (b) to get an upper bound
\begin{align*}
\Prob\big\{u\mbox{ is of type }(K_0,d)\;|\;\Event'_u\big\}
&\leq \Prob\big\{\mbox{(a) holds}\;|\;\Event'_u\big\}\cdot
\Prob\big\{\mbox{(b) holds}\;|\;\Event'_u\big\}
\\
&\leq p
\Big|\bigcup_{r\in L_{d-1}}M_{in}(r)\setminus \bigcup_{h\in L_g,\,g\leq d-2}M_{in}(h)\Big|
\cdot\exp(-cpm).
\end{align*}

Next, again by the assumption on the cardinality of the union of $M_{in}(r)$ (with $r\in L_1\cup\dots\cup L_{d-2}$),
we have
$$\Prob(\widetilde \Event\,|\,\Event'_u)
=\Prob\Big\{\Big|\outneigh(u)\,
\setminus \bigcup_{r\in L_g,\,g\leq d-2}M_{in}(r)\Big|> K_0\;\Big|\Big\}
> \frac{1}{2}\quad \mbox{for all }\quad u\in L_d.$$
This, together with the above, gives for every $u\in L_d$:
\begin{itemize}
\item $\Prob\{u\mbox{ is of type }(K_0,d)\;|\;\widetilde\Event\}\leq \exp(-c'pm)$,
if $u$ belongs to the set $\bigcup_{r\in L_{d-1}}M_{in}(r)\setminus \bigcup_{h\in L_g,\,g\leq d-2}M_{in}(h)$;

\item $\Prob\{u\mbox{ is of type }(K_0,d)\;|\;\widetilde\Event\}\leq p\,
\big|\bigcup_{r\in L_{d-1}}M_{in}(r)\setminus \bigcup_{h\in L_g,\,g\leq d-2}M_{in}(h)\big|\,\exp(-c'pm)$,
if $u\notin \bigcup_{r\in L_{d-1}}M_{in}(r)\setminus \bigcup_{h\in L_g,\,g\leq d-2}M_{in}(h)$.

\end{itemize}

Summing up over all $u\in L_d$, we get
\begin{align*}
\Exp&\big(\big|\big\{u\in L_d:\,u\mbox{ is of type }(K_0,d)\big\}\big|\;|\;\widetilde\Event\big)\\
&\leq e^{-c''pm}\,\Big|\bigcup_{r\in L_{d-1}}M_{in}(r)\setminus \bigcup_{h\in L_g,\,g\leq d-2}M_{in}(h)\Big|
\end{align*}
for some $c''>0$ depending only on $\alpha$.
Thus,
$$\Exp\bigg(\frac{\big|T_{K_0,d}\big|}
{|\inneigh(T_{K_0,d-1})
\setminus \inneigh(\bigcup_{g\leq d-2}T_{K_0,g})|}\;\Big|\;\widetilde\Event\bigg)
\leq e^{-c''pm}.$$
Moreover, the event $\widetilde\Event$ is an atom of the sigma-algebra $\mathcal F_{d-1}$.
The only restriction on the sets $T_{K_0,\ell}$ that we employ is that
$$\Big|\inneigh\Big(\bigcup_{\ell\leq d-1}T_{K_0,\ell}\Big)\Big|\leq m/4$$
(conditioned on $\widetilde\Event$).
Hence, we get from the above
$$\Exp\bigg(\frac{\big|T_{K_0,d}\big|\,\indicator_d}
{|\inneigh(T_{K_0,d-1})\setminus
\inneigh(\bigcup_{\ell\leq d-2}T_{K_0,\ell})|}\;\Big|\;\mathcal F_{d-1}\bigg)
\leq e^{-c''pm},$$
with $\indicator_d$ defined earlier.
\end{proof}

The next proposition asserts that the expected cardinality of the set of in-neighbors of the right vertices of $\Gr'$ of finite
types is much smaller than $m$. Thus, with large probability the majority of left vertices are connected only to right vertices of
the infinite type. When recast in terms of the random matrix $A-z\,\Id$ (see Subsection~\ref{subs: for matrices}),
the result says that with probability close to one
most of the rows of $A-z\,\Id$ are supported on the infinite column type.

\begin{prop}\label{p: support of finite classes}
Let $m,p$ and $\Gr'$ be as above. Then
$$
\Exp\Big(\Big|\inneigh\Big(\bigcup_{g\geq 1}T_{K_0,g}\Big)\Big|\Big)
\leq e^{-c_{\smallrefer{p: support of finite classes}} \,pm}\,m
$$
for $c_{\smallrefer{p: support of finite classes}}>0$ depending only on $\alpha$.
\end{prop}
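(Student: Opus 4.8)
The plan is a layer-by-layer geometric-decay argument driven by Lemma~\ref{l: aux q3409thg}, with a stopping time inserted to dispose of the indicator $\indicator_d$ occurring there. Set $N_d:=\inneigh(T_{K_0,d})\setminus\inneigh\big(\bigcup_{g\le d-1}T_{K_0,g}\big)$ and $W_d:=\big|\inneigh\big(\bigcup_{g\le d}T_{K_0,g}\big)\big|$. Then the $N_d$ are pairwise disjoint, $W_d-W_{d-1}=|N_d|$, the quantity to be bounded equals $W_\infty:=\lim_d W_d=\big|\inneigh\big(\bigcup_{g\ge1}T_{K_0,g}\big)\big|$, and $W_d,|N_d|$ are $\mathcal F_d$-measurable (the denominator in Lemma~\ref{l: aux q3409thg} is exactly $|N_{d-1}|$). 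I would then introduce $\sigma:=\min\{d\ge1:\ W_d>m/4\}$ (with $\min\emptyset=\infty$); the event $\{\sigma\ge d\}$ is $\mathcal F_{d-1}$-measurable, and on it $W_{d-1}\le m/4$, so the indicator $\indicator_d$ of Lemma~\ref{l: aux q3409thg} is $1$ there.

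The core is an increment recursion. Truncating in-degrees, $|\inneigh(j)|\le 2pm+(|\inneigh(j)|-2pm)_+$, gives $|N_d|\le\sum_{j\in T_{K_0,d}}|\inneigh(j)|\le 2pm\,|T_{K_0,d}|+\sum_{j\in T_{K_0,d}}(|\inneigh(j)|-2pm)_+$. Multiplying by $\indicator_{\{\sigma\ge d\}}$, conditioning on $\mathcal F_{d-1}$ (so Lemma~\ref{l: aux q3409thg} applies, since $\indicator_d=1$ on $\{\sigma\ge d\}$), and using $\{\sigma\ge d\}\subseteq\{\sigma\ge d-1\}$, I obtain, with $a_d:=\Exp\big[|N_d|\indicator_{\{\sigma\ge d\}}\big]$ and $e_d:=\Exp\big[\indicator_{\{\sigma\ge d\}}\sum_{j\in T_{K_0,d}}(|\inneigh(j)|-2pm)_+\big]$,
\[
a_d\ \le\ \rho\,a_{d-1}+e_d,\qquad\rho:=2pm\,e^{-c_{\smallrefer{l: aux q3409thg}}pm},
\]
and $\rho<1/2$ once $pm\ge C_\alpha$. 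The truncation is what makes the errors summable: dropping $\indicator_{\{\sigma\ge d\}}$ and using disjointness of the $T_{K_0,g}$, $\sum_{d\ge1}e_d\le m\,\Exp\big(|\inneigh(1)|-2pm\big)_+\le Cm\,e^{-c\,pm}$, the last bound being Bernstein's inequality (Lemma~\ref{l: bernsteins}) applied to $|\inneigh(1)|-1$ (a sum of $m-1$ independent Bernoulli$(p)$ variables). For the base term, $a_1\le\Exp|N_1|\le 2pm\,\Exp|T_{K_0,1}|+Cm\,e^{-c\,pm}$, and $\Exp|T_{K_0,1}|=\sum_j\Prob\{|\outneigh(j)|\le K_0\}\le m\,e^{-c_\alpha pm}$: indeed $|\outneigh(j)|-1$ stochastically dominates a sum of $m-1$ independent Bernoulli$(p/\alpha)$ variables, whose mean exceeds $K_0\le 2pm/(3\alpha)$ by a constant factor, so Bernstein applies again. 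Thus $a_1\le m\,e^{-c_1 pm}$.

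Summing, $(1-\rho)\sum_{d\ge1}a_d\le a_1+\sum_{d\ge2}e_d$, hence $\sum_{d\ge1}a_d\le m\,e^{-c_2 pm}$ for some $c_2=c_2(\alpha)>0$. Now I would unwind: $\sum_{d\ge1}a_d=\Exp\big[\sum_{d\ge1}|N_d|\indicator_{\{\sigma\ge d\}}\big]=\Exp\big[W_\infty\indicator_{\{\sigma=\infty\}}\big]+\Exp\big[W_\sigma\indicator_{\{\sigma<\infty\}}\big]$. Since $W_\sigma>m/4$ on $\{\sigma<\infty\}$, the second term gives $\Prob\{\sigma<\infty\}\le4e^{-c_2 pm}$, and therefore
\[
\Exp W_\infty=\Exp\big[W_\infty\indicator_{\{\sigma=\infty\}}\big]+\Exp\big[W_\infty\indicator_{\{\sigma<\infty\}}\big]\ \le\ m\,e^{-c_2 pm}+m\,\Prob\{\sigma<\infty\}\ \le\ 5m\,e^{-c_2 pm},
\]
which is the claim after absorbing the factor $5$ into the exponent (using $pm\ge C_\alpha$).

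The point I expect to require the most care is the treatment of the threshold $m/4$. The naive route — first show $W_d$ never crosses $m/4$ with overwhelming probability, then run an unconditional geometric recursion — is circular, since that statement is essentially the conclusion. The device above avoids this by bounding the \emph{stopped} sum $\sum_d a_d=\Exp\big[W_\sigma\indicator_{\{\sigma<\infty\}}\big]+\Exp\big[W_\infty\indicator_{\{\sigma=\infty\}}\big]$ first and only then extracting $\Prob\{\sigma<\infty\}$ from it via $W_\sigma>m/4$. The second subtlety is that the error terms must be genuinely summable over the up to $\sim m$ layers: bounding $\sum_{j\in T_{K_0,d}}|\inneigh(j)|$ crudely by $(pm+\log(m/|T_{K_0,d}|))|T_{K_0,d}|$ (cf.\ Lemma~\ref{l: union of supp}) would leave a per-layer error of order $m\,e^{-cpm}$ that blows up after summation, whereas the truncation $|\inneigh(j)|\le 2pm+(|\inneigh(j)|-2pm)_+$ confines the total overshoot to the single, globally controlled quantity $\sum_{j\in[m]}(|\inneigh(j)|-2pm)_+$.
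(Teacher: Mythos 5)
Your proof is correct. It follows the paper's overall strategy --- iterate Lemma~\ref{l: aux q3409thg} layer by layer to get geometric decay of the increments $|N_d|$, with some device keeping the hypothesis $\big|\inneigh\big(\bigcup_{g\le d-1}T_{K_0,g}\big)\big|\le m/4$ in force --- but the two places where it deviates are genuine improvements in economy. First, to pass from $|T_{K_0,d}|$ to the increment $|N_d|$, the paper proves $\Exp\big(|N_\ell|\,\big|\,\mathcal F_{\ell-1}\big)\le 3pm\,\Exp\big(|T_{K_0,\ell}|\,\big|\,\mathcal F_{\ell-1}\big)$ by a rather delicate argument: it freezes the matrix $(\mu_{ij})$, notes that $T_{K_0,\ell}$ is then determined by the $\delta_{ij}$ with $\mu_{ij}=1$, and exploits conditional independence of the remaining $\delta_{ij}$ (formulas \eqref{eq: aux kmff} and \eqref{eq: auxhr}, plus a three-way split of $\inneigh(T_{K_0,\ell})$). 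Your deterministic truncation $|\inneigh(j)|\le 2pm+(|\inneigh(j)|-2pm)_+$ sidesteps these correlations entirely, pushing the overshoot into the single global quantity $\sum_{j\in[m]}(|\inneigh(j)|-2pm)_+$ of expectation $O(m e^{-cpm})$; your remark that the cruder bound of Lemma~\ref{l: union of supp} would leave non-summable per-layer errors is exactly the right diagnosis. Second, the paper enforces the $m/4$ constraint via nested indicators $\widetilde\indicator_\ell$ with shrinking thresholds $\frac m4(1-2^{-\ell})$ and a separate Markov/telescoping argument showing $\prod_\ell\widetilde\indicator_\ell=1$ with high probability; your stopping time extracts $\Prob\{\sigma<\infty\}$ directly from the stopped sum via $W_\sigma>m/4$, which is cleaner and avoids the auxiliary per-layer estimate. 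What you give up is the layer-wise decay \eqref{eq: aux qping}, but neither the proposition nor anything downstream (the event $\Event_0$ in Lemma~\ref{l: cycle-free chains} involves only the total) requires it. Two small points to make explicit in a final write-up: the lower-tail (Chernoff-type) bound needed for $\Prob\{|\outneigh(j)|\le K_0\}$ is not literally Lemma~\ref{l: bernsteins} as stated (which is an upper tail), though the paper glosses over the same point; and in the degenerate case $N_{d-1}=\emptyset$ one has $T_{K_0,d}=\emptyset$ by definition of the types, so the ratio in Lemma~\ref{l: aux q3409thg} causes no trouble.
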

\begin{proof}
Let $\indicator_\ell$ be as in Lemma~\ref{l: aux q3409thg}, and for each $\ell\in\N$ define
$\widetilde\indicator_\ell$ to be the indicator of the event
$$
\Event_\ell:=\Big\{\Big|\inneigh\Big(\bigcup_{h\leq \ell-1}T_{K_0,h}\Big)\Big|\leq \frac{m}{4}(1-2^{-\ell})\Big\}.
$$
Observe that $\widetilde\indicator_\ell\leq \indicator_\ell$ for all $\ell\geq 2$;
we will postulate that $\widetilde \indicator_1=1$ everywhere.
Importantly, $\widetilde\indicator_\ell$ is measurable with respect to the sigma-algebra $\mathcal F_{\ell-1}$ ($\ell\geq 1$).

We will prove the statement in two steps.
First, we show that, conditioned on the intersection $\bigcap\limits_{j=1}^\ell \Event_j$,
the cardinality of the set $\inneigh\big(T_{K_0,\ell}\big)\setminus
\inneigh\big(\bigcup_{g\leq \ell-1}T_{K_0,g}\big)$ is small on average.
Then, we show that the event $\bigcap\limits_{j=1}^\ell \Event_j$ holds with probability close to one.

For any $\ell\geq 2$, we have
\begin{align*}
&\Exp\big(|T_{K_0,\ell}|\widetilde\indicator_1\dots\widetilde \indicator_\ell\big)\\
&=
\Exp\bigg(\Exp\bigg(\frac{|T_{K_0,\ell}|\widetilde\indicator_1\dots\widetilde \indicator_\ell}
{\big|\inneigh(T_{K_0,\ell-1})
\setminus \inneigh\big(\bigcup_{g\leq \ell-2}T_{K_0,g}\big)\big|}\;\Big|\;\mathcal F_{\ell-1}\bigg)
%&\hspace{1cm}
\cdot\Big|\inneigh(T_{K_0,\ell-1})
\setminus \inneigh\Big(\bigcup_{g\leq \ell-2}T_{K_0,g}\Big)\Big|\widetilde\indicator_1\dots\widetilde \indicator_{\ell-1}\bigg)\\
&\leq e^{-c''pm}\,\Exp\Big(\Big|\inneigh(T_{K_0,\ell-1})
\setminus \inneigh\Big(\bigcup_{g\leq \ell-2}T_{K_0,g}\Big)\Big|
\widetilde\indicator_1\dots\widetilde \indicator_{\ell-1}\Big)
\end{align*}
for some $c''>0$ depending on $\alpha$, where at the last step we used Lemma~\ref{l: aux q3409thg}.

Let $W^0$ be any realization of the matrix $W:=(\mu_{ij})$,
$\mathcal A\in \mathcal F_{\ell-1}$
be any atom of the sigma-algebra $\mathcal F_{\ell-1}$ (i.e.\ some realization of sets
$T_{K_0,1},\dots, T_{K_0,\ell-1}$ and respective collections of in- and out-neighbors),
and set $\Event':=\mathcal A\cap\{W=W^0\}$
assuming that the event has a non-zero probability.
Observe that, conditioned on $\Event'$, the variables
$\delta_{ij}$ for $j\notin T_{K_0,1},\dots, T_{K_0,\ell-1}$ and $W^0_{ij}=0$, are mutually independent, and,
moreover, the set $T_{K_0,\ell}$ is completely determined by the values of $\delta_{ij}$ for $(i,j)$ with $W^0_{ij}=1$.
Hence,
\begin{align}
\Exp\big(\big|\big\{&(i,j)\in [m]\times[m]:\,j\in T_{K_0,\ell},\,W^0_{ij}=0,\,\delta_{ij}=1,\,
i\neq j\big\}\big|\;|\;\Event'\big)\nonumber\\
&\leq pm\,\Exp\big(|T_{K_0,\ell}|\;|\;\Event'\big).
\label{eq: aux kmff}
\end{align}
At the same time, by the definition of $T_{K_0,\ell}$, we have (deterministically)
\begin{equation}\label{eq: auxhr}
\Big|\outneigh(T_{K_0,\ell})
\setminus \inneigh\Big(\bigcup_{h\leq\ell-1}T_{K_0,h}\Big)\Big|\leq K_0|T_{K_0,\ell}|.
\end{equation}
Note that the set $\inneigh(T_{K_0,\ell})$ can be viewed as consisting of three parts:
the left vertices $i$ such that $i\to j$ and $i\not\leftarrow j$ for some $j\in T_{K_0,\ell}\setminus\{i\}$;
left vertices $i$ such that $i\to j$ and $i\leftarrow j$ for some $j\in T_{K_0,\ell}\setminus\{i\}$;
and the vertices $i$ with $i\in T_{K_0,\ell}$.
For the first category, we will apply formula \eqref{eq: aux kmff};
for the second --- formula \eqref{eq: auxhr}, and for the third --- the trivial upper bound.
Thus, removing conditioning with respect to $\{W=W^0\}$, we obtain
$$\Exp\Big(\Big|\inneigh(T_{K_0,\ell})\,
\setminus \inneigh\Big(\bigcup_{h\leq\ell-1}T_{K_0,h}\Big)\Big|\;|\;\mathcal F_{\ell-1}\Big)
\leq 3pm\,\Exp\big(|T_{K_0,\ell}|\;|\;\mathcal F_{\ell-1}\big).$$

Together with above estimate of $\Exp\big(|T_{K_0,\ell}|\widetilde\indicator_1\dots\widetilde \indicator_\ell\big)$
and the measurability of $\widetilde\indicator_1,\dots,\widetilde \indicator_\ell$ with respect to $\mathcal F_{\ell-1}$, this yields
\begin{align*}
\Exp\Big(&\Big|\inneigh(T_{K_0,\ell})
\setminus \inneigh\Big(\bigcup_{h\leq\ell-1}T_{K_0,h}\Big)\Big|
\widetilde\indicator_1\dots\widetilde \indicator_{\ell}\Big)\\
&\leq
e^{-\widetilde c \,pm}\,\Exp\Big(\Big|\inneigh(T_{K_0,\ell-1})
\setminus \inneigh\Big(\bigcup_{g\leq \ell-2}T_{K_0,g}\Big)\Big|
\widetilde\indicator_1\dots\widetilde \indicator_{\ell-1}\Big)
\end{align*}
for all $\ell\geq 2$, where $\widetilde c$ may only depend on $\alpha$.
It is an easy consequence of Bernstein--type inequalities that
$$\Exp\big(\big|\inneigh(T_{K_0,1})\big|
\big)\leq e^{-c \,pm}m$$
for a universal constant $c>0$.
Then, applying the previous relation iteratively, we obtain for all $\ell\geq 1$ and $c'=c'(\alpha)>0$:
\begin{equation}\label{eq: aux qping}
\Exp\Big(\Big|\inneigh(T_{K_0,\ell})
\setminus \inneigh\Big(\bigcup_{h\leq\ell-1}T_{K_0,h}\Big)\Big|
\widetilde\indicator_1\dots\widetilde \indicator_{\ell}\Big)\leq e^{-c' \,pm\, \ell}\,m.
\end{equation}
This completes the first part of the proof.
It remains to show that $\widetilde\indicator_1\dots\widetilde \indicator_{\ell}$
is equal to one with high probability.
Conditioned on the event $\{\widetilde\indicator_1\dots\widetilde \indicator_{\ell-1}=1\}$, we have
$$\Big|\inneigh\Big(\bigcup_{h\leq \ell-1}T_{K_0,h}\Big)\Big|
\leq \Big|\inneigh(T_{K_0,\ell-1})
\setminus \inneigh\Big(\bigcup_{h\leq \ell-2}T_{K_0,h}\Big)\Big|+ \frac{m}{4}(1-2^{-\ell+1}).$$
Hence, for any $\ell\geq 2$:
\begin{align*}
\Prob&\big\{\widetilde\indicator_1\dots\widetilde \indicator_{\ell-1}\cdot(1-\widetilde \indicator_\ell)=1\big\}\\
&=\Prob\big\{\widetilde \indicator_\ell=0\;|\;\widetilde\indicator_1\dots\widetilde \indicator_{\ell-1}=1\big\}\,
\Prob\big\{\widetilde\indicator_1\dots\widetilde \indicator_{\ell-1}=1\big\}\\
&\leq \Prob\Big\{\Big|\inneigh(T_{K_0,\ell-1})
\setminus \inneigh\Big(\bigcup_{g\leq \ell-2}T_{K_0,g}\Big)\Big|> \frac{m}{4}2^{-\ell}
\;\big|\;\widetilde\indicator_1\dots\widetilde \indicator_{\ell-1}=1\Big\}
%&\hspace{1cm}
\cdot\Prob\big\{\widetilde\indicator_1\dots\widetilde \indicator_{\ell-1}=1\big\}.
\end{align*}
Using \eqref{eq: aux qping} and  applying Markov's inequality, we obtain
\[
\Prob\Big\{\Big|\inneigh(T_{K_0,\ell-1})
\setminus \inneigh\Big(\bigcup_{g\leq \ell-2}T_{K_0,g}\Big)\Big|> \frac{m}{4}2^{-\ell}
\;\big|\;\widetilde\indicator_1\dots\widetilde \indicator_{\ell-1}=1\Big\}
\leq e^{-c'\,pm\,(\ell-1)}2^{\ell+2}.
\]
Thus,
$$\Prob\big\{\widetilde\indicator_1\dots\widetilde \indicator_{\ell-1}\cdot(1-\widetilde \indicator_\ell)=1\big\}
\leq e^{-c'\,pm\,(\ell-1)}2^{\ell+2}\,\Prob\big\{\widetilde\indicator_1\dots\widetilde \indicator_{\ell-1}=1\big\},\quad \ell\geq 2.$$
Rearranging, we get
$$\Prob\big\{\widetilde\indicator_1\dots\widetilde \indicator_{\ell-1}\widetilde\indicator_\ell=1\big\}\geq
\big(1-e^{-c'\,pm\,(\ell-1)}2^{\ell+2}\big)\Prob\big\{\widetilde\indicator_1\dots\widetilde \indicator_{\ell-1}=1\big\},
\quad\ell\geq 2,$$
and hence
$$\Prob\Big\{\prod_{\ell=1}^\infty\widetilde\indicator_\ell=1\Big\}\geq 1-e^{-c_1\,pm}.$$
It remains to apply the last relation to \eqref{eq: aux qping}:
we have
\begin{align*}
\Exp&\Big(\Big|\inneigh\Big(\bigcup_{g\geq 1}T_{K_0,g}\Big)\Big|\Big)\\
&\leq \Exp\Big(\Big|\inneigh\Big(\bigcup_{g\geq 1}T_{K_0,g}\Big)\Big|\prod_{\ell=1}^\infty\widetilde\indicator_\ell=1\Big)
+m\,\Prob\Big\{\prod_{\ell=1}^\infty\widetilde\indicator_\ell=0\Big\}\\
&\leq
\sum_{\ell=1}^\infty\Exp\Big(\Big|\inneigh(T_{K_0,\ell})
\setminus \inneigh\Big(\bigcup_{h\leq\ell-1}T_{K_0,h}\Big)\Big|
\widetilde\indicator_1\dots\widetilde \indicator_{\ell}\Big)+e^{-c_1\,pm}\,m\\
&\leq e^{-\bar c \,pm}\,m
\end{align*}
for $\bar c=\bar c(\alpha)>0$.
\end{proof}

\subsection{Chains}\label{subs: chains}

%\begin{wrapfigure}{R}{0.3\textwidth}
%\centering
%\includegraphics[width=0.3\textwidth]{chains.png}
%\captionsetup{width=0.9\linewidth}
%\captionof{figure}{Example of a cycle-free chain $(1,2,4)$ and a cyclic chain $(4,6,4)$
%for a $6\times 6$ matrix with non-zero diagonal elements (and related bipartite graph)}
%\end{wrapfigure}
Define a subfamily $\grch_{n,n}\subset\grc_{n,n}$ as the collection of graphs having all ``horizontal'' edges,
namely, for any $G\in\grch_{n,n}$ and any $i\in[n]$ we have $i^L\to i^R$ and $i^L\leftarrow i^R$.
Note that the random graph $\Gr$ defined by \eqref{Asmp on G}, belongs to $\grch_{n,n}$
with probability one.
Such graphs are important for us since they correspond to matrices with a non-zero diagonal.

Let $G\in\grch_{n,n}$ and $k\geq 1$.
The left and right vertices of $G$ are indexed by the same set $[n]$.
For this moment, it will be convenient to write $j^{L}$ for the left and $j^{R}$ for the right vertices.
We will say that a sequence $(j_\ell^R)_{\ell=1}^k$ of right vertices of $G$
is {\it a chain of length $k$ for $G$} if
it lies on the path $j^{R}_1\to j^L_1\to j^R_2\to j^L_2\to\dots\to j^L_{k-1}\to j^R_k$,
with $j_{\ell}^R\neq j_{\ell+1}^R$ for $\ell<k$.
In other words, all edges leading to the left vertices are ``horizontal''.
If all $j_\ell^R$'s ($1\leq \ell\leq k$) are distinct, we will call such a chain {\it cycle-free}.
Further, if $j_\ell^R$'s for $1\leq \ell\leq k-1$ are all distinct but $j_k^R=j_{u}^R$ for some $u<k-1$, the chain will be called {\it cyclic}.

To verify that the square matrix $A-z\,\Id$ is non-singular with high probability, the above setting
is all that is needed. However, in the treatment of {\it intermediate} singular values
we will need a more general definition of chains for bipartite graphs with different
sets of left and right vertices.
We will extend the definition in the last part of the section.

The following is an elementary observation:
\begin{lemma}\label{l: chain types}
Let $J=(j_\ell)_{\ell=1}^k$ be a chain for a graph $G\in \grch_{n,n}$. Then one of the following two assertions is true:
either $J$ is cycle-free or there is a number $1\leq k_1\leq k$ such that $(j_\ell)_{\ell=1}^{k_1}$ is cyclic.
\end{lemma}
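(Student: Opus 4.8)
The plan is to locate the first repetition among the right vertices $j_1,\dots,j_k$ and truncate the chain there. If all the $j_\ell$ are pairwise distinct, then by definition $J$ is cycle-free and there is nothing to prove (this also covers the trivial case $k=1$). Otherwise, let $k_1$ be the smallest index in $\{2,\dots,k\}$ for which $j_{k_1}=j_u$ for some $u<k_1$. By the minimality of $k_1$, the right vertices $j_1,\dots,j_{k_1-1}$ are pairwise distinct.

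Next I would check that in fact $u<k_1-1$. This is the one place where the defining property of a chain --- that $j_\ell\neq j_{\ell+1}$ for every $\ell<k$ --- is used: applying it with $\ell=k_1-1$ gives $j_{k_1}\neq j_{k_1-1}$, so the index $u$ with $j_{k_1}=j_u$ must satisfy $u\leq k_1-2$, i.e.\ $u<k_1-1$ (in particular $k_1\geq 3$).

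Finally I would observe that the truncated sequence $(j_\ell)_{\ell=1}^{k_1}$ is itself a chain for $G$: it lies on the initial segment $j_1^R\to j_1^L\to\dots\to j_{k_1-1}^L\to j_{k_1}^R$ of the original path realizing $J$, and the requirement $j_\ell^R\neq j_{\ell+1}^R$ for $\ell<k_1$ is inherited from the original chain. Since its first $k_1-1$ right vertices are pairwise distinct while $j_{k_1}=j_u$ with $u<k_1-1$, this truncated chain is cyclic by the definition, which gives the claim. The argument is a straightforward unwinding of the definitions; the only point requiring any care is that the first detected repetition is necessarily with a vertex at least two steps earlier, which is precisely what the non-consecutive-repetition clause in the definition of a chain guarantees, so I do not anticipate a genuine obstacle.
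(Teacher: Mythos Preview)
Your proof is correct and is exactly the elementary argument the paper has in mind; the paper in fact does not spell out a proof at all, simply labeling the lemma ``an elementary observation.'' Your write-up fills in the details cleanly, including the one nontrivial check that the first repetition must occur with an index $u<k_1-1$ because consecutive chain elements are distinct.
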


In what follows, it will be sometimes convenient for us to view a chain $J$ as a set rather
than a sequence. In particular, for any subset of integers $S$, notation $J\setminus S$
should be understood as a set consisting of those elements of $J$ which are not included in $S$.
Further, given a chain $J=(j_\ell)_{\ell=1}^k$ for $G$, let $\subgr{G}{J}$
be the subgraph of $G$ formed by removing right vertices $j_\ell$, $\ell\leq k$
(note that this notation for subgraphs is consistent with the one given in Subsection~\ref{subs: types}).
It will be convenient for us to assume that the right vertices of $\subgr{G}{J}$
are indexed by $[n]\setminus \{j_\ell\}_{\ell=1}^{k}$.

Given $K>0$, we will say that the chain $J$ for a graph $G\in\grch_{n,n}$ is
{\it $K$--self-balancing} if
$j_\ell\notin T_{K,\infty}(G)$ for all $\ell\leq k$ and, moreover,
for any $\ell\leq k$ we have
$$\outneigh(j_\ell)\subset %\inneigh\big(J\setminus\{j_\ell\}\big)\;\cup\;
\inneigh\Big(\bigcup_{g\geq 1}T_{K,g}(G)\setminus \{j_\ell\}\Big).$$
Note that in the above definition $j_\ell^L\in \outneigh(j_\ell^R)$ since the graph $G\in\grch_{n,n}$ is required to contain ``horizontal'' edges;
and it is possible that $\outneigh(j_\ell^R)$ consists of a single element $j_\ell^L$.

%By negation, a chain $J=(j_\ell)_{\ell=1}^{k}$ for $G$ is not $K$--self-balancing if and only if
%either (a) $j_\ell\in T_{K,\infty}(G)$ for some $\ell\leq k$ or
%(b) $j_1,j_2,\dots,j_k\in [n]\setminus T_{K,\infty}(G)$ and there is
%$j_\ell$ and a left vertex $i\in\outneigh(j_\ell)$ such that $\outneigh(i)\setminus\{j_\ell\}\subset T_{K,\infty}(G)\setminus J$.
%The ``only if'' direction in (b) is obvious. To see that ``if'' direction in (b) holds true, let
%$i\in\outneigh(j_\ell)$ be such that $\outneigh(i)\setminus\{j_\ell\}\subset T_{K,\infty}(G)\setminus J$.
%Then, in particular, $\outneigh(i)\subset([n]\setminus J)\cup\{j_\ell\}$, and hence
%$i\notin \inneigh\big(J\setminus\{j_\ell\}\big)$. Further, as $\outneigh(i)\cap (T_{K,g}(G)\setminus J)=\emptyset$
%for all $g\geq 1$, we have $i\notin \inneigh\big(\bigcup_{g\geq 1}
%T_{K,g}(G)\setminus J\big)$.
By negation, a chain $J=(j_\ell)_{\ell=1}^{k}$ for $G$ is not $K$--self-balancing if and only if
either (a) $j_\ell\in T_{K,\infty}(G)$ for some $\ell\leq k$ or
(b) $j_1,j_2,\dots,j_k\in [n]\setminus T_{K,\infty}(G)$ and there is
$j_\ell$ and a left vertex $i\in\outneigh(j_\ell)$ such that $\outneigh(i)\setminus\{j_\ell\}\subset T_{K,\infty}(G)$.
%The ``only if'' direction in (b) is obvious. To check ``if'' direction in (b), let
%$i\in\outneigh(j_\ell)$ be such that $\outneigh(i)\setminus\{j_\ell\}\subset T_{K,\infty}(G)$.
%Then $\outneigh(i)\cap (T_{K,g}(G)\setminus\{j_\ell\}) =(\outneigh(i)\setminus\{j_\ell\})\cap T_{K,g}(G)=\emptyset$
%for all $g\geq 1$, hence we have $i\notin \inneigh\big(\bigcup_{g\geq 1}
%T_{K,g}(G)\setminus \{j_\ell\}\big)$.
%%%%%%%%%%%%%%%%%%%%%%MR ???????
We also observe that if a chain $(j_\ell)_{\ell=1}^{k}$ is $K$--self-balancing then $(j_\ell)_{\ell=1}^{h}$
is $K$--self-balancing for any $h\leq \ell$.

The notion of chains plays the central role in our argument.
A connection with the matrix invertibility can be illustrated as follows:
assume that $x$ is a non-zero null vector of $A-z\,\Id$ (where we assume that
the matrix diagonal elements are non-zero), and let $\Gr$ be the corresponding bipartite graph.
Let $i\leq n$ be such that $x_i\neq 0$. Looking at $\row_i(A-z\,\Id)$ and noticing that
$a_{ii}-z\neq 0$, we find $j\neq i$
such that $a_{ij}x_j\neq 0$. This means that $i\in \inneigh(j)$.
Next, looking at the $j$-th row, by the same reason, we find $k\neq j$ such that $a_{jk}x_k\neq 0$, continuing with the construction of the chain.
In fact, for any $k\geq 1$ there exists a chain $J$ for $\Gr$ of length $k$, with all elements in the support of $x$.
A more detailed analysis shows that, conditioned on an event of probability close to one, all such chains
must be self-balancing. At the same time, as we show in Subsection~\ref{subs: self-balancing chains},
with high probability there are no self-balancing cyclic or cycle-free chains of logarithmic length.
This (combined with some additional observations) implies that $A-z\,\Id$ does not have very
sparse null vectors with high probability.
As we are interested in quantitative bounds on the smallest singular value,
this argument needs to be augmented: we have to take into account the magnitudes of the coordinates of $x$,
the distribution of cardinalities of supports of rows of $A-z\,\Id$, {\it statistics} of chains (i.e.\ number of self-balancing/non-self-balancing
cyclic and cycle-free chains of a given length). The lastly mentioned characteristic of the matrix is studied in this section.

%\bigskip

Define
$$K_0:=\frac{pn}{2\alpha}.$$
The next lemma can be viewed as a decoupling procedure for the vertex chains.
Specifically, it will be used to replace vertex types of the graph $\Gr$ in the definition of a self-balancing
chain $J$ with vertex types of the subgraph $\subgr{\Gr}{J}$, taking advantage of independence of these
types from edges incident to $J$.
\begin{lemma}\label{l: local to global random}
Let $n$, $p$, $\Gr$ be as in \eqref{Asmp on G}.
Define
\begin{align*}
\Event_{\smallrefer{l: local to global random}}:=
\big\{&\mbox{for every cycle-free/cyclic chain $J$ of length $k\leq\log_{pn}n$}\\
&\mbox{for $\Gr$ we have $T_{K_0,\infty}(\subgr{\Gr}{J})\subset T_{K_0/2,\infty}(\Gr)$}
\big\}.
\end{align*}
Then $\Prob(\Event_{\smallrefer{l: local to global random}})\geq 1-n^{-10}$.
In fact, ``$-10$'' can be replaced with any negative constant.
\end{lemma}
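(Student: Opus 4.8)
The plan is to reduce to the deterministic criterion of Lemma~\ref{l: local to global} and then verify its hypothesis by a union bound over chains. Applying that lemma with $K=K_0$ and $I=J$ (the chain regarded as a set of right vertices), the inclusion $T_{K_0,\infty}(\subgr{\Gr}{J})\subset T_{K_0/2,\infty}(\Gr)$ holds whenever $|\outneigh(j^R)\cap\inneigh(J^R)|\le K_0/2$ for every right vertex $j\notin J$. So the whole statement will follow once I show that, with probability at least $1-n^{-10}$, this inequality holds simultaneously for every cycle-free or cyclic chain $J$ of length $k\le L:=\log_{pn}n$ and every $j\notin J$.

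The heart of the argument is a pointwise estimate for a fixed candidate chain. I would fix a tuple $J=(j_1,\dots,j_k)$ --- with distinct entries in the cycle-free case, and with a single coincidence $j_k=j_u$, $u<k-1$, in the cyclic case --- and a right vertex $j\notin\widetilde J:=\{j_1,\dots,j_k\}$, and split $\outneigh(j)\cap\inneigh(J)$ according to the identity of its left vertices, using the rules of \eqref{Asmp on G}: the vertex $j$ itself contributes at most $1$; a vertex $v\in\widetilde J$ always belongs to $\inneigh(J)$ (horizontal edge $v^L\to v^R$) and belongs to $\outneigh(j)$ exactly when $\delta_{vj}\mu_{vj}=1$; and every other left vertex $i$ belongs to the intersection exactly when $\delta_{ij}\mu_{ij}=1$ and $\delta_{ij_\ell}=1$ for some $\ell$. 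This yields $|\outneigh(j)\cap\inneigh(J)|\le 1+M_j+N_j$ with $M_j:=\sum_{v\in\widetilde J}\indicator\{\delta_{vj}\mu_{vj}=1\}$ and $N_j:=\sum_{i\notin\widetilde J\cup\{j\}}\indicator\{\delta_{ij}\mu_{ij}=1\}\,\indicator\{\exists\,\ell:\delta_{ij_\ell}=1\}$.

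Next I would exploit the separation of randomness between the event $E_J:=\{J\text{ is a chain for }\Gr\}=\bigcap_{\ell=1}^{k-1}\{\delta_{j_\ell j_{\ell+1}}=1\}$ and the counting variables: $E_J$ uses only variables $\delta_{vw}$ with $v,w\in\widetilde J$ (and, the pairs $(j_\ell,j_{\ell+1})$ being distinct, $\Prob\{E_J\}=p^{k-1}$), $M_j$ uses only $\{\delta_{vj},\mu_{vj}:v\in\widetilde J\}$ with column $j\notin\widetilde J$, and $N_j$ uses only variables whose row index lies outside $\widetilde J$. Hence $E_J$ is independent of $(M_j,N_j)$, and $M_j+N_j$ is a sum of at most $n+k$ independent Bernoulli variables of total mean at most $kp+knp^2=kp(1+np)\le 2k\,(np)\,p\le 2kn^{-3/4}$ by \eqref{Asmp on p weak}. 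Since $k\le L\le\log_2 n$ and $C_\alpha$ may be taken large, the elementary tail bound $\Prob\{\sum_i\zeta_i\ge t\}\le(e\mu/t)^t$ (with $\mu=\sum_i\Exp\zeta_i$) applied with $t=\lceil K_0/4\rceil$ gives $\Prob\{1+M_j+N_j>K_0/2\mid E_J\}\le n^{-cK_0}$ for an absolute constant $c>0$ (using $K_0/2-1\ge K_0/4$, valid once $C_\alpha$ is large, since $K_0=pn/(2\alpha)\ge C_\alpha/(2\alpha)$).

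Finally I would union bound: there are at most $2n^k$ candidate chains of length $k$ and at most $n$ vertices $j$, so by the independence of $E_J$ and $(M_j,N_j)$ the failure probability is at most $\sum_{k\le L}2n^k\cdot n\cdot p^{k-1}\cdot n^{-cK_0}=2n^{-cK_0}\sum_{k\le L}n^2(np)^{k-1}\le 2n^{-cK_0}\cdot n^2\cdot n=2n^{3-cK_0}$, using $np\ge C_\alpha\ge2$ to sum the geometric series and $(np)^{L}=n$ by the definition of $L$. Choosing $C_\alpha$ large enough in terms of $\alpha$ pushes $cK_0$ past $13$, which gives the bound $n^{-10}$; enlarging $C_\alpha$ further replaces $-10$ by any negative constant, as claimed. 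I expect the only subtle point to be the bookkeeping of which Bernoulli variables feed into $E_J$, $M_j$ and $N_j$ --- equivalently, checking that the $O(k)$ edges forced by the chain contribute negligibly to $\inneigh(J)$ --- which the decomposition $|\outneigh(j)\cap\inneigh(J)|\le 1+M_j+N_j$ is designed to make transparent; beyond that there is no real obstacle.
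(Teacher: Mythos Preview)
Your proof is correct and follows essentially the same approach as the paper's: reduce to the deterministic criterion of Lemma~\ref{l: local to global}, then for a fixed candidate chain $J$ and vertex $j\notin J$ split $\outneigh(j)\cap\inneigh(J)$ according to whether the left vertex lies in $\widetilde J$ or outside, exploit that the chain condition $E_J$ and the counting variables use disjoint Bernoulli selectors, and finish with a union bound over chains and $j$. The paper carries out the same decomposition (using the specific chain $(1,\dots,k)$ by symmetry and passing to the larger set $\inneigh(j)\supset\outneigh(j)$), bounding the two pieces by direct binomial estimates $p^{\lceil K_0/4\rceil}\binom{k}{\lceil K_0/4\rceil}$ and $(p^2k)^{\lceil K_0/4-1\rceil}\binom{n}{\lceil K_0/4-1\rceil}$ rather than your Chernoff-style bound on $M_j+N_j$; the arithmetic in the final union bound is the same.
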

\begin{proof}
It is not difficult to see that it is sufficient to prove the statement for cycle-free chains; corresponding bound for
cyclic chains will follow just by throwing away the last element of the chains.
Fix $k\leq \log_{pn}n$ and denote
\begin{align*}
\Event':=\big\{(1,2,\dots,k)
\mbox{ is a chain for $\Gr$ such }
\mbox{that $T_{K_0,\infty}(\subgr{\Gr}{J})\not\subset T_{K_0/2,\infty}(\Gr)$}
\big\}.
\end{align*}
We will compute the probability $\Prob(\Event')$.
By Lemma~\ref{l: local to global},
$\Event'$ does not occur, whenever for any $j\in [n]\setminus [k]$ we have
$\big|\outneigh(j)\,\cap\,
\inneigh([k])\big|\leq K_0/2$.
Thus,
\begin{align*}
\Prob(\Event')&\leq
\Prob\Big\{\ell\to\ell+1\mbox{ for all }\ell\leq k-1\mbox{ and there is }j\in [n]\setminus [k]\\
&\hspace{1cm}\mbox{with }\big|\outneigh(j)\,\cap\,
\inneigh([k])\big|> K_0/2
\Big\}\\
&\leq
\Prob\Big\{\ell\to\ell+1\mbox{ for all }\ell\leq k-1\mbox{ and there is }j\in [n]\setminus [k]\\
&\hspace{1cm}\mbox{with }\big|\inneigh(j)\,\cap\,[k]^L\big|> K_0/4\Big\}\\
&+
\Prob\Big\{\ell\to\ell+1\mbox{ for all }\ell\leq k-1\mbox{ and there is }j\in [n]\setminus [k]\\
&\hspace{1cm}\mbox{with }\big|\inneigh(j)\,\cap\,\inneigh([k]^R)\setminus[k]^L\big|> K_0/4
\Big\},
\end{align*}
where in the last inequality, we used that $\outneigh(j)\subset\inneigh(j)$ for a right vertex $j$.
Hence, we get
\begin{align*}
\Prob(\Event')
&\leq  p^{k-1}\cdot n\cdot p^{\lceil K_0/4\rceil }{k\choose \lceil K_0/4\rceil}
%&\hspace{1cm}
+ p^{k-1}\cdot n\cdot \Prob\Big\{\big|\inneigh(j)\,\cap\,\inneigh([k]^R)\setminus[k]^L\big|> K_0/4
\Big\}\\
&\leq p^{k-1}n \big(24k/n\big)^{K_0/4}
+ p^{k-1}n\cdot (p^2k)^{\lceil K_0/4-1\rceil}{n\choose \lceil K_0/4-1\rceil}.
\end{align*}
Bounding the first term is straightforward.
To bound the second term, we use the assumptions on $p$ and $k$, which imply that $p^2 k n\leq n^{-1/4}$.
This yields $\Prob(\Event')\leq p^k n^{-100}$.
It remains to observe that
\begin{align*}
\Prob\big\{&\mbox{there is a cycle-free %non-self-balancing
chain $J$ of length $k$}\\
&\mbox{for $\Gr$ with }T_{K_0,\infty}(\subgr{\Gr}{J})\not\subset T_{K_0/2,\infty}(\Gr)\big\}\leq n^k \Prob(\Event'),
\end{align*}
and apply the union bound over all $k\leq \log_{pn}n$.
\end{proof}

\subsubsection{Self-balancing chains}\label{subs: self-balancing chains}

In this subsection, we study statistics of self-balancing chains. Further, in subsection~\ref{subs: matrix comp}
we will transfer the results to a generalized setting of $\phi$--chains.

\begin{lemma}[Number of self-balancing cycle-free chains]\label{l: cycle-free chains}
Let $n,p,\Gr$ be as in \eqref{Asmp on G}.
Let $1\leq k\leq \log_{pn}n$, and let $\IsoChains_k$ be the set of all $(K_0/2)$--self-balancing cycle-free chains of length $k$ for $\Gr$.
Denote by $\Event_0$ the event
$\{|\inneigh\big(\bigcup_{g\geq 1}T_{K_0,g}(\Gr)\big)|\leq n\,e^{-c_{\smallrefer{p: support of finite classes}}pn/2}\}$,
where $c_{\smallrefer{p: support of finite classes}}=c_{\smallrefer{p: support of finite classes}}(\alpha)>0$
is taken from Proposition~\ref{p: support of finite classes}.
Then
$$\Exp\big(|\IsoChains_k|\;|\;\Event_0\cap \Event_{\smallrefer{l: local to global random}}
\big)\leq 8(pk)^{k-1}(k-1)+n\,e^{-c_{\smallrefer{l: cycle-free chains}}pnk}$$
for some $c_{\smallrefer{l: cycle-free chains}}=c_{\smallrefer{l: cycle-free chains}}(\alpha)>0$.
\end{lemma}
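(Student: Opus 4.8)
The plan is to estimate $\Exp\big(|\IsoChains_k|\mid \Event_0\cap\Event_{\smallrefer{l: local to global random}}\big)$ by a union bound over all candidate ordered tuples $(j_1,\dots,j_k)$ of distinct right vertices, computing for each tuple the probability that it is simultaneously a cycle-free chain and $(K_0/2)$--self-balancing. First I would use $\Event_{\smallrefer{l: local to global random}}$ to replace the global types $T_{K_0/2,\infty}(\Gr)$ in the self-balancing condition by the types of the subgraph $\subgr{\Gr}{J}$: indeed, on that event $T_{K_0,\infty}(\subgr{\Gr}{J})\subset T_{K_0/2,\infty}(\Gr)$, and the negation of self-balancing (clause (b) in the discussion preceding the lemma, since clause (a) — some $j_\ell$ of infinite type — is what self-balancing forbids) asserts that for some $\ell$ there is a left vertex $i\in\outneigh(j_\ell)$ with $\outneigh(i)\setminus\{j_\ell\}\subset T_{K_0/2,\infty}(\Gr)$; this is \emph{harder} to satisfy when the target set is smaller, so upgrading to $T_{K_0,\infty}(\subgr{\Gr}{J})$ only shrinks the bad event. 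Equivalently: if $J$ is self-balancing for $\Gr$ then $\outneigh(j_\ell)\subset\inneigh\big(\bigcup_{g\ge1}T_{K_0,g}(\subgr{\Gr}{J})\big)$ for each $\ell$. The decisive gain is that the complement $\bigcup_{g\ge1}T_{K_0,g}(\subgr{\Gr}{J})$ and its in-neighbor set are measurable with respect to edges of $\subgr{\Gr}{J}$, hence \emph{independent} of all edges incident to the vertices $j_1,\dots,j_k$ — which is exactly what makes the chain edges $j_\ell^L\to j_{\ell+1}^R$ and the ``balancing'' constraints on $\outneigh(j_\ell)$ decouple.

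Next I would set up the computation conditionally on the realization of $\subgr{\Gr}{J}$ and on the event $\Event_0$. On $\Event_0$ we know $|\inneigh(\bigcup_{g\ge1}T_{K_0,g}(\Gr))|\le n e^{-c_{\smallrefer{p: support of finite classes}} pn/2}$; by the hereditary relation (Lemma~\ref{l: hereditary prop}, which gives $T_{K_0,\ell}(\subgr{\Gr}{J})\subset\bigcup_{h\le\ell}T_{K_0,h}(\Gr)$ and hence $\inneigh(\bigcup_g T_{K_0,g}(\subgr{\Gr}{J}))\subset \inneigh(\bigcup_g T_{K_0,g}(\Gr))$ up to the $k$ removed vertices) the set $S:=\inneigh\big(\bigcup_{g\ge1}T_{K_0,g}(\subgr{\Gr}{J})\big)$ has cardinality $\le n e^{-c_{\smallrefer{p: support of finite classes}} pn/2}+k$. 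Now for a fixed tuple $(j_1,\dots,j_k)$: the probability it forms a chain contributes a factor roughly $p^{k-1}$ (one factor $p$ for each required edge $j_\ell\to j_{\ell+1}$, remembering the horizontal edges are automatic), and for each $\ell$ the self-balancing requirement $\outneigh(j_\ell)\subset S\cup\{j_\ell^L\}$ forces all the random out-edges from $j_\ell$ to land in $S$. Writing $\sigma:=|S|/n$, the probability that a given right vertex has all its out-neighbors inside a fixed set of density $\sigma$ is, via the independence of the $\delta_{ij}\mu_{ij}$ Bernoulli selectors and the assumption $\Prob\{\mu_{ij}=1\}\ge1/\alpha$, at most $\exp(-c(\alpha) pn)$ once $\sigma$ is a small constant — here $\sigma\le e^{-c_{\smallrefer{p: support of finite classes}} pn/2}+k/n$ is indeed tiny. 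Multiplying over $\ell=1,\dots,k$ gives an extra factor $\exp(-c(\alpha) pnk)$, but I must be careful: the horizontal edge $j_\ell^L\in\outneigh(j_\ell^R)$ means I actually need $j_\ell^L\in S$, which is a constraint on the \emph{subgraph} side and handled by the same density bound applied to how many indices $j_\ell$ can have $j_\ell^L\in S$.

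Assembling: the number of ordered tuples is at most $n^k$, but the chain edges cost $p^{k-1}$ and more precisely, accounting for the fact that we may fix $j_1$ freely (factor $n$) and then each subsequent $j_{\ell+1}$ must be an out-neighbor of $j_\ell^L$, we get a bound like $n\cdot(\text{typical out-degree})^{k-1}\approx n(pn)^{k-1}$ before balancing, then $\times e^{-c(\alpha)pnk}$; comparing with the claimed $8(pk)^{k-1}(k-1)+ne^{-c pnk}$ shows I should instead split into two regimes. In the regime where \emph{all} out-degrees along the chain are small (say $\le$ some constant multiple of $pn$), the count of chains is $\le n(Cpn)^{k-1}$, and a refined argument — reindexing so the free variable contributes and using that the balancing constraint on each of the $k$ vertices each kills a factor $e^{-cpn}$, one of which absorbs the spare $n$ and the rest beat the $(Cpn)^{k-1}$ down to $(pk)^{k-1}$ using $k\le\log_{pn}n$ so that $(pn)^{k}\le n$ — yields the $8(pk)^{k-1}(k-1)$ term; the constant $8$ and the factor $(k-1)$ come from the number of positions along the chain where ``non-uniqueness'' / the cyclic-vs-cycle-free bookkeeping contributes. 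In the complementary regime where some chain vertex has a large out-degree, one uses Proposition~\ref{p: supports} / the event $\Event_{\smallrefer{p: supports}}$ (or a direct Bernstein estimate) to see that the number of such vertices is at most $ne^{-c pn}$, contributing the residual $ne^{-c_{\smallrefer{l: cycle-free chains}}pnk}$ term. \textbf{The main obstacle} is the conditioning: one must verify that conditioning on $\Event_0\cap\Event_{\smallrefer{l: local to global random}}$ (events of probability $1-o(1)$ but not probability one, and \emph{not} independent of the chain edges) does not destroy the independence used in the per-vertex balancing estimate — this is why the decoupling via $\subgr{\Gr}{J}$ and the $\mu_{ij}$-selector description of the out-edges is essential, and it is where the argument has to be set up most carefully, passing to atoms of the appropriate sigma-algebra exactly as in the proof of Proposition~\ref{p: support of finite classes}.
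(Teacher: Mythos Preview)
Your decoupling strategy is right and matches the paper's: use $\Event_{\smallrefer{l: local to global random}}$ to pass from types of $\Gr$ to types of the subgraph $\subgr{\Gr}{J}$, then exploit that the latter are independent of edges incident to $J$. The conditioning difficulty you flag at the end is also genuine, and the paper resolves it just as you suggest --- by working on the auxiliary event $\Event([k]):=\{|\inneigh(\bigcup_g T_{K_0,g}(\subgr{\Gr}{[k]}))|\le ne^{-c pn/2}\}$, which contains $\Event_0$ by the hereditary lemma and, crucially, is measurable with respect to $\subgr{\Gr}{[k]}$.

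However, there is a real gap in your computation of the self-balancing constraint. After decoupling, the condition $\outneigh(j_\ell)\subset\inneigh\big(\bigcup_g T_{K_0/2,g}(\Gr)\setminus\{j_\ell\}\big)$ becomes
\[
\outneigh(j_\ell)\;\subset\; \inneigh\big(J\setminus\{j_\ell\}\big)\;\cup\;S,\qquad S:=\inneigh\Big(\bigcup_g T_{K_0,g}(\subgr{\Gr}{J})\Big),
\]
not $S\cup\{j_\ell^L\}$ as you write. The extra set $\inneigh(J\setminus\{j_\ell\})$ has typical size of order $p n(k-1)$, which is \emph{not} negligible, so you cannot argue that each out-edge must land in a set of density $o(1)$ and collect a factor $e^{-cpn}$ per chain vertex. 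This is precisely why the paper introduces a different dichotomy: for a left vertex $i\notin W:=[k]\cup S$ with $i\leftarrow j$ for some $j\in[k]$, the balancing forces $i\in\inneigh([k]\setminus\{j\})$, hence $|\outneigh(i)\cap[k]|\ge 2$. If there are many such collisions then $\sum_h|\inneigh(h^R)|-|\inneigh([k]^R)|$ is large, and this is controlled by the expansion estimate of Proposition~\ref{p: expansion}, yielding probability $(n/k)^{-k}$; multiplied by $n^k p^{k-1}$ this is exactly the $(pk)^{k-1}(k-1)$ term. The complementary case --- few out-edges from $[k]^R$ land in $[n]\setminus W$ --- gives the $e^{-c'pnk}$ term via a direct binomial bound. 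Your proposed split into ``small vs.\ large out-degrees along the chain'' is not the relevant dichotomy, and the arithmetic you sketch to recover $(pk)^{k-1}$ from $n(Cpn)^{k-1}e^{-cpnk}$ does not work.
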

\begin{proof}
Let us estimate conditional probability of the event
$$\Event:=\big\{(1,2,\dots,k)\mbox{ is a $(K_0/2)$--self-balancing chain for $\Gr$}\big\}$$
given $\Event_0\cap \Event_{\smallrefer{l: local to global random}}$.
Due to lack of independence, a direct estimate can be complicated.
To overcome this problem, we introduce auxiliary events $\Event(Q)$.
For any $Q\subset[n]$ set
$$\Event(Q):=\Big\{\Big|\inneigh\Big(\bigcup_{g\geq 1}T_{K_0,g}(\subgr{\Gr}{Q})\Big)\Big|\leq n\,
e^{-c_{\smallrefer{p: support of finite classes}}pn/2}\Big\}.$$
Observe that, by Lemma~\ref{l: hereditary prop},
we have $\Event_0\subset\Event(Q)$ for every $Q\subset[n]$.

We will bound $\Prob(\Event\,|\,\Event([k])\cap \Event_{\smallrefer{l: local to global random}})$ first.
We have
\[
\Prob\big(\Event\;|\;\Event([k])\cap \Event_{\smallrefer{l: local to global random}}\big)
=
\frac{\Prob(\Event([k]))}{\Prob(\Event([k])\cap \Event_{\smallrefer{l: local to global random}})}\cdot
\Prob\big(\Event\cap \Event_{\smallrefer{l: local to global random}}\;|\;\Event([k])\big)
\leq 2\,\Prob\big(\Event\cap \Event_{\smallrefer{l: local to global random}}\;|\;\Event([k])\big),
\]
where we used a simple estimate $\Prob(\Event([k])\cap \Event_{\smallrefer{l: local to global random}})\geq 1/2$,
which follows from the inclusion $\Event_0\cap \Event_{\smallrefer{l: local to global random}}\subset
\Event([k])\cap \Event_{\smallrefer{l: local to global random}}$
and the fact that both $\Event_0$ and $\Event_{\smallrefer{l: local to global random}}$
have probabilities close to one (see Lemma~\ref{l: local to global random} and Proposition~\ref{p: support of finite classes}).

By the definition of a self-balancing chain, on the event $\Event\cap \Event_{\smallrefer{l: local to global random}}$ we have
$i\in\inneigh\big(([k]\setminus\{j\})\cup(\bigcup\limits_{g\geq 1}T_{K_0/2,g}(\Gr)\setminus[k])\big)$
for all pairs $(i,j)\in\{k+1,\dots,n\}\times [k]$ with $i\leftarrow j$.
Moreover, on this event we have $\bigcup\limits_{g\geq 1}T_{K_0/2,g}(\Gr)\setminus[k]
\subset \bigcup\limits_{g\geq 1}T_{K_0,g}(\subgr{\Gr}{[k]})$.
Thus,
\begin{align*}
\Prob\big(\Event\cap \Event_{\smallrefer{l: local to global random}}\;|\;\Event([k])\big)\leq
\Prob\Big\{&i\to i+1\mbox{ for all }i\leq k-1\mbox{ and}\\
&\forall \;(i,j)\in \{k+1,\dots,n\}\times [k]
\mbox{ such that }i\leftarrow j\mbox{ we have}\\
&i\in \inneigh\big(\{h\leq k,\,h\neq j\}\big)\;\cup\;
\inneigh\Big(\bigcup_{g\geq 1}T_{K_0,g}(\subgr{\Gr}{[k]})\Big)\;\big|\;\Event([k])
\Big\}.
\end{align*}
Therefore,
\begin{align*}
\Prob\big(\Event\;|\;\Event([k])\cap \Event_{\smallrefer{l: local to global random}}\big)
\leq 2\Prob\big\{&i\to i+1\mbox{ for all }i\leq k-1\;\big|\;\Event([k])\big\}\cdot\\
\Prob\Big\{&\forall \;(i,j)\in \{k+1,\dots,n\}\times [k]
\mbox{ such that }i\leftarrow j\mbox{ we have}\\
&i\in \inneigh\big(\{h\leq k,\,h\neq j\}\big)\;\cup\;
\inneigh\Big(\bigcup_{g\geq 1}T_{K_0,g}(\subgr{\Gr}{[k]})\Big)\;\big|\;\Event([k])\Big\},
\end{align*}
where we used conditional independence of $\{\delta_{ij}:\,(i,j)\in[k]\times[k]\}$
and $\{\delta_{ij}, \mu_{ij}:\,(i,j)\in\{k+1,\dots,n\}\times [k]\}$ given event $\Event([k])$,
as this event refers only to the subgraph $\subgr{\Gr}{[k]}$.
Obviously,
$$\Prob\big\{i\to i+1\mbox{ for all }i\leq k-1\;\big|\;\Event([k])\big\}\leq p^{k-1}.$$
To estimate probability of the second event in the last formula, observe that
the condition
\begin{align*}
&\forall \;(i,j)\in \{k+1,\dots,n\}\times [k]
\mbox{ such that }i\leftarrow j\mbox{ we have}\\
&i\in \inneigh\big(\{h\leq k,\,h\neq j\}\big)\;\cup\;
\inneigh\Big(\bigcup_{g\geq 1}T_{K_0,g}(\subgr{\Gr}{[k]})\Big)
\end{align*}
means that for every left vertex $i$ of $\Gr$ with $i\notin W:=
[k]\cup\inneigh\big(\bigcup_{g\geq 1}T_{K_0,g}(\subgr{\Gr}{[k]})\big)$,
either
\begin{itemize}

\item[(a)] $\inneigh(i)\cap [k]=\emptyset$ or

\item[(b)] $|\outneigh(i)\cap [k]|\geq 2$ and $|\inneigh(i)\cap[k]|\geq 1$, that is,
$i$ has at least $2$ out-neighbors one of which is also its in-neighbor.

\end{itemize}
If there are $q$ pairs $(i,j)\in [n]\setminus W\,\times [k]$
such that $i\to j$ and the left vertex $i$ satisfies condition (b) then
necessarily
$$\sum_{h=1}^k\big|\inneigh(h^R)\big|-
\big|\inneigh([k]^R)\big|\geq q/2.$$
Otherwise, if there are less than $q$ such pairs then
$$\Big|\Big\{(i,j)\in [n]\setminus W\,\times[k]:\,i\leftarrow j\Big\}\Big|
\leq q.$$
In view of the above, we can write
\begin{align*}
\Prob\Big\{&\forall \;(i,j)\in \{k+1,\dots,n\}\times [k]
\mbox{ such that }i\leftarrow j\mbox{ we have}\\
&i\in \inneigh\big(\{h\leq k,\,h\neq j\}\big)\;\cup\;
\inneigh\Big(\bigcup_{g\geq 1}T_{K_0,g}(\subgr{\Gr}{[k]})\Big)\;\big|\;\Event([k])\Big\}\\
&\leq \Prob\Big\{\sum_{h=1}^k\big|\inneigh(h^R)\big|-
\big|\inneigh([k]^R)\big|\geq K_0 \,k/8\;\big|\;\Event([k])\Big\}\\
&+\Prob\Big\{\Big|\Big\{(i,j)\in [n]\setminus W\,\times[k]:\,i\leftarrow j\Big\}\Big|
\leq K_0 \,k/4\;\big|\;\Event([k])\Big\}.
\end{align*}
For the first of the two probabilities, we can apply Proposition~\ref{p: expansion}
to get an upper estimate $(\frac{n}{k})^{-k}$ (for $k=1$, the probability is clearly zero).
The second term can be represented as
$$\Prob\Big\{\sum\limits_{(i,j)\in [n]\setminus W\,\times[k]}
\delta_{ij}\mu_{ij}
\leq K_0 \,k/4\;\big|\;\Event([k])\Big\}.$$
Here, $\delta_{ij},\mu_{ij}$ are given by \eqref{Asmp on G} and are conditionally independent given $\Event([k])$.
Further, the cardinality of $[n]\setminus W$ given $\Event([k])$ is at least $n/2$.
Therefore,
\begin{align*}
\Prob\Big\{&\Big|\Big\{(i,j)\in [n]\setminus W\,\times[k]:\,i\leftarrow j\Big\}\Big|
\leq K_0 \,k/4\;\big|\;\Event([k])\Big\}\\
&\leq
\sum_{q=0}^{\lfloor K_0\,k/4\rfloor} {\lfloor nk/2\rfloor \choose q}(p/\alpha)^{q}(1-p/\alpha)^{\lfloor nk/2\rfloor-q}\\
&\leq e^{-pnk/(4\alpha)}+\sum_{q=1}^{\lfloor K_0\,k/4\rfloor} \big(e pnk/(\alpha q)\big)^q e^{-pnk/(4\alpha)}\\
&\leq e^{-c'pnk}
\end{align*}
for some $c'=c'(\alpha)>0$.
Combining all the estimates, we obtain
$$\Prob\big(\Event\;|\;\Event([k])\cap \Event_{\smallrefer{l: local to global random}}\big)
\leq 2p^{k-1}\,\big(e^{-c'pnk}+(n/k)^{-k}\indicator_{\{k\geq 2\}}\big).$$
Therefore
\begin{equation*}\label{eq: aux ghkmn}
\Prob\big(\Event\;|\;\Event_0\cap \Event_{\smallrefer{l: local to global random}}\big)
\leq \frac{\Prob(\Event([k])\cap \Event_{\smallrefer{l: local to global random}})}{\Prob(\Event_0\cap \Event_{\smallrefer{l: local to global random}})}
\cdot 2p^{k-1}\,\big(e^{-c'pnk}+(n/k)^{-k}\indicator_{\{k\geq 2\}}\big).
\end{equation*}
%which completes the first step of the proof.

As we observed before, $\Prob(\Event_0\cap \Event_{\smallrefer{l: local to global random}})\geq 1/2$
and hence $\frac{\Prob(\Event([k])\cap \Event_{\smallrefer{l: local to global random}})}{\Prob(\Event_0\cap \Event_{\smallrefer{l: local to global random}})}\leq 2$.
Thus, % together with \eqref{eq: aux ghkmn},
$$\Prob\big(\Event\;|\;\Event_0\cap \Event_{\smallrefer{l: local to global random}}\big)\leq 4p^{k-1}\,\big(e^{-c'pnk}+(n/k)^{-k}\indicator_{\{k\geq 2\}}\big).$$
Finally, by the permutation invariance of our model, we get
$$\Exp\big(|\IsoChains_k|\;|\;\Event_0\cap \Event_{\smallrefer{l: local to global random}}\big)
\leq n^k\Prob\big(\Event\;|\;\Event_0\cap \Event_{\smallrefer{l: local to global random}}\big)
%\leq n^k\cdot 2p^{k-1}\,\big(e^{-pnk/8}+(n/k)^{-k}\big)
\leq 4(pn)^{k-1}n\,e^{-c'pnk}+8(pk)^{k-1}(k-1).$$
The result follows.
\end{proof}

\begin{lemma}[No self-balancing cyclic chains]\label{l: cyclic chains}
Let $n,p,\Gr$ and event $\Event_0$ be as in Lemma~\ref{l: cycle-free chains}.
Then for any $k\leq \log_{pn}n$ we have
\begin{align*}
\Prob\big\{&\mbox{$\Gr$ contains a $(K_0/2)$--self-balancing cyclic chain of length $k$}\;|\;\Event_0
\cap\Event_{\smallrefer{l: local to global random}}\big\}\\
&\leq e^{-c_{\smallrefer{l: cyclic chains}}pn k}+k\,\big(pk\big)^{k-1}
\end{align*}
for some $c_{\smallrefer{l: cyclic chains}}=c_{\smallrefer{l: cyclic chains}}(\alpha)>0$.
\end{lemma}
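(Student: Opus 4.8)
The plan is to mirror the proof of Lemma~\ref{l: cycle-free chains}, exploiting that a cyclic chain $(j_\ell)_{\ell=1}^k$ of length $k$ is a cycle-free chain $(j_\ell)_{\ell=1}^{k-1}$ of length $k-1$ together with one extra ``closing'' edge $j_{k-1}^L\to j_k^R$ (here $j_k=j_u$ for some $u\le k-2$), and that its self-balancing conditions coincide with those of the prefix $(j_\ell)_{\ell=1}^{k-1}$: the condition at the repeated position $j_k=j_u$ is exactly the one already imposed at position $u$. Since no cyclic chains exist for $k\le 2$, the claim is vacuous there, so I may assume $k\ge 3$.

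By permutation invariance of the model and of the events $\Event_0$, $\Event_{\smallrefer{l: local to global random}}$, it will suffice to bound, for a fixed $u\in\{1,\dots,k-2\}$, the conditional probability that $J:=(1,2,\dots,k-1,u)$ is a $(K_0/2)$--self-balancing cyclic chain, and then to multiply the result by the number of cyclic chains of length $k$, which is at most $(k-2)n^{k-1}$ (an ordered $(k-1)$--tuple of distinct indices, and a value of $u$). I would write the event ``$J$ is a $(K_0/2)$--self-balancing cyclic chain'' as the intersection of (i) the path edges $\ell\to\ell+1$, $\ell\le k-2$; (ii) the closing edge $(k-1)\to u$; and (iii) the self-balancing conditions $\outneigh(\ell)\subset\inneigh\big(\bigcup_{g\ge 1}T_{K_0/2,g}(\Gr)\setminus\{\ell\}\big)$ for $\ell\le k-1$, which by the remark above are precisely the self-balancing conditions of the cycle-free chain $(1,\dots,k-1)$.

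Next I would condition on the event $\Event(Q)$ with $Q=[k-1]$ (in the notation of the proof of Lemma~\ref{l: cycle-free chains}), use $\Event_0\subset\Event([k-1])$ and $\Prob(\Event_0\cap\Event_{\smallrefer{l: local to global random}})\ge 1/2$, and invoke $\Event_{\smallrefer{l: local to global random}}$ (applicable since $k\le\log_{pn}n$) to replace (iii) by the weaker condition that $i\in\inneigh\big([k-1]\setminus\{\ell\}\big)\cup\inneigh\big(\bigcup_{g\ge 1}T_{K_0,g}(\subgr{\Gr}{[k-1]})\big)$ for every $(i,\ell)\in\{k,\dots,n\}\times[k-1]$ with $i\leftarrow\ell$. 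The point is that, conditionally on $\Event([k-1])$, the three ingredients are governed by $\delta,\mu$--entries with pairwise disjoint supports: the $k-1$ edges in (i)--(ii) live on rows $\le k-1$ and columns $\le k-1$; the transferred form of (iii) lives on rows $\ge k$ and columns $\le k-1$ together with the subgraph $\subgr{\Gr}{[k-1]}$ (columns outside $[k-1]$); and $\Event([k-1])$ lives on that subgraph. Hence the conditional probability factors, and — exactly as in Lemma~\ref{l: cycle-free chains}, with Proposition~\ref{p: expansion} bounding the ``expansion defect'' term by $(n/(k-1))^{-(k-1)}$ and a Bernstein estimate for $\sum\delta_{ij}\mu_{ij}$ bounding the remainder by $e^{-c'pn(k-1)}$ for some $c'=c'(\alpha)>0$ — I would obtain
$$\Prob\big(J\text{ is }(K_0/2)\text{--self-balancing cyclic}\;|\;\Event_0\cap\Event_{\smallrefer{l: local to global random}}\big)\le 2p^{k-1}\big((n/(k-1))^{-(k-1)}+e^{-c'pn(k-1)}\big).$$

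Multiplying by $(k-2)n^{k-1}$ then gives the bound $2(k-2)(p(k-1))^{k-1}+2(k-2)(pn)^{k-1}e^{-c'pn(k-1)}$ for the probability in question. Since $2(k-2)(k-1)^{k-1}\le k^k$ for all $k\ge 3$, the first summand is at most $k(pk)^{k-1}$; for the second, I would write $(pn)^{k-1}e^{-c'pn(k-1)}=(pn\,e^{-c'pn})^{k-1}$ and use $pn\ge C_\alpha$ to absorb the base together with the polynomial factor $2(k-2)$ into the exponent, obtaining $e^{-c_{\smallrefer{l: cyclic chains}}pnk}$ for a suitable $c_{\smallrefer{l: cyclic chains}}=c_{\smallrefer{l: cyclic chains}}(\alpha)>0$. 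The step I expect to need the most care is the disjointness of supports in the factorization: the closing edge $(k-1)\to u$ sits in column $u\in[k-1]$, which is shared with the path edges and with the transferred self-balancing condition, so one must use that after the $\Event_{\smallrefer{l: local to global random}}$--transfer the latter involves column $u$ only through rows $\ge k$, never through row $k-1$.
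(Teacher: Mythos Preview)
Your proposal is correct and follows essentially the same route as the paper: fix a canonical cyclic chain $(1,\dots,k-1,u)$, condition on $\Event([k-1])$, use $\Event_{\smallrefer{l: local to global random}}$ to transfer the self-balancing condition to the subgraph $\subgr{\Gr}{[k-1]}$, factor the resulting probability into the $k-1$ edge indicators (giving $p^{k-1}$) and the transferred self-balancing term (bounded exactly as in Lemma~\ref{l: cycle-free chains} by $e^{-c'pn(k-1)}+(n/(k-1))^{-(k-1)}$), and finally multiply by the count of potential cyclic chains. The one minor slip is the constant: the two transfers $\Event_0\cap\Event_{\smallrefer{l: local to global random}}\to\Event([k-1])\cap\Event_{\smallrefer{l: local to global random}}$ and $\Event([k-1])\cap\Event_{\smallrefer{l: local to global random}}\to\Event([k-1])$ each cost a factor $2$, so the bound should read $4p^{k-1}(\cdots)$ rather than $2p^{k-1}(\cdots)$; this is harmless for the conclusion, since the paper itself absorbs the resulting $2k(p(k-1))^{k-1}$ into $k(pk)^{k-1}$ via $2(k-1)^{k-1}\le k^{k-1}$, and your final simplification goes through the same way.
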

\begin{proof}
Fix any $k$ in $\{3,4,\dots,n\}$ (cyclic chains have length at least $3$).
As in the proof of Lemma~\ref{l: cycle-free chains}, denote by
$\Event(Q)$ ($Q\subset[n]$) the event
$\big\{\big|\inneigh\big(\bigcup_{g\geq 1}T_{K_0,g}(\subgr{\Gr}{Q})\big)\big|
\leq n\,e^{-c_{\smallrefer{p: support of finite classes}}pn/2}\big\}$.
By a similar argument, we get for every integer $w\leq k-2$:
\begin{align*}
\Prob\big\{(1,2,\dots,&k-1,w)\mbox{ is a $(K_0/2)$--self-balancing (cyclic) chain}\;|\;\Event([k-1])\cap
\Event_{\smallrefer{l: local to global random}}\big\}\\
\leq 2\Prob\big\{&i\to i+1\mbox{ for all }i\leq k-2;\;\;k-1\to w\;\big|\;\Event([k-1])\big\}\cdot\\
\Prob\Big\{&\forall \;(i,j)\in \{k,\dots,n\}\times [k-1]
\mbox{ such that }i\leftarrow j\mbox{ we have}\\
&i\in \inneigh\big(\{h\leq k-1,\,h\neq j\}\big)\;\cup\;
\inneigh\Big(\bigcup_{g\geq 1}T_{K_0,g}(\subgr{\Gr}{[k-1]})\Big)\;\big|\;\Event([k-1])\Big\}.
\end{align*}
The first probability is trivially at most $p^{k-1}$, whereas for the second we
apply the same argument as in the proof of Lemma~\ref{l: cycle-free chains}
to get an upper estimate
$$e^{-c'pn(k-1)}+(n/(k-1))^{-(k-1)}$$
for some $c'=c'(\alpha)>0$.
Hence,
\begin{align*}
\Prob\big\{&(1,2,\dots,k-1,w)\mbox{ is a $(K_0/2)$--self-balancing cyclic chain}\;|\;\Event_0\cap \Event_{\smallrefer{l: local to global random}}\big\}\\
&\leq
2\Prob\big\{(1,2,\dots,k-1,w)\mbox{ is a $(K_0/2)$--self-balancing cyclic chain}\;|\;\Event([k-1])\cap \Event_{\smallrefer{l: local to global random}}\big\}\\
&\leq 4p^{k-1}\big(e^{-c'pn(k-1)}+(n/(k-1))^{-(k-1)}\big),
\end{align*}
where we
reproduced arguments from the proof of Lemma~\ref{l: cycle-free chains}.

Notice that the number of ``potential'' cyclic chains of length $k$ is less than $k\cdot n^{k-1}$.
Hence,
\begin{align*}
\Prob\big\{&\mbox{$\Gr$ contains a $(K_0/2)$--self-balancing cyclic chain of length $k$}\;|\;\Event_0
\cap \Event_{\smallrefer{l: local to global random}}\big\}\\
&\leq 2k(pn)^{k-1}\big(e^{-c'pn(k-1)}+(n/(k-1))^{-(k-1)}\big)\\
&\leq e^{-c''pn k}+k\,\big(pk\big)^{k-1},
\end{align*}
where $c''>0$ may only depend on $\alpha$.
\end{proof}

Let us summarize the last two lemmas.

\begin{prop}[Statistics of self-balancing chains]\label{p: self-balancing stat}
There is $c_{\smallrefer{p: self-balancing stat}}>0$ depending only on $\alpha$ with the following property.
Let $n,p,\Gr$ satisfy \eqref{Asmp on G}.
For each $k\leq n$, denote by $\IsoChains_k$ the set of all $(K_0/2)$--self-balancing cycle-free chains of length $k$ for $\Gr$.
Finally, set
\begin{align*}
\Event_{\smallrefer{p: self-balancing stat}}:=
&\Big\{\Big|\inneigh\Big(\bigcup_{g\geq 1}T_{K_0,g}(\Gr)\Big)\Big|
\leq n\,e^{- c_{\smallrefer{p: self-balancing stat}}pn}\Big\}\,\cap\\
&\Big\{
|\IsoChains_k|\leq n\,e^{-c_{\smallrefer{p: self-balancing stat}}pnk}\;\;\mbox{ for all }\;1\leq k\leq n\Big\}\cap\\
&\Big\{\mbox{$\Gr$ does not contain $(K_0/2)$--self-balancing cyclic chains}
\Big\}.
\end{align*}
Then $\Prob(\Event_{\smallrefer{p: self-balancing stat}})\geq 1-\exp(-c_{\smallrefer{p: self-balancing stat}}pn)-n^{-c_{\smallrefer{p: self-balancing stat}}}$.
\end{prop}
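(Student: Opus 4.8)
The plan is to assemble the proposition from the three ingredients already proved — Proposition~\ref{p: support of finite classes}, Lemma~\ref{l: cycle-free chains}, and Lemma~\ref{l: cyclic chains} — together with the elementary observation recorded just above Lemma~\ref{l: local to global random} that every prefix of a $(K_0/2)$--self-balancing chain is again $(K_0/2)$--self-balancing. Set $k^*:=\lceil\log_{pn}n\rceil$. Since the cited lemmas only control chains of length at most $\log_{pn}n$, the one genuinely new point is to extend control to all lengths $k\le n$; everything else is bookkeeping of probabilities.

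First I would condition on $\Event_0\cap\Event_{\smallrefer{l: local to global random}}$, where $\Event_0$ is the event from Lemma~\ref{l: cycle-free chains}: Markov's inequality applied to Proposition~\ref{p: support of finite classes} (with $m=n$) gives $\Prob(\Event_0)\ge 1-\exp(-c_{\smallrefer{p: support of finite classes}}pn/2)$, Lemma~\ref{l: local to global random} gives $\Prob(\Event_{\smallrefer{l: local to global random}})\ge 1-n^{-10}$, and on $\Event_0$ the first set in the definition of $\Event_{\smallrefer{p: self-balancing stat}}$ already holds once $c_{\smallrefer{p: self-balancing stat}}\le c_{\smallrefer{p: support of finite classes}}/2$. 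Next, for each $1\le k\le k^*$ I would write the bound of Lemma~\ref{l: cycle-free chains} as $\Exp(|\IsoChains_k|\mid\Event_0\cap\Event_{\smallrefer{l: local to global random}})\le B_k+R_k$ with $B_k=8(pk)^{k-1}(k-1)$ and $R_k=n\,e^{-c_{\smallrefer{l: cycle-free chains}}pnk}$, both deterministic. When $R_k\ge B_k$, Markov with threshold $n\,e^{-c\,pnk}$ for a suitable $c<c_{\smallrefer{l: cycle-free chains}}$ (depending only on $\alpha$) gives a failure probability $\le 2e^{-(c_{\smallrefer{l: cycle-free chains}}-c)pnk}$, which is summable in $k$; when $B_k>R_k$ one instead has $\Exp(|\IsoChains_k|\mid\cdots)\le 2B_k$, so Markov with threshold $1$ gives $\Prob(|\IsoChains_k|\ge 1\mid\cdots)\le 16(pk)^{k-1}(k-1)$, and since $pk\le n^{-7/8}k^*\le 1/2$ on this range one has $(pk)^{k-1}(k-1)\le p\,k^2 2^{-(k-2)}$ with $\sum_{k\ge2}$ of this of order $O(p)=O(n^{-7/8})$. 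Summing over $k\le k^*$ shows that, conditionally, with probability $\ge 1-\exp(-c'pn)-O(n^{-7/8})$ we have $|\IsoChains_k|\le n\,e^{-c\,pnk}$ for all $k\le k^*$; an identical summation of the bound of Lemma~\ref{l: cyclic chains} over $3\le k\le k^*$ rules out $(K_0/2)$--self-balancing cyclic chains of length $\le k^*$ with a probability bound of the same shape.

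The hard part is the range $k>k^*$, which the preceding lemmas do not reach. Here I would exploit that $c$ depends only on $\alpha$ while $C_\alpha$ in \eqref{Asmp on p weak} may be taken arbitrarily large: since $t\mapsto t/\log t$ is increasing, $pnk^*\ge pn\,\frac{\log n}{\log(pn)}\ge\frac{C_\alpha}{\log C_\alpha}\log n$, hence $n\,e^{-c\,pnk^*}\le n^{1-cC_\alpha/\log C_\alpha}<1$ once $C_\alpha$ is large enough. Therefore, on the event built above, $|\IsoChains_{k^*}|$ is a nonnegative integer strictly below $1$, so $\IsoChains_{k^*}=\emptyset$, and prefix-stability closes everything off: a nonempty $\IsoChains_k$ with $k>k^*$ would contain a chain whose length-$k^*$ prefix is a cycle-free $(K_0/2)$--self-balancing chain, i.e.\ an element of $\IsoChains_{k^*}$ — impossible — so $|\IsoChains_k|=0\le n\,e^{-c\,pnk}$ there; and a $(K_0/2)$--self-balancing cyclic chain of length $k>k^*$ would have a cycle-free self-balancing prefix of length $k-1\ge k^*$, hence a length-$k^*$ prefix in $\IsoChains_{k^*}$ — again impossible. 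Thus all three sets in the definition of $\Event_{\smallrefer{p: self-balancing stat}}$ hold on this event. Combining the probability estimates from the two previous paragraphs, and shrinking $c_{\smallrefer{p: self-balancing stat}}$ once more so that it is at most $\min(c,c_{\smallrefer{p: support of finite classes}}/2)$ and absorbs the $O(1)$ multiplicative factors, yields $\Prob(\Event_{\smallrefer{p: self-balancing stat}})\ge 1-\exp(-c_{\smallrefer{p: self-balancing stat}}pn)-n^{-c_{\smallrefer{p: self-balancing stat}}}$. I expect the only real subtlety to be the passage from logarithmic to arbitrary chain length, which is handled precisely by the quantitative gain $n\,e^{-c\,pnk^*}<1$ (made possible by taking $C_\alpha$ large) together with prefix-stability.
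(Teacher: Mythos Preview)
Your proposal is correct and follows essentially the same approach as the paper: both combine Proposition~\ref{p: support of finite classes}, Lemma~\ref{l: cycle-free chains}, and Lemma~\ref{l: cyclic chains} via Markov's inequality and the same two-case split (comparing $B_k$ against $R_k$), and both extend to $k>\log_{pn}n$ via prefix-stability once the bound at the logarithmic cutoff drops below $1$. The only quibble is bookkeeping: you set $k^*:=\lceil\log_{pn}n\rceil$ and invoke Lemma~\ref{l: cycle-free chains} at $k=k^*$, but that lemma (through its dependence on $\Event_{\smallrefer{l: local to global random}}$) is stated only for $k\le\log_{pn}n$; the paper uses $\lfloor\log_{pn}n\rfloor$ instead, and with that swap your argument goes through verbatim.
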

\begin{proof}
The first part of the intersection in the definition of $\Event_{\smallrefer{p: self-balancing stat}}$
can be estimated using Proposition~\ref{p: support of finite classes}.
For the second part, if $1\leq k\leq \log_{pn}n$ is such that
$8(pk)^{k-1}(k-1)\leq n\,e^{-c_{\smallrefer{l: cycle-free chains}}pnk}$ then,
combining Lemma~\ref{l: cycle-free chains} with Markov's inequality, we get
$$
\Prob\big\{|\IsoChains_k|\geq \lceil n\,e^{-c_{\smallrefer{l: cycle-free chains}}pnk/2}\rceil
\;|\;\Event_0\cap \Event_{\smallrefer{l: local to global random}}\big\}
\leq 2e^{-c_{\smallrefer{l: cycle-free chains}}pnk/2}.
$$
On the other hand, if $8(pk)^{k-1}(k-1)\geq n\,e^{-c_{\smallrefer{l: cycle-free chains}}pnk}$
then, by Lemma~\ref{l: cycle-free chains},
$$
\Exp\big(|\IsoChains_k|\;|\;\Event_0\cap \Event_{\smallrefer{l: local to global random}}
\big)\leq 16(pk)^{k-1}(k-1)\leq n^{-c}
$$
for a constant $c>0$, where we have used that $p$ satisfies \eqref{Asmp on p weak}.
Hence, applying Markov's inequality, we get
$$
\Prob\big\{|\IsoChains_k|\geq \lceil n\,e^{-c_{\smallrefer{l: cycle-free chains}}pnk/2}\rceil
\;|\;\Event_0\cap \Event_{\smallrefer{l: local to global random}}\big\}
\leq \Prob\big\{|\IsoChains_k|\geq 1\;|\;\Event_0\cap \Event_{\smallrefer{l: local to global random}}\big\}\leq n^{-c}.
$$
This together with the union bound over all $1\leq k\leq \log_{pn}n$
and $\Prob(\Event_0\cap \Event_{\smallrefer{l: local to global random}})\geq 1-e^{-c'pn}-n^{-10}$,
gives
$$
\Prob\big\{
|\IsoChains_k|\leq n\,e^{-c_{\smallrefer{l: cycle-free chains}}pnk/2}\;\;\mbox{ for all }\;1\leq k\leq \log_{pn}n\big\}
\geq 1-e^{-c''pn}-n^{-c''}.
$$
It remains to note that, as any $(K_0/2)$--self-balancing chain of length $k>\log_{pn}n$ contains
$(K_0/2)$--self-balancing subchains of length $\lfloor \log_{pn}n\rfloor$, we get from the above
$$
\Prob\big\{
|\IsoChains_k|\leq n\,e^{-pnk}\;\;\mbox{ for all }\;\log_{pn}n< k\leq n\big\}\\
\geq \Prob\big\{
\IsoChains_{\lfloor \log_{pn}n\rfloor}= \emptyset\big\}\geq 1-e^{-c''pn}-n^{-c''},
$$
where we have used that $n\,e^{-c_{\smallrefer{l: cycle-free chains}}pn\,\lfloor\log_{pn}n\rfloor/2}<1$.

For the third part, by removing conditioning in Lemma~\ref{l: cyclic chains},
we get
\begin{align*}
\Prob\big\{\mbox{$\Gr$ contains a $(K_0/2)$--self-balancing cyclic chain of length $k\leq \log_{pn}n$}\big\}
\leq e^{-\widetilde cpn}+n^{-\widetilde c}.
\end{align*}
for some constant $\widetilde c>0$.
At the same time, applying the above estimate for cycle-free chains, we obtain
\begin{align*}
\Prob\big\{&\mbox{$\Gr$ contains a $(K_0/2)$--self-balancing cyclic chain of length $k>\log_{pn}n$}\big\}\\
&\leq\Prob\big\{\IsoChains_{\lceil\log_{pn}n\rceil-1}\neq\emptyset\big\}\leq e^{-c''pn}+n^{-c''}.
\end{align*}
The result follows.
\end{proof}

\subsection{Graph compression}\label{subs: matrix comp}

The notion of chains, the way it is considered in the previous subsection, would be sufficient if our only goal
was to bound the smallest
singular value of the shifted matrix $A-z\,\Id$. However, bounding {\it intermediate} singular values requires
more elaborate arguments.
In particular, our notion of chains should be extended to cover what can be called ``graph compression''.

A compression of a graph $G\in\grc_{n,n}$ is glueing together some pairs of left vertices of $G$
satisfying certain additional assumptions.
Namely, left vertices $i_1,i_2$ can be glued together only if their sets of out-neighbors are disjoint %$\outneigh(i_1)\cap\outneigh(i_2)=\emptyset$
and all out-neighbors of $\{i_1,i_2\}$ belong to the infinite type $T_{K,\infty}(G)$.

Formally, let $G\in\grc_{n,n}$, let $K\geq 1$ be a parameter, and let $\phi:[n]\to[m]$ be
a surjective mapping. Let us assume that the mapping $\phi$ satisfies the following assumptions:
\begin{itemize}

\item For any $i\in[m]$, the preimage $\phi^{-1}(i)$ consists of either one or two elements;

\item For any $i_1\neq i_2\in[m]$ such that $\phi(i_1)=\phi(i_2)$, we have
$\outneigh(i_1)\cap\outneigh(i_2)=\emptyset$, and,
moreover, $\outneigh(i_1)\cup\outneigh(i_2)\subset T_{K,\infty}(G)$.

\end{itemize}
We will call such a mapping $\phi$ {\it $(G,K)$--admissible}.
The crucial observation, which will be made rigorous further in the paper, is that
a ``compression'' of the matrix $A-z\,\Id$ (which is defined as a matrix equivalent of the compression for $\Gr$)
typically contains only well-spread vectors in its kernel.

We say that a $(G,K)$--admissible mapping is {\it $u$--light} for some $u>0$ if
the set of in-neighbors of any right vertex contains not more than $u$ left vertices glued by $\phi$; formally,
$$\big|\big\{i\leq n:\;|\phi^{-1}(\phi(i))|=2\big\}\cap\inneigh(j)\big|\leq u\;\;\mbox{ for all }j\leq n.$$
The notion of $u$--light mappings allows us to identify those compressions which preserve expansion
properties of the graph (see Lemma~\ref{l: phi expansion} below).

\medskip

Denote by $\phi(G)$ the directed bipartite graph in $\grc_{m,n}$
obtained from $G$ by glueing together left vertices by $\phi$. That is,
if $\phi(i_1)=\phi(i_2)$ then
$$\outneigh^{\phi}(\phi(i_1))=\outneigh(i_1)\cup\outneigh(i_2)$$
and
$$\inneigh^{\phi}(\phi(i_1))=\inneigh(i_1)\cup\inneigh(i_2).$$
Here and further, $\inneigh(\cdot)$ and $\outneigh(\cdot)$ mean the sets
of in- and out-neighbors in the graph $G$ as before, and
$\inneigh^{\phi}(\cdot)$ and $\outneigh^{\phi}(\cdot)$
stand for the sets of in- and out-neighbors in the compressed graph $\phi(G)$.

\begin{lemma}\label{l: phi expansion}
Let $K,u>0$, let $G\in\grc_{n,n}$, $m\leq n$, and let $\phi:[n]\to[m]$ be a $u$--light $(G,K)$--admissible
mapping.
Further, assume that for some $\delta>0$ and $\varepsilon>0$ we have
$$\big|\inneigh(I)\big|\geq \sum_{i\in I}\big|\inneigh(i)\big|-\varepsilon K|I|\;\;
\mbox{for any subset $I$ of right vertices with $|I|\leq \delta n$}.$$
Then
$$\big|\inneigh^\phi(I)\big|\geq \sum_{i\in I}\big|\inneigh^\phi(i)\big|-(\varepsilon K+u)|I|\;\;
\mbox{for any subset of right vertices $I$ with $|I|\leq \delta n$}.$$
\end{lemma}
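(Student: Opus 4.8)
The plan is to pass from the expansion inequality for $G$ to one for $\phi(G)$ by carefully accounting for the double-counting that glueing introduces, and to control that double-counting using the $u$--lightness hypothesis. Fix a subset $I$ of right vertices of $\phi(G)$ (equivalently, a subset of $[n]$, since the right vertex set is unchanged by compression) with $|I|\leq\delta n$. The starting point is the identity relating in-neighborhoods in $\phi(G)$ to those in $G$: a left vertex $i^\phi\in[m]$ lies in $\inneigh^\phi(I)$ iff some $i\in\phi^{-1}(i^\phi)$ lies in $\inneigh(I)$. Thus $\inneigh^\phi(I)=\phi(\inneigh(I))$, and since $\phi$ is at most $2$-to-$1$,
$$
\big|\inneigh^\phi(I)\big|\geq\big|\inneigh(I)\big|-\big|\{i^\phi\in[m]:\,|\phi^{-1}(i^\phi)\cap\inneigh(I)|=2\}\big|.
$$
By the hypothesis on $G$, $\big|\inneigh(I)\big|\geq\sum_{i\in I}|\inneigh(i)|-\varepsilon K|I|$, so it remains to (i) replace $\sum_{i\in I}|\inneigh(i)|$ by $\sum_{i\in I}|\inneigh^\phi(i)|$ up to a controlled error, and (ii) bound the number of ``doubly covered'' glued vertices.

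For step (i), observe that for a single right vertex $i\in I$ we have $\inneigh^\phi(i)=\phi(\inneigh(i))$, so $|\inneigh^\phi(i)|\geq|\inneigh(i)|-r_i$, where $r_i$ is the number of glued pairs $\{i_1,i_2\}$ (i.e.\ $\phi(i_1)=\phi(i_2)$, $i_1\neq i_2$) with both $i_1,i_2\in\inneigh(i)$. Summing, $\sum_{i\in I}|\inneigh(i)|\leq\sum_{i\in I}|\inneigh^\phi(i)|+\sum_{i\in I}r_i$. Now the $u$--lightness hypothesis says precisely that for each right vertex $j$, at most $u$ of its in-neighbors lie in the set of glued vertices $\{i\leq n:\,|\phi^{-1}(\phi(i))|=2\}$; in particular $r_j\leq u/2\leq u$ for every $j$, hence $\sum_{i\in I}r_i\leq u|I|$. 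For step (ii), a glued vertex $i^\phi$ with $\phi^{-1}(i^\phi)=\{i_1,i_2\}$ and both $i_1,i_2\in\inneigh(I)$ gives, for some $j\in I$, at least one of $i_1,i_2$ in $\inneigh(j)$; since both are glued vertices and each $\inneigh(j)$ contains at most $u$ glued vertices in total, the number of such $i^\phi$ is at most $\sum_{j\in I}u=u|I|$ — but we must be slightly careful not to double-count the loss from (i) and (ii). The cleanest route is to bound everything through the single quantity $N:=|\{i\leq n:\,|\phi^{-1}(\phi(i))|=2,\ i\in\inneigh(I)\}|$, the number of glued left vertices adjacent to $I$ in $G$; by $u$--lightness $N\leq u|I|$. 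Both the ``number of doubly covered glued images'' in the first display and $\sum_{i\in I}r_i$ are bounded by $N$ (since each involves pairs of glued vertices adjacent to $I$), and in fact one checks directly that
$$
\big|\inneigh^\phi(I)\big|-\sum_{i\in I}\big|\inneigh^\phi(i)\big|\;\geq\;\big|\inneigh(I)\big|-\sum_{i\in I}\big|\inneigh(i)\big|\;-\;N,
$$
because each glued pair $\{i_1,i_2\}$ adjacent to $I$ contributes a loss of at most $1$ to the left-hand side beyond what it contributes on the right. Combining with the $G$-expansion bound and $N\leq u|I|$ yields $\big|\inneigh^\phi(I)\big|\geq\sum_{i\in I}\big|\inneigh^\phi(i)\big|-(\varepsilon K+u)|I|$, as claimed.

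The main obstacle I anticipate is bookkeeping rather than any genuine difficulty: one has to verify carefully that the losses incurred in approximating $\inneigh(I)$ by $\inneigh^\phi(I)$ and in approximating $\sum|\inneigh(i)|$ by $\sum|\inneigh^\phi(i)|$ do not compound into a factor larger than $u|I|$, i.e.\ that they are in fact two facets of the same quantity $N$. Making the inequality displayed above precise — tracking, for each glued pair of left vertices adjacent to $I$, exactly how much it subtracts from each side — is the crux. Everything else (the identities $\inneigh^\phi(I)=\phi(\inneigh(I))$, the $2$-to-$1$ bound, and the direct translation of the $u$--light definition into $N\leq u|I|$) is routine; note also that the admissibility hypothesis on $\phi$ is not needed for this lemma (it is only $u$--lightness that matters here), so no facts about $T_{K,\infty}(G)$ enter the argument.
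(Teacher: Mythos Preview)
Your argument is correct, and in one respect slightly sharper than the paper's: you are right that the $(G,K)$--admissibility hypothesis is not actually needed for the conclusion, only $u$--lightness. However, your write-up is more convoluted than necessary, and the paper's proof exploits admissibility to avoid exactly the bookkeeping you worry about.

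The paper observes that admissibility forces $r_i=0$ for every right vertex $i$: if $\phi(i_1)=\phi(i_2)$ then $\outneigh(i_1)\cap\outneigh(i_2)=\emptyset$, so no right vertex $i$ can have both $i_1,i_2\in\inneigh(i)$. Hence $\big|\inneigh^\phi(i)\big|=\big|\inneigh(i)\big|$ exactly, your step (i) is vacuous, and the whole lemma reduces to bounding the single loss $\big|\inneigh(I)\big|-\big|\inneigh^\phi(I)\big|$ by $u|I|$ via lightness.

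If you want to keep your admissibility-free route, note that your step (i) as written goes the wrong way: you establish $\sum|\inneigh(i)|\le\sum|\inneigh^\phi(i)|+\sum r_i$, but what the chain of inequalities actually needs is the trivial bound $\sum|\inneigh^\phi(i)|\le\sum|\inneigh(i)|$. With that, the argument is simply
\[
\big|\inneigh^\phi(I)\big|\ \ge\ \big|\inneigh(I)\big|-N\ \ge\ \sum_{i\in I}\big|\inneigh(i)\big|-\varepsilon K|I|-N\ \ge\ \sum_{i\in I}\big|\inneigh^\phi(i)\big|-(\varepsilon K+u)|I|,
\]
using $N\le u|I|$ from lightness. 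The quantities $r_i$ and the displayed ``loss comparison'' inequality are then unnecessary; your ``cleanest route'' paragraph already arrives at the right conclusion, but can be shortened to this one line.
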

\begin{proof}
Fix any subset $I\subset[n]$ of right vertices with $|I|\leq \delta n$.
By the definition of a $(G,K)$--admissible mapping, the number of in-neighbors of any right vertex
does not change under compression, and hence
$$\sum_{i\in I}\big|\inneigh(i)\big|=\sum_{i\in I}\big|\inneigh^\phi(i)\big|.$$
On the other hand, the assumption that $\phi$ is light implies that
\begin{align*}
\big|\inneigh^\phi (I)\big|&\geq \big|\inneigh(I)\big|
-\sum\limits_{i\in I}\,|\{j\leq m:\;|\phi^{-1}(j)|=2,\,\phi^{-1}(j)\cap \inneigh(i)\neq\emptyset\}|\\
&\geq \big|\inneigh(I)\big|-u|I|.
\end{align*}
The result follows.
\end{proof}

\begin{lemma}\label{l: comp types}
Let $K>0$, let $G\in\grc_{n,n}$, $m\leq n$, and let $\phi:[n]\to[m]$ be a $(G,K)$--admissible
mapping. Then for any $g\geq 1$ we have $T_{K,g}(G)=T_{K,g}(\phi(G))$; as a consequence,
$T_{K,\infty}(G)=T_{K,\infty}(\phi(G))$.
\end{lemma}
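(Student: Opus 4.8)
The plan is to follow what the compression $\phi$ does to the two ingredients of the type recursion --- the out-neighbourhood $\outneigh(j^R)$ of a right vertex $j$ and the in-neighbourhood $\inneigh(T)$ of a set $T$ of right vertices --- and to show that the quantity $|\outneigh(j^R)\setminus\inneigh(T)|$ that decides membership in $T_{K,\ell}$ is \emph{literally unchanged} by $\phi$ whenever $T\cap T_{K,\infty}(G)=\emptyset$. Since in the recursion defining $T_{K,\ell}(\cdot)$ the set $T$ is always a union of previously identified (finite) types, this invariance together with a one-line induction on $\ell$ will give the lemma.

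\emph{Consequences of admissibility.} First I would record two facts. Whenever $\phi(i_1)=\phi(i_2)$ with $i_1\neq i_2$, admissibility gives $\outneigh(i_1)\cap\outneigh(i_2)=\emptyset$ and $\outneigh(i_1)\cup\outneigh(i_2)\subset T_{K,\infty}(G)$; in particular, any left vertex that $\phi$ merges with another one has all of its out-neighbours inside $T_{K,\infty}(G)$. From this: (a) for every right vertex $j$, $\phi$ is injective on $\outneigh(j^R)$, since two distinct merged vertices $i_1,i_2\in\outneigh(j^R)$ would satisfy $i_1\leftarrow j$ and $i_2\leftarrow j$, hence $i_1\to j$ and $i_2\to j$ by the defining property of $\grc_{n,n}$, i.e.\ $j\in\outneigh(i_1)\cap\outneigh(i_2)=\emptyset$, impossible; and (b) if $T$ is a set of right vertices with $T\cap T_{K,\infty}(G)=\emptyset$, then no vertex of $\inneigh(T)$ is merged by $\phi$, because if $i\in\inneigh(T)$ were merged and $j'\in T$ is such that $i\to j'$, then $j'\in\outneigh(i)\subset T_{K,\infty}(G)$, contradicting $j'\notin T_{K,\infty}(G)$. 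I would also use the bookkeeping identities $\outneigh^\phi(j^R)=\phi(\outneigh(j^R))$ and $\inneigh^\phi(T)=\phi(\inneigh(T))$, which are immediate from the construction of $\phi(G)$.

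\emph{Invariance of the defect.} Combining these, for any right vertex $j$ and any set $T$ of right vertices with $T\cap T_{K,\infty}(G)=\emptyset$ one has $\phi^{-1}(\phi(\inneigh(T)))=\inneigh(T)$ by (b), so, using the elementary identity $\phi(X)\setminus\phi(Y)=\phi(X\setminus\phi^{-1}(\phi(Y)))$ with $X=\outneigh(j^R)$ and $Y=\inneigh(T)$ together with the bookkeeping identities,
$$\outneigh^\phi(j^R)\setminus\inneigh^\phi(T)=\phi\big(\outneigh(j^R)\setminus\inneigh(T)\big),$$
and by the injectivity in (a), $\big|\outneigh^\phi(j^R)\setminus\inneigh^\phi(T)\big|=\big|\outneigh(j^R)\setminus\inneigh(T)\big|$.

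\emph{Induction.} I would then prove $T_{K,g}(G)=T_{K,g}(\phi(G))$ by induction on $g$. For $g=1$ this is $|\outneigh^\phi(j^R)|=|\outneigh(j^R)|$, a special case of the above. For $g\ge2$, assuming equality for all $h<g$, set $S:=\bigcup_{h<g}T_{K,h}(G)=\bigcup_{h<g}T_{K,h}(\phi(G))$; this $S$ consists of finite-type vertices and hence satisfies $S\cap T_{K,\infty}(G)=\emptyset$, so the defect invariance applies with $T=S$ and gives, for every right vertex $j\notin S$, that $|\outneigh^\phi(j^R)\setminus\inneigh^\phi(S)|\le K$ if and only if $|\outneigh(j^R)\setminus\inneigh(S)|\le K$, i.e.\ $j\in T_{K,g}(\phi(G))\iff j\in T_{K,g}(G)$. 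Since $T_{K,\infty}$ is the complement of $\bigcup_{g\ge1}T_{K,g}$ in both graphs, equality of all finite types yields $T_{K,\infty}(G)=T_{K,\infty}(\phi(G))$. The only delicate point is the observation behind (b): admissibility confines every merging of left vertices to vertices whose out-neighbourhoods lie in $T_{K,\infty}(G)$, so the sets $\inneigh(T)$ driving the recursion (with $T$ a union of finite types) are untouched by $\phi$; once that is noticed, everything else is routine set bookkeeping.
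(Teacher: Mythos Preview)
Your proof is correct and follows essentially the same approach as the paper's: both argue by induction on $g$, using the key observation that vertices merged by $\phi$ have all their out-neighbours in $T_{K,\infty}(G)$, so that neither $\outneigh(j^R)$ nor $\inneigh\big(\bigcup_{h<g}T_{K,h}\big)$ is ``affected'' by the compression. Your write-up is more explicit than the paper's---you spell out the set identity $\phi(X)\setminus\phi(Y)=\phi(X\setminus\phi^{-1}(\phi(Y)))$ and the injectivity on $\outneigh(j^R)$ that together make the defect invariance precise---whereas the paper simply asserts these sets are ``not affected by $\phi$''; but the underlying argument is the same.
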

\begin{proof}
The left vertices which are glued together by $\phi$, do not have out-neighbors
in $\bigcup\limits_{g\geq 1} T_{K,g}(G)$.
As we will show, this implies that each $T_{K,g}$ is preserved under compression.

First, since $|\inneigh^\phi(j)|=|\inneigh(j)|$ for all right vertices $j$,
we have that $T_{K,1}(G)=T_{K,1}(\phi(G))$. Now, fix $g>1$ and assume that all vertex types
$(K,1)$, $(K,2)$, $\dots$, $(K,g-1)$ coincide for $G$ and $\phi(G)$.

Take any $j\in T_{K,g}(G)$, and observe that
$\outneigh(j)$ and $\inneigh\big(\bigcup\limits_{h<g}T_{K,h}(G)\big)$
are not affected by the mapping $\phi$.
This, together with the definition of the type $(K,g)$, implies that $j\in T_{K,g}(\phi(G))$.

On the other hand, if $j\notin T_{K,g}(G)$ then
$\big|\outneigh(j)\setminus \inneigh\big(\bigcup\limits_{h<g}T_{K,h}(G)\big)\big|>K$,
and hence by the induction hypothesis
$\big|\outneigh^\phi(j)\setminus \inneigh^\phi\big(\bigcup\limits_{h<g}T_{K,h}(\phi(G))\big)\big|>K$.
Thus, $j\notin T_{K,g}(\phi(G))$.
\end{proof}

In the next lemma we show that, conditioned on certain realization of our graph $\Gr$, given
a uniform random $\ell$--element subset $J$ of the left vertices of $\Gr$, with high probability (with respect to $J$) we can find
a light $(\Gr,K)$--admissible mapping $\phi$ which glues together only some vertices in $J$.
The lemma will be applied to estimate the intermediate singular values of the random matrix $A-z\,\Id$
(see remark before Proposition~\ref{prop: othonormal system}).

\begin{lemma}\label{l: phi construction}
Fix $K>0$, $r>0$ and a realization of $\Gr$ from event $\EE_{\smallrefer{p: supports}}$ such that
$$\Big|\inneigh\Big(\bigcup_{g\geq 1}T_{K,g}\Big)\Big|\Big)
\leq e^{-r pn}\,n.$$
Let $\ell\geq n^{1/2}$
%\[
%\ell \in \left[n^{1/2}, \frac{n}{(\log np)^{100}} \right]
%\]
be a natural number, and let $\varepsilon\in(0,1/32)$. Let $J$ be a uniform random subset of $[n]$ of cardinality $\ell$
(defined on another probability space).
Denote by $\Event$ the event that there exists a $(\Gr,K)$--admissible
mapping $\phi$ satisfying the conditions
\begin{itemize}

\item $|\phi([n])|= n-\lfloor\varepsilon\ell\rfloor$;

\item $\phi^{-1}(\phi(i))=i$ for all $i\notin J$;

\item $\phi$ is $(64\varepsilon pn)$--light.

\end{itemize}
Then
$$\Prob_J(\Event)\geq 1-2e^{-c_{\smallrefer{l: phi construction}}\ell pn},$$
where $c_{\smallrefer{l: phi construction}}>0$ may only depend on $r$.
\end{lemma}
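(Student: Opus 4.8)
The plan is to treat the realisation of $\Gr$ as fixed --- lying in $\Event_{\smallrefer{p: supports}}$ and satisfying the stated bound on $\big|\inneigh\big(\bigcup_{g\ge1}T_{K,g}\big)\big|$ --- so that the only randomness left is the uniform $\ell$--subset $J$, and to \emph{construct} an admissible $\phi$ deterministically from $J$. The event $\Event$ will then fail only when $J$ is atypical, and each relevant atypicality will have probability at most $e^{-c\ell pn}$.

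First I would isolate the glueable pool. A left vertex $i$ can take part in a gluing only if $\outneigh(i)\subseteq T_{K,\infty}$, that is, $i\notin\inneigh\big(\bigcup_{g\ge1}T_{K,g}\big)$; by hypothesis this forbids at most $e^{-rpn}n$ left vertices. In order to make lightness attainable I would also discard $B^\ast:=\inneigh\big(\{j:\,|\inneigh(j)|>2pn+w\}\big)$ together with the left vertices $i$ for which $|\{j:\,i\to j\}|>2pn+w$, for a suitable $w=w(r,\alpha)$ proportional to $pn$: on $\Event_{\smallrefer{p: supports}}$, and by the Bernstein bound of Lemma~\ref{l: bernsteins} as in the proof of Proposition~\ref{p: supports}, there are at most $e^{-c_{\smallrefer{p: supports}}(pn+w)}n$ offending vertices in each orientation, each of degree at most $2pn+w$, so a direct summation (or Lemma~\ref{l: union of supp}) shows that the union $E$ of the excluded sets satisfies $|E|\le 2e^{-rpn}n$ once $w$ is large enough. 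Put $Q:=J\setminus E$. Since $J$ is a uniform $\ell$--subset and $\ell\ge n^{1/2}$, the variable $|J\cap E|$ is hypergeometric with mean at most $2e^{-rpn}\ell$, which is exponentially smaller than the target deficiency $\varepsilon\ell/4$; a standard large-deviation estimate therefore gives
\[
\Prob_J\big\{|Q|<(1-\varepsilon/4)\ell\big\}\le\big(C\varepsilon^{-1}e^{-rpn}\big)^{\varepsilon\ell/8}\le e^{-c\ell pn}.
\]
This is exactly where the exponent $\ell pn$ in the conclusion originates.

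On the complementary event $|Q|\ge(1-\varepsilon/4)\ell$ I would build $\phi$ by pairing vertices of $Q$ into pairs with disjoint out-neighbourhoods and letting $\phi$ be the identity off $J$. Since all out-neighbours of vertices of $Q$ lie in $T_{K,\infty}$, any such $\phi$ is $(\Gr,K)$--admissible, and by Lemma~\ref{l: comp types} the vertex types are unaffected. On $\Event_{\smallrefer{p: supports}}$ every in- and out-degree of $\Gr$ is $O(pn+\log n)$ (the number of right vertices of in-degree $\ge 2pn+u$ is at most $e^{-c(pn+u)}n$, which is $<1$ once $u\gtrsim\log n$), so the ``conflict graph'' on $Q$, in which $i_1\sim i_2$ when $\outneigh(i_1)\cap\outneigh(i_2)\ne\emptyset$, has maximum degree $O((pn+\log n)^2)$; as $|Q|\ge(1-\varepsilon/4)\ell\gg(pn+\log n)^2$, the complement of this conflict graph contains a matching of size at least $\lfloor\varepsilon\ell\rfloor$. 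Retaining exactly $\lfloor\varepsilon\ell\rfloor$ of its edges yields a mapping with $|\phi([n])|=n-\lfloor\varepsilon\ell\rfloor$ and $\phi^{-1}(\phi(i))=i$ for all $i\notin J$.

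The hard part --- and the main obstacle --- is to choose \emph{which} $\lfloor\varepsilon\ell\rfloor$ pairs to keep so that the resulting $\phi$ is $(64\varepsilon pn)$--light, i.e.\ $\big|\{i:\,|\phi^{-1}(\phi(i))|=2\}\cap\inneigh(j)\big|\le 64\varepsilon pn$ for \emph{every} right vertex $j$. The difficulty is that a union bound over the up to $n$ right vertices, applied to a per-vertex failure probability of order $e^{-c\varepsilon pn}$, is far too weak to reach the target $1-e^{-c\ell pn}$, so lightness must be secured \emph{deterministically} from the good event rather than through extra randomisation. This is precisely where the deletion of $B^\ast$ pays off: after it, every right vertex having a $\to$--edge into $Q$ has in-degree $O(pn)$, so each potential glued vertex lies in only $O(pn)$ of the sets $\inneigh(j)$; a load-aware choice of the retained pairs then keeps every load $\big|\{\text{glued}\}\cap\inneigh(j)\big|$ below $64\varepsilon pn$, the key estimate being a comparison of the number of nearly-saturated right vertices against $|Q|\ge(1-\varepsilon/4)\ell$ and against the maximum degree of the conflict graph. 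Carrying this selection argument out quantitatively --- exploiting simultaneously the bounded in-degrees guaranteed by $\Event_{\smallrefer{p: supports}}$ and the removal of every heavy right vertex in $B^\ast$ --- is the technically heaviest step. Once it is in place, verifying $(\Gr,K)$--admissibility and $(64\varepsilon pn)$--lightness of the constructed $\phi$ is routine, so $\Event$ holds whenever $|Q|\ge(1-\varepsilon/4)\ell$; since the latter fails with probability at most $e^{-c\ell pn}$ (the factor $2$ in the statement absorbing this and, if needed, one further large-deviation estimate inside the load-aware step), one obtains $\Prob_J(\Event)\ge 1-2e^{-c_{\smallrefer{l: phi construction}}\ell pn}$.
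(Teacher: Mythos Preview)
Your overall architecture coincides with the paper's: isolate a ``good'' pool of left vertices (those with $\outneigh(i)\subset T_{K,\infty}$, bounded out-degree, and with no out-neighbour of large in-degree), use a hypergeometric tail bound to show that $J$ lands almost entirely in this pool except on an event of probability $\le e^{-c\ell pn}$, and then greedily pair vertices inside $J\cap(\text{good pool})$ into disjoint-out-neighbourhood pairs via the $O((pn+\log n)^2)$ conflict-degree bound. Up to this point the two arguments are essentially identical.

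The gap is precisely where you locate it, and your sketch does not close it. You promise a deterministic ``load-aware'' selection of $\lfloor\varepsilon\ell\rfloor$ pairs keeping every right-vertex load below $64\varepsilon pn$, but a naive greedy analysis does not go through: if fewer than $\varepsilon\ell$ pairs have been selected, the total load contributed is at most $4\varepsilon\ell\,pn$, so the number of saturated right vertices is at most $\ell/16$, yet each such vertex can block up to $2pn$ further pairs, yielding $O(\ell\,pn)\gg|H_1|$ blocked pairs --- so one cannot conclude the greedy reaches $\lfloor\varepsilon\ell\rfloor$. The paper's resolution is not a sharper combinatorial selection but an \emph{auxiliary randomisation followed by a first-moment argument}: draw a uniform $\lfloor 2\varepsilon\ell\rfloor$-subset $Q$ of the matching $H_1$; for each right vertex $u$ the over-load probability is $\le e^{-c\varepsilon pn}$; and --- this is exactly the ``comparison of nearly-saturated vertices'' you gesture at --- only $O(|H_1|/\varepsilon)$ right vertices touch $\ge\varepsilon pn$ pairs of $H_1$ at all (pigeonhole on the total incidence $\le 4pn|H_1|$), so $\Exp_Q\sum_u\indicator_u\le (C|H_1|/\varepsilon)e^{-c\varepsilon pn}$. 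One then \emph{fixes} a realisation $Q_0$ achieving this bound and deletes the few pairs touching the remaining over-loaded vertices; since every right vertex reachable from the good pool has in-degree $<2pn$, this costs $o(\varepsilon\ell)$ pairs and leaves an $H_2\subset Q_0$ of size exactly $\lfloor\varepsilon\ell\rfloor$ which is automatically $(64\varepsilon pn)$-light. The missing ingredient in your proposal is thus this averaging over an auxiliary random subsample, which cleanly replaces the deterministic load-balancing you leave open.
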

\begin{proof}
We will assume that $pn$ is large.
We will construct the mapping $\phi$ in three steps.
First, we show that with large probability we can find a subset $J'$ of $J$ of cardinality at least $\ell/2$
such that for any $i\in J'$ both the left vertex $i^L$ and the right vertex $i^R$ have ``good'' sets of neighbors.
Second, we extract from $J'$ a subset of pairs of left vertices $H_1$ with disjoint sets of out-neighbors in each pair.
Third, we construct subset $H_2\subset H_1$ which will define our compression $\phi$.

Let
$$M:=\big\{j\leq n:\;|\inneigh(j^R)|\geq 2pn\big\}.$$
Define a subset $I$ of all left vertices of $\Gr$ satisfying the following conditions:
\begin{itemize}

\item $|\outneigh(j^L)|\leq 2pn$;

\item $\outneigh(j^L)\subset T_{K,\infty}(\Gr)$;

\item $j^L\notin\inneigh(M)$.

\end{itemize}
Since we condition on event $\EE_{\smallrefer{p: supports}}$ and  because of Proposition~\ref{p: supports},
we have
$|\inneigh(M)|\leq \exp(-c_1pn)n$.
Further, by our assumption on the realization of $\Gr$,
the total number of left vertices of $\Gr$
whose sets of out-neighbors have non-empty intersection with $[n]\setminus T_{K,\infty}(\Gr)$,
is bounded from above by $\exp(-c_2pn)n$, for a constant $c_2>0$.
Finally, again in view of conditioning on $\EE_{\smallrefer{p: supports}}$, cardinality of the set of left vertices
with at least $2pn$ out-neighbors is bounded above by $\exp(-c_3pn)n$.
Combining the bounds, we get $|I^c|\leq \exp(-c'pn)n$, for some $c'>0$.

Now, since $J$ is chosen uniformly at random, standard concentration inequalities imply that the set
$J':=J\cap I$ has cardinality at least $\ell/2$ with probability at least $1-e^{-\tilde c\ell pn}$.

From now on, we fix $J'$ with $|J'|\geq \ell/2$ and work with it as a deterministic set.
We construct the set of pairs $H_1$ step by step as follows:
at $k$--th step, choose any index $j_k$ in $J'$ which was not previously selected.
By our construction of $J'$, $|\outneigh(j_k^L)|\leq 2pn$.
On the other hand, since we conditioned on the event $\EE_{\smallrefer{p: supports}}$,
for every $u\in \outneigh(j_k^L)$, the cardinality of $\inneigh(u)$ is at most $2pn+C\log n$.
Hence, the number of unselected indices in $J'$ whose sets of out-neighbors have a non-empty intersection with
$\outneigh(j_k^L)$, is bounded above by $2pn(2pn+C\log n)$. Choose one of the indices of $J'$ which
does not belong to the set, and add the resulting pair to $H_1$.

Continuing the process, we get a collection of pairs $H_1$ with $|H_1|\geq (|J'|-2pn(2pn+C\log n))/2\geq \ell/8$,
where we used our assumptions on $\ell$ and $pn$.

It remains to construct a subset $H_2\subset H_1$ of pairs of vertices to be glued together by the mapping $\phi$.

Let $Q$ be a uniform random subset of $H_1$ of cardinality $\lfloor 2\varepsilon \ell\rfloor \leq 16\varepsilon |H_1|$
(defined on another probability space).
By the construction of $H_1$, for any right vertex $u$ of $\Gr$
we have
$$\big|\big\{(j,\widetilde j)\in H_1:\;\{j,\widetilde j\}\cap \inneigh(u)\neq\emptyset\big\}\big|\leq 2pn.$$
Hence, a standard concentration inequality implies
$$\Prob_Q\big\{
\big|\big\{(j,\widetilde j)\in Q:\;\{j,\widetilde j\}\cap \inneigh(u)\neq\emptyset\big\}\big|\geq 64\varepsilon pn
\big\}\leq \exp(-c_4\varepsilon pn),\;\;u\leq n.$$
Further, using the condition $|\outneigh(j^L)|\leq 2pn$, $j\in J'$, we get
$$\big|\big\{u\leq n:\;\big|\big\{(j,\widetilde j)\in H_1:\;\{j,\widetilde j\}\cap \inneigh(u)\neq\emptyset\big\}\big|\geq
\varepsilon pn \big\}\big|\leq C_2|H_1|/\varepsilon.$$
For every $u\leq n$, denote by $\indicator_u$ the indicator of the event (with respect to the randomness of $Q$)
$$\big\{\big|\big\{(j,\widetilde j)\in Q:\;\{j,\widetilde j\}\cap \inneigh(u)\neq\emptyset\big\}\big|\geq 64\varepsilon pn\big\}.$$
Then, by the above, $\Exp_Q\indicator_u\leq \exp(-c_4\varepsilon pn)$ for all $u\leq n$ and, moreover,
$\indicator_u=0$ for all but at most $C_2|H_1|/\varepsilon$ indices. Hence,
$$\sum\limits_{u=1}^n\indicator_u\leq \frac{C_2|H_1|}{\varepsilon}\exp(-c_4\varepsilon pn)\leq \exp(-c_5\varepsilon pn)|H_1|$$
for some realization $Q_0$ of $Q$.

Denote by $R$ the collection of all indices $u\leq n$ with $\big|\big\{(j,\widetilde j)\in Q_0:\;\{j,\widetilde j\}\cap\inneigh(u)
\neq\emptyset\big\}\big|\leq 64\varepsilon pn$. The above argument shows that
$|R^c|\leq \exp(-c_5\varepsilon pn)|H_1|$.

Define $H_2$ as the subset of $Q_0$ of cardinality $\lfloor\varepsilon \ell\rfloor$,
where we remove all pairs of indices $\{j,\widetilde j\}$
intersecting with $\inneigh(u)$ for some right vertex $u\in R^c$ (such a subset exists
since $j^L\notin\inneigh(M)$ for all $j\in J'$, and $|R^c|$ is small).
Finally, observe that if we define any surjective mapping $\phi:[n]\to [n-| H_2|]$ with
$\phi(j)=\phi(\widetilde j)$ for all $(j,\widetilde j)\in H_2$,
is $(\Gr,K)$--admissible and is $(64\varepsilon pn)$--light.
\end{proof}

%\bigskip

As the next step, we extend the notion of chains to compressed graphs.
Specifically, let $m\leq n$, let $G\in\grch_{n,n}$,
and let $\phi:[n]\to[m]$ be a $(G,K)$--admissible map.
We will say that a sequence $(j_\ell)_{\ell=1}^k$ of {\it right} vertices of $\phi(G)$
is {\it a $\phi$--chain of length $k$ for $\phi(G)$} if for any $\ell<k$ we have
$j_{\ell}\neq j_{\ell+1}$ and $\phi(j_\ell)^L\to j_{\ell+1}^R$ (i.e.\ the edge from $\phi(j_\ell)^L$
to $j_{\ell+1}^R$ belongs to the edge set of $\phi(G)$).
If all $j_\ell$'s ($1\leq \ell\leq k$) are distinct, we will call such a $\phi$--chain {\it cycle-free}.
Further, if $j_\ell$'s for $1\leq \ell\leq k-1$ are all distinct but $j_k=j_{u}$ for some $u<k-1$, the $\phi$--chain will be called {\it cyclic}.
Note that for $m=n$ and $\phi$ being the identity map, the above notion of chains coincides with the one given
in the previous subsection.
Similarly to the ``uncompressed'' setting,
$\phi$--chains can be associated with ``zig-zag'' paths on the graph $\phi(G)$.
Namely, $(j_\ell)_{\ell=1}^k$ lies on the path $j_1^R\to \phi(j_1)^L\to j_2^R\to \phi(j_2)^L\to\dots \to \phi(j_{k-1})^L\to j_k^R$,
with $j_\ell^R\neq j_{\ell+1}^R$ for all $\ell<k$.

\medskip

The definition of self-balancing $\phi$--chains and contact elements carries to the generalized setting in a straightforward way.
We restate the definitions for completeness.
Let $G\in\grch_{n,n}$ and let $\phi:[n]\to[m]$ be $(G,K)$--admissible.
Given $K>0$, a $\phi$--chain $J=(j_\ell)_{\ell=1}^k$ for $\phi(G)$ is
{\it $(K,\phi)$--self-balancing} if
$j_\ell\notin T_{K,\infty}(\phi(G))$ for all $\ell\leq k$ and, moreover,
for any $\ell\leq k$, we have
$$\outneigh^\phi(j_\ell)\subset \inneigh^\phi\Big(\bigcup_{g\geq 1}T_{K,g}\big(\phi(G))\setminus\{j_\ell\}\Big).$$

By a ``self-balancing chain'' for a graph $G\in\grch_{n,n}$ we mean $\psi$--self-balancing chain
with $\psi$ being the identity mapping. This makes the new generalized notions compatible
with the previous definitions.

The following lemma allows to easily transfer the results of the previous subsection to the new generalized setting.
\begin{lemma}[Compression via an admissible mapping]\label{l: compviaadm}
Let $G\in\grch_{n,n}$ and let $\phi:[n]\to[m]$ be $(G,K)$--admissible (for some $K>0$).
Then
\begin{itemize}

\item Any chain $J$ for $G$ is also a $\phi$--chain for $\phi(G)$. Converse is not true in general, however

\item Any $\phi$--chain $J$ for $\phi(G)$ such that $J\cap T_{K,\infty}(G)=\emptyset$, is also a chain for $G$;

\item Any cyclic $\phi$--self-balancing chain for $\phi(G)$ is also an self-balancing cyclic chain for $G$, and vice versa;

\item Any cycle-free $\phi$--self-balancing chain for $\phi(G)$ is also a cycle-free self-balancing chain for $G$, and vice versa.

\end{itemize}
\end{lemma}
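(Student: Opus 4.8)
The plan is to deduce all four assertions from three elementary ``transfer'' identities between neighborhoods in $G$ and in $\phi(G)$, together with the identity of vertex types already established in Lemma~\ref{l: comp types}. First I would record, straight from the construction of $\phi(G)$: for every left vertex $i$ of $G$ one has $\outneigh^\phi(\phi(i)^L)=\outneigh(i^L)$ when $i$ is not glued and $\outneigh^\phi(\phi(i)^L)=\outneigh(i^L)\cup\outneigh((i')^L)$ when $\phi^{-1}(\phi(i))=\{i,i'\}$, so in particular $\outneigh^\phi(\phi(i)^L)\supseteq\outneigh(i^L)$ always; and for every right vertex $j$ of $G$ and every set $S$ of right vertices, $\outneigh^\phi(j^R)=\phi(\outneigh(j^R))$ and $\inneigh^\phi(S)=\phi(\inneigh(S))$, where $\phi$ acts on the relevant subsets of $[n]$. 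I would also note the following consequence of $(G,K)$--admissibility: every glued left vertex $i$ satisfies $\outneigh(i^L)\subset T_{K,\infty}(G)$, hence $\outneigh(i^L)$ is disjoint from $\bigcup_{g\geq 1}T_{K,g}(G)$ and so $i\notin\inneigh\big(\bigcup_{g\geq 1}T_{K,g}(G)\big)$.

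For the first two bullets: if $(j_\ell)_{\ell=1}^k$ is a chain for $G$, then for $\ell<k$ the edge $j_\ell^L\to j_{\ell+1}^R$ lies in $G$, i.e.\ $j_{\ell+1}\in\outneigh(j_\ell^L)\subseteq\outneigh^\phi(\phi(j_\ell)^L)$, so $\phi(j_\ell)^L\to j_{\ell+1}^R$ lies in $\phi(G)$; since the distinctness conditions are unchanged and the ``horizontal'' steps $j_\ell^R\to\phi(j_\ell)^L$ survive compression (they are inherited from the horizontal edges of $G$), $(j_\ell)$ is a $\phi$--chain for $\phi(G)$. The converse genuinely fails because $\phi(j_\ell)^L$ may reach $j_{\ell+1}^R$ only through the glued partner of $j_\ell$. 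However, if $(j_\ell)_{\ell=1}^k$ is a $\phi$--chain avoiding $T_{K,\infty}(G)$, I would argue that no $j_\ell$ with $\ell<k$ can be glued: were it glued, $\outneigh^\phi(\phi(j_\ell)^L)\subset T_{K,\infty}(G)$ would force $j_{\ell+1}\in T_{K,\infty}(G)$, contradicting the hypothesis; hence $\outneigh^\phi(\phi(j_\ell)^L)=\outneigh(j_\ell^L)$ and the edge $j_\ell^L\to j_{\ell+1}^R$ is recovered in $G$, so $(j_\ell)$ is a chain for $G$ (the horizontal edges being available since $G\in\grch_{n,n}$).

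For the last two bullets, I would first observe that being cyclic or cycle-free depends only on the right-vertex sequence, so it transfers in both directions by the previous step; it then remains to match the self-balancing conditions. If $(j_\ell)_{\ell=1}^k$ is $(K,\phi)$--self-balancing it avoids $T_{K,\infty}(\phi(G))=T_{K,\infty}(G)$, so by the previous step it is already a chain for $G$; and for each $\ell$ its self-balancing condition $\outneigh^\phi(j_\ell^R)\subseteq\inneigh^\phi\big(\bigcup_{g\geq 1}T_{K,g}(\phi(G))\setminus\{j_\ell\}\big)$ rewrites, via the transfer identities and Lemma~\ref{l: comp types}, as $\phi(\outneigh(j_\ell^R))\subseteq\phi(\inneigh(S))$ with $S:=\bigcup_{g\geq 1}T_{K,g}(G)\setminus\{j_\ell\}$. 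The converse direction (from $G$ to $\phi(G)$) follows just by applying $\phi$, using Lemma~\ref{l: comp types} and that a self-balancing chain for $G$ avoids $T_{K,\infty}(G)=T_{K,\infty}(\phi(G))$. The forward direction is the one place where non-injectivity of $\phi$ must be handled: given $i\in\outneigh(j_\ell^R)$, pick $i'$ with $\phi(i)=\phi(i')$ and $i'\in\inneigh(S)$; if $i'\neq i$ then $i'$ is glued, hence $i'\notin\inneigh\big(\bigcup_{g\geq 1}T_{K,g}(G)\big)\supseteq\inneigh(S)$, a contradiction, so $i'=i$ and $i\in\inneigh(S)$; thus $\outneigh(j_\ell^R)\subseteq\inneigh(S)$, and $(j_\ell)$ is self-balancing for $G$. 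That small case analysis, plus keeping left/right vertices and edge orientations straight under compression, is the only subtlety; everything else is a direct unwinding of the definitions and Lemma~\ref{l: comp types}.
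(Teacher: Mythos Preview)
Your proof is correct and follows essentially the same approach as the paper: both arguments unpack the definitions, invoke Lemma~\ref{l: comp types} for the invariance of vertex types, and use the key consequence of admissibility that glued left vertices have all their out-neighbors in $T_{K,\infty}(G)$. The only cosmetic difference is where the ``not glued'' contradiction is localized: for the second bullet the paper observes that $j_\ell$ itself lies in $\outneigh(j_\ell^L)$ via the horizontal edge (so gluing $j_\ell$ would put $j_\ell\in T_{K,\infty}(G)$), whereas you use $j_{\ell+1}$; and for the self-balancing direction the paper argues that $\phi$ is injective on $\inneigh(j_\ell)$ (no $i$ there can be glued), while you argue that no glued $i'$ can land in $\inneigh(S)$ --- both are the same observation applied at different endpoints.
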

\begin{proof}
The first assertion of the lemma follows immediately from definitions.

For the second assertion, assume that $J=(j_\ell)_{\ell=1}^k$ is a $\phi$--chain for $\phi(G)$.
Then $j_\ell^R\to\phi(j_\ell)^L\to j_{\ell+1}^R$ for all $1\leq \ell<k$. Now, the condition $J\cap T_{K,\infty}(G)=\emptyset$
implies that $\phi^{-1}(\phi(j_\ell))=j_\ell$.
On the other hand, $j_\ell^R\to j_{\ell}^L\to j_{\ell+1}^R$ in $G$ if and only if
$j_\ell^R\to\phi(j_\ell)^L\to j_{\ell+1}^R$ in $\phi(G)$, $\ell<k$.
Thus, $J$ is a chain for $G$.

For the third and fourth assertions, let $J=(j_\ell)_{\ell=1}^k$ be a
$(K,\phi)$--self-balancing cyclic (resp., cycle-free) chain for $\phi(G)$.
Then, in particular, $j_\ell\notin T_{K,\infty}(\phi(G))$ for all $\ell\leq k$,
and hence, by the definition of an admissible mapping,
for any left vertex $i$ of $G$, such that $\outneigh(i)\cap J\neq \emptyset$, we necessarily have
$\phi^{-1}(\phi(i))=i$. That is, restricted to $\inneigh(j_\ell)$ ($\ell\leq k$), $\phi$
acts as a bijection. This, together with the stability of the vertex types under the ``compression''
operation (Lemma~\ref{l: comp types})
implies that $J$ must be an self-balancing cyclic (resp., cycle-free) chain for $G$
as well (in the sense of subsection~\ref{subs: chains}).
The converse statement is checked in a similar way.
\end{proof}

Combining Proposition~\ref{p: self-balancing stat}, Lemma~\ref{l: local to global random} and
Lemma~\ref{l: compviaadm}, we get
\begin{prop}[Statistics of self-balancing $\phi$--chains]\label{p: chains combined}
Let $n,p,\Gr$ satisfy assumptions \eqref{Asmp on G}.
Define
\begin{align*}
\Event_{\smallrefer{p: chains combined}}^1&:=\Big\{\Big|\inneigh\Big(\bigcup_{g\geq 1}T_{K_0,g}(\Gr)\Big)\Big|
\leq n\,e^{- c_{\smallrefer{p: self-balancing stat}}pn}\Big\};\\
\Event_{\smallrefer{p: chains combined}}^2&:=\Big\{\mbox{For any $(\Gr,K_0/2)$--admissible map $\phi$}\\
&\hspace{0.9cm}\mbox{there are no $(K_0/2,\phi)$--self-balancing cyclic chains for $\phi(\Gr)$}\Big\};\\
\Event_{\smallrefer{p: chains combined}}^3&:=\Big\{\mbox{For any $(\Gr,K_0/2)$--admissible map $\phi$ and any}\;\;1\leq k\leq n^{1/4},\\
&\hspace{0.9cm}\mbox{all but at most }n\,e^{-c_{\smallrefer{p: self-balancing stat}}pnk}\mbox{ cycle-free $\phi$--chains of length $k$}\\
&\hspace{0.9cm}\mbox{for $\phi(\Gr)$ are not $(K_0/2,\phi)$--self-balancing}
\Big\},
\end{align*}
and set
\begin{align*}
\Event_{\smallrefer{p: chains combined}}:=
\Event_{\smallrefer{p: chains combined}}^1\cap \Event_{\smallrefer{p: chains combined}}^2\cap
\Event_{\smallrefer{p: chains combined}}^3.
\end{align*}
Then $\Prob(\Event_{\smallrefer{p: chains combined}})\geq 1-\exp(-c_{\smallrefer{p: chains combined}}pn)-n^{-c_{\smallrefer{p: chains combined}}}$
for a universal constant $c_{\smallrefer{p: chains combined}}>0$.
\end{prop}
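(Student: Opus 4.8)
The plan is to reduce everything to the \emph{deterministic} inclusion $\Event_{\smallrefer{p: self-balancing stat}}\subseteq\Event_{\smallrefer{p: chains combined}}$, after which the probability bound is just a restatement of Proposition~\ref{p: self-balancing stat} with $c_{\smallrefer{p: chains combined}}:=c_{\smallrefer{p: self-balancing stat}}$. Recall that $\Gr\in\grch_{n,n}$ almost surely, so Lemma~\ref{l: compviaadm} applies to $\Gr$; one uses it throughout with parameter $K_0/2$, which is exactly the parameter appearing in the admissible maps and in the self-balancing condition in the definition of $\Event_{\smallrefer{p: chains combined}}$, and, by Lemma~\ref{l: comp types}, all relevant vertex types are unchanged under compression.

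Then I would argue clause by clause. The event $\Event^1_{\smallrefer{p: chains combined}}$ is verbatim the first clause of $\Event_{\smallrefer{p: self-balancing stat}}$, so there is nothing to prove there. For $\Event^2_{\smallrefer{p: chains combined}}$: fix a realization of $\Gr$ lying in $\Event_{\smallrefer{p: self-balancing stat}}$ and an arbitrary $(\Gr,K_0/2)$--admissible map $\phi$; by the third bullet of Lemma~\ref{l: compviaadm} every $(K_0/2,\phi)$--self-balancing cyclic chain for $\phi(\Gr)$ is a $(K_0/2)$--self-balancing cyclic chain for $\Gr$, and the third clause of $\Event_{\smallrefer{p: self-balancing stat}}$ says $\Gr$ has none. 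For $\Event^3_{\smallrefer{p: chains combined}}$: for the same $\Gr$ and $\phi$, and each $1\le k\le n^{1/4}\le n$, the fourth bullet of Lemma~\ref{l: compviaadm} shows that the set of $(K_0/2,\phi)$--self-balancing cycle-free $\phi$--chains of length $k$ for $\phi(\Gr)$ is contained in $\IsoChains_k$ (the set of $(K_0/2)$--self-balancing cycle-free chains of length $k$ for $\Gr$), whose cardinality is at most $n\,e^{-c_{\smallrefer{p: self-balancing stat}}pnk}$ by the second clause of $\Event_{\smallrefer{p: self-balancing stat}}$; hence all but at most that many cycle-free $\phi$--chains of length $k$ are not $(K_0/2,\phi)$--self-balancing. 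Since $\phi$ was arbitrary in the last two steps, the three clauses of $\Event_{\smallrefer{p: chains combined}}$ all hold.

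This gives $\Event_{\smallrefer{p: self-balancing stat}}\subseteq\Event_{\smallrefer{p: chains combined}}$, so $\Prob(\Event_{\smallrefer{p: chains combined}})\ge\Prob(\Event_{\smallrefer{p: self-balancing stat}})\ge 1-\exp(-c_{\smallrefer{p: self-balancing stat}}pn)-n^{-c_{\smallrefer{p: self-balancing stat}}}$ by Proposition~\ref{p: self-balancing stat} (whose proof itself relies on Lemma~\ref{l: local to global random}). The only point requiring care is the bookkeeping of the parameter $K_0/2$ across the admissibility condition, the self-balancing condition, and the vertex types appearing in each; once that is pinned down, there is no real obstacle, since every quantitative estimate — the bound on $|\IsoChains_k|$ and the absence of self-balancing cyclic chains — has already been done in Proposition~\ref{p: self-balancing stat}, and the transfer from $\Gr$ to $\phi(\Gr)$ is purely combinatorial via Lemma~\ref{l: compviaadm}.
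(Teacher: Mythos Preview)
Your proof is correct and follows essentially the same approach as the paper, which simply says the result follows by ``combining Proposition~\ref{p: self-balancing stat}, Lemma~\ref{l: local to global random} and Lemma~\ref{l: compviaadm}''. You have spelled out the deterministic inclusion $\Event_{\smallrefer{p: self-balancing stat}}\subseteq\Event_{\smallrefer{p: chains combined}}$ explicitly, matching each clause of $\Event_{\smallrefer{p: chains combined}}$ to the corresponding clause of $\Event_{\smallrefer{p: self-balancing stat}}$ via the bijection in Lemma~\ref{l: compviaadm}, which is exactly the intended mechanism.
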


\subsubsection{Number of $\phi$--chains}

The next lemma bounds the number of starting vertices of $\phi$--chains terminating in a set of right vertices $S$.

\begin{lemma}\label{l: number of chains}
Let $n,p,\Gr$ satisfy assumption \eqref{Asmp on G},
and $\Event_{\smallrefer{p: supports}}$ be as in Proposition~\ref{p: supports}.
Fix a realization of $\Gr$ in $\Event_{\smallrefer{p: supports}}$.
Let $K>0$ and let $\phi$ be a $(\Gr,K)$--admissible map.
Then for any subset $S\subset[n]$ and any $k\geq 1$, the set
$$\ChainsSet_{k,S}:=\big\{j\leq n:\;\mbox{there is a $\phi$--chain $J=(j_\ell)_{\ell=1}^{u}$ for $\phi(\Gr)$
with}\;u\leq k,\;\;
\mbox{$j_1=j$, $j_u\in S$}\big\}$$
has cardinality at most
$$\Big(C_{\smallrefer{l: number of chains}}pn+C_{\smallrefer{l: number of chains}}\log\frac{n}{|S|}\Big)^{k-1}\,|S|,$$
where $C_{\smallrefer{l: number of chains}}>0$ is a universal constant.
\end{lemma}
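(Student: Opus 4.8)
The plan is to argue by induction on $k$, with the induction step reduced to bounding the set of ``one-step predecessors'' of a set of right vertices in the compressed graph $\phi(\Gr)$; that bound will in turn come straight from Lemma~\ref{l: union of supp}. Throughout we may assume $S\ne\emptyset$, the other case being trivial.

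First I would record the recursive structure of $\ChainsSet_{k,S}$. For $k=1$ a $\phi$--chain of length one is a single right vertex, so $\ChainsSet_{1,S}=S$ and the asserted bound is an equality. For $k\ge 2$, given $j\in\ChainsSet_{k,S}$ witnessed by a $\phi$--chain $(j_1,\dots,j_u)$ with $j_1=j$, $u\le k$, $j_u\in S$: either $u\le k-1$, so that $j\in\ChainsSet_{k-1,S}$, or $u\ge 2$, in which case $(j_2,\dots,j_u)$ is a $\phi$--chain of length $u-1\le k-1$ ending in $S$, so $j_2\in\ChainsSet_{k-1,S}$ and $\phi(j)^L\to j_2^R$ is an edge of $\phi(\Gr)$, i.e.\ $\phi(j)\in\inneigh^\phi(j_2)$. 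Conversely, if $\phi(j)\in\inneigh^\phi(j')$ for some $j'\in\ChainsSet_{k-1,S}$ with $j\ne j'$, then prepending $j$ to a witnessing chain for $j'$ produces a $\phi$--chain of length $\le k$ from $j$ to $S$ (the ``no immediate repeat'' requirement is respected because $j\ne j'$), while the case $j=j'$ already gives $j\in\ChainsSet_{k-1,S}$. Hence, writing $N(I):=\{j\le n:\ \phi(j)\in\inneigh^\phi(I)\}$ for a set $I$ of right vertices,
\[
\ChainsSet_{k,S}\ \subseteq\ \ChainsSet_{k-1,S}\ \cup\ N\big(\ChainsSet_{k-1,S}\big).
\]

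Second, I would carry out the compression bookkeeping to bound $|N(I)|$. From the construction of $\phi(\Gr)$, for any right vertex $j$ one has $\inneigh^\phi(j)=\phi\big(\inneigh(j)\big)$, and since the two left vertices of any glued pair have disjoint out-neighborhoods (this is exactly the admissibility condition), $\phi$ is injective on $\inneigh(j)$; consequently $\inneigh^\phi(I)=\phi\big(\inneigh(I)\big)$. As every fiber of $\phi$ has at most two elements,
\[
|N(I)|=\big|\phi^{-1}\big(\inneigh^\phi(I)\big)\big|\ \le\ 2\,\big|\inneigh^\phi(I)\big|\ \le\ 2\,\big|\inneigh(I)\big|\ \le\ 2\sum_{j\in I}\big|\inneigh(j)\big|.
\]
Since we have conditioned on a realization of $\Gr$ in $\Event_{\smallrefer{p: supports}}$, Lemma~\ref{l: union of supp} applied with $M=I$ yields $\sum_{j\in I}|\inneigh(j)|\le C_{\smallrefer{l: union of supp}}\big(pn+\log(n/|I|)\big)|I|$, hence $|N(I)|\le 2C_{\smallrefer{l: union of supp}}\big(pn+\log(n/|I|)\big)|I|$.

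Finally I would solve the recursion. Put $a_u:=|\ChainsSet_{u,S}|$; since $S\subseteq\ChainsSet_{u,S}\subseteq[n]$ we have $|S|\le a_u\le n$, so $\log(n/a_u)\le\log(n/|S|)$. Combining the two displays above,
\[
a_k\ \le\ a_{k-1}+\big|N(\ChainsSet_{k-1,S})\big|\ \le\ \big(1+2C_{\smallrefer{l: union of supp}}\big(pn+\log(n/|S|)\big)\big)\,a_{k-1},
\]
and iterating down to $a_1=|S|$ gives $a_k\le\big(1+2C_{\smallrefer{l: union of supp}}(pn+\log(n/|S|))\big)^{k-1}|S|$. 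Since $pn\ge C_\alpha\ge 1$ under \eqref{Asmp on p weak}, the factor in parentheses is at most $C_{\smallrefer{l: number of chains}}\big(pn+\log(n/|S|)\big)$ with $C_{\smallrefer{l: number of chains}}:=1+2C_{\smallrefer{l: union of supp}}$, which is the claim. I do not expect a real obstacle here; the only place that needs care is the compression bookkeeping in the second step — establishing $\inneigh^\phi(I)=\phi(\inneigh(I))$ and the injectivity of $\phi$ on each $\inneigh(j)$, and then the harmless verification in the first step that prepending a vertex to a $\phi$--chain respects its definition.
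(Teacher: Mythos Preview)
Your proof is correct and follows essentially the same approach as the paper: both argue by induction on $k$, observe that the first vertex $j_1$ of a $\phi$--chain of length $\ge 2$ lies in $\phi^{-1}\big(\inneigh^\phi(\ChainsSet_{k-1,S})\big)$, bound this set by $2\,|\inneigh(\ChainsSet_{k-1,S})|$, and then invoke Lemma~\ref{l: union of supp} together with the monotonicity $\log(n/|\ChainsSet_{k-1,S}|)\le\log(n/|S|)$ to close the recursion. Your presentation is in fact slightly cleaner; the injectivity of $\phi$ on each $\inneigh(j)$ that you note is true but not actually needed, since $|\phi(\inneigh(I))|\le|\inneigh(I)|$ holds trivially.
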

\begin{proof}
We will construct the $\phi$--chains with the last element in $S$ ``backwards'',
using the definition of $\Event_{\smallrefer{p: supports}}$.
Clearly, $\ChainsSet_{1,S}=S$.
Take any $k> 1$.
Observe that cardinality of the set $\ChainsSet_{k,S}$ can be bounded
from above by {\it twice} the cardinality of the union
of $\inneigh^\phi(i)$, $i\in \ChainsSet_{k-1,S}$, plus the cardinality of $S$,
with the latter coming from chains of length one.
Indeed, any $\phi$--chain $J=(j_\ell)_{\ell=1}^u$ of length $u\geq 2$
must satisfy $j_1^R\to\phi(j_1)^L\to j_2^R$,
and therefore $j_1$ necessarily belongs to the set $\phi^{-1}(\inneigh^\phi(j_2))$.
Thus, all possible choices of $j_1$ are contained in the set
$\phi^{-1}\big(\inneigh^\phi(\ChainsSet_{k-1,S})\big)$
of cardinality
at most $2\,|\inneigh^\phi(\ChainsSet_{k-1,S})|$.
In view of Lemma~\ref{l: union of supp}, we obtain
\begin{align*}
\frac{1}{2}\big|\ChainsSet_{k,S})\big|&\leq
\big|\inneigh^\phi(\ChainsSet_{k-1,S})\big|
\leq
\big|\inneigh(\ChainsSet_{k-1,S})\big|
\leq C
\Big(pn+\log\frac{n}{|\ChainsSet_{k-1,S}|}\Big)\,|\ChainsSet_{k-1,S}|
+|S|\\
&\leq C'\Big(pn+\log\frac{n}{|S|}\Big)\,|\ChainsSet_{k-1,S}|,\quad k>1.
\end{align*}
Applying the estimate iteratively, we get the result.
\end{proof}

As a corollary of the last estimate, we obtain
\begin{lemma}\label{l: n of chains 2}
Let $n,p,\Gr$ satisfy assumption \eqref{Asmp on G},
and $\Event_{\smallrefer{p: supports}}$ be as in Proposition~\ref{p: supports}.
Fix a realization of $\Gr$ in $\Event_{\smallrefer{p: supports}}$.
Further, take $K>0$ and a $(\Gr,K)$--admissible map $\phi$.
Let $V$ be a subset of $\phi([n])$, let $k\geq 1$,
and let $\mathcal J$ be a collection of $\phi$--chains for $\phi(\Gr)$
with distinct first elements, each $J=(j_\ell)_{\ell=1}^p\in\mathcal J$ of length at most $k$
and such that
$$\inneigh^\phi(J)\cap V\neq\emptyset.$$
Then necessarily
$$|\mathcal J|\leq \Big(C_{\smallrefer{l: n of chains 2}}pn+C_{\smallrefer{l: n of chains 2}}\log\frac{n}{|V|}\Big)^{k}\,|V|$$
for a universal constant $C_{\smallrefer{l: n of chains 2}}>0$.
\end{lemma}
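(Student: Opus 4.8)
The plan is to deduce this lemma directly from Lemma~\ref{l: number of chains} by replacing the target set $V$ of left vertices of $\phi(\Gr)$ with a set of right vertices of comparable size. I would set $S:=\outneigh^\phi(V)=\bigcup_{i\in V}\outneigh^\phi(i)$, the right vertices reachable from $V$ in one step. By the definition of the compressed graph, $\outneigh^\phi(i)=\bigcup_{i'\in\phi^{-1}(i)}\outneigh(i')$ for every left vertex $i$, so $S$ coincides with $\outneigh(\phi^{-1}(V))$ computed in $\Gr$, where $\phi^{-1}(V)\subset[n]$ satisfies $|V|\le|\phi^{-1}(V)|\le 2|V|$ by surjectivity of $\phi$ and the bound $|\phi^{-1}(\cdot)|\le 2$. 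Also $|S|\ge|V|$, since for each $i\in V$ one may pick $j\in\phi^{-1}(i)$ and the horizontal edge $j^L\to j^R$ of $\Gr$ gives $j^R\in\outneigh^\phi(i)\subset S$, with distinct $i$ producing distinct $j$. As we have fixed a realization of $\Gr$ in $\Event_{\smallrefer{p: supports}}$, Lemma~\ref{l: union of supp} applied to the left-vertex set $\phi^{-1}(V)$ yields
\[
|V|\;\le\;|S|\;\le\;C_{\smallrefer{l: union of supp}}\Big(pn+\log\tfrac n{|\phi^{-1}(V)|}\Big)\,|\phi^{-1}(V)|\;\le\;2C_{\smallrefer{l: union of supp}}\Big(pn+\log\tfrac n{|V|}\Big)\,|V|,
\]
and in particular $\log\frac n{|S|}\le\log\frac n{|V|}$.

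Next, take any $J=(j_\ell)_{\ell=1}^q\in\mathcal J$, so $q\le k$ and $\inneigh^\phi(J)\cap V\ne\emptyset$; choose $i\in V$ and an index $\ell\le q$ with $i\to j_\ell$ in $\phi(\Gr)$, so that $j_\ell\in\outneigh^\phi(i)\subset S$. The prefix $(j_1,\dots,j_\ell)$ is again a $\phi$--chain for $\phi(\Gr)$ --- the defining conditions of a $\phi$--chain are local, hence any prefix of a $\phi$--chain is one --- of length $\ell\le k$ and terminating in $S$, so its first element $j_1$ lies in $\ChainsSet_{k,S}$ in the notation of Lemma~\ref{l: number of chains}. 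Since the members of $\mathcal J$ have pairwise distinct first elements, $|\mathcal J|\le|\ChainsSet_{k,S}|$, and Lemma~\ref{l: number of chains} combined with the bound on $|S|$ above gives
\[
|\mathcal J|\;\le\;\Big(C_{\smallrefer{l: number of chains}}pn+C_{\smallrefer{l: number of chains}}\log\tfrac n{|S|}\Big)^{k-1}|S|\;\le\;\Big(C_{\smallrefer{l: number of chains}}pn+C_{\smallrefer{l: number of chains}}\log\tfrac n{|V|}\Big)^{k-1}\,2C_{\smallrefer{l: union of supp}}\Big(pn+\log\tfrac n{|V|}\Big)|V|.
\]
Absorbing the trailing linear factor raises the $(k-1)$-st power to a $k$-th power at the cost of enlarging the constant (one may take $C_{\smallrefer{l: n of chains 2}}:=2C_{\smallrefer{l: union of supp}}C_{\smallrefer{l: number of chains}}$, which we assume $\ge1$), and the case $k=1$ is immediate since then $|\mathcal J|\le|S|$. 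This gives the asserted bound.

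I do not expect any genuine obstacle: the lemma is a straightforward corollary, as the excerpt itself signals. The two points needing care are (i) tracking how $\inneigh^\phi$ and $\outneigh^\phi$ behave under the compression $\phi$ --- this is exactly where $(\Gr,K)$--admissibility is used, keeping the cost of passing to $\phi$--preimages down to a factor $2$ --- and (ii) recognising that the correct ``target'' to feed into Lemma~\ref{l: number of chains} is the right-vertex image $\outneigh^\phi(V)$, whose cardinality is controlled by Lemma~\ref{l: union of supp} precisely because we have conditioned on $\Event_{\smallrefer{p: supports}}$.
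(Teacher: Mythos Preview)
Your proof is correct and follows essentially the same route as the paper: your set $S=\outneigh^\phi(V)$ is exactly the paper's set $V'=\{j:\inneigh^\phi(j)\cap V\neq\emptyset\}$, and both proofs then feed this right-vertex set into Lemma~\ref{l: number of chains} after bounding its size via Lemma~\ref{l: union of supp}. If anything, you are slightly more careful than the paper in making explicit the lower bound $|S|\ge|V|$ (needed to control $\log(n/|S|)$), which the paper's proof leaves implicit.
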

\begin{proof}
First, we estimate cardinality of the subset $V'\subset[n]$ of all right vertices $j$ such that
$\inneigh^\phi(j)\cap V\neq\emptyset$.
Clearly, the last condition is equivalent to $\inneigh(j)\cap \phi^{-1}(V)\neq\emptyset$,
where $|\phi^{-1}(V)|\leq 2|V|$.
Lemma~\ref{l: union of supp} implies
$$|V'|\leq C\Big(pn+\log\frac{n}{|V|}\Big)\,|V|$$
for a universal constant $C>0$.
On the other hand, by the definition of sets $\ChainsSet_{k,S}$
from Lemma~\ref{l: number of chains}, we get
$|\mathcal J|\leq|\ChainsSet_{k,V'}|%+|V'|
$,
and hence, by the cardinality estimate from Lemma~\ref{l: number of chains},
$$|\mathcal J|\leq \Big(C'pn+C'\log\frac{n}{|V'|}\Big)^{k-1}\,|V'|.$$
The result follows.
\end{proof}

\subsection{Shells} \label{subs: shells}

In our approach, we separate observations related to the structure of
the underlying graph $\Gr$, from linear algebraic aspects of the problem.
The notion which connects these two parts of the argument is {\it shell}.

Let $G\in\grc_{k,m}$, let $d\geq 1$ be a natural number,
and $M\subset[k]$ be any subset of left vertices of $G$.
We say that a finite sequence $\Am=(\Ael_\ell)_{\ell=0}^{d}$ of sets of right vertices of $G$ is an
{\it $M$--shell of depth $d$ for $G$}
if for any $0\leq \ell\leq d-1$ and any
$j\in \Ael_\ell$ we have the following: whenever a left vertex $i\in [k]\setminus M$ is such that $i\leftarrow j$,
there is a right vertex $j'=j'(i,j)\neq j$ in $\Ael_{\ell+1}$ such that
$i\to j'$.
The sets $\Ael_\ell$ are called {\it layers} of the shell.
The subset $\Ael_0$ will be called {\it the center} of $\Am$.

As we prove below, assuming certain expansion properties for the graph $G$, we can show that
{\it any} shell centered in $T_{K,\infty}(G)$ (with the center of sufficiently large cardinality),
must be fast expanding in the sense that cardinalities of the layers grow at an exponential rate.
\begin{lemma}[Expansion property of shells]\label{l: expansion for structures}
Let $k,m$ be large integers, $M\subset[k]$,
$K>0$; let $G\in\grc_{k,m}$, and assume that for some $\delta\in(0,1]$ and
$\varepsilon\in (0,1/32)$
we have
\begin{equation}\label{eq: col exp prop}
\big|\inneigh(I)\big|\geq \sum_{i\in I}\big|\inneigh(i)\big|-\varepsilon K|I|,\;\;
\mbox{for any subset of right vertices $I$ with $|I|\leq \delta m$}.
\end{equation}
Further, fix any non-empty $J\subset T_{K,\infty}(G)$ with $|J|\leq \delta m/2$
such that
$$\frac{2}{K}\sum_{i\in M}|\outneigh(i)|\leq \frac{|J|}{2}.$$
Then any $M$--shell $\Am=(\Ael_\ell)_{\ell=0}^{d}$ for
$G$ of depth $d\geq 1$
centered in $J$ (if such a shell exists), satisfies
$$|\Ael_\ell|\geq \min\big(\lfloor\delta m/4\rfloor,(32\varepsilon)^{-\ell}|J|\big),\quad\quad 0\leq\ell\leq d.$$
\end{lemma}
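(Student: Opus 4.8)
The plan is to follow the trace of the shell on the infinite type. Set $B_\ell:=\Ael_\ell\cap T_{K,\infty}(G)$; since $J\subset T_{K,\infty}(G)$ we have $B_0=\Ael_0=J$, and since $\Ael_\ell\supset B_\ell$ it suffices to prove the slightly stronger assertion $|B_\ell|\ge\min(\lfloor\delta m/4\rfloor,(32\varepsilon)^{-\ell}|J|)$ for all $\ell\le d$. I would do this by induction on $\ell$, the case $\ell=0$ being trivial; the inductive step rests on the observation that, started from $T_{K,\infty}(G)$, the shell recursion does not leave $T_{K,\infty}(G)$. Concretely, for $j\in T_{K,\infty}(G)$ put $S(j):=\outneigh(j^R)\setminus\inneigh(\bigcup_{g\ge1}T_{K,g}(G))$. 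Then $|S(j)|>K$ and every $i\in S(j)$ has $\outneigh(i^L)\subset T_{K,\infty}(G)$: indeed, $j$ being of no finite type, $\outneigh(j^R)\setminus\inneigh(T_{K,1}(G)\cup\dots\cup T_{K,\ell-1}(G))$ has more than $K$ elements for every $\ell\ge1$; these sets decrease with $\ell$ inside the finite set $\outneigh(j^R)$, so they stabilize at $S(j)$, which therefore has more than $K$ elements, and $i\in S(j)$ means precisely $i\leftarrow j$ together with $i$ pointing to no finite‑type right vertex, i.e.\ $\outneigh(i^L)\subset T_{K,\infty}(G)$. Consequently, if $j\in B_\ell$ and $i\in S(j)\setminus M$, the witness $j'=j'(i,j)\in\Ael_{\ell+1}$ supplied by the shell satisfies $i\to j'$, hence $j'\in\outneigh(i^L)\subset T_{K,\infty}(G)$, so $j'\in B_{\ell+1}$.

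For the inductive step, using the induction hypothesis and, if needed, replacing $\Ael_\ell$ by a subset of its center (the shell property is inherited by subsets of $\Ael_0$), I may assume $\min(\lfloor\delta m/4\rfloor,(32\varepsilon)^{-\ell}|J|)\le|B_\ell|\le\lfloor\delta m/4\rfloor$; I may also assume $|B_{\ell+1}|\le\delta m/2$, since otherwise $|B_{\ell+1}|>\lfloor\delta m/4\rfloor$ and the claim at level $\ell+1$ is immediate. Set $W:=\bigcup_{j\in B_\ell}(S(j)\setminus M)$. By the preceding paragraph $W\subset\inneigh(B_{\ell+1})$, and also $W\subset\inneigh(B_\ell)$ because $S(j)\subset\outneigh(j^R)\subset\inneigh(j^R)$. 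I would lower‑bound $|W|$: from $|S(j)|>K$, from $\sum_{j\in B_\ell}|\outneigh(j^R)\cap M|\le\sum_{i\in M}|\outneigh(i)|\le\tfrac14 K|J|$ (this is the hypothesis $\tfrac2K\sum_{i\in M}|\outneigh(i)|\le|J|/2$), and from the bound $\sum_{j\in B_\ell}|\inneigh(j^R)|-|\inneigh(B_\ell)|\le\varepsilon K|B_\ell|$ furnished by \eqref{eq: col exp prop} on the aggregate overlap of the sets $S(j)\setminus M\subset\inneigh(j^R)$, one gets $|W|>K|B_\ell|(1-\varepsilon)-\tfrac14K|J|\ge\tfrac{7}{32}K|B_\ell|$, using that $|J|\le2|B_\ell|+2$ in every case and $\varepsilon<1/32$ (the degenerate case $\ell=0$, $|J|=1$ being checked directly). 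Then I would upper‑bound $|W|$: since $W\subset\inneigh(B_{\ell+1})$,
$$|W|\le\sum_{v\in W}\bigl|\{\,j'\in B_{\ell+1}:v\in\inneigh(j'^R)\,\}\bigr|\le\sum_{v\in\inneigh(B_\ell)}\bigl|\{\,j'\in B_{\ell+1}:v\in\inneigh(j'^R)\,\}\bigr|,$$
and, separating off the multiplicity‑one part, the right‑hand side is at most $|\inneigh(B_\ell)\cap\inneigh(B_{\ell+1})|+\bigl(\sum_{j'\in B_{\ell+1}}|\inneigh(j'^R)|-|\inneigh(B_{\ell+1})|\bigr)$, which by \eqref{eq: col exp prop} applied to $B_{\ell+1}$ and to $B_\ell\cup B_{\ell+1}$ (both of size $\le\delta m$) is $\le\varepsilon K(|B_\ell|+|B_{\ell+1}|)+\varepsilon K|B_{\ell+1}|$.

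Comparing the two bounds gives $\tfrac{7}{32}K|B_\ell|\le\varepsilon K|B_\ell|+2\varepsilon K|B_{\ell+1}|$, hence $|B_{\ell+1}|\ge\tfrac1{32\varepsilon}|B_\ell|$. Feeding this into the induction hypothesis for $B_\ell$ yields the claim for $B_{\ell+1}$: if the minimum for $B_\ell$ was the power of $32\varepsilon$ we gain the factor $(32\varepsilon)^{-1}$, while if it was $\lfloor\delta m/4\rfloor$ then $|B_{\ell+1}|>\lfloor\delta m/4\rfloor$; in both cases the bound at level $\ell+1$ holds. Since $|\Ael_\ell|\ge|B_\ell|$, this completes the proof.

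The hard part is the upper bound on $|W|$: the naive estimate $|\inneigh(B_{\ell+1})|\le k$ is useless, because it does not see the size of $B_{\ell+1}$ at all; one must instead exploit that distinct right vertices share at most $\varepsilon K$ in‑neighbors — equivalently, the aggregate‑excess form of \eqref{eq: col exp prop} applied to $B_{\ell+1}$ and to $B_\ell\cup B_{\ell+1}$ — in order to replace the crude $k$ by the useful $\varepsilon K(|B_\ell|+|B_{\ell+1}|)$, and it is precisely this trade that produces the amplification factor $(32\varepsilon)^{-1}$. The rest is bookkeeping: keeping every set fed to \eqref{eq: col exp prop} below $\delta m$ (handled by the subset‑of‑the‑center reduction) and, when $B_\ell$ and $B_{\ell+1}$ overlap, diverting the few vertices of $W$ whose witness falls back into $B_\ell$ into the excess term so that \eqref{eq: col exp prop} is applied to a disjoint union; both are routine.
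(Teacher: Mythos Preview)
Your argument is correct and follows the same approach as the paper's: both track $\Ael_\ell\cap T_{K,\infty}(G)$ through the layers, use that left vertices $i$ with $\outneigh(i^L)\subset T_{K,\infty}(G)$ keep the shell witnesses inside the infinite type, and then extract exponential growth from~\eqref{eq: col exp prop} (the paper builds auxiliary sets $V_\ell$ of controlled size and applies expansion once to $S\cup V_{\ell-1}$, while you sandwich $|W|$ between two expansion estimates --- a difference of bookkeeping, not of strategy). One simplification: your upper bound on $|W|$ needs no overlap caveat at all, since each $v\in W$ lies in $\inneigh(j)\cap\inneigh(j')$ for two \emph{distinct} vertices $j,j'\in B_\ell\cup B_{\ell+1}$, and hence directly $|W|\le\sum_{r\in B_\ell\cup B_{\ell+1}}|\inneigh(r)|-|\inneigh(B_\ell\cup B_{\ell+1})|\le\varepsilon K\,|B_\ell\cup B_{\ell+1}|$.
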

\begin{proof}
Let us fix any $d\geq 1$ and any $M$--shell $\Am=(\Ael_\ell)_{\ell=0}^{d}$ for $G$ centered in $J$
(if such a shell does not exist then there is nothing to prove).
Observe that the total number of right vertices whose sets of in-neighbors intersect
with $M$ at least on $K/2$ vertices, is at most $\frac{2}{K}\sum_{i\in M}|\outneigh(i)|$,
which is less than $|J|/2$, by the assumptions on $J$ and $G$.

We will prove assertion of the lemma via an inductive argument.
At zero step, we set $\widetilde V_0$ to be the subset of all vertices $j\in J$ such that
\begin{equation}\label{eq: aux apbg;skdf}
\Big|\outneigh(j)\setminus\Big(M\,\cup\,\inneigh\Big(\bigcup_{g\geq 1}T_{K,g}(G)\Big)\Big)\Big|\geq K/2.
\end{equation}
By the above remark on the cardinality of $M$ and the definition of $T_{K,\infty}(G)$,
we get $|\widetilde V_0|\geq |J|/2$. We then let $V_0$ to be a subset of $\widetilde V_0$
of cardinality $\min(\lfloor \delta m/2\rfloor, |\widetilde V_0|)$.

Now, fix $1\leq \ell\leq d$, and assume that a subset $V_{\ell-1}$ of $\Ael_{\ell-1}\cap T_{K,\infty}(G)$,
of cardinality $|J|/2\leq|V_{\ell-1}|\leq\delta m/2$,
such that all $j\in V_{\ell-1}$ satisfy \eqref{eq: aux apbg;skdf},
has been defined.

Denote by $Q_\ell$ the collection of all edges
$i\leftarrow h$, with
$h\in V_{\ell-1}$ and $i\in [k]\setminus M$.
Note that by the definition of an $M$--shell, for any edge $i\leftarrow h$ in $Q_\ell$
there is right vertex $r\neq h$ with $r\in \Ael_\ell$ and $i\to r$.
Thus, we can define a function $f:Q_\ell\to[m]$, with $r=f(i\leftarrow h)$
($f$ need not be uniquely defined).
Further, by the condition on $V_{\ell-1}$,
for any $h\in V_{\ell-1}$ there are at least $K/2$ left vertices $i$ such that
the edge $i\leftarrow h$ belongs to $Q_\ell$ and $f(i,h)\in T_{K,\infty}(G)$.
Thus, the set $Q'_\ell:=\{i\leftarrow h\mbox{ in }Q_\ell:\;f(i,h)\in T_{K,\infty}(G)\cap \Ael_\ell\}$ has cardinality at least $K|V_{\ell-1}|/2$.

Set $S:=\{f(i,h):\,i\leftarrow h\mbox{ in }Q_\ell'\}$, then
for any $h\in V_{\ell-1}$ we have $|\inneigh(h)\,\cap\,\inneigh(S\setminus\{h\})|\geq K/2$. This immediately implies that
$$\sum_{r\in S\cup V_{\ell-1}}\big|\inneigh(r)\big|-\big|\inneigh(S\cup V_{\ell-1})\big|\geq K|V_{\ell-1}|/4.$$
Combining this with the expansion property taken as the assumption of the lemma, we get
$$K|V_{\ell-1}|/4\leq \varepsilon K|S\cup V_{\ell-1}|,$$
unless $|S\cup V_{\ell-1}|>\delta m$.
Hence, we have
$$|S|\geq \min\big(\big((4\varepsilon)^{-1}-1\big)|V_{\ell-1}|,\delta m-|V_{\ell-1}|\big)
\geq \min\big((8\varepsilon)^{-1}|V_{\ell-1}|,\delta m/2\big).$$
Further, let $S'\subset S$ be the set of all right vertices in $S$ whose
sets of in-neighbors intersect with $M$
at most on $K/2$ elements. Obviously, the total number of vertices in $S\setminus S'$
cannot be bigger than $\frac{2}{K}\sum_{i\in M}|\outneigh(i)|$.
Hence, by the assumptions on $J$ and $\delta$, we have
$$|S'|\geq |S|-\frac{|J|}{2}\geq\min\Big((8\varepsilon)^{-1}|V_{\ell-1}|-\frac{|J|}{2},\delta m/4\Big)
\geq \min\big(\lfloor (16\varepsilon)^{-1}|V_{\ell-1}\rfloor,\lfloor\delta m/4\rfloor\big),$$
where in the last inequality we used the induction hypothesis.
Now, we set $V_\ell$ as a subset of $S'$ of cardinality
$\min\big(\lfloor (16\varepsilon)^{-1}|V_{\ell-1}|\rfloor,\lfloor\delta m/4\rfloor\big)$.
Then $|J|/2\leq |V_\ell|\leq \delta m/4$, completing the induction.
The result follows.
\end{proof}

The next lemma shows that if the center of a shell is sufficiently large then
the union of first few layers has a large intersection with $T_{K_0/2,\infty}(\Gr)$.

\begin{lemma}\label{l: hit infty type}
There are constants $C_{\smallrefer{l: hit infty type}},c_{\smallrefer{l: hit infty type}},c_{\smallrefer{l: hit infty type}}'>0$
with the following property.
Let $n,p,\Gr$ satisfy assumptions \eqref{Asmp on G} and fix a realization of $\Gr$
in $\Event_{\smallrefer{p: supports}}\cap\,\Event_{\smallrefer{p: chains combined}}$.
Let $m\leq n$, and let $\phi:[n]\to[m]$ be a
$(\Gr,K_0/2)$--admissible surjective mapping.
Further, let $M\subset[m]$ be a subset of left vertices of $\phi(\Gr)$ satisfying
$$|M|\leq n/\sqrt{L},$$
for some $L>1$.
Let $1\leq k$, and let $\Am=(\Ael_\ell)_{\ell=0}^k$ be any $M$--shell for $\phi(\Gr)$
such that
$$|\Ael_0|\geq \big(C_{\smallrefer{l: hit infty type}}pn+
C_{\smallrefer{l: hit infty type}}\log L\big)^{k+2} n/\sqrt{L}+C_{\smallrefer{l: hit infty type}}\, e^{-c_{\smallrefer{p: self-balancing stat}}pnk}n.
$$
Then necessarily
\begin{align*}
\Big|\bigcup_{\ell=0}^{k}\Ael_\ell\,\cap T_{K_0/2,\infty}
(\phi(\Gr))\Big|
&\geq(k+1)\max\Big(n/\sqrt{L},\sum\limits_{i\in M}|\outneigh(i)|\Big).
\end{align*}
\end{lemma}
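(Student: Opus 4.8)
I would prove this by showing that the centre $\Ael_0$ \emph{by itself} already contains almost $(k+1)\max\bigl(n/\sqrt L,\ \sum_{i\in M}|\outneigh^\phi(i)|\bigr)$ right vertices of infinite type; since $\Ael_0\subseteq\bigcup_{\ell=0}^{k}\Ael_\ell$, this suffices, and the later layers play no role in the bound. What makes this work is the hypothesis on $|\Ael_0|$ combined with the fact that $\phi(\Gr)$ has exponentially few right vertices of finite type. So the plan reduces to two inputs: (i) $\bigl|[n]\setminus T_{K_0/2,\infty}(\phi(\Gr))\bigr|\le n\,e^{-c_{\smallrefer{p: self-balancing stat}}pn}$, and (ii) $\sum_{i\in M}|\outneigh^\phi(i)|\le C(pn+\log L)\,n/\sqrt L$ for a universal $C$; given these, the conclusion is an elementary comparison of exponents.

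For (i): since $\phi$ is $(\Gr,K_0/2)$--admissible, Lemma~\ref{l: comp types} gives $T_{K_0/2,g}(\phi(\Gr))=T_{K_0/2,g}(\Gr)$ for all $g$, hence $T_{K_0/2,\infty}(\phi(\Gr))=T_{K_0/2,\infty}(\Gr)$. Next, the finite-type classes are monotone in the threshold: a short induction on $g$ (using monotonicity of $\inneigh(\cdot)$ under inclusion and that a smaller threshold only tightens the defining inequality) gives $\bigcup_{g\ge1}T_{K_0/2,g}(\Gr)\subseteq\bigcup_{g\ge1}T_{K_0,g}(\Gr)$. Finally, because $\Gr\in\grch_{n,n}$ carries all horizontal edges, sending a finite-type right vertex $j$ to the left vertex $j$ injects $\bigcup_{g\ge1}T_{K_0,g}(\Gr)$ (right vertices) into $\inneigh\bigl(\bigcup_{g\ge1}T_{K_0,g}(\Gr)\bigr)$ (left vertices), so $\bigl|\bigcup_{g\ge1}T_{K_0,g}(\Gr)\bigr|$ is at most $\bigl|\inneigh\bigl(\bigcup_{g\ge1}T_{K_0,g}(\Gr)\bigr)\bigr|$; on the event $\Event_{\smallrefer{p: chains combined}}^1$ (which we assume) the latter is $\le n\,e^{-c_{\smallrefer{p: self-balancing stat}}pn}$, and (i) follows.

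For (ii): admissibility yields $\sum_{i\in M}|\outneigh^\phi(i)|\le\sum_{i\in\phi^{-1}(M)}|\outneigh(i)|$ with $|\phi^{-1}(M)|\le 2|M|\le 2n/\sqrt L$, and applying Lemma~\ref{l: union of supp} on $\Gr\in\Event_{\smallrefer{p: supports}}$ to the left-vertex set $\phi^{-1}(M)$, together with monotonicity of $t\mapsto(pn+\log(n/t))t$, gives (ii) (with a harmless adjustment of $C$ in the range $L<4$, where $\phi^{-1}(M)$ may be all of $[n]$). To finish, $\Ael_0\cap T_{K_0/2,\infty}(\phi(\Gr))\subseteq\bigcup_{\ell=0}^k\Ael_\ell\cap T_{K_0/2,\infty}(\phi(\Gr))$, so (i) gives $\bigl|\bigcup_{\ell=0}^k\Ael_\ell\cap T_{K_0/2,\infty}(\phi(\Gr))\bigr|\ge|\Ael_0|-n\,e^{-c_{\smallrefer{p: self-balancing stat}}pn}$; substituting the hypothesis on $|\Ael_0|$ and (ii) reduces the claim to the inequality $(Cpn+C\log L)^{k+2}\ge 2(k+1)\,C(pn+\log L)$, which holds because $pn\ge C_\alpha$ forces $(Cpn+C\log L)^{k+1}\ge 2^{k+1}\ge 2(k+1)$, while the additive term $C\,e^{-c_{\smallrefer{p: self-balancing stat}}pnk}n$ in the hypothesis absorbs $n\,e^{-c_{\smallrefer{p: self-balancing stat}}pn}$ for $k=1$ and the slack in $(Cpn+C\log L)^{k+1}$ does so for $k\ge2$.

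The part I expect to need the most care is exactly this final bookkeeping: one must check that the exponentially small error $n\,e^{-c_{\smallrefer{p: self-balancing stat}}pn}$ and the a priori unconstrained parameter $L$ never upset the comparison, which is where the sharp exponent $k+2$ and the extra term $C\,e^{-c_{\smallrefer{p: self-balancing stat}}pnk}n$ in the hypothesis on $|\Ael_0|$ earn their keep. One could instead run a layer-by-layer argument: pick a large $J\subseteq\Ael_0\cap T_{K_0/2,\infty}(\phi(\Gr))$ with $|J|\le\delta m/2$ and $\tfrac{4}{K_0}\sum_{i\in M}|\outneigh^\phi(i)|\le|J|/2$, feed the sub-shell $(J,\Ael_1,\dots,\Ael_k)$ into the shell expansion lemma (Lemma~\ref{l: expansion for structures}) with $K=K_0/2$ to force geometric growth of the layers, and sum; but this requires first transferring the expansion hypothesis \eqref{eq: col exp prop} to $\phi(\Gr)$ and disjointifying the layers, so I would prefer the direct estimate above.
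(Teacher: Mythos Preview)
Your argument has a genuine gap in the ``final bookkeeping'' step, precisely where you flag it as needing care. The issue is that for $k\ge 2$ and $L$ large, the hypothesis on $|\Ael_0|$ can be \emph{much smaller} than the number of finite-type vertices, so $\Ael_0$ can lie entirely inside $[n]\setminus T_{K_0/2,\infty}(\phi(\Gr))$ and your lower bound $|\Ael_0|-ne^{-c_{\smallrefer{p: self-balancing stat}}pn}$ becomes negative.

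Concretely: take $M=\emptyset$, $k=2$, and $L=e^{4c_{\smallrefer{p: self-balancing stat}}pn}$. Then the hypothesis reads
\[
|\Ael_0|\ \ge\ (C_{\smallrefer{l: hit infty type}}(1+4c_{\smallrefer{p: self-balancing stat}})pn)^{4}\,n\,e^{-2c_{\smallrefer{p: self-balancing stat}}pn}+C_{\smallrefer{l: hit infty type}}\,n\,e^{-2c_{\smallrefer{p: self-balancing stat}}pn},
\]
which for $pn$ large is $\ll n\,e^{-c_{\smallrefer{p: self-balancing stat}}pn}$, the upper bound you have on $|[n]\setminus T_{K_0/2,\infty}|$. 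So nothing prevents $\Ael_0\subset[n]\setminus T_{K_0/2,\infty}$, and your approach yields $|\Ael_0\cap T_{K_0/2,\infty}|=0$. The conclusion of the lemma in this regime is $|\bigcup_\ell\Ael_\ell\cap T_{K_0/2,\infty}|\ge 3n\,e^{-2c_{\smallrefer{p: self-balancing stat}}pn}$, which is nontrivial (it forces the intersection to be nonempty, and in fact much more), and your argument cannot produce it. The ``slack in $(Cpn+C\log L)^{k+1}$'' you invoke is only $(Cpn+C\log L)^{k+2}n/\sqrt{L}$, which need not dominate $ne^{-c_{\smallrefer{p: self-balancing stat}}pn}$: you would need $(Cpn+C\log L)^{k+2}\ge e^{-c_{\smallrefer{p: self-balancing stat}}pn}\sqrt{L}$, and as $\log L\to\infty$ the right side grows exponentially while the left grows only polynomially in $\log L$. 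Your alternative layer-by-layer sketch has the same defect, since it also starts from $J\subset\Ael_0\cap T_{K_0/2,\infty}$.

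The paper's proof does not attempt to find infinite-type vertices in $\Ael_0$. Instead it builds, from \emph{every} $j\in\Ael_0$, a $\phi$--chain of length $\le k$ through the layers $\Ael_0,\Ael_1,\dots$ (using $\phi(j_\ell)^L\leftarrow j_\ell^R$ and the $M$--shell property). After discarding the at most $(C'pn+C'\log L)^k n/\sqrt{L}$ chains that meet $M$ and the at most $n\,e^{-c_{\smallrefer{p: self-balancing stat}}pnk}$ that are $(K_0/2,\phi)$--self-balancing (this is where the $k$ in the exponent of the hypothesis is spent, via $\Event_{\smallrefer{p: chains combined}}^3$), the remaining $\ge|\Ael_0|/2$ chains are not self-balancing. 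Each such chain either contains a vertex of $T_{K_0/2,\infty}$, or has a ``contact'' left vertex whose out-neighbours (other than the chain vertex) all lie in $T_{K_0/2,\infty}$; the shell property then places one such out-neighbour in the next layer. This produces a set $S\subset\bigcup_{\ell\le k}\Ael_\ell\cap T_{K_0/2,\infty}$, and Lemmas~\ref{l: number of chains} and~\ref{l: n of chains 2} bound $|\Ael_0|/2$ from above by $(C'pn+C'\log(n/|S|))^{k+1}|S|$, which is then inverted to give the claimed lower bound on $|S|$. The shell layers and the chain machinery are thus essential, not optional.
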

The assumption on $|\Ael_0|$ requires $L$ to be sufficiently large; otherwise, the statement is vacuous.
\begin{proof}
Fix an $M$--shell $\Am=(\Ael_\ell)_{\ell=0}^k$ for $\phi(\Gr)$ satisfying the above condition
for $\Ael_0$, where we assume that $C_{\smallrefer{l: hit infty type}}>0$ is a large universal constant to be chosen later.
Note that the definition of $\phi(\Gr)$,
implies that $\phi(j)^L\leftarrow j^R$ for all $j\leq n$.
Now, starting with any $j\in \Ael_0$, let us construct a sequence of vertices
$J=J(j)=(j_\ell)_{\ell=0}^q$ (with $q\leq k-1$) as follows:

At Step $0$, we set $j_0:=j\in\Ael_0$.

At Step $\ell$, $k\geq\ell\geq 1$, we have indices $j_0,j_1,\dots,j_{\ell-1}$
constructed, with $j_{r}\in\Ael_r$ for all $r<\ell$.
We do the following.
If $\inneigh^\phi(j_{\ell-1})\,\cap \,M\neq \emptyset$ then set $q:=\ell-1$ and teminate.
Otherwise, if $j_{\ell-1}=j_r$ for some $r<\ell-1$ then, again, set $q:=\ell-1$ and terminate.
Otherwise, as $\inneigh^\phi(j_{\ell-1})\,\cap \,M= \emptyset$ and
$\phi(j_{\ell-1})^L\leftarrow j_{\ell-1}^R$, by the definition of an $M$--shell
there is a right vertex $j_\ell\neq j_{\ell-1}$ such that $j_\ell\in \Ael_\ell$ and $\phi(j_{\ell-1})\to j_\ell$;
this vertex is added to the sequence.

At Step $k$ (if this step is reached), we set $q:=k-1$ and terminate.

\medskip

As a result of the above procedure, for any $j\in\Ael_0$ we obtain
a $\phi$--chain $J=J(j)=(j_\ell)_{\ell=0}^q$
for $\phi(\Gr)$ of length $q+1\leq k$ such that $j=j_0$,
$\inneigh^\phi(j_\ell)\,\cap \,M= \emptyset$ for all $\ell\leq q-1$, and, additionally,
one of the following three conditions holds:
\begin{itemize}

\item[(a)] $\inneigh^\phi (j_q)\,\cap \,M\neq \emptyset$;

\item[(b)] $q=k-1$, $\inneigh^\phi (j_q)\,\cap \,M= \emptyset$, and $J$ is cycle-free;

\item[(c)] $\inneigh^\phi (j_q)\,\cap \,M= \emptyset$ and $J$ is cyclic.

\end{itemize}
Fix one such chain for each $j\in \Ael_0$ and denote the set of these chains by $\mathcal J$.
If $n/\sqrt{L}\geq 1$ then Lemma~\ref{l: n of chains 2} yields that the number of $\phi$--chains from $\mathcal J$
satisfying condition (a) is bounded from above by
$$\Big(\widetilde C pn+\widetilde C\log\frac{n}{\lfloor n/\sqrt{L} \rfloor}\Big)^{k}\,\lfloor n/\sqrt{L} \rfloor
\leq \big(C'pn+C'\log L\big)^{k} n/\sqrt{L}$$
(for $n/\sqrt{L}< 1$ we have $M=\emptyset$, and the upper bound trivially holds as well).
Further, on the event $\Event_{\smallrefer{p: chains combined}}$,
no $\phi$--chains satisfying condition (c) are $(K_0/2,\phi)$--self-balancing, and at most
$n e^{-c_{\smallrefer{p: self-balancing stat}}pnk}$ chains satisfying (b)
are $(K_0/2,\phi)$--self-balancing.
Therefore, because of our assumption on $|\mathcal J|=|\Ael_0|$ (choosing a sufficiently large $C_{\smallrefer{l: hit infty type}}$), we get
that there is a subset $\mathcal J'\subset \mathcal J$ of cardinality at least $\frac{1}{2}|\mathcal J|$
such that any $J\in\mathcal J'$ satisfies conditions (b) or (c) and is not $(K_0/2,\phi)$--self-balancing.

Pick a chain $J=(j_\ell)_{\ell=0}^q$ in $\mathcal J'$.
Then, by definition of non--self-balancing chains, we have the following alternative.

\begin{enumerate}

\item\label{item: (1)} There is $v\leq q$ such that $j_v\in T_{K_0/2,\infty}(\phi(\Gr))$.
We denote the vertex $j_v$ by ${\bf j}_J$.

\item\label{item: (2)}
The chain $J$ does not satisfy \eqref{item: (1)}, and
there is $v\leq q$ and a left vertex $i\in \outneigh^\phi(j_v)$
such that $\outneigh^\phi(i)\setminus\{j_v\}\subset T_{K_0/2,\infty}(\phi(\Gr))\setminus J$.
As $J$ is of type (b) or (c), we have $i\notin M$, and hence by the definition of an $M$-shell, the
set $\outneigh^\phi(i)\setminus\{j_v\}$ must be non-empty, implying that
there is a right vertex
$$j\in \big(\outneigh^\phi(i)\setminus\{j_v\}\big)\cap \Ael_{v+1}
\subset (\Ael_0\cup\dots\cup \Ael_k)\,\cap\,T_{K_0/2,\infty}(\phi(\Gr)).$$
In this case, we set ${\bf w}_J:=j$.

\end{enumerate}

Denote
$$S_1:=\big\{{\bf j}_J:\;J\mbox{ satisfies }\eqref{item: (1)}\big\};\quad
S_2:=\big\{{\bf w}_J:\;J\mbox{ satisfies }\eqref{item: (2)}\big\},
$$
and observe that $S_1,S_2\subset  (\Ael_0\cup\dots\cup \Ael_k)\,\cap\,T_{K_0/2,\infty}(\phi(\Gr))$ and at least one of these sets is non-empty.
%%%%%%%%%%%%%%%%%MR

Take any set $W$ of right vertices containing $S:=S_1\cup S_2$.
By Lemma~\ref{l: number of chains} applied to chains $J$ truncated at ${\bf j}_J$, we have
$$\big|\big\{J\in\mathcal J':\;J\mbox{ satisfies }\eqref{item: (1)}\big\}\big|
\leq \Big(C_{\smallrefer{l: number of chains}}pn+C_{\smallrefer{l: number of chains}}\log\frac{n}{|W|}\Big)^{k-1}\,|W|.$$
Similarly, by Lemma~\ref{l: n of chains 2} we have
$$\big|\big\{J\in\mathcal J':\;J\mbox{ satisfies }\eqref{item: (2)}\big\}\big|
\leq \big(\widetilde Cpn+\widetilde C\log\frac{n}{|\inneigh^\phi(W)|}\Big)^{k}\,|\inneigh^\phi(W)|,
$$
where
$$\big|\inneigh^\phi(W)\big|\leq  C\Big(pn+\log\frac{n}{|W|}\Big)\,|W|,$$
in view of Lemma~\ref{l: union of supp}.

Combining the inequalities and taking the minimum over $R=|W|\geq |S|$, we get
\begin{equation}\label{eq: aux pognf,smf}
\frac{1}{2}|\Ael_0|\leq |\mathcal J'|\leq \min\limits_{n\geq R\geq |S|}\Big(C'pn+C'\log\frac{n}{R}\Big)^{k+1}\,R
\end{equation}
for a large enough universal constant $C'>0$.
Let us show that the last relation implies that
$$|S|\geq (k+1)\max\Big(n/\sqrt{L},\sum\limits_{i\in M}|\outneigh^\phi(i)|\Big).$$
Since we condition on event $\Event_{\smallrefer{p: supports}}$, Lemma~\ref{l: union of supp}
and the upper bound $|M|\leq n/\sqrt{L}$
imply that $\sum\limits_{i\in M}|\outneigh^\phi(i)|\leq \bar C \big(pn+\log L\big)n/\sqrt{L}$
for a universal constant $\bar C>0$.
Thus, it is sufficient to show that 
\[
 |S|\geq \bar C(k+1)\big(pn+\log L\big)n/\sqrt{L}.
\]
As $S \neq \emptyset$, 
the last inequality can be false false only when $\bar C(k+1)\big(pn+\log L\big)n/\sqrt{L}\geq 1$.
In this case, choose
%%%%%%%%%%%%%%MR
 $R_0:=\lfloor \bar C(k+1)\big(pn+\log L\big)n/\sqrt{L}\rfloor$ and observe that
$$\Big(C'pn+C'\log\frac{n}{R_0}\Big)^{k+1}\,R_0\leq (C'''pn+C'''\log L)^{k+2}n/\sqrt{L}.$$
Thus, if the constant $C_{\smallrefer{l: hit infty type}}$ is sufficiently large, we get contradiction to \eqref{eq: aux pognf,smf}.

The result follows.
\end{proof}

\section{Almost null vectors cannot be very sparse}\label{s: very sparse}

In this section, we show that a shifted (very sparse) random matrix $A-z\,\Id$ satisfying the above assumptions
on the distribution of the entries and on the non-random shift,
typically does not have almost null very sparse vectors.
The main statement of the section is Proposition~\ref{prop: no very sparse null vectors}.
The main difficulty in proving the result, compared to the standard setting dealing with dense matrices
as well as sparse matrices with at least logarithmic average number of non-zero elements in rows/columns,
lies in the fact that in the very sparse regime some rows and columns of $A$ have only zero components.
The absence of very sparse null vectors (and, as we show later, non-singularity of $A-z\,\Id$)
is guaranteed by the presence of the non-zero shift $z\,\Id$. Accordingly, the way to study the kernel
of $A-z\,\Id$ is significantly different from the geometric approach to invertibility of dense random matrices.
The random graphs, considered in the previous section,
provide a helpful tool in analyzing the structure of non-zero entries of the matrix $A-z\,\Id$,
taking into account their magnitudes.
In the next subsection, we will consider matrix equivalents of the notions
of a $\phi$-compression, an $M$-shell and vertex types.

\subsection{Compressions, shells and types for matrices}\label{subs: for matrices}

Let $B=(b_{ij})$ be an $m\times k$ matrix with complex entries, and let $K>0$ be a parameter.
We associate with $B$ a graph $G_B\in\grc_{m,k}$ with the edge set defined as follows:
$i\to j$ if and only if $b_{ij}\neq 0$, and $i\leftarrow j$ if and only if $|b_{ij}|\geq 1/\alpha$.
This way, rows of $B$ correspond to left vertices of $G_B$, and columns --- to right ones.
When the matrix $B$ is random, this association generates coupling with a random graph from $\grc_{m,k}$.
In particular, when $B=A-z\,\Id$, with $A$ and $z$ satisfying conditions \eqref{Asmp on p weak}--\eqref{Asmp on A}--\eqref{Asmp on z},
the associated graph $\Gr:=G_B$ satisfies \eqref{Asmp on G} where we take $\mu_{ij}$ as indicators of events
$\{|\xi_{ij}|\geq 1/\alpha\}$.
For every $g\in \N\cup\{\infty\}$, we let $T_{K,g}(B):=T_{K,g}(G_B)$.
We will refer to sets $T_{K,g}(B)$ as {\it column types} of $B$.
The infinite type $T_{K,\infty}(B)$ is of particular importance to us as it corresponds
to a nicely expanding part of the graph.

Further, consider a square $n\times n$ matrix $B$.
Let $\phi$ be any $(G_B,K)$--admissible mapping with $\phi([n])=[m]$ for some $m\leq n$.
We define the $m\times n$ matrix $\phi(B)$ ({\it{}the $\phi$--compression of $B$})
by
$$\row_i(\phi(B)):=\sum\limits_{v\in\phi^{-1}(i)}\row_v(B),\quad i\leq m.$$
The above means that we add rows whose indices are glued together by $\phi$, and have disjoint supports,
in view of the definition of a $(G_B,K)$--admissible mapping.
Note that $T_{K,g}(\phi(B))=T_{K,g}(\phi(G_B))$.
In what follows, such a mapping $\phi$ will be called $(B,K)$--admissible.

We say that a $(B,K)$--admissible mapping $\phi$ (for some $n\times n$ matrix $B$)
is {\it $u$--light} for some $u>0$ if
$$\big|\big\{i\leq n:\;|\phi^{-1}(\phi(i))|=2\big\}\cap \supp\col_j(B)\big|\leq u\;\;\mbox{ for all }j\leq n.$$
Clearly, the notion is consistent with that of a $u$--light mapping for graphs, given in the previous section.

Shells for matrices are defined as shells for the associated graphs.
Specifically, let $B=(b_{ij})$ be a $k\times m$ matrix with complex entries, let $d\geq 1$ be a natural number,
and $M\subset[k]$ be any subset.
We say that a finite sequence $\Am=(\Ael_\ell)_{\ell=0}^{d}$ of subsets of $[m]$ is an
{\it $M$--shell of depth $d$ for $B$}
if for any $0\leq \ell\leq d-1$ and any
$j\in \Ael_\ell$ we have the following: whenever $i\in [k]\setminus M$ is such that $|b_{ij}|\geq 1/\alpha$,
there is an index $j'=j'(i,j)\neq j$ from $\Ael_{\ell+1}$ such that
$b_{ij'}$ is non-zero.
The subset $\Ael_0$ will be called {\it the center} of $\Am$.

\subsection{Matrix shells in non-random setting}\label{s: column structures}

In the next lemma, we relate structural properties of almost null vectors of a matrix to properties of its $M$-shells.
More specifically, we will show that if the coordinates of an almost null vector $x$
are large on some subset of indices $J$ then there exists an $M$-shell $\Am$ centered at $J$
such that $x_i$'s are also large for all $i$ in the first few layers of $\Am$.

\begin{lemma}[Order statistics and $M$-shells]\label{l: order statistics}
Assume that $B=(b_{ij})$ is a $k\times m$ matrix with complex entries.
Further, let $M\subset[k]$, let $x\in \C^m$ be a complex vector, and fix any non-empty $J\subset[m]$ and $d\geq 1$.
Denote
$$L:=\max\limits_{i\in [k]\setminus M}\sum\limits_{j=1}^m |b_{ij}|.$$
Assume that
$$\Big|\sum\limits_{j=1}^m b_{ij}x_j\Big|\leq \frac{1}{2\alpha}\, \big(2\alpha\, L\big)^{-d}\,\min\limits_{j\in J}|x_j|,
\quad i\in[k]\setminus M.$$
Then there exists an $M$--shell $\Am=(\Ael_\ell)_{\ell=0}^d$ of depth $d$ centered in $J$
such that for any $1\leq q\leq d$ with $\Ael_q\neq\emptyset$, we have
$$x^*_{|\Ael_q|}\geq  \big(2\alpha\, L\big)^{-q}\,\min\limits_{j\in J}|x_j|.$$
\end{lemma}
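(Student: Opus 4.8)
The plan is to build the shell $\Am=(\Ael_\ell)_{\ell=0}^d$ greedily, layer by layer, starting from $\Ael_0:=J$, while maintaining throughout the invariant that every coordinate indexed by $\Ael_\ell$ is large, namely $|x_j|\geq (2\alpha L)^{-\ell}\,t$ for all $j\in\Ael_\ell$, where $t:=\min_{j\in J}|x_j|$. One may assume $t>0$, this being the case of interest (when $t=0$ the right-hand sides in the hypothesis and the conclusion both vanish). Granting the invariant, the conclusion is immediate: for $1\leq q\leq d$ with $\Ael_q\neq\emptyset$, the set $\Ael_q$ exhibits at least $|\Ael_q|$ coordinates of $x$ of absolute value $\geq (2\alpha L)^{-q}t$, hence $x^*_{|\Ael_q|}\geq (2\alpha L)^{-q}t=(2\alpha L)^{-q}\min_{j\in J}|x_j|$.

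For the inductive step, suppose $\Ael_\ell$ has been constructed with the invariant, where $\ell\leq d-1$. For each pair $(i,j)$ with $j\in\Ael_\ell$, $i\in[k]\setminus M$ and $|b_{ij}|\geq 1/\alpha$ we must select an index $j'=j'(i,j)\neq j$ with $b_{ij'}\neq 0$ and $|x_{j'}|\geq (2\alpha L)^{-\ell-1}t$; collecting all selected indices over all such pairs defines $\Ael_{\ell+1}$, which is then, by construction, a valid next layer of an $M$-shell and inherits the invariant. To find $j'$, use the $i$-th row relation $\sum_{j''}b_{ij''}x_{j''}=s_i$ with $|s_i|\leq \tfrac{1}{2\alpha}(2\alpha L)^{-d}t$; isolating the $j$-term and applying the triangle inequality gives $|b_{ij}|\,|x_j|\leq |s_i|+\sum_{j''\neq j}|b_{ij''}|\,|x_{j''}|$, and the left side is at least $\tfrac{1}{\alpha}(2\alpha L)^{-\ell}t$ by the invariant and $|b_{ij}|\geq 1/\alpha$. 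Split the sum over the ``good'' set $G:=\{j''\neq j:\ |x_{j''}|\geq (2\alpha L)^{-\ell-1}t\}$ and its complement: the complement contributes at most $(2\alpha L)^{-\ell-1}t\cdot\sum_{j''}|b_{ij''}|\leq (2\alpha L)^{-\ell-1}tL=\tfrac{1}{2\alpha}(2\alpha L)^{-\ell}t$, while the error term satisfies $|s_i|\leq\tfrac{1}{2\alpha}(2\alpha L)^{-d}t\leq\tfrac{1}{4\alpha}(2\alpha L)^{-\ell}t$, using $2\alpha L\geq 2$ (which holds whenever this step is non-vacuous, since then some row of $B$ has an entry of size $\geq 1/\alpha$, forcing $L\geq 1/\alpha$) together with $\ell+1\leq d$. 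Combining these bounds yields $\sum_{j''\in G}|b_{ij''}|\,|x_{j''}|\geq \tfrac{1}{\alpha}(2\alpha L)^{-\ell}t-\tfrac{1}{2\alpha}(2\alpha L)^{-\ell}t-\tfrac{1}{4\alpha}(2\alpha L)^{-\ell}t=\tfrac{1}{4\alpha}(2\alpha L)^{-\ell}t>0$, so $G$ contains an index $j''$ with $b_{ij''}\neq 0$; take that index to be $j'(i,j)$.

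This closes the induction and produces the desired $M$-shell of depth $d$ centered in $J$. There is no serious obstacle here: the whole argument is a direct greedy construction, and the only delicate point is the constant bookkeeping, namely verifying that the per-layer shrinkage factor $(2\alpha L)^{-1}$ together with the uniform error $\tfrac{1}{2\alpha}(2\alpha L)^{-d}t$ still leaves strictly positive mass on the good set at every layer $\ell\leq d-1$. This is precisely why the hypothesis is stated with the exponent $-d$ tied to the total depth rather than to the current layer, giving a uniform error term that is dominated by the geometric decay; beyond that, one only has to keep track of the trivial case $\min_{j\in J}|x_j|=0$ and the harmless observation that when a relevant row has no entry of size $\geq 1/\alpha$ the corresponding step simply adds nothing to $\Ael_{\ell+1}$.
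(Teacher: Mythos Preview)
Your proof is correct and follows essentially the same greedy layer-by-layer construction as the paper. The only cosmetic difference is that the paper uses a \emph{relative} threshold (finding $h$ with $|x_h|\geq \frac{1}{2\alpha L}|x_\ell|$), which lets the error term $|s_i|$ be absorbed directly into the contradiction without the extra $2\alpha L\geq 2$ bookkeeping you do; your absolute-threshold version works just as well.
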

\begin{proof}
We start with the following observation. Let $(i,\ell)\in([k]\setminus M)\times [m]$ and assume that
$$ |b_{i\ell}|\geq 1/\alpha\quad\mbox{and}\quad|x_\ell|\geq \big(2\alpha\, L\big)^{-d}\,\min\limits_{j\in J}|x_j|.$$

Then the upper bound on $|\sum_{j=1}^m b_{ij}x_j|$
implies that there is $h=h(i,\ell)\neq \ell$ such that $b_{ih}\neq 0$
and $|x_h|\geq \frac{1}{2\alpha\, L}|x_\ell|$. Indeed, if it was not the case then
the inner product $\sum_{j=1}^m b_{ij}x_j$ could be estimated as
$$\Big|\sum_{j=1}^m b_{ij}x_j\Big|\geq |x_\ell\,b_{i\ell}|-\sum_{r\neq \ell}|x_r\,b_{i r}|
> |x_\ell\,b_{i\ell}|-L \cdot \frac{1}{2\alpha\, L}|x_\ell|\geq \frac{1}{2\alpha}\big(2\alpha\, L\big)^{-d}
\,\min\limits_{j\in J}|x_j|,$$
leading to contradiction.

Denote by $Q$ the collection of all pairs $(i,\ell)\in([k]\setminus M)\times[m]$ satisfying the assumption above.
If $Q\neq \emptyset$ then the observation above tells us that we can define a mapping $f:Q\to[m]$
taking $f(i,\ell):=h$, with $h=(i,\ell)$ satisfying the aforementioned conditions.

Now, we can construct an $M$-shell $(\Ael_q)_{q=0}^d$ centered in $J$ as follows.
Let $\Ael_1:=\{f(i,\ell):\,(i,\ell)\in Q\cap([k]\times J)\}$; and for any $1\leq q\leq d-1$, let
$$\Ael_{q+1}:=\{f(i,\ell):\,(i,\ell)\in Q\cap([k]\times \Ael_q)\}.$$
Observe that by the construction for any $1\leq q\leq d$ and any $\ell\in \Ael_q$ we have, by induction,
$$|x_\ell|\geq \big(2\alpha\, L\big)^{-q}\,\min\limits_{j\in J}|x_j|.$$
Thus, whenever $(i,\ell)\in ([k]\setminus M)\times \Ael_q$ is such that $|b_{i\ell}|\geq 1/\alpha$,
there is at least one index $\ell'\neq \ell$ with $\ell'\in \Ael_{q+1}$ and $b_{i\ell'}\neq 0$.
Thus, the $M$--shell is well defined.
Finally, assuming that $\Ael_q$ is non-empty, we have
$$x^*_{|\Ael_q|}\geq  \big(2\alpha\, L\big)^{-q}\,\min\limits_{j\in J}|x_j|,$$
and the result follows.
\end{proof}

Clearly, the above lemma gives a non-trivial estimate only when all shells for $B$
are ``expanding'' in the sense that cardinalities of the $q$-th subset of each shell is much greater than $|J|$.
This expansion property is guaranteed by Lemma~\ref{l: expansion for structures}.
Combining it with Lemma~\ref{l: order statistics}, we obtain
\begin{cor}\label{cor: order stat decay}
Let $k,m$, $M\subset[k]$, $K$, $\varepsilon$, $\delta$ and the associated graph $G:=G_B$
be as in Lemma~\ref{l: expansion for structures}
(in particular, $G_B$ satisfies \eqref{eq: col exp prop}).
Set
$$L:=\max\limits_{i\in [k]\setminus M}\sum\limits_{j=1}^m |b_{ij}|.$$
Fix a non-empty subset $J\subset T_{K,\infty}(B)$ with $|J|\leq \delta m/2$ and
$$\frac{2}{K}\sum_{i\in M}|\supp\,\row_i(B)|\leq \frac{|J|}{2}.$$
Let $x$ be a complex vector such that
$$\Big|\sum\limits_{j=1}^m b_{ij}x_j\Big|\leq \frac{1}{2\alpha}\, \big(2\alpha\, L\big)^{-d}\,\min\limits_{j\in J}|x_j|,
\quad i\in[k]\setminus M.$$
Then for any $1\leq q\leq d$ and $k_q:=\min\big(\lfloor\delta m/4\rfloor,(32\varepsilon)^{-q}|J|\big)$ we have
$$x^*_{k_q}\geq  \big(2\alpha\, L\big)^{-q}\min\limits_{j\in J}|x_j|.$$
\end{cor}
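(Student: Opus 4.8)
The plan is to simply combine the two preceding results, since Corollary~\ref{cor: order stat decay} is essentially a mechanical merge of Lemma~\ref{l: order statistics} and Lemma~\ref{l: expansion for structures}. First I would invoke Lemma~\ref{l: order statistics} with the given data $B$, $M$, $x$, $J$, $d$: its hypothesis is exactly the displayed bound $|\sum_j b_{ij}x_j|\le \frac{1}{2\alpha}(2\alpha L)^{-d}\min_{j\in J}|x_j|$ for all $i\in[k]\setminus M$, which we are assuming. This produces an $M$--shell $\Am=(\Ael_\ell)_{\ell=0}^d$ centered in $J$ with the property that $x^*_{|\Ael_q|}\ge (2\alpha L)^{-q}\min_{j\in J}|x_j|$ for every $1\le q\le d$ with $\Ael_q\neq\emptyset$.

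Next I would feed this same $M$--shell into Lemma~\ref{l: expansion for structures}. The hypotheses of that lemma match the standing assumptions of the corollary: $G_B$ satisfies the expansion inequality \eqref{eq: col exp prop} with parameters $\varepsilon\in(0,1/32)$ and $\delta$; the subset $J$ is a non-empty subset of $T_{K,\infty}(B)=T_{K,\infty}(G_B)$ with $|J|\le\delta m/2$; and the balancing condition $\frac{2}{K}\sum_{i\in M}|\outneigh(i)|\le\frac{|J|}{2}$ holds because $|\outneigh(i)|=|\supp\,\row_i(B)|$ under the association $B\mapsto G_B$ (an edge $i\to j$ is present iff $b_{ij}\neq 0$), so the corollary's hypothesis $\frac{2}{K}\sum_{i\in M}|\supp\,\row_i(B)|\le\frac{|J|}{2}$ is literally the same statement. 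Hence Lemma~\ref{l: expansion for structures} gives $|\Ael_\ell|\ge\min(\lfloor\delta m/4\rfloor,(32\varepsilon)^{-\ell}|J|)=k_\ell$ for all $0\le\ell\le d$; in particular every layer $\Ael_q$ with $q\ge 1$ is non-empty (since $k_q\ge|J|\ge 1$), so the conclusion of Lemma~\ref{l: order statistics} applies without the caveat.

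Finally I would combine the two: for each $1\le q\le d$ we have $|\Ael_q|\ge k_q$, and the non-increasing rearrangement satisfies $x^*_{k_q}\ge x^*_{|\Ael_q|}$ because $k_q\le|\Ael_q|$ (a larger index into a non-increasing sequence gives a smaller or equal value). Therefore
\[
x^*_{k_q}\ge x^*_{|\Ael_q|}\ge (2\alpha L)^{-q}\min_{j\in J}|x_j|,
\]
which is exactly the assertion. There is no real obstacle here: the only points requiring a word of care are verifying that the graph-theoretic quantity $|\outneigh(i)|$ in Lemma~\ref{l: expansion for structures} is the same as $|\supp\,\row_i(B)|$ in the corollary's hypothesis, and noting that $k_q\ge 1$ guarantees $\Ael_q\neq\emptyset$ so that the order-statistics bound is vacuously available for every $q$ in range. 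Both are immediate from the definitions, so the proof is a two-line citation argument.
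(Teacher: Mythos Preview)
Your proposal is correct and matches the paper's approach exactly: the corollary is stated immediately after Lemma~\ref{l: order statistics} as the result of ``combining it with Lemma~\ref{l: expansion for structures},'' with no further proof given. Your verification that $|\outneigh(i)|=|\supp\,\row_i(B)|$ for left vertices and that $k_q\ge 1$ forces $\Ael_q\neq\emptyset$ are precisely the bookkeeping details one needs to spell out.
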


Roughly speaking, the last statement tells us that whenever $B$ satisfies certain expansion properties,
any almost null vector of $B$, ``well supported'' on $T_{K,\infty}(B)$, must necessarily be well spread.

\subsection{Order statistics of almost null vectors}\label{subs: very sparse}

In this subsection, all the results on the random graph $\Gr$ and its compressions
obtained in Section~\ref{s: graph section}, come into play.
As in the text before, we define parameter $K_0$ as $K_0:=pn/(2\alpha)$.
By some abuse of terminology, we will say that some event holds for a random $n\times n$ matrix $B$
if that event holds for the associated graph $G_B$.

\begin{lemma}\label{l: treatment of sparse}
There are universal constants $C_{\smallrefer{l: treatment of sparse}},c_{\smallrefer{l: treatment of sparse}}>0$
%depending only on $\alpha$,
with the following property.
Let $n$, $p$, $z$ and $A$ satisfy \eqref{Asmp on p weak}--\eqref{Asmp on A}--\eqref{Asmp on z}, and
set $\widetilde A:=A-z\,\Id$.
Fix a realization of $A$ such that $\Event_{\smallrefer{p: ell one norm}}$ occurs for $A$
and event $\Event_{\smallrefer{p: supports}}\cap\,\Event_{\smallrefer{p: chains combined}}\,\cap\,\Event_{\smallrefer{p: expansion}}
(1/(512\alpha))$ occurs for $\widetilde A$.
Let $q$ be in the interval $\{1,2,\dots,\lfloor e^{-c_{\smallrefer{p: self-balancing stat}}pn/2}n\rfloor\}$.
Let $m\leq n$, and let $\phi:[n]\to[m]$ be a
$(\widetilde A,K_0/2)$--admissible $(K_0/256)$--light mapping.
Then for any vector $x\in\C^n$ with
$$\|\phi(\widetilde A)\, x\|_2
\leq \frac{\sqrt{n}}{2\alpha}(2\alpha)^{-C_{\smallrefer{l: treatment of sparse}}\log^2\frac{4n}{q}\,\log^2\log\frac{4n}{q}}\,x^*_q$$
we have
$$x^*_{q}
\leq (2\alpha)^{C_{\smallrefer{l: treatment of sparse}}\log^3\frac{4n}{q}\,\log^2\log\frac{4n}{q}}
x^*_{\lfloor c_{\smallrefer{l: treatment of sparse}}/p\rfloor}.$$
\end{lemma}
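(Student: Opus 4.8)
The plan is to derive the statement from Corollary~\ref{cor: order stat decay} by a bootstrap in which one repeatedly enlarges, using graph expansion, the set of coordinates on which $x$ is non-negligible, until that set has size comparable to $1/p$.

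\smallskip
\noindent\emph{Setup.} Write $B:=\phi(\widetilde A)$, an $m\times n$ matrix whose associated graph is $\phi(\Gr)$, and record the structural consequences of the conditioned event. By Lemma~\ref{l: comp types}, $T_{K_0/2,\infty}(B)=T_{K_0/2,\infty}(\Gr)$; since $j\in\inneigh(j)$ for every right vertex and $\bigcup_g T_{K_0/2,g}(\Gr)\subseteq\bigcup_g T_{K_0,g}(\Gr)$ (an elementary monotonicity of the type construction in the threshold), the event $\Event^1_{\smallrefer{p: chains combined}}$ gives $|[n]\setminus T_{K_0/2,\infty}(B)|\le n\,e^{-c_{\smallrefer{p: self-balancing stat}}pn}$. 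Because $\phi$ is $(K_0/256)$-light and $\Event_{\smallrefer{p: expansion}}(1/(512\alpha))$ holds for $\Gr$, Lemma~\ref{l: phi expansion} yields the expansion property \eqref{eq: col exp prop} for $\phi(\Gr)$ with $K=K_0/2$, some $\varepsilon<1/32$ with $(32\varepsilon)^{-1}\ge 2$, and $\delta m\asymp_\alpha 1/p$. Finally $\Event_{\smallrefer{p: ell one norm}}$ controls the $\ell_1$-norms of the rows of $B$ (each a sum of at most two rows of $\widetilde A$, equivalently of $A$ up to a diagonal shift of size $|z|\le pn$): at most $n/t^{0.9}$ of them exceed $t\,pn$, and $\max_i\|\row_i(B)\|_1\le n^{O(1)}$.

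\smallskip
\noindent\emph{The iteration.} We may assume $x^*_q>0$ and, since otherwise the conclusion is trivial, $x^*_r<x^*_q$, where $r:=\lfloor c_{\smallrefer{l: treatment of sparse}}/p\rfloor$ and $c_{\smallrefer{l: treatment of sparse}}$ is chosen small enough that $\lfloor\delta m/4\rfloor\ge r$. The engine is Corollary~\ref{cor: order stat decay} with $M=\emptyset$: if $J\subseteq T_{K_0/2,\infty}(B)$ with $|J|\le\delta m/2$ and $\min_{j\in J}|x_j|\ge\rho$, and $\|Bx\|_2\le\tfrac1{2\alpha}(2\alpha L)^{-d}\rho$, then $x^*_{\min(\lfloor\delta m/4\rfloor,\,2^d|J|)}\ge(2\alpha L)^{-d}\rho$. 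One starts from $J_0$ equal to the indices of the $\approx q$ largest coordinates of $x$ intersected with $T_{K_0/2,\infty}(B)$; when $q$ exceeds the size $n\,e^{-cpn}$ of the exceptional set this costs essentially nothing, and when it does not, one first reaches $T_{K_0/2,\infty}(B)$ from a small center by combining Lemma~\ref{l: order statistics} with Lemma~\ref{l: hit infty type} for $M=\emptyset$ (legitimate precisely because the exceptional set is exponentially small), at the cost of one shell of depth $\asymp\log(n/q)$. Then one iterates: at step $t$ apply the Corollary to $J_t$, discard the $\le n\,e^{-cpn}$ coordinates that fall outside $T_{K_0/2,\infty}(B)$ to form $J_{t+1}$, and repeat until $|J_t|$ reaches $\lfloor\delta m/4\rfloor\ge r$; the hypothesis on $\|\phi(\widetilde A)x\|_2$, with its factor $(2\alpha)^{-C\log^2(4n/q)\log^2\log(4n/q)}$, is exactly what certifies the required image bound at every step. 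At termination $x^*_r\ge x^*_{\lfloor\delta m/4\rfloor}\ge\big(\prod_t(2\alpha L_t)^{-d_t}\big)x^*_q$, which rearranges to the claim once the accumulated loss is bounded below by $(2\alpha)^{-C_{\smallrefer{l: treatment of sparse}}\log^3(4n/q)\log^2\log(4n/q)}$.

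\smallskip
\noindent\emph{Main obstacle.} The bulk of the work is this last bookkeeping: showing the iteration terminates in $O(\log(4n/q))$ rounds with accumulated loss within the stated budget. The delicate point is that Corollary~\ref{cor: order stat decay} pays a power of $\max_i\|\row_i(B)\|_1$, which is only polynomial in $n$ while a typical row has $\ell_1$-norm $\asymp pn$; so one does not apply it with the global maximum but stratifies the rows via $\Event_{\smallrefer{p: ell one norm}}$ into $O(\log n)$ blocks according to $\ell_1$-norm and runs the $M$-shell argument blockwise, taking $M$ to be the exceptional rows of large $\ell_1$-norm (whose total support is negligible compared to $|J_t|$, so the side condition $\tfrac2K\sum_{i\in M}|\supp\row_i(B)|\le|J_t|/2$ holds), then recombining; this stratification, together with the choice of per-step depths, is what inflates the $\log^2$ of the hypothesis to the $\log^3$ of the conclusion and produces the $\log^2\log$ factors. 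One must also dispose of the cold-start case ($q$ below the exceptional-set size) via Lemma~\ref{l: hit infty type}, and, throughout, ensure that every shell produced stays inside $T_{K_0/2,\infty}(\phi(\widetilde A))$ as the coordinate magnitudes decay — this is precisely what links the graph-combinatorial results of Section~\ref{s: graph section} to the present linear-algebraic statement.
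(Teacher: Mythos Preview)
Your bootstrap is not what the paper does, and the sketch has a real gap at the cold start. Invoking Lemma~\ref{l: hit infty type} with $M=\emptyset$ gives the lower bound $(k+1)\max\bigl(n/\sqrt{L},\,\sum_{i\in M}|\outneigh(i)|\bigr)=(k+1)\,n/\sqrt{L}$, and since the hypothesis $|M|\le n/\sqrt{L}$ holds for every $L$, you can only extract something nontrivial by fixing a finite $L$; but then the lower bound on $|\Ael_0|$ in that lemma scales like $(C\log L)^{k+2}n/\sqrt{L}$, and balancing this against the desired conclusion does not obviously produce an intersection with $T_{K_0/2,\infty}$ large enough to feed back into Corollary~\ref{cor: order stat decay}. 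More broadly, your iteration keeps needing $J_t\subset T_{K_0/2,\infty}$, and the mechanism that actually guarantees a large intersection with $T_{K_0/2,\infty}$ is Lemma~\ref{l: hit infty type} with a \emph{nontrivial} $M$---its conclusion is calibrated to $\sum_{i\in M}|\supp\row_i(B)|$, which is exactly the quantity appearing in the side condition of Lemma~\ref{l: expansion for structures}. With $M=\emptyset$ that calibration is lost.

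The paper's argument is a single pass, not an iteration, and the exceptional set $M$ is the whole point. One sets $k\asymp\frac{1}{pn}\log\frac{4n}{q}$, $L\asymp\bigl(k\log\frac{4n}{q}\bigr)^{Ck}(n/q)^2$, $d\asymp\log L$, and takes $M:=\widetilde M\cup M'$ where $\widetilde M$ is the set of rows $i$ with $|\langle\row_i(B),\bar x\rangle|$ large (small by the hypothesis on $\|Bx\|_2$) and $M'$ is the set of rows with $\ell_1$-norm exceeding $L$ (small by $\Event_{\smallrefer{p: ell one norm}}$), so that $|M|\le n/\sqrt{L}$. Lemma~\ref{l: order statistics} then builds a single $M$--shell of depth $d$ centered at $\{i:|x_i|\ge x^*_q\}$. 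Lemma~\ref{l: hit infty type}, applied with \emph{this} $M$ and $L$, produces a layer $\Ael_{\ell_0}$ ($\ell_0\le k$) whose intersection with $T_{K_0/2,\infty}(B)$ has cardinality at least $\sum_{i\in M}|\supp\row_i(B)|$; this is precisely the hypothesis $\frac{4}{K_0}\sum_{i\in M}|\supp\row_i(B)|\le\tfrac12|J|$ needed to run Lemma~\ref{l: expansion for structures} from $J:=\Ael_{\ell_0}\cap T_{K_0/2,\infty}(B)$, and the expansion then forces $|\Ael_d|\ge c/p$ within the same shell. The order-statistic conclusion of Lemma~\ref{l: order statistics} gives $x^*_{\lfloor c/p\rfloor}\ge(2\alpha L)^{-d}x^*_q$, and unpacking $L$ and $d$ yields the stated exponent. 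There is no stratification of rows and no repeated discarding of the finite-type set; the single choice of $M$ absorbs both the large-$\ell_1$ rows and the rows where the image is large, and Lemma~\ref{l: hit infty type} is designed so that its output exactly matches the input of the expansion lemma.
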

\begin{proof}
Observe that the condition on the Euclidean norm of $\phi(\widetilde A)\, x$ implies that
the set $\widetilde M\subset[m]$ of all indices $i$ such that
$$\big|\big\langle \row_i(\phi(\widetilde A)),\bar x\big\rangle\big|
\geq \frac{1}{2\alpha}(2\alpha)^{-\frac{C_{\smallrefer{l: treatment of sparse}}}{2}\log^2\frac{4n}{q}\,\log^2\log\frac{4n}{q}}\,x^*_q,$$
has cardinality
\begin{equation*}\label{aux: 134 alsuhfalk}
|\widetilde M|\leq n (2\alpha)^{-C_{\smallrefer{l: treatment of sparse}}\log^2\frac{4n}{q}\,\log^2\log\frac{4n}{q}}.
\end{equation*}
Denote $B:=\phi(\widetilde A)$ and let $k:=\lceil \frac{1}{cpn}\log\frac{4n}{q}\rceil$,
$L:=\big(k\log\frac{4n}{q}\big)^{Ck}\big(\frac{n}{q}\big)^2$,
and $d:=\lceil C\log L\rceil$, where $c>0$ is small and $C>0$
is large enough constant, whose values can be recovered from the proof below.
Let $M'\subset[m]$ be the set of all indices $i$ such that
$\|\row_i(B)\|_1\geq L$, and set $M:=\widetilde M\cup M'$.
On event $\Event_{\smallrefer{p: ell one norm}}$, we have
$|M'|\leq n/(2\sqrt{L})$,
and so $|M|\leq n/\sqrt{L}$.

Let $\Am=(\Ael_\ell)_{\ell=0}^{d}$ be the $M$--shell for $B$ centered in the set
$\Ael_0:=\{i\leq n:\;|x_i|\geq x^*_q\}$, constructed in Lemma~\ref{l: order statistics}.
By the assumption on $q$ and the definition of $L$ and $k$ we have
\begin{align*}
&\big(C_{\smallrefer{l: hit infty type}}pn+
C_{\smallrefer{l: hit infty type}}\log L\big)^{k+2} n/\sqrt{L}\\
&\leq
\Big(C_{\smallrefer{l: hit infty type}}pn+
2C_{\smallrefer{l: hit infty type}}Ck\log\log\frac{4n}{q}+2C_{\smallrefer{l: hit infty type}}\log\frac{n}{q}
\Big)^{k+2} \Big(k\log\frac{4n}{q}\Big)^{-Ck/2} q
\leq \frac{q}{2},
\end{align*}
if $C$ is sufficiently large.
Also,
$$C_{\smallrefer{l: hit infty type}}\, e^{-c_{\smallrefer{p: self-balancing stat}}pnk}n\leq \frac{q}{2}$$
if $c$ is small enough. Thus, the cardinality of $\Ael_0$ satisfies assumptions in Lemma~\ref{l: hit infty type}.
Applying Lemma~\ref{l: hit infty type}, we get
$$\Big|\bigcup_{\ell=0}^{k}\Ael_\ell\,\cap T_{K_0/2,\infty}
(B)\Big|\geq (k+1)\max\Big(n/\sqrt{L},\sum\limits_{i\in M}|\supp\row_i(B)|\Big),
$$
and hence there is $\ell_0\in\{0,1,\dots,k\}$, such that
$$|\Ael_{\ell_0}\,\cap T_{K_0/2,\infty}
(B)|\geq \max\Big(n/\sqrt{L},\sum\limits_{i\in M}|\supp\row_i(B)|\Big),
$$
implying
$$\frac{4}{K_0}
\sum\limits_{i\in M}|\supp\row_i(B)|
\leq \frac{|\Ael_{\ell_0}\,\cap T_{K_0/2,\infty}(B)|}{2}.$$
Since we are on the event $\Event_{\smallrefer{p: expansion}}(1/(512\alpha))$,
the graph $G_{\widetilde A}$ satisfies
$$
\big|\inneigh(I)\big|
\geq \sum_{i\in I}\big|\inneigh(i)\big|- \kappa K\,|I|
\;\mbox{for every set of right vertices $I$,\quad$2\leq |I|\leq \delta n$},
$$
with $\kappa:=1/128$, $K:=K_0/2$, and $\delta:=c_1/(pn)$ (for some constant $c_1>0$ depending on $\alpha$).
Next, we use the assumption that the mapping $\phi$ is $(K_0/256)$--light.
Applying Lemma~\ref{l: phi expansion}, we get for the graph $G_B=\phi(G_{\widetilde A})$:
$$\big|\inneigh^\phi(I)\big|\geq \sum_{i\in I}\big|\inneigh^\phi(i)\big|-\varepsilon K |I|\;\;
\mbox{for any subset of right vertices $I$ with $|I|\leq \delta n$},$$
with $\varepsilon:=1/64$.
Applying Lemma~\ref{l: expansion for structures} with $J:=\Ael_{\ell_0}\,\cap T_{K_0/2,\infty}(B)$
and using that $d\geq Ck$ and $\ell_0\leq k$, we obtain
$$|\Ael_{d}|\geq \min\big(\lfloor\delta m/4\rfloor,2^{d-\ell_0}|J|\big)
\geq\min\big(\lfloor\delta m/4\rfloor,2^{C\log L/2}|J|\big)
\geq  c_2/p.$$
Thus, using Lemma~\ref{l: order statistics}, we get
$$x^*_{\lfloor c_2/p\rfloor}\geq (2\alpha\, L)^{-d}x^*_q,$$
and the result follows.
\end{proof}

In the next proposition, we extend
Lemma~\ref{l: treatment of sparse}
to all $q\leq c/p$.
\begin{prop}\label{prop: no very sparse null vectors}
There are universal constants $C_{\smallrefer{prop: no very sparse null vectors}},c_{\smallrefer{prop: no very sparse null vectors}}>0$
with the following property.
Let $n$, $p$, $z$ and $A$ satisfy \eqref{Asmp on p weak}--\eqref{Asmp on A}--\eqref{Asmp on z};
set $\widetilde A:=A-z\,\Id$.
Assume that event $\Event_{\smallrefer{p: supports}}\cap\,\Event_{\smallrefer{p: chains combined}}\,\cap\,\Event_{\smallrefer{p: expansion}}
(1/(512\alpha))$ occurs for $\widetilde A$ and event $\Event_{\smallrefer{p: ell one norm}}$ occurs for $A$.
Let $q$ be in the interval $\{1,2,\dots,\lfloor c_{\smallrefer{prop: no very sparse null vectors}}/p\rfloor\}$.
Let $m\leq n$, and let $\phi:[n]\to[m]$ be a
$(\widetilde A,K_0/2)$--admissible $(K_0/256)$--light mapping.
Then for any vector $x\in\C^n$ with
$$\|\phi(\widetilde A)\, x\|_2
\leq \frac{\sqrt{n}}{2\alpha}(2\alpha)^{-C_{\smallrefer{prop: no very sparse null vectors}}\log^2\frac{4n}{q}\,\log^2(pn+\log\frac{4n}{q})}\,x^*_q$$
we have
$$x^*_{q}
\leq (2\alpha)^{C_{\smallrefer{prop: no very sparse null vectors}}\log^2\frac{4n}{q}\,\log^2(pn+\log\frac{4n}{q})}
x^*_{\lfloor c_{\smallrefer{prop: no very sparse null vectors}}/p\rfloor}.$$
\end{prop}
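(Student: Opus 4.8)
The plan is to repeat, with a more careful choice of parameters, the argument used to prove Lemma~\ref{l: treatment of sparse}. In that proof the depth of the shell (hence the size of the exponent) is governed by an auxiliary parameter $L$, the threshold above which a row of $\phi(\widetilde A)$ is called ``heavy'', and the choice of $L$ there is tailored to the regime $pn\lesssim\log\frac{4n}{q}$ --- exactly the content of the restriction $q\le e^{-c_{\smallrefer{p: self-balancing stat}}pn/2}n$. To reach all $q\le c_{\smallrefer{prop: no very sparse null vectors}}/p$ (all $pn$ up to $n^{1/8}$, all $q$ down to $1$) I would set
\[
k:=\Big\lceil\frac{1}{c_1 pn}\log\frac{4n}{q}\Big\rceil,\qquad
L:=\big(C_1\,pn\,k\,\log\tfrac{4n}{q}\big)^{Ck}(n/q)^2,\qquad d:=\lceil C\log L\rceil,
\]
with universal $C,C_1\ge1$ large, $c_1>0$ small, and $C_\alpha$ in \eqref{Asmp on p weak} sufficiently large. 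The only structural change from Lemma~\ref{l: treatment of sparse} is the extra factor $(pn)^{\Theta(k)}$ in $L$; this is what makes the shell-hitting estimate of Lemma~\ref{l: hit infty type} available for centers of size merely $q$ when $pn$ is large, while inflating $\log L$ only by a harmless amount.

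With these parameters the proof proceeds exactly as for Lemma~\ref{l: treatment of sparse}. A Markov/counting bound together with the hypothesis on $\|\phi(\widetilde A)x\|_2$ --- whose exponent $\log^2\frac{4n}{q}\log^2(pn+\log\frac{4n}{q})$ is chosen precisely so this step works --- shows that the set $\widetilde M$ of $i\le m$ with $|\langle\row_i(\phi(\widetilde A)),\bar x\rangle|>\frac{1}{2\alpha}(2\alpha L)^{-d}x^*_q$ has $|\widetilde M|\le n/(2\sqrt L)$; on $\Event_{\smallrefer{p: ell one norm}}$ the set $M'$ of rows of $\ell_1$-norm $\ge L$ has $|M'|\le n/(2\sqrt L)$, so $M:=\widetilde M\cup M'$ satisfies $|M|\le n/\sqrt L$. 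Lemma~\ref{l: order statistics}, applied with this $M$, with $x$, with the center $J_0:=\{i:\,|x_i|\ge x^*_q\}$ (so $|J_0|\ge q$) and depth $d$, produces an $M$-shell $\Am=(\Ael_\ell)_{\ell=0}^d$ for $\phi(\widetilde A)$ with $\Ael_0\supseteq J_0$ and $x^*_{|\Ael_\ell|}\ge(2\alpha L)^{-\ell}x^*_q$ whenever $\Ael_\ell\neq\emptyset$. Using the definition of $L$ (and $C_\alpha$ large) one checks that $|\Ael_0|\ge q$ meets the lower bound on the center demanded by Lemma~\ref{l: hit infty type} (on $\Event_{\smallrefer{p: supports}}\cap\Event_{\smallrefer{p: chains combined}}$); that lemma yields some $\ell_0\le k$ with $|\Ael_{\ell_0}\cap T_{K_0/2,\infty}(\phi(\widetilde A))|\ge\max\big(n/\sqrt L,\ \sum_{i\in M}|\supp\row_i(\phi(\widetilde A))|\big)$. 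If $|\Ael_{\ell_0}|\ge\lfloor c_{\smallrefer{prop: no very sparse null vectors}}/p\rfloor$ we are done directly; otherwise $J:=\Ael_{\ell_0}\cap T_{K_0/2,\infty}(\phi(\widetilde A))$ has $|J|\le\delta m/2$ and (since $|J|\ge\sum_{i\in M}|\supp\row_i(\phi(\widetilde A))|$ and $pn$ is large) satisfies the budget condition, so the tail $(\Ael_\ell)_{\ell\ge\ell_0}$ re-centered at $J$ meets the hypotheses of Lemma~\ref{l: expansion for structures} --- the needed expansion of $G_{\phi(\widetilde A)}$ coming from $\Event_{\smallrefer{p: expansion}}(1/(512\alpha))$, from $\phi$ being $(K_0/256)$-light, and from Lemma~\ref{l: phi expansion} (with $\varepsilon=1/64$, $K=K_0/2$, $\delta=c_2/(pn)$). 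Hence $|\Ael_d|\ge\min(\lfloor\delta m/4\rfloor,\,2^{\,d-\ell_0}|J|)\ge\lfloor c_{\smallrefer{prop: no very sparse null vectors}}/p\rfloor$, using $d-\ell_0\ge d-k\ge\frac C2\log L$ and $|J|\ge n/\sqrt L$. In both cases Lemma~\ref{l: order statistics} gives $x^*_{\lfloor c_{\smallrefer{prop: no very sparse null vectors}}/p\rfloor}\ge(2\alpha L)^{-d}x^*_q$, i.e.\ $x^*_q\le(2\alpha L)^d x^*_{\lfloor c_{\smallrefer{prop: no very sparse null vectors}}/p\rfloor}$.

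It remains to verify that $d\log(2\alpha L)=O\big(\log^2\frac{4n}{q}\log^2(pn+\log\frac{4n}{q})\big)$, which, via $d\le2C\log L$, reduces to $\log L=O\big(\log\frac{4n}{q}\log(pn+\log\frac{4n}{q})\big)$; I would check this by splitting into the case $pn\le\frac1{c_1}\log\frac{4n}{q}$ (then $pnk\lesssim\log\frac{4n}{q}$, so $\log L\lesssim\log\frac{4n}{q}\log\log\frac{4n}{q}$) and the case $pn>\frac1{c_1}\log\frac{4n}{q}$ (then $k=1$, and $\log\frac{4n}{q}\le c_1pn$, $\log pn\le\log\frac{4n}{q}$ give $\log L\lesssim\log\frac{4n}{q}$). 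The main obstacle --- and the only genuine departure from Lemma~\ref{l: treatment of sparse} --- is checking that the center-size requirement of Lemma~\ref{l: hit infty type},
\[
q\ \ge\ \big(C_{\smallrefer{l: hit infty type}}pn+C_{\smallrefer{l: hit infty type}}\log L\big)^{k+2}\frac{n}{\sqrt L}+C_{\smallrefer{l: hit infty type}}\,e^{-c_{\smallrefer{p: self-balancing stat}}pnk}\,n,
\]
holds for $q$ as small as $1$ and $pn$ as large as $n^{1/8}$: the second summand is controlled by the choice $pnk\approx\log\frac{4n}{q}$, while the first is what forces the extra $(pn)^{\Theta(k)}$ factor in $L$ (when $k=1$ and $pn$ is large the binding inequality is $(C_{\smallrefer{l: hit infty type}}pn)^3\le\tfrac14(C_1pn\log\frac{4n}{q})^{C/2}$, valid since $C$ and $C_\alpha$ are large), and reconciling this requirement with $\log L=O(\log\frac{4n}{q}\log(pn+\log\frac{4n}{q}))$ is the crux.
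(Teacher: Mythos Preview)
Your approach is correct, but it differs from the paper's route. You give a unified argument, redoing Lemma~\ref{l: treatment of sparse} with an inflated choice of $L$ (the extra factor $(pn)^{\Theta(k)}$) so that the center-size hypothesis of Lemma~\ref{l: hit infty type} continues to hold even when $k=1$, i.e.\ when $pn\gtrsim\log\frac{4n}{q}$. This works, and your parameter bookkeeping (the two cases for $\log L$, the verification $(C_{\smallrefer{l: hit infty type}}pn)^3\le\frac14(C_1 pn\log\frac{4n}{q})^{C/2}$ using $\log\frac{4n}{q}\ge\log(pn)$ since $q\le c/p$) is sound.

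The paper instead splits into two ranges. For $q\le e^{-c_{\smallrefer{p: self-balancing stat}}pn/2}n$ it simply quotes Lemma~\ref{l: treatment of sparse}. For $q\ge e^{-c_{\smallrefer{p: self-balancing stat}}pn/2}n$ it observes that, on $\Event_{\smallrefer{p: chains combined}}$, the set $[n]\setminus T_{K_0,\infty}(\phi(\widetilde A))$ has cardinality at most $ne^{-c_{\smallrefer{p: self-balancing stat}}pn}\le q/2$; hence the set $J:=\{i:|x_i|\ge x^*_q\}\cap T_{K_0,\infty}$ already has $|J|\ge q/2$, and one can skip Lemma~\ref{l: hit infty type} altogether and apply Corollary~\ref{cor: order stat decay} directly with the much smaller $L:=C(pn)^2(n/q)^2$ and $d:=\lfloor C'\log\frac{2n}{q}\rfloor$. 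What the paper's shortcut buys is that the ``crux'' you identify---forcing Lemma~\ref{l: hit infty type} to accept a center of size $q$ when $k=1$ and $pn$ is large---never arises: in that regime the center already lives inside $T_{K_0/2,\infty}$, so the chain machinery is unnecessary. Your route buys uniformity: a single choice of parameters covers the whole range, at the cost of the delicate verification you flag at the end.
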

\begin{proof}
We will choose constant $C_{\smallrefer{prop: no very sparse null vectors}}>0$ large
and $c_{\smallrefer{prop: no very sparse null vectors}}>0$ small enough (the precise relation can be recovered from the argument below).
In the range $q\in\{1,2,\dots,\lfloor e^{-c_{\smallrefer{p: self-balancing stat}}pn/2}n\rfloor\}$ the
statement is proved above (Lemma~\ref{l: treatment of sparse}).
When $q\geq e^{-c_{\smallrefer{p: self-balancing stat}}pn/2}n$, we have on $\Event_{\smallrefer{p: chains combined}}$
that the subset
$$J:=\big\{i\leq n:\;|x_i|\geq x^*_q\big\}\,\cap\,T_{K_0,\infty}(\phi(\widetilde A))$$
has cardinality at least $q/2$.
Set $L:=C(pn)^2(n/q)^2$ (for a large enough constant $C>0$).
As in the proof of Lemma~\ref{l: treatment of sparse}, we define two subsets $\widetilde M,M'\subset[m]$:
$\widetilde M$ is the set of all indices $i$ such that
$$\big|\big\langle \row_i(\phi(\widetilde A)),\bar x\big\rangle\big|
\geq \frac{1}{2\alpha}(2\alpha)^{-\frac{C_{\smallrefer{prop: no very sparse null vectors}}}{2}\log^2\frac{4n}{q}\,\log^2(pn+\log\frac{4n}{q})}\,x^*_q,$$
and $M'$ the set of all indices $i$ such that $\|\row_i(\phi(\widetilde A))\|_1\geq L$. Define $M:=\widetilde M\cup M'$.
On the event $\Event_{\smallrefer{p: ell one norm}}\cap \Event_{\smallrefer{p: supports}}$ we have $|M'|\leq n/\sqrt{L}$.
Therefore, by Lemma~\ref{l: union of supp} and since $q\geq e^{-c_{\smallrefer{p: self-balancing stat}}pn/2}n$,
we obtain $\sum_{i\in M'}|\supp\,\row_i(\phi(\widetilde A))|\leq C' pn\cdot n/\sqrt{L}$.
Thus,
$$\frac{2}{K_0}\sum_{i\in M'}|\supp\,\row_i(\phi(\widetilde A))|\leq \frac{|J|}{4}.$$
Similarly to the proof of Lemma~\ref{l: treatment of sparse}, we can estimate the cardinality of $\widetilde M$ as
$$|\widetilde M|\leq n (2\alpha)^{-C_{\smallrefer{prop: no very sparse null vectors}}
\log^2\frac{4n}{q}\,\log^2(pn+\log\frac{4n}{q})}.$$
Proceeding as above, we get
$$\frac{2}{K_0}\sum_{i\in M}|\supp\,\row_i(\phi(\widetilde A))|\leq \frac{|J|}{2}.$$
As in the proof of the above lemma, observe that
on event $\Event_{\smallrefer{p: expansion}}(1/(512\alpha))$, in view of Lemma~\ref{l: phi expansion},
the matrix $\phi(\widetilde A)$ satisfies
\eqref{eq: col exp prop} with $\varepsilon:=1/64$, $K:=K_0$ and $\delta:=c_1/p$ (for some universal constant $c_1>0$).
Then, applying Corollary~\ref{cor: order stat decay} with $d:=\lfloor C'\log \frac{2n}{q}\rfloor$ (for an appropriate constant $C'>0$), we get
the required estimate.
\end{proof}
As an immediate corollary, we get the following statement:
\begin{cor}\label{cor: no very sparse for smin}
Let $n$, $p$, $A$, $z$ satisfy assumptions \eqref{Asmp on p weak}--\eqref{Asmp on A}--\eqref{Asmp on z}.
Fix a realization of $\widetilde A:=A-z\,\Id$
such that events $\Event_{\smallrefer{p: supports}}\cap\,
\Event_{\smallrefer{p: chains combined}}\,\cap\,\Event_{\smallrefer{p: expansion}}
(1/(512\alpha))$ and $\Event_{\smallrefer{p: ell one norm}}$ occur.
Then
any vector $x\in\C^n$ such that
$\|\widetilde A x\|_\infty\leq (2\alpha)^{-C_{\smallrefer{cor: no very sparse for smin}}\log^3 n}\|x\|_{\infty}$,
satisfies
$$\|x\|_\infty\leq (2\alpha)^{C_{\smallrefer{cor: no very sparse for smin}}\log^3 n}\,
x_{\lfloor c_{\smallrefer{cor: no very sparse for smin}}/p\rfloor}^*,$$
and, moreover, for all $q$ in $\{1,2,\dots,\lfloor c_{\smallrefer{cor: no very sparse for smin}}/p\rfloor\}$
we have
$$x^*_{q}
\leq (2\alpha)^{C_{\smallrefer{cor: no very sparse for smin}}\log^2\frac{4n}{q}\log^2(pn+\log\frac{4n}{q})}
x^*_{\lfloor c_{\smallrefer{cor: no very sparse for smin}}/p\rfloor}.$$
\end{cor}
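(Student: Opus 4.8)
The plan is to obtain this corollary by specializing Proposition~\ref{prop: no very sparse null vectors} to the identity map $\phi=\mathrm{id}_{[n]}$ (so $m=n$). The first thing to check is that $\mathrm{id}_{[n]}$ is $(\widetilde A,K_0/2)$--admissible and $(K_0/256)$--light: every fiber $\phi^{-1}(i)$ is a singleton, so the conditions imposed on glued left vertices in the definition of admissibility are vacuous, and $\{i\le n:\,|\phi^{-1}(\phi(i))|=2\}=\emptyset$, so $\phi$ is $0$--light, a fortiori $(K_0/256)$--light. Consequently $\phi(\widetilde A)=\widetilde A$ and $T_{K_0,g}(\phi(\widetilde A))=T_{K_0,g}(\widetilde A)$ for every $g$, while the conditioning events $\Event_{\smallrefer{p: supports}}\cap\Event_{\smallrefer{p: chains combined}}\cap\Event_{\smallrefer{p: expansion}}(1/(512\alpha))$ for $\widetilde A$ and $\Event_{\smallrefer{p: ell one norm}}$ for $A$ assumed in the corollary are exactly the hypotheses of Proposition~\ref{prop: no very sparse null vectors}. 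To pass from the $\ell_\infty$--control of $\widetilde A x$ to the $\ell_2$--control appearing in the proposition I would use only $\|\widetilde A x\|_2\le\sqrt n\,\|\widetilde A x\|_\infty$.

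Write $E(q):=\log^2\tfrac{4n}{q}\,\log^2(pn+\log\tfrac{4n}{q})$ and let $C,c>0$ be the constants of Proposition~\ref{prop: no very sparse null vectors}; I would take $c_{\smallrefer{cor: no very sparse for smin}}\le c$ and $C_{\smallrefer{cor: no very sparse for smin}}\ge C+1$. For the first displayed assertion, apply the proposition with $q=1$: since $x^*_1=\|x\|_\infty$, its hypothesis is $\|\widetilde A x\|_2\le\frac{\sqrt n}{2\alpha}(2\alpha)^{-C\,E(1)}\|x\|_\infty$, which follows from $\|\widetilde A x\|_\infty\le(2\alpha)^{-C_{\smallrefer{cor: no very sparse for smin}}\log^3 n}\|x\|_\infty$ as soon as $C_{\smallrefer{cor: no very sparse for smin}}\log^3 n\ge C\,E(1)+1$; the proposition then gives $\|x\|_\infty\le(2\alpha)^{C\,E(1)}x^*_{\lfloor c/p\rfloor}\le(2\alpha)^{C_{\smallrefer{cor: no very sparse for smin}}\log^3 n}x^*_{\lfloor c_{\smallrefer{cor: no very sparse for smin}}/p\rfloor}$. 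For the second assertion, fix $q\le\lfloor c_{\smallrefer{cor: no very sparse for smin}}/p\rfloor$ and split into two cases. If $x^*_q\ge 2\alpha\,(2\alpha)^{C\,E(q)-C_{\smallrefer{cor: no very sparse for smin}}\log^3 n}\|x\|_\infty$, then $\|\widetilde A x\|_2\le\sqrt n\,(2\alpha)^{-C_{\smallrefer{cor: no very sparse for smin}}\log^3 n}\|x\|_\infty\le\frac{\sqrt n}{2\alpha}(2\alpha)^{-C\,E(q)}x^*_q$, so the proposition applies verbatim and yields $x^*_q\le(2\alpha)^{C\,E(q)}x^*_{\lfloor c/p\rfloor}$, which implies what is claimed. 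Otherwise $x^*_q<2\alpha\,(2\alpha)^{C\,E(q)}\,(2\alpha)^{-C_{\smallrefer{cor: no very sparse for smin}}\log^3 n}\|x\|_\infty$, and feeding in the first assertion $\|x\|_\infty\le(2\alpha)^{C_{\smallrefer{cor: no very sparse for smin}}\log^3 n}x^*_{\lfloor c_{\smallrefer{cor: no very sparse for smin}}/p\rfloor}$ together with $C_{\smallrefer{cor: no very sparse for smin}}\ge C+1$ (and $E(q)\ge1$) gives $x^*_q\le(2\alpha)^{C_{\smallrefer{cor: no very sparse for smin}}E(q)}x^*_{\lfloor c_{\smallrefer{cor: no very sparse for smin}}/p\rfloor}$.

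The step I expect to be the only genuinely non-formal one is the inequality $C_{\smallrefer{cor: no very sparse for smin}}\log^3 n\ge C\,E(1)+1$ needed to trigger the proposition at $q=1$, i.e.\ bounding $E(1)=\log^2(4n)\,\log^2(pn+\log 4n)$ by a universal multiple of $\log^3 n$. This reduces to controlling $\log^2(pn+\log 4n)$ by $O(\log n)$; the standing assumption $p\le n^{-7/8}$ from \eqref{Asmp on p weak} gives $pn+\log 4n\le 2n^{1/8}$ for $n$ large, and in the sparsity range in which the corollary is actually used $pn$ is small enough that $\log^2(pn+\log 4n)=O((\log\log n)^2)=o(\log n)$, so that $E(1)=O(\log^3 n)$ with room to spare and the remaining constants and lower-order logarithmic factors are absorbed into $C_{\smallrefer{cor: no very sparse for smin}}$. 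With the constants fixed in this way both bounds follow, the quantifier ranges matching because $c_{\smallrefer{cor: no very sparse for smin}}\le c$.
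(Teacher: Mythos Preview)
Your approach is exactly what the paper intends: specialize Proposition~\ref{prop: no very sparse null vectors} to $\phi=\mathrm{id}_{[n]}$ (trivially admissible and $0$--light, so $\phi(\widetilde A)=\widetilde A$) and pass from $\|\widetilde A x\|_\infty$ to $\|\widetilde A x\|_2$ via $\|\cdot\|_2\le\sqrt n\,\|\cdot\|_\infty$; the paper records this as an ``immediate corollary'' with no further argument, and your case split for the second assertion is a clean way to fill in the quantifier over $q$. You are right to flag the inequality $E(1)\lesssim\log^3 n$ as the only non-formal point: strictly, near the top of the range $pn\sim n^{1/8}$ allowed by \eqref{Asmp on p weak} one has $\log^2(pn+\log 4n)\asymp\log^2 n$ and hence $E(1)\asymp\log^4 n$, so the exponent $\log^3 n$ is a slight understatement already in the corollary as stated---this is harmless for the applications, which only need a polylogarithmic bound, and your argument gives the stated $\log^3 n$ in the regime $pn=O(\mathrm{polylog}\,n)$ where the corollary is actually invoked.
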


\section{Almost null vectors cannot be moderately sparse}\label{s: moderately sparse}

In this section, we extend the results of Section~\ref{subs: very sparse} showing that the the matrix
$\phi(\widetilde{A})$ typically does not have almost null vectors which are close to
$n/\log (pn)$--sparse.
By the results of the previous section, it is enough to consider vectors $x \in S^{n-1}(\C)$
having sufficiently large $x^*_m$ for $m\approx p^{-1}$.
Unlike the treatment of very sparse vectors, which relied on the properties of the graph associated to the matrix,
the analysis of the moderately sparse vectors uses $\e$-nets.
However, the standard $\e$-net argument cannot be applied here as the operator norm of the matrix $\phi(\widetilde{A})$ is too large.
Instead, we will analyze each inner product of a row of $\phi(\widetilde{A})$
and $x$ separately. In this analysis, in contrast with the dense matrices,
the approximation in $\ell_{\infty}$ norm works better than that in $\ell_2$ norm.
We note here that several versions of the $\e$--net argument have been developed recently to deal with sparse random matrices;
see, in particular, \cite{LLTTY JMAA, Cook circ, BCZ, BR inv, BR circ, LLTTY smin, LLTTY huge}.
The argument presented here differs considerably from those works.

To approximate any moderately sparse vector in $\ell_{\infty}$ norm, one can consider a covering of $S^{n-1}(\C)$ by cubes. This covering, however, is too large to be combined with a small ball probability estimate, which is rather weak due to the sparsity of the matrix. Yet, the sparsity, being an obstacle, can be turned into an advantage. For a fixed vector $x$ any given row typically has zero entries in the spots corresponding a few largest coordinates of $x$. If this occurs, the largest coordinates of $x$ do not have to be approximated, which allows to reduce the cardinality of the net. We implement this program below.

For a vector $x\in\C^n$ and a number $r\geq 1$, denote by $\Max_r(x)$ an $\lfloor r\rfloor$-element subset of $[n]$ containing coordinates
of $x$ with largest absolute values (the ties are broken arbitrarily).
Let $n/2 \le m \le n$. We will consider a sparse $m \times n$ matrix $\phi(\widetilde A)$,
where $\phi:[n]\to[m]$ is a $(K_0/2,\widetilde A)$--admissible surjective mapping and, as before, $\widetilde A=A-z\Id_n$.

We start with showing that for a fixed vector $x$, the probability that a given row of the matrix $\widetilde A$
has a large product with $x$ and the entries corresponding to the largest coordinates of $x$ are zeroes, is non-negligible.

\begin{lemma} \label{l: Levy concentration}
Let $n,p,z,A$ satisfy \eqref{Asmp on p weak}---\eqref{Asmp on A}---\eqref{Asmp on z}, and let $\widetilde A=(\widetilde a_{ij}):=A-z\,\Id_n$.
Let $\tau \in (0,1)$ be a parameter and $q$ be an integer with $\tau p^{-1} \le q \le p^{-1}$.
Fix $x \in S^{n-1}(\C)$.
For any $i\notin \Max_{q}(x)$, consider the event
\begin{align*}
\Omega_{x}^i
=\Big\{\sum_{j=1}^n |\widetilde a_{ij}| \le C_{\smallrefer{l: Levy concentration}}pn \;\; \& \;\;
\widetilde a_{ij}=0\;\mbox{for all }j \in \Max_{q/2}(x) \;\; \& \;\;
|\langle\row_i(\widetilde A),\bar x\rangle|\geq \frac{1}{2 \a} x_q^*  \Big\}.
\end{align*}
Then
\[
\Prob(\Omega_{x}^i) \ge c_{\smallrefer{l: Levy concentration}}
\]
for some $c_{\smallrefer{l: Levy concentration}}=c_{\smallrefer{l: Levy concentration}}(\tau)>0$ depending only on $\tau$.
\end{lemma}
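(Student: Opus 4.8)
The plan is to exhibit $\Omega_x^i$ as containing the intersection of a few \emph{mutually independent} events built from disjoint blocks of the $i$th row of $\widetilde A$, plus one conditionally favourable event, each of probability bounded below by a quantity depending only on $\tau$ (and the fixed global parameter $\alpha$).

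\textbf{A partition adapted to $x$, and three independent events.} Write $x_q^*$ for the $q$th largest absolute value among the coordinates of $x$. Put $S:=\Max_{q/2}(x)$; since $i\notin\Max_q(x)\supseteq S$ we have $i\notin S$ and $|x_i|\le x_q^*$, and there are at least $\lceil q/2\rceil-1\ge\lfloor q/4\rfloor$ indices outside $S\cup\{i\}$ with $|x_j|\ge x_q^*$. Let $T$ be a set of $\lfloor q/4\rfloor$ such indices (so $i\notin T$), and set $U':=[n]\setminus(S\cup T\cup\{i\})$. Since the $\delta_{ij}$ and $\xi_{ij}$ are jointly independent, events referring to disjoint blocks of column indices are independent. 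Define
\[
E_1:=\{\delta_{ij}=0\ \text{for all }j\in S\},\qquad E_2:=\{\,|\{j\in T:\delta_{ij}=1\}|=1\,\},
\]
write $j^*\in T$ for the unique such index on $E_2$, and set $E_4:=\{\,|\widetilde a_{ii}|+\sum_{j\in U'}\delta_{ij}|\xi_{ij}|\le 6pn\,\}$. From $q\le p^{-1}$ (hence $|S|\le\tfrac1{2p}$, $|T|p\le\tfrac14$) and $q\ge\tau p^{-1}$ (hence $|T|p\ge\tau/8$, using that $p$ is small by \eqref{Asmp on p weak}), together with $(1-p)^{1/p}\ge\tfrac14$ for $p\le\tfrac12$, one gets $\Prob(E_1)=(1-p)^{|S|}\ge\tfrac12$ and $\Prob(E_2)=|T|p(1-p)^{|T|-1}\ge\tau/16$. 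Since $\mathbb E|\xi_{ij}|\le 1$ and $|z|\le pn$ by \eqref{Asmp on z}, we have $\mathbb E\big[|\widetilde a_{ii}|+\sum_{j\in U'}\delta_{ij}|\xi_{ij}|\big]\le 3pn$, so $\Prob(E_4)\ge\tfrac12$ by Markov. As $E_1,E_2,E_4$ depend on the pairwise disjoint blocks $S$, $T$, $U'\cup\{i\}$, they are mutually independent, giving $\Prob(E_1\cap E_2\cap E_4)\ge\tau/64$.

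\textbf{Anti-concentration on the single coordinate $j^*$.} Condition on $E_1\cap E_2\cap E_4$ together with the $\sigma$-algebra generated by all $\delta_{ij}$ and by all $\xi_{ij}$ with $j\in U'\cup\{i\}$; the events $E_1,E_2,E_4$ and the index $j^*$ are measurable for it, and the only residual randomness is $\xi_{ij^*}$, still distributed as $\xi$. On $E_1\cap E_2$ every $\widetilde a_{ij}$ with $j\in S$ and every $\widetilde a_{ij}$ with $j\in T\setminus\{j^*\}$ vanishes, so
\[
\langle\row_i(\widetilde A),\bar x\rangle=\xi_{ij^*}x_{j^*}+w,\qquad \sum_{j=1}^n|\widetilde a_{ij}|=c_{\mathrm{fix}}+|\xi_{ij^*}|,
\]
where $w$ and $c_{\mathrm{fix}}\le 6pn$ (the latter by $E_4$) are determined by the conditioning. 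Because $\cf(\xi,1/\alpha)\le 1-1/\alpha$ and $|x_{j^*}|\ge x_q^*$, the conditional probability that $|\xi_{ij^*}x_{j^*}+w|\ge\frac1{2\alpha}x_q^*$ is at least $1/\alpha$; and since $\mathbb E|\xi|^2=1$ and $pn\ge C_\alpha$, the conditional probability that $|\xi_{ij^*}|>pn$ is at most $(pn)^{-2}\le\frac1{2\alpha}$ (enlarging $C_\alpha$ if necessary). Hence with conditional probability at least $\frac1{2\alpha}$ both inequalities hold, and on that event $\sum_j|\widetilde a_{ij}|\le 7pn$, so with $C_{\smallrefer{l: Levy concentration}}=7$ all three conditions defining $\Omega_x^i$ hold simultaneously. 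Integrating out the conditioning gives $\Prob(\Omega_x^i)\ge\Prob(E_1\cap E_2\cap E_4)\cdot\frac1{2\alpha}\ge\tau/(128\alpha)=:c_{\smallrefer{l: Levy concentration}}$.

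\textbf{Where the difficulty lies.} The one genuine point is that the $\ell_1$ requirement $\sum_j|\widetilde a_{ij}|\le C_{\smallrefer{l: Levy concentration}}pn$ cannot simply be added to the anti-concentration requirement by a union bound, since the latter event has probability only of order $\tau/\alpha$, far from $1$. The remedy is precisely the rearrangement above: let a single, combinatorially isolated coordinate $j^*$ (pinned down by $E_2$) carry all the anti-concentration, cap its own $\ell_1$-contribution $|\xi_{ij^*}|$ at the crude deterministic level $pn$ (costing only $(pn)^{-2}$ in probability), and push the remaining $\ell_1$-mass of the row into the independent event $E_4$; after this the three conditions become genuinely compatible. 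No separate treatment of $x_q^*=0$ is needed, since then $\Prob(|\xi_{ij^*}x_{j^*}+w|\ge 0)=1$, and $p\le n^{-7/8}$ automatically makes $q\ge\tau p^{-1}$ large, which is what licenses the elementary binomial estimates for $\Prob(E_1)$ and $\Prob(E_2)$.
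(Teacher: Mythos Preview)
Your proof is correct and follows the same architecture as the paper: zero out the row on $\Max_{q/2}(x)$, extract anti-concentration from the middle block $\Max_q(x)\setminus\Max_{q/2}(x)$, and control the $\ell_1$-norm via Markov. The tactical details differ in two places. First, the paper gets anti-concentration by a two-point dichotomy: conditioning on everything outside the middle block, it considers the events $\Omega_0^i$ (all middle entries vanish, so the middle contribution $y=0$) and $\Omega_1^i$ (exactly one middle entry has $|a_{ij}|\ge 1/\alpha$, so $|y|\ge x_q^*/\alpha$); for any value of the frozen remainder, at least one of these forces $|\langle\row_i,\bar x\rangle|\ge x_q^*/(2\alpha)$. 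You instead pin down a single active index $j^*$ via $E_2$ and invoke $\cf(\xi,1/\alpha)\le 1-1/\alpha$ directly against the complex shift $-w/x_{j^*}$. Second, the paper handles the $\ell_1$-constraint by a global Markov subtraction at the very end, which forces $C_{\smallrefer{l: Levy concentration}}$ to depend on $\tau$; your block decomposition (with the separate Chebyshev cap $|\xi_{ij^*}|\le pn$) yields $C_{\smallrefer{l: Levy concentration}}=7$ independent of $\tau$, which is a small gain your ``where the difficulty lies'' paragraph correctly identifies. Both routes are short and neither needs more than the assumptions \eqref{Asmp on p weak}--\eqref{Asmp on z}.
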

\begin{proof}
Let $U$ be the event that $\widetilde a_{ij}=0$ for all $j \in \Max_{q/2}(x)$.
Then, clearly, $\Prob(U) \ge c_1$, where $c_1$ is an absolute constant.

Next, condition on any realization of $\widetilde a_{ij}$, $j \in ([n] \setminus \Max_q(x))\cup\Max_{q/2}(x)$, and set
\[
y:=\sum_{j \in\Max_q(x)\setminus\Max_{q/2}(x)} \widetilde a_{ij} x_j.
\]
Let $\Omega_k^i(x)$ be the event that exactly $k$ of the entries $a_{ij}$,
$j \in \Max_q(x) \setminus \Max_{q/2}(x)$, have absolute value greater or equal to $1/\a$, and the other entries are zero.
Clearly, $y=0$ everywhere on $\Omega_0^i(x)$, and $|y|\geq \frac{1}{\alpha} x^*_q$ everywhere
on $\Omega_1^i(x)$. Further, by the conditions on $q$, we have
$\P( \Omega_0^i(x)), \P(\Omega_1^i(x)) \ge c(\tau)$ for some $c(\tau)>0$.
Together with the above observation on the probability of the event $U$, this gives
$$\Prob\Big\{\widetilde a_{ij}=0\;\mbox{for all }j \in \Max_{q/2}(x) \;\; \& \;\;
|\langle\row_i(\widetilde A),\bar x\rangle|\geq \frac{1}{2\a} x_q^*  \Big\}\geq c'(\tau)$$
for some $c'(\tau)>0$.
Finally, observe that
as the $\row_i(\widetilde A)$ contains one entry which is shifted by $z$ and $|z| \le pn$, by Markov's inequality there is $C_1(\tau)>0$ such that
 \[
   \Prob\Big\{\sum_{j=1}^n |\widetilde a_{ij}|>C_1 pn\Big\} \le \frac{pn\, \E |\xi_{ij}|+pn}{C_1 pn} \le  \frac{c'(\tau)}{2}.
 \]
The combination of the last two probability estimates yields the lemma.
\end{proof}

To pass from a single coordinate of $\widetilde Ax$ to bounds for its norm,
we need the following elementary lemma showing that with overwhelming probability, the number of good rows is large.
\begin{lemma}\label{l: many rows}
For any $\tau\in(0,1)$ there are $c_{\smallrefer{l: many rows}},c_{\smallrefer{l: many rows}}'>0$ depending only on $\tau$ with the following property.
Let $x$ and $n,p,z,A,q$ be as in Lemma~\ref{l: Levy concentration}.
Denote
\[
  S(x):= \sum\limits_{i\in[n]\setminus \Max_{q}(x)}\indicator_{\Omega_x^i},
\]
where $\indicator_{\Omega_x^i}$ is the indicator of the event $\Omega_x^i$.
Set
 \[
  \Omega_x:=\big\{|S(x)| \ge c_{\smallrefer{l: many rows}} n \big\}.
 \]
 Then
 \[
  \P(\Omega_x) \ge 1- \exp(-c'_{\smallrefer{l: many rows}}  n).
 \]
\end{lemma}
\begin{proof}
Since the events $\Omega_{x}^i$ for different $i$ are independent,
Bernstein's inequality and Lemma~\ref{l: Levy concentration} imply this bound.
\end{proof}

The next proposition is the main step toward proving the result of this section.
It asserts that if $X$ is an almost null vector for $\phi(\widetilde A)$ and
$x^*_{m/2}$ and $x^*_m$ are commensurate,
then typically $x^*_M$ is also commensurate with $x^*_m$ for $M$ almost proportional to $n$.

\begin{prop} \label{prop: almost proportional}
Let $\tau\in (0,1)$, let $n,p,z,A,q$ be as in Lemma~\ref{l: Levy concentration}, and let
\[
  \tau p^{-1} \le m \le p^{-1}.
\]
There exist positive constants $C_{\smallrefer{prop: almost proportional}}, \tilde{C}_{\smallrefer{prop: almost proportional}},
c_{\smallrefer{prop: almost proportional}}, \tilde{c}_{\smallrefer{prop: almost proportional}}, \hat{c}_{\smallrefer{prop: almost proportional}}$
depending on $\tau$ and $\alpha$ with the following property.
Let
  \[
   M:=\lfloor\tilde{c}_{\smallrefer{prop: almost proportional}} n/\log (np)\rfloor,
  \]
and define
\begin{align*}
\Event_{\smallrefer{prop: almost proportional}}:=\Big\{&\forall \,\phi:[n]\to\N\mbox{ with }
|\{i\leq n:\;|\phi^{-1}(\phi(i))|\geq 2\}|\leq
c_{\smallrefer{prop: almost proportional}} n\mbox{ and }\\
&\forall\,x \in S^{n-1}(\C)
\mbox{ such that }x_m^* > e^{-\tilde{c}_{\smallrefer{prop: almost proportional}} pn} x^*_{\lfloor m/2\rfloor}
\;\; \& \;\; x_M^* \le \frac{c_{\smallrefer{prop: almost proportional}}}{pn}  x_m^*\\
&\mbox{we have } \;\; \|\phi(\widetilde A) x\|_2 > \hat{c}_{\smallrefer{prop: almost proportional}} \sqrt{n} x_m^* \Big\}.
\end{align*}
Then $\Prob(\Event_{\smallrefer{prop: almost proportional}})\geq 1-\exp(- C_{\smallrefer{prop: almost proportional}} n)$.
\end{prop}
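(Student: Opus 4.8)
The plan is to prove the equivalent statement: with probability at least $1-e^{-C_{\smallrefer{prop: almost proportional}}n}$, every admissible $\phi$ and every $x$ obeying the two constraints in $\Event_{\smallrefer{prop: almost proportional}}$ satisfies $\|\phi(\widetilde A)x\|_2>\hat c_{\smallrefer{prop: almost proportional}}\sqrt n\,x_m^*$, where $\widetilde A:=A-z\,\Id$. The engine is the pointwise estimate: for a \emph{fixed} such $x$ I would apply Lemma~\ref{l: many rows} with its parameter taken to be $m$ (admissible as $\tau p^{-1}\le m\le p^{-1}$). This gives, with probability $\ge 1-\exp(-c_{\smallrefer{l: many rows}}'n)$, at least $c_{\smallrefer{l: many rows}}n$ rows $i\notin\Max_m(x)$ in the event $\Omega^i_x$ of Lemma~\ref{l: Levy concentration}: such rows have $\|\row_i(\widetilde A)\|_1\le C_{\smallrefer{l: Levy concentration}}pn$, have $\widetilde a_{ij}=0$ for every $j\in\Max_{m/2}(x)$, and have $|\langle\row_i(\widetilde A),\bar x\rangle|\ge\tfrac1{2\alpha}x_m^*$. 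On this event the conclusion holds for \emph{all} admissible $\phi$ simultaneously: since a $\phi$ with $|\{i:|\phi^{-1}(\phi(i))|\ge2\}|\le c_{\smallrefer{prop: almost proportional}}n$ glues at most $c_{\smallrefer{prop: almost proportional}}n$ row-indices, at least $(c_{\smallrefer{l: many rows}}-c_{\smallrefer{prop: almost proportional}})n$ of these good rows survive verbatim as (distinct) rows of $\phi(\widetilde A)$, whence $\|\phi(\widetilde A)x\|_2^2\ge(c_{\smallrefer{l: many rows}}-c_{\smallrefer{prop: almost proportional}})\tfrac n{4\alpha^2}(x_m^*)^2$; picking $c_{\smallrefer{prop: almost proportional}}<\tfrac12c_{\smallrefer{l: many rows}}$ and $\hat c_{\smallrefer{prop: almost proportional}}$ small finishes the fixed-$x$ case. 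So the $\phi$-quantifier is free, and the task reduces to replacing ``a fixed $x$'' by ``all admissible $x$''.

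For that I would run an $\ell_\infty$--net argument exploiting the sparsity of $\widetilde A$ together with the structural hypotheses on $x$. Fix the dyadic scale $v$ of $x_m^*$ (only $O(n)$ scales matter). Split the coordinates of $x$ into: the \emph{large} block $\Max_{m/2}(x)$, of size $\le\tfrac m2\le\tfrac1{2p}$, which a good row annihilates and hence need not be approximated; the \emph{tiny} block of coordinates of modulus $<\varepsilon v/(pn)$, simply rounded to $0$ (their total $\ell_1$ contribution to a row of $\ell_1$--norm $\le C_{\smallrefer{l: Levy concentration}}pn$ is then $\le C_{\smallrefer{l: Levy concentration}}\varepsilon v$); and the \emph{moderate} block in between. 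The hypothesis $x_M^*\le\frac{c_{\smallrefer{prop: almost proportional}}}{pn}x_m^*$ forces every coordinate of index $>M$ into the tiny block, so the moderate block has at most $M=\lfloor\tilde c_{\smallrefer{prop: almost proportional}}n/\log(np)\rfloor$ coordinates; of these, the $\le\tfrac m2$ sitting in $\Max_m(x)\setminus\Max_{m/2}(x)$ take values in $[v,e^{\tilde c_{\smallrefer{prop: almost proportional}}pn}v]$ by the flatness hypothesis $x_m^*>e^{-\tilde c_{\smallrefer{prop: almost proportional}}pn}x_{m/2}^*$, the others in $[\varepsilon v/(pn),v]$. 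Recording the index sets $\Max_{m/2}(x)\subset\Max_m(x)$ and discretizing each moderate coordinate to accuracy $\varepsilon v/(pn)$ gives a net $\mathcal N$ with
\[
\log|\mathcal N|\;\le\;\log\binom nm+\log\binom nM+\tfrac m2\log\tfrac{e^{\tilde c_{\smallrefer{prop: almost proportional}}pn}pn}{\varepsilon}+M\log\tfrac{C_{\smallrefer{l: Levy concentration}}pn}{\varepsilon}+O(\log n).
\]
Each of these five terms can be made $\le\tfrac15c_{\smallrefer{l: many rows}}'n$: the binomials because $m/n\le 1/C_\alpha$ and $M/n$ are small; the $\tfrac m2$--term because $\tfrac m2\log(e^{\tilde c_{\smallrefer{prop: almost proportional}}pn}pn/\varepsilon)\le\tfrac12\tilde c_{\smallrefer{prop: almost proportional}}n+\tfrac1{2p}\log(pn/\varepsilon)$ with $m\le 1/p\le n/C_\alpha$ and $pn\ge C_\alpha$; the $M$--term because $M\log(pn)\le\tilde c_{\smallrefer{prop: almost proportional}}n$. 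Fixing $\varepsilon$ small, then $\tilde c_{\smallrefer{prop: almost proportional}}$ small, then $C_\alpha$ large, we get $|\mathcal N|\le e^{c_{\smallrefer{l: many rows}}'n/2}$.

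To close: for an admissible $x$ let $y\in\mathcal N$ be its approximant, rescaled onto $S^{n-1}(\C)$; then $\|x-y\|_\infty\le\varepsilon v/(pn)$, so $y_m^*\ge\tfrac12x_m^*$, and on any row $i$ good for $y$ (relative to the recorded index sets) one has $|\langle\row_i(\widetilde A),\bar x\rangle|\ge|\langle\row_i(\widetilde A),\bar y\rangle|-C_{\smallrefer{l: Levy concentration}}pn\|x-y\|_\infty\ge\tfrac1{4\alpha}x_m^*-C_{\smallrefer{l: Levy concentration}}\varepsilon x_m^*\ge\tfrac1{8\alpha}x_m^*$ (up to a harmless dyadic factor). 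The deterministic implication of the first paragraph, applied now to $y$, forces $\|\phi(\widetilde A)x\|_2>\hat c_{\smallrefer{prop: almost proportional}}\sqrt n\,x_m^*$ for all admissible $\phi$ on the event $\{S(y)\ge c_{\smallrefer{l: many rows}}n\}$. Hence $\Event_{\smallrefer{prop: almost proportional}}^c\subset\bigcup_{y\in\mathcal N}\{S(y)<c_{\smallrefer{l: many rows}}n\}$, and Lemma~\ref{l: many rows} with the union bound yields $\Prob(\Event_{\smallrefer{prop: almost proportional}}^c)\le|\mathcal N|\,e^{-c_{\smallrefer{l: many rows}}'n}\le e^{-c_{\smallrefer{l: many rows}}'n/2}$.

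The hard part is exactly the net-cardinality bookkeeping above: the $\ell_\infty$--accuracy $\varepsilon v/(pn)$ dictated by the (sparsity-limited, hence weak) small-ball bound is so coarse that a naive discretization of the moderate block would contain $\approx(pn)^M=e^{\Theta(n)}$ points and overwhelm the $e^{-c_{\smallrefer{l: many rows}}'n}$ gain from tensorization; the argument works only because the cost can be kept at $e^{\Theta(\tilde c_{\smallrefer{prop: almost proportional}}n)}$ with a constant we control, which in turn relies on (a) genuinely \emph{removing} the large coordinates from the net rather than approximating them, (b) confining the exponentially wide magnitude window $[v,e^{\tilde c_{\smallrefer{prop: almost proportional}}pn}v]$ to the $O(1/p)$--sized block, where $1/p\le n/C_\alpha$ absorbs it, and (c) the cap $M=\tilde c_{\smallrefer{prop: almost proportional}}n/\log(pn)$ on the number of moderate coordinates. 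A further technical nuisance is making $\Max_{m/2}(y)$ match $\Max_{m/2}(x)$ through the discretization — handled by carrying these sets in the net data and invoking the permutation invariance of the model — and treating pathologically small $x_m^*$ separately via a crude bound.
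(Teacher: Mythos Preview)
Your argument is essentially the paper's: an $\ell_\infty$-net on the coordinates outside $\Max_{m/2}(x)$ at accuracy $\sim x_m^*/(pn)$, combined with Lemma~\ref{l: many rows} and a union bound, with the same net-size bookkeeping and the same row-by-row approximation using $\|\row_i(\widetilde A)\|_1\le C_{\smallrefer{l: Levy concentration}}pn$. The one simplification the paper makes is to normalize upfront by $x_m^*$ --- defining $f(x):=\Proj_{[n]\setminus\Max_{m/2}(x)}(x)/x_m^*$, which lands in a fixed bounded set $W$ and building a single $\varepsilon$-net there --- so that your enumeration of dyadic scales of $x_m^*$ and the separate handling of ``pathologically small $x_m^*$'' become unnecessary.
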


\begin{proof}
Let
\[
\e:= \frac{c_{\smallrefer{prop: almost proportional}} }{pn},
\]
 where the constant $c_{\smallrefer{prop: almost proportional}}$ will be chosen later. Denote
\[
V:=\left \{x \in S^{n-1}(\C):\; x_{m}^* > e^{- \tilde{c}_{\smallrefer{prop: almost proportional}} pn}x_{\lfloor m/2\rfloor}^* \;\; \& \;\;
x_M^* \le \e \cdot x_m^*   \right \}.
\]
and define a function $f:V \to \C^n$ by
\[
f(x):=\frac{\Proj_{[n]\setminus \Max_{m/2}(x)}(x)}{x_m^*},
\]
where $\Proj_{[n]\setminus \Max_{m/2}(x)}$ denotes the coordinate projection
onto $[n]\setminus \Max_{m/2}(x)$.
Define also
\[
W:=\left \{f(x): \ x \in V \right \}.
\]
The proof uses an $\e$-net in the set $W$ in the $\ell_\infty$ metric.
Note that for every $x \in V$
 \[
   \norm{f(x)}_{\infty} \le H:=\exp(\tilde{c}_{\smallrefer{prop: almost proportional}} pn), \quad f(x)_{\lceil m/2\rceil+1}^* \le 1
   \quad \text{and} \quad f(x)_{M-\lfloor m/2\rfloor+1}^* \le \e.
 \]
To construct the net, we first choose an $M$-element subset of $[n]$ corresponding to $\Max_M(x)$
and a further $\lfloor m/2\rfloor$-subset corresponding to $\Max_{m/2}(x)$.
After these sets are chosen, we cover the cube of size $H$ of (complex) dimension $\lceil m/2\rceil$
and the unit cube of dimension $M-m$ by cubes of size $\e$.
This allows to construct an $\e$-net $\NN' \subset W$ with cardinality
\begin{align*}
|\NN'| &\le \binom{n}{M-\lfloor m/2\rfloor} \binom{M-\lfloor m/2\rfloor}{\lceil m/2\rceil}
\left( \frac{3}{\e} \right)^{2(M-m)}  \left( \frac{3 H}{\e} \right)^{m+1} \\
&\le \exp \left( M \left[ \log \left( \frac{en}{M} \right) + 2\log \left( \frac{3}{\e} \right ) \right]
+ (m+1) \left[ \log \left( \frac{eM}{m/2} \right) + \log \left( H \right ) \right]
\right).
\end{align*}
Here,
\[
M \log \left( \frac{en}{M} \right)
\leq \tilde{c}_{\smallrefer{prop: almost proportional}} \frac{n}{\log (pn)}
\log \left( e \log \frac{pn}{\tilde{c}_{\smallrefer{prop: almost proportional}}} \right) \le\tilde{c}_{\smallrefer{prop: almost proportional}} n
\]
and
\[
M \log \left( \frac{3}{\e} \right )
= \tilde{c}_{\smallrefer{prop: almost proportional}} \frac{n}{\log (pn)}  \log (3 c_{\smallrefer{prop: almost proportional}}^{-1} pn)
\le C_1 \tilde{c}_{\smallrefer{prop: almost proportional}} n
\]
for some constant $C_1$ which does not depend on $\tilde{c}_{\smallrefer{prop: almost proportional}}$
as long as ${c}_{\smallrefer{prop: almost proportional}}^{-1} \le pn$.
Also, since $\tau p^{-1}\le m \le p^{-1}$, we have
\[
(m+1) \left[ \log \left( \frac{eM}{m/2} \right) + \log \left( H \right ) \right]
\le 2p^{-1} \left[ \log( C_2 \tilde{c}_{\smallrefer{prop: almost proportional}} pn)+ \tilde{c}_{\smallrefer{prop: almost proportional}} pn \right]
   \le C_3  \tilde{c}_{\smallrefer{prop: almost proportional}} n
  \]
  where $C_2,C_3$ do not depend on $\tilde{c}_{\smallrefer{prop: almost proportional}}$.
  This allows us to conclude that
  \[
  |\NN'|
  \le \exp \left(  C_4 \tilde{c}_{\smallrefer{prop: almost proportional}} n \right).
  \]
 We will use a modification of this net to approximate the $n-m/2$ smallest coordinates of a vector.
 Let us construct this modification.

For every $u \in \NN'$ and every $I \subset [n], \ |I|=\lfloor m/2\rfloor$, pick a vector $x \in V$
such that $\Max_{m/2}(x)=I$ and
$\norm{f(x)-u}_{\infty} \le \e$. If such $x$ does not exist for a given $I$, we skip this $I$.
If such $x$ does not exist for any $I$, we skip $u$. This process creates a set $\NN \subset V$ such that
 \[
 |\NN| \le \binom{n}{\lfloor m/2\rfloor} \cdot |\NN'|
 \le \left(\frac{en}{\lfloor m/2\rfloor} \right)^{\lfloor m/2\rfloor} \cdot  \exp \left(  C_4 \tilde{c}_{\smallrefer{prop: almost proportional}} n \right)
 \le  \exp \left(  C_5 \tilde{c}_{\smallrefer{prop: almost proportional}} n \right)
 \]
  where  $C_5$ does not depend on $\tilde{c}_{\smallrefer{prop: almost proportional}}$.
  By construction, for any $y \in V$, there exists an $x \in \NN$ with
 \[
  \Max_{m/2}(x)=\Max_{m/2}(y) \quad \text{and} \quad \norm{f(y)-f(x)}_{\infty} \leq 2\e.
 \]
Assume that the event $\Omega=\bigcap_{x \in \NN} \Omega_x$ occurs.
Take any $y \in V$ and choose $x \in \NN$ satisfying the condition above.
Since $\Max_{m/2}(x)=\Max_{m/2}(y)$,  for any $i$ such that $\Omega_x^i$ holds, we have
 \begin{align*}
|\langle\row_i(\widetilde A), y\rangle|
&= |\langle\row_i(\widetilde A), \Proj_{[n]\setminus \Max_{m/2}(y)}(y)\rangle|\\
&= |\langle\row_i(\widetilde A), f(y)\rangle| \cdot y_m^*
\ge \left( |\langle\row_i(\widetilde A), f(x)\rangle|  - |\langle\row_i(\widetilde A),f(x)-f(y)\rangle| \right) \cdot y_m^* \\
&\ge \Big( \frac{1}{2 \a}  - \norm{f(x)-f(y)}_{\infty} \cdot \sum_{j=1}^n |\widetilde a_{ij}| \Big) \cdot y_m^*
\ge \Big(  \frac{1}{2 \a} - 2\e \cdot C_{\smallrefer{l: Levy concentration}} pn  \Big) \cdot y_m^*\\
&\ge  \frac{1}{4 \a}   \cdot y_m^*,
\end{align*}
if the constant $c_{\smallrefer{prop: almost proportional}}$ appearing in the definition of $\e$ is chosen sufficiently small.
Since $S(x) \ge c_{\smallrefer{l: many rows}} n$ on $\Omega$, and since $|\{i\leq n:\;|\phi^{-1}(\phi(i))|\geq 2\}|\leq
c_{\smallrefer{prop: almost proportional}}n$, this implies that
 \[
  \norm{\phi(\widetilde A) y}_2  \ge  \hat{c}_{\smallrefer{prop: almost proportional}} \sqrt{n} y_m^*
 \]
for an appropriately chosen $ \hat{c}_{\smallrefer{prop: almost proportional}}$.
We proved that if $\Omega$ occurs, then the event $\Event_{\smallrefer{prop: almost proportional}}$ does not occur.
It remains to estimate the probability of $\Omega^c$.
By Lemma~\ref{l: many rows}, we have
\begin{align*}
\P(\Omega^c) \le \sum_{x \in \NN} \P(\Omega_x^c) \le |\NN| \cdot \exp(-c'_{\smallrefer{l: many rows}} n)
\le  \exp \left(   C_5 \tilde{c}_{\smallrefer{prop: almost proportional}} n  - c'_{\smallrefer{l: many rows}} n\right)
\le \exp(-(c'_{\smallrefer{l: many rows}}/2)  n)
 \end{align*}
if the constant $\tilde{c}_{\smallrefer{prop: almost proportional}}$ is chosen appropriately small.
This finishes the proof of the proposition.
\end{proof}

Now, we combine Proposition~\ref{prop: almost proportional} and Proposition \ref{prop: no very sparse null vectors}
to derive the main result of this section.
It will be convenient for us to define a single event which encapsulates all good properties
of the matrix $\widetilde A$ and the associated graph $G_{\widetilde A}$.
Set
$\Event:=\EE_{\smallrefer{p: supports}}\cap\,
\EE_{\smallrefer{p: chains combined}}\,\cap\,\EE_{\smallrefer{p: expansion}}(1/512\alpha) \cap \EE_{\smallrefer{prop: almost proportional}}$,
and let $\Event_{good}$ be the event
$$\Event_{good}:=\big\{\mbox{$\Event$ occurs for both $\widetilde A$ and $\widetilde A^\top$}\big\}.$$
Note that the results we have proved up to now show that
$\Prob(\Event_{good})\geq 1-(pn)^{-c}$ for an absolute constant $c>0$.

\begin{prop} \label{prop: almost proportional balance}
Let $n$, $p$, $z$ and the matrix $A$
satisfy assumptions \eqref{Asmp on p weak}--\eqref{Asmp on A}--\eqref{Asmp on z}.
Fix a realization of $A$ in $\Event_{good}$.
Let $q$ be in the interval $\{1,2,\dots,\lfloor c_{\smallrefer{prop: almost proportional balance}}/p\rfloor\}$.
Let $m\leq n$, and let $\phi:[n]\to[m]$ be a
$(\widetilde A,K_0/2)$--admissible $(K_0/256)$--light mapping.
Set
\[
M=\lfloor c_{\smallrefer{prop: almost proportional balance}}n/\log (np)\rfloor.
\]
Then for any vector $x\in\C^n$ with
$$\|\phi(\widetilde A)\, x\|_2
\leq \frac{\sqrt{n}}{2\alpha}(2\alpha)^{-C_{\smallrefer{prop: almost proportional balance}}\log^2\frac{4n}{q}\,\log^2(pn+\log\frac{4n}{q})}\,x^*_q$$
we have
$$x^*_{q}
\leq (2\alpha)^{C_{\smallrefer{prop: almost proportional balance}}\log^2\frac{4n}{q}\,\log^2(pn+\log\frac{4n}{q})}
x^*_{M}.$$
\end{prop}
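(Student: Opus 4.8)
The plan is to derive Proposition~\ref{prop: almost proportional balance} by contradiction, feeding the output of Proposition~\ref{prop: no very sparse null vectors} (which propagates largeness of the order statistics of a near-null vector from a coordinate $q\lesssim p^{-1}$ down to the coordinate $\lfloor c_{\smallrefer{prop: no very sparse null vectors}}/p\rfloor$) into Proposition~\ref{prop: almost proportional} (which bridges the gap from the scale $\sim p^{-1}$ to the scale $M\sim n/\log(np)$). By homogeneity I may take $x\in S^{n-1}(\C)$, and the realization being in $\Event_{good}$ makes available the conclusions of both propositions. I fix $\tau:=c_{\smallrefer{prop: no very sparse null vectors}}/2$ (so Proposition~\ref{prop: almost proportional}'s constants are defined), choose $c_{\smallrefer{prop: almost proportional balance}}\le\min\{\tilde c_{\smallrefer{prop: almost proportional}},\,c_{\smallrefer{prop: no very sparse null vectors}}/4\}$ and $C_{\smallrefer{prop: almost proportional balance}}\ge 2C_{\smallrefer{prop: no very sparse null vectors}}$ large enough, abbreviate $m_0:=\lfloor c_{\smallrefer{prop: no very sparse null vectors}}/p\rfloor$, and note that $q\le m_0$, that $m_0\in[\tau/p,\,1/p]$ because $p$ is small, and that $M=\lfloor c_{\smallrefer{prop: almost proportional balance}}n/\log(np)\rfloor\le\lfloor\tilde c_{\smallrefer{prop: almost proportional}}n/\log(np)\rfloor$. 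Since $C_{\smallrefer{prop: almost proportional balance}}\ge C_{\smallrefer{prop: no very sparse null vectors}}$, the assumed bound on $\|\phi(\widetilde A)x\|_2$ is at least as strong as the hypothesis of Proposition~\ref{prop: no very sparse null vectors} at the scale $q$, hence $x^*_q\le(2\alpha)^{C_{\smallrefer{prop: no very sparse null vectors}}\log^2\frac{4n}{q}\log^2(pn+\log\frac{4n}{q})}x^*_{m_0}$. Assume now, contrary to the conclusion, that $x^*_q>(2\alpha)^{C_{\smallrefer{prop: almost proportional balance}}\log^2\frac{4n}{q}\log^2(pn+\log\frac{4n}{q})}x^*_M$. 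Combining the two,
\[
x^*_{m_0}>(2\alpha)^{(C_{\smallrefer{prop: almost proportional balance}}-C_{\smallrefer{prop: no very sparse null vectors}})\log^2\frac{4n}{q}\log^2(pn+\log\frac{4n}{q})}\,x^*_M;
\]
since $\log^2(pn+\log\frac{4n}{q})\ge\log(pn)$, $\log^2\frac{4n}{q}\ge1$ and $pn\ge C_\alpha$, the factor on the right is at least $(pn)^2\ge pn/c_{\smallrefer{prop: almost proportional}}$, which exceeds $1$ (so in particular $m_0<M$) and gives $x^*_M\le\frac{c_{\smallrefer{prop: almost proportional}}}{pn}x^*_{m_0}$; the same bound then holds with $M$ replaced by $\lfloor\tilde c_{\smallrefer{prop: almost proportional}}n/\log(np)\rfloor$.

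Next I split according to whether $x^*_{m_0}$ and $x^*_{\lfloor m_0/2\rfloor}$ are commensurate. Case (i): $x^*_{m_0}>e^{-\tilde c_{\smallrefer{prop: almost proportional}}pn}x^*_{\lfloor m_0/2\rfloor}$. Then the unit vector $x$ satisfies both conditions defining the set quantified over in $\Event_{\smallrefer{prop: almost proportional}}$, with scale $m:=m_0\in[\tau/p,1/p]$, so Proposition~\ref{prop: almost proportional} yields
\[
\|\phi(\widetilde A)x\|_2>\hat c_{\smallrefer{prop: almost proportional}}\sqrt n\,x^*_{m_0}\ge\hat c_{\smallrefer{prop: almost proportional}}(2\alpha)^{-C_{\smallrefer{prop: no very sparse null vectors}}\log^2\frac{4n}{q}\log^2(pn+\log\frac{4n}{q})}\sqrt n\,x^*_q,
\]
using the inequality of the previous paragraph in the last step. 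For $C_{\smallrefer{prop: almost proportional balance}}$ large this lower bound exceeds the assumed upper bound $\frac{\sqrt n}{2\alpha}(2\alpha)^{-C_{\smallrefer{prop: almost proportional balance}}\log^2\frac{4n}{q}\log^2(pn+\log\frac{4n}{q})}x^*_q$ on $\|\phi(\widetilde A)x\|_2$, a contradiction.

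Case (ii): $x^*_{m_0}\le e^{-\tilde c_{\smallrefer{prop: almost proportional}}pn}x^*_{q'}$ with $q':=\lfloor m_0/2\rfloor$; here $q\le q'\le m_0$, so Proposition~\ref{prop: no very sparse null vectors} is applicable at the scale $q'$. Using $x^*_{m_0}\le x^*_{q'}$ together with the bound $x^*_q\le(2\alpha)^{C_{\smallrefer{prop: no very sparse null vectors}}\log^2\frac{4n}{q}\log^2(pn+\log\frac{4n}{q})}x^*_{m_0}$, the assumed bound on $\|\phi(\widetilde A)x\|_2$ implies the hypothesis of Proposition~\ref{prop: no very sparse null vectors} at the scale $q'$: the surplus exponent $(C_{\smallrefer{prop: almost proportional balance}}-C_{\smallrefer{prop: no very sparse null vectors}})\log^2\frac{4n}{q}\log^2(pn+\log\frac{4n}{q})$ absorbs the passage from $q$ to $q'$, since $q\le q'$ forces $\log^2\frac{4n}{q}\log^2(pn+\log\frac{4n}{q})\ge\log^2\frac{4n}{q'}\log^2(pn+\log\frac{4n}{q'})$ and $C_{\smallrefer{prop: almost proportional balance}}\ge2C_{\smallrefer{prop: no very sparse null vectors}}$. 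Hence $x^*_{q'}\le(2\alpha)^{C_{\smallrefer{prop: no very sparse null vectors}}\log^2\frac{4n}{q'}\log^2(pn+\log\frac{4n}{q'})}x^*_{m_0}$, and combined with the case hypothesis,
\[
x^*_{q'}\le(2\alpha)^{C_{\smallrefer{prop: no very sparse null vectors}}\log^2\frac{4n}{q'}\log^2(pn+\log\frac{4n}{q'})}e^{-\tilde c_{\smallrefer{prop: almost proportional}}pn}\,x^*_{q'}.
\]
As $q'\asymp p^{-1}$ we have $\frac{4n}{q'}=O(pn)$, so the exponent of $2\alpha$ above is $O(\log^4(pn))$, which is $o(pn)$ once $C_\alpha$ is large enough; thus the coefficient is $<1$, forcing $x^*_{q'}=0$, i.e.\ $x=0$, contradicting $\|x\|_2=1$. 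Both cases being contradictory, the proposition follows.

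The crux is reconciling two mismatched ``exchange rates'': a passage through Proposition~\ref{prop: no very sparse null vectors} costs a factor $(2\alpha)^{C\log^2\frac{4n}{q}\log^2(pn+\log\frac{4n}{q})}$, potentially as large as $(2\alpha)^{(\log n)^4}$, whereas Proposition~\ref{prop: almost proportional} tolerates only a drop $e^{\tilde c pn}$ between $x^*_m$ and $x^*_{\lfloor m/2\rfloor}$, possibly a mere constant. The dichotomy is what makes them fit: either the drop is small and Proposition~\ref{prop: almost proportional} applies outright, or it is so large that a single further pass of Proposition~\ref{prop: no very sparse null vectors} at the halved scale collapses the vector — the second branch working precisely because $pn\ge C_\alpha$ can be taken large enough that $e^{\tilde c pn}$ beats every fixed power of $\log(pn)$. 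A point requiring care is matching the admissibility and $(K_0/256)$--lightness of $\phi$ assumed here against the hypothesis of Proposition~\ref{prop: almost proportional}, which quantifies over mappings gluing at most $c_{\smallrefer{prop: almost proportional}}n$ coordinates.
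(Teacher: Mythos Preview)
Your approach is essentially the one the paper intends: the paper gives no proof beyond ``combine Proposition~\ref{prop: almost proportional} and Proposition~\ref{prop: no very sparse null vectors}'', and your argument does exactly this, with the dichotomy on whether $x^*_{m_0}$ and $x^*_{\lfloor m_0/2\rfloor}$ are commensurate supplying the hypothesis $x^*_m>e^{-\tilde c pn}x^*_{\lfloor m/2\rfloor}$ needed in Proposition~\ref{prop: almost proportional}. The computations in both cases are correct; in fact Case~(ii) can be avoided altogether, since applying Proposition~\ref{prop: no very sparse null vectors} at scale $q'=\lfloor m_0/2\rfloor$ directly gives $x^*_{q'}\le(2\alpha)^{O(\log^4(pn))}x^*_{m_0}$, and $(2\alpha)^{O(\log^4(pn))}<e^{\tilde c pn}$ for $pn\ge C_\alpha$ large, so Case~(i) always holds.

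The concern you flag at the end is genuine and is not resolved in your write-up: the event $\Event_{\smallrefer{prop: almost proportional}}$ only quantifies over maps $\phi$ with $|\{i:|\phi^{-1}(\phi(i))|\ge 2\}|\le c_{\smallrefer{prop: almost proportional}}n$, whereas $(K_0/256)$--lightness bounds only the number of glued indices meeting each column support, not the total. This is a gap in the statement as written (the paper glosses over it too). In the paper's actual uses of Proposition~\ref{prop: almost proportional balance} --- either $\phi=\mathrm{id}$ in Theorem~\ref{thm: intermediate singular}, or $\phi$ built by Lemma~\ref{l: phi construction} in Proposition~\ref{prop: othonormal system} with $n-|\phi([n])|=\lfloor\varepsilon\ell\rfloor$ and $\varepsilon=1/(2^{15}\alpha)$ --- the global bound on glued indices is automatic. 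To make your proof complete for the proposition as stated, you should either add this cardinality hypothesis on $\phi$ or note that it holds in every application.
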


Combining Corollary~\ref{cor: no very sparse for smin} and Proposition~\ref{prop: almost proportional},
we obtain
\begin{cor}\label{cor: no moderately sparse for smin}
Let $n$, $p$, $A$, $z$ satisfy assumptions \eqref{Asmp on p weak}--\eqref{Asmp on A}--\eqref{Asmp on z}.
Fix a realization of $\widetilde A$ in $\Event_{good}$.
Set
\[
M=\lfloor c_{\smallrefer{cor: no moderately sparse for smin}}n/\log (pn)\rfloor.
\]
Then
any vector $x\in\C^n$ such that
$\|\widetilde A x\|_2\leq (2\alpha)^{-C_{\smallrefer{cor: no moderately sparse for smin}}\log^3 n}\|x\|_{\infty}$,
satisfies
$$\|x\|_\infty\leq (2\alpha)^{C_{\smallrefer{cor: no moderately sparse for smin}}\log^3 n}\,
x_{M}^*.$$
\end{cor}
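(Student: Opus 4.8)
The asserted inequality is homogeneous in $x$, so we may assume $x\in S^{n-1}(\C)$. The plan is to run everything with the identity map $\phi=\mathrm{id}_{[n]}$, which is trivially $(\widetilde A,K_0/2)$--admissible and $0$--light (hence $(K_0/256)$--light) and satisfies $\phi(\widetilde A)=\widetilde A$, $m=n$; on $\widetilde A\in\Event_{good}$ all the events invoked below hold for $\widetilde A$ (we may assume $\Event_{good}$ also records $\Event_{\smallrefer{p: ell one norm}}$ for $A$, at no loss in the probability bound). The idea is to chain the ``very sparse'' bound, Corollary~\ref{cor: no very sparse for smin}, which controls $x$ down to scale $\asymp 1/p$, with the ``almost proportional'' bound carried by the event $\Event_{\smallrefer{prop: almost proportional}}$ of Proposition~\ref{prop: almost proportional}, which bridges from scale $\asymp 1/p$ up to scale $M=\lfloor\tilde c_{\smallrefer{prop: almost proportional}}\,n/\log(np)\rfloor$. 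Since $\|\widetilde A x\|_\infty\le\|\widetilde A x\|_2\le(2\alpha)^{-C_{\smallrefer{cor: no moderately sparse for smin}}\log^3 n}\|x\|_\infty$ and $C_{\smallrefer{cor: no moderately sparse for smin}}\ge C_{\smallrefer{cor: no very sparse for smin}}$, Corollary~\ref{cor: no very sparse for smin} applies; writing $m:=\lfloor c_{\smallrefer{cor: no very sparse for smin}}/p\rfloor$ it gives
\[
\|x\|_\infty\le(2\alpha)^{C_{\smallrefer{cor: no very sparse for smin}}\log^3 n}\,x^*_m\qquad\text{and}\qquad x^*_{\lfloor m/2\rfloor}\le(2\alpha)^{C'(\alpha)\log^4(pn)}\,x^*_m,
\]
the second being the order-statistics estimate of that corollary evaluated at the two scales $\asymp 1/p$. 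In particular $x^*_m>0$ and $x^*_m\ge(2\alpha)^{-C_{\smallrefer{cor: no very sparse for smin}}\log^3 n}/\sqrt n$.

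Next I would invoke the event $\Event_{\smallrefer{prop: almost proportional}}$ (part of $\Event_{good}$) in contrapositive form, with $\phi=\mathrm{id}$, the scale $m$ just chosen (which lies in the admissible window $[\tau p^{-1},p^{-1}]$ for a suitable fixed $\tau$), and the same $M$. From $x^*_m\ge(2\alpha)^{-C_{\smallrefer{cor: no very sparse for smin}}\log^3 n}/\sqrt n$ we get $\|\widetilde A x\|_2\le(2\alpha)^{-C_{\smallrefer{cor: no moderately sparse for smin}}\log^3 n}\le\hat c_{\smallrefer{prop: almost proportional}}\sqrt n\,x^*_m$ once $C_{\smallrefer{cor: no moderately sparse for smin}}$ is large, so $\Event_{\smallrefer{prop: almost proportional}}$ forces that $x$ violates one of the two conditions defining the quantified class, i.e. either $x^*_m\le e^{-\tilde c_{\smallrefer{prop: almost proportional}}pn}x^*_{\lfloor m/2\rfloor}$ or $x^*_M>\frac{c_{\smallrefer{prop: almost proportional}}}{pn}\,x^*_m$. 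The first alternative is impossible: together with the second display above it would force $1\le e^{-\tilde c_{\smallrefer{prop: almost proportional}}pn}(2\alpha)^{C'(\alpha)\log^4(pn)}$, which fails once $C_\alpha$ (hence $pn$) is large, because $\log^4(pn)=o(pn)$. Hence the second alternative holds.

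Finally, combining $x^*_M>\frac{c_{\smallrefer{prop: almost proportional}}}{pn}x^*_m$ with $\|x\|_\infty\le(2\alpha)^{C_{\smallrefer{cor: no very sparse for smin}}\log^3 n}x^*_m$ and $pn\le n^{1/8}$ gives
\[
\|x\|_\infty\le\frac{pn}{c_{\smallrefer{prop: almost proportional}}}\,(2\alpha)^{C_{\smallrefer{cor: no very sparse for smin}}\log^3 n}\,x^*_M\le(2\alpha)^{C_{\smallrefer{cor: no moderately sparse for smin}}\log^3 n}\,x^*_M
\]
for $C_{\smallrefer{cor: no moderately sparse for smin}}$ large, which is the claim with $c_{\smallrefer{cor: no moderately sparse for smin}}:=\tilde c_{\smallrefer{prop: almost proportional}}$ (alternatively, one may invoke Proposition~\ref{prop: almost proportional balance} directly with $\phi=\mathrm{id}$, at the cost of a slightly larger polylogarithmic exponent). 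The delicate point — and the only place where the structure of the graph $\Gr$ really enters — is the exclusion of the ``gap'' alternative $x^*_m\le e^{-\tilde c_{\smallrefer{prop: almost proportional}}pn}x^*_{\lfloor m/2\rfloor}$: this relies on the order-statistics estimate of Corollary~\ref{cor: no very sparse for smin}, which says that consecutive order statistics of a near-null vector near scale $1/p$ cannot decay by more than a polylogarithmic-in-$n$ power of $2\alpha$, far slower than $e^{\tilde c pn}$ when $pn\ge C_\alpha$. The rest is routine bookkeeping: choosing $C_\alpha$ large compared with the polylogarithmic exponents and $C_{\smallrefer{cor: no moderately sparse for smin}}$ large compared with $C_{\smallrefer{cor: no very sparse for smin}}$.
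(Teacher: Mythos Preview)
Your proof is correct and follows exactly the approach the paper indicates (it merely writes ``Combining Corollary~\ref{cor: no very sparse for smin} and Proposition~\ref{prop: almost proportional}'' without details): apply the very-sparse bound with $\phi=\mathrm{id}$ to control $\|x\|_\infty$ by $x^*_m$ at scale $m\asymp 1/p$ and to get the mild ratio $x^*_{\lfloor m/2\rfloor}/x^*_m\le (2\alpha)^{O(\log^4(pn))}$, then use the contrapositive of $\Event_{\smallrefer{prop: almost proportional}}$ to force $x^*_M>\frac{c}{pn}x^*_m$, the other alternative being excluded since $(2\alpha)^{O(\log^4(pn))}\ll e^{\tilde c\,pn}$ for $pn\ge C_\alpha$. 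Your remark that $\Event_{\smallrefer{p: ell one norm}}$ should be (and effectively is) absorbed into $\Event_{good}$ is also well taken.
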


\section{The smallest singular value} \label{sec: smallest}

In this short section, we establish one of the main results of the paper, namely,
the lower bound on the smallest singular value.
Sections~\ref{s: very sparse} and~\ref{s: moderately sparse}
provide a probabilistic lower bound on $\|\widetilde{A}x\|_2$ for sparse vectors $x$. The methods used there cannot however be extended to spread vectors. A method for treating these vectors suggested in \cite{RV invertibility} was used to derive a lower bound on the smallest singular value.
If we know that a certain coordinate of $x$, say $x_1$, has a large absolute value, then we can use the orthogonal projection $P_1$ onto the space $H_1=\spn\big\{\col_2(\widetilde{A}),\dots,\col_n(\widetilde{A})\big\}^{\perp}$ to bound $\|\widetilde{A}x\|_2$ from below:
\[
 \|\widetilde{A}x\|_2 \ge \|P_1 \widetilde{A}x\|_2= |x_1| \cdot \|P_1 \col_1(\widetilde{A})\|_2.
\]
The quantity $\|P_1 \col_1(\widetilde{A})\|_2$ can in turn be estimated below by $|\langle \nu_1,\col_1(\widetilde A)\rangle|$,
where $\nu_1$ is a unit vector orthogonal to $\col_2(\widetilde{A}),\dots,\col_n(\widetilde{A})$.
This estimate provides the desired lower bound for all vectors $x$ with a sufficiently large first coordinate. If we don't know which coordinate of $x$ is large, but know that many of them are, we can construct a probabilistic version of this estimate by choosing a coordinate uniformly at random.
We implement this idea below.
\begin{theor}[Bound for $s_{\min}$]\label{th: bound on smin}
Let $n$, $p$, $A$, $z$ satisfy assumptions \eqref{Asmp on p weak}--\eqref{Asmp on A}--\eqref{Asmp on z},
and, as before, let $\widetilde A:=A-z\,\Id$.
Then
$$\Prob\big\{s_{\min}(\widetilde A)\leq (2\alpha)^{-C_{\smallrefer{th: bound on smin}}\log^3 n}\big\}
\leq (pn)^{-c_{\smallrefer{th: bound on smin}}},$$
where $c_{\smallrefer{th: bound on smin}}>0$ is a universal constant.
\end{theor}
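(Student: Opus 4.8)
The plan is to work on the event $\Event_{good}$, which by the remark preceding Proposition~\ref{prop: almost proportional balance} has probability at least $1-(pn)^{-c}$, and to reduce the bound on $s_{\min}(\widetilde A)$ to an estimate for ``spread'' vectors handled by the random normal method of \cite{RV invertibility}. Put $M:=\lfloor c_{\smallrefer{cor: no moderately sparse for smin}}n/\log(pn)\rfloor$. Suppose $\Event_{good}$ occurs and $s_{\min}(\widetilde A)\le (2\alpha)^{-C_{\smallrefer{th: bound on smin}}\log^3 n}$ for a constant $C_{\smallrefer{th: bound on smin}}$ to be chosen large; pick a unit vector $x$ with $\|\widetilde A x\|_2\le (2\alpha)^{-C_{\smallrefer{th: bound on smin}}\log^3 n}$. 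Since $\|x\|_\infty\ge n^{-1/2}$, for $n$ large this gives $\|\widetilde A x\|_2\le (2\alpha)^{-C_{\smallrefer{cor: no moderately sparse for smin}}\log^3 n}\|x\|_\infty$, so Corollary~\ref{cor: no moderately sparse for smin} yields $\|x\|_\infty\le (2\alpha)^{C_{\smallrefer{cor: no moderately sparse for smin}}\log^3 n}x^*_M$. Hence $x$ has at least $M$ coordinates of modulus at least $\rho:=(2\alpha)^{-C_{\smallrefer{cor: no moderately sparse for smin}}\log^3 n}n^{-1/2}$, and it suffices to estimate, on $\Event_{good}$, the probability that some unit vector with $\ge M$ coordinates of modulus $\ge\rho$ is almost annihilated by $\widetilde A$.

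\textbf{Random normals.} For each $j\le n$ let $\nu_j$ be a measurably chosen unit vector in the kernel of the matrix obtained from $\widetilde A^\top$ by deleting its $j$-th row. Then $\nu_j$ is a function of the columns $\col_i(\widetilde A)$, $i\ne j$, only; since the columns of $\widetilde A$ are independent, $\nu_j$ is independent of $\col_j(\widetilde A)$, and $\widetilde A^\top\nu_j$ is supported on coordinate $j$ with $|(\widetilde A^\top\nu_j)_j|=|\col_j(\widetilde A)^\top\nu_j|$. Moreover $\|\widetilde A x\|_2\ge |x_j|\,|(\widetilde A^\top\nu_j)_j|$ for all $x$ and all $j$. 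Set $t:=(2\alpha)^{-C'\log^3 n}$ with $C'$ large and $G:=\{j\le n:\ |(\widetilde A^\top\nu_j)_j|\le t\}$. If $|G|<M$, then any unit vector with $\ge M$ coordinates of modulus $\ge\rho$ has such a coordinate outside $G$, whence $\|\widetilde A x\|_2>\rho t\ge (2\alpha)^{-C_{\smallrefer{th: bound on smin}}\log^3 n}$ once $C_{\smallrefer{th: bound on smin}}$ is chosen larger than $C'+C_{\smallrefer{cor: no moderately sparse for smin}}+1$. Consequently, on $\Event_{good}$, the event $\{s_{\min}(\widetilde A)\le (2\alpha)^{-C_{\smallrefer{th: bound on smin}}\log^3 n}\}$ is contained in $\{|G|\ge M\}$, and it remains to bound $\Prob(\{|G|\ge M\}\cap\Event_{good})$.

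\textbf{Anti-concentration via L\'evy--Kolmogorov--Rogozin--Esseen.} By permutation invariance of the model and Markov's inequality, $\Prob(\{|G|\ge M\}\cap\Event_{good})\le (n/M)\,\Prob(\{|(\widetilde A^\top\nu_1)_1|\le t\}\cap\Event_{good})$. On $\Event_{good}$ the inequality $|(\widetilde A^\top\nu_1)_1|\le t$ reads $\|\widetilde A^\top\nu_1\|_2\le (2\alpha)^{-C_{\smallrefer{cor: no moderately sparse for smin}}\log^3 n}\|\nu_1\|_\infty$ (using $\|\nu_1\|_\infty\ge n^{-1/2}$ and the choice of $C'$), so Corollary~\ref{cor: no moderately sparse for smin} applied to $\widetilde A^\top$ (which has i.i.d.\ entries of the same law, and whose good event is part of $\Event_{good}$) forces $\nu_1$ to be spread: at least $M$ of its coordinates have modulus $\ge\rho$. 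The event $\{\nu_1\text{ is spread}\}$ depends only on $\col_i(\widetilde A)$, $i\ne 1$, hence is independent of $\col_1(\widetilde A)$; conditioning on this event and on $\nu_1$, the variable $(\widetilde A^\top\nu_1)_1=\sum_i\widetilde a_{i1}(\nu_1)_i$ is a sum of independent complex variables, and for each of the $\ge M$ coordinates $i$ with $|(\nu_1)_i|\ge\rho$ we have $1-\cf(\widetilde a_{i1}(\nu_1)_i,t)\ge p/\alpha$, using that $\widetilde a_{i1}$ vanishes with probability $1-p$, that $\cf(\xi_{i1},1/\alpha)\le 1-1/\alpha$, and that $t/\rho\le 1/\alpha$ for $C'$ large. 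Lemma~\ref{l: rogozin} then gives $\cf((\widetilde A^\top\nu_1)_1,t)\le C_{\smallrefer{l: rogozin}}(Mp/\alpha)^{-1/2}\le C_1\sqrt{\log(pn)/(pn)}$ with $C_1=C_1(\alpha)$. Combining, $\Prob(\{|G|\ge M\}\cap\Event_{good})\le (n/M)\,C_1\sqrt{\log(pn)/(pn)}\le C_1\big(\log(pn)\big)^{3/2}(pn)^{-1/2}\le (pn)^{-c}$ once $pn\ge C_\alpha$, and adding $\Prob(\Event_{good}^c)\le(pn)^{-c}$ finishes the proof.

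\textbf{Main obstacle.} The delicate point is the chain of conditioning in the last paragraph: one must replace the global event $\Event_{good}$, which involves $\col_1(\widetilde A)$, by the event $\{\nu_1\text{ is spread}\}$, which does not, in order to legitimately exploit the independence of $\nu_1$ and $\col_1(\widetilde A)$ before invoking Lemma~\ref{l: rogozin}. This dichotomy --- either $|(\widetilde A^\top\nu_1)_1|$ is not too small, or $\nu_1$ is forced to be spread and hence $|(\widetilde A^\top\nu_1)_1|$ is unlikely to be small --- is exactly where the structural work of Sections~\ref{s: very sparse}--\ref{s: moderately sparse} enters. The remaining bookkeeping --- tracking the exponents of $(2\alpha)^{\pm\,\cdot\,\log^3 n}$ through the reduction and the choices of $\rho$, $t$, $C'$, and $C_{\smallrefer{th: bound on smin}}$, and absorbing the polylogarithmic factor $\big(\log(pn)\big)^{3/2}$ into a power of $(pn)^{-1}$ for $pn\ge C_\alpha$ --- is routine.
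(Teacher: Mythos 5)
Your proposal reproduces the paper's own proof: define random unit normals $\nu_j$ to the spans of the other columns, show via Corollary~\ref{cor: no moderately sparse for smin} that a small-singular-value unit vector must have at least $M$ large coordinates, observe that on each such coordinate $j$ the quantity $|\langle\nu_j,\col_j(\widetilde A)\rangle|$ must be tiny, use Corollary~\ref{cor: no moderately sparse for smin} again on $\widetilde A^\top$ to force $\nu_j$ itself to be spread, and finish with Lemma~\ref{l: rogozin} and Markov. The only cosmetic difference is that you invoke permutation invariance to reduce to $j=1$ whereas the paper sums the identical bound $\Prob(\Event_j)$ over $j$; the argument, the key lemmas, and the final polylog-over-$\sqrt{pn}$ bound are the same.
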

\begin{proof}
For any $j\leq n$, let $\nu_j$ be a random unit normal vector to the linear span of columns $\col_u(\widetilde A)$, $u\neq j$
(of course, $\nu_j$ is not uniquely defined). We will assume that $\nu_j$ is measurable with respect to the $\sigma$--algebra
generated by the columns $\col_u(\widetilde A)$, $u\neq j$, that is, $\nu_j$ and $\col_j(\widetilde A)$ are independent for each $j$.
Further, let $X_{\min}$ be a normalized right singular vector corresponding to the smallest singular
value of $\widetilde A$.

Denote
$$\Event_{smin}:=\Event_{good}
\cap\big\{s_{\min}(\widetilde A)\leq (2\alpha)^{-2C_{\smallrefer{cor: no moderately sparse for smin}}\log^3 n}/(\alpha n)\big\}.$$
and
$$\Event_j:=\Event_{good}\cap\big\{|\langle \nu_j,\col_j(\widetilde A)\rangle|\leq (2\alpha)^{-C_{\smallrefer{cor: no moderately sparse for smin}}\log^3 n}
/(\alpha\sqrt{n})\big\},\quad j=1,2,\dots,n.$$
Observe that Corollary~\ref{cor: no moderately sparse for smin} yields
$$\Event_{smin}\subset \big\{\|X_{\min}\|_\infty\leq (2\alpha)^{C_{\smallrefer{cor: no moderately sparse for smin}}\log^3 n}\,
(X_{\min})_{M}^*\big\},$$
where
$$M:=\lfloor c_{\smallrefer{cor: no moderately sparse for smin}}n/\log (pn)\rfloor,$$
as in Corollary~\ref{cor: no moderately sparse for smin}.
The last relation, in combination with $\|X_{\min}\|_2=1$, implies that within the event $\Event_{smin}$, at least $M$ coordinates of $X_{\min}$
are greater than $(2\alpha)^{-C_{\smallrefer{cor: no moderately sparse for smin}}\log^3 n}/\sqrt{n}$ by absolute value.
On each of those coordinates, we have
$$s_{\min}(\widetilde A)=\|\widetilde A X_{\min}\|_2\geq |\nu_j^\top \widetilde A X_{\min}|
\geq (2\alpha)^{-C_{\smallrefer{cor: no moderately sparse for smin}}\log^3 n}
|\langle \nu_j,\col_j(\widetilde A)\rangle|/\sqrt{n},$$
hence,
$$|\langle \nu_j,\col_j(\widetilde A)\rangle|\leq (2\alpha)^{-C_{\smallrefer{cor: no moderately sparse for smin}}\log^3 n}/(\alpha\sqrt{n}).$$
Thus, for any $\omega\in \Event_{smin}$ there are at least $M$ indices $j$ such that $\omega\in\Event_j$.
Equivalently, we can write
\begin{equation}\label{eq: aux apsofianpfka}
\sum\limits_{j=1}^n \indicator_{\Event_j}\geq M\quad \mbox{everywhere on }\Event_{smin}.
\end{equation}
As the final step of the proof, observe that for each $j\leq n$:
\begin{align*}
\Prob(\Event_j)&\leq
\Prob\big\{|\langle \nu_j,\col_j(\widetilde A)\rangle|\leq (2\alpha)^{-C_{\smallrefer{cor: no moderately sparse for smin}}\log^3 n}/(\alpha\sqrt{n})
\mbox{ and }(\nu_j)^*_M\geq (2\alpha)^{-C_{\smallrefer{cor: no moderately sparse for smin}}\log^3 n}/\sqrt{n}\big\}.
\end{align*}
Indeed, the inequality
$|\langle \nu_j,\col_j(\widetilde A)\rangle|\leq (2\alpha)^{-C_{\smallrefer{cor: no moderately sparse for smin}}\log^3 n}/(\alpha\sqrt{n})$,
together with the condition $\langle \nu_j,\col_u(\widetilde A)\rangle=0$, $u\neq j$,
implies $\|\widetilde A^\top \nu_j\|_2 \leq (2\alpha)^{-C_{\smallrefer{cor: no moderately sparse for smin}}\log^3 n}\|\nu_j\|_{\infty}$.
Since $\Event_j\subset\Event_{good}$, by Corollary~\ref{cor: no moderately sparse for smin}
(applied to the transposed matrix $\widetilde A^\top$) this
implies that $\nu_j$ is spread in the sense that
$(\nu_j)^*_M\geq (2\alpha)^{-C_{\smallrefer{cor: no moderately sparse for smin}}\log^3 n}\|\nu_j\|_\infty$.

By our choice of $\nu_j$'s, the event
$\{(\nu_j)^*_M\geq (2\alpha)^{-C_{\smallrefer{cor: no moderately sparse for smin}}\log^3 n}/\sqrt{n}\}$
is measurable with respect to $\sigma$-algebra generated by columns $\col_u(\widetilde A)$, $u\in [n]\setminus\{j\}$,
and hence
\begin{align*}
\Prob\big\{&|\langle \nu_j,\col_j(\widetilde A)\rangle|\leq (2\alpha)^{-C_{\smallrefer{cor: no moderately sparse for smin}}\log^3 n}/(\alpha\sqrt{n})
\mbox{ and }(\nu_j)^*_M\geq (2\alpha)^{-C_{\smallrefer{cor: no moderately sparse for smin}}\log^3 n}/\sqrt{n}\big\}\\
&= \Exp\,\Big(\Prob\big\{|\langle \nu_j,\col_j(\widetilde A)\rangle|\leq (2\alpha)^{-C_{\smallrefer{cor: no moderately sparse for smin}}\log^3 n}/(\alpha\sqrt{n})
\;|\;\nu_j\big\}\indicator_{\{(\nu_j)^*_M\geq (2\alpha)^{-C_{\smallrefer{cor: no moderately sparse for smin}}\log^3 n}/\sqrt{n}\}}\Big)\\
&\leq \sup\limits_Y\Prob\big\{|\langle Y,\col_j(\widetilde A)\rangle|
\leq (2\alpha)^{-C_{\smallrefer{cor: no moderately sparse for smin}}\log^3 n}/(\alpha\sqrt{n})\big\},
\end{align*}
where in the last relation the supremum is taken over all unit (non-random) complex vectors $Y$ with
$Y^*_M\geq (2\alpha)^{-C_{\smallrefer{cor: no moderately sparse for smin}}\log^3 n}/\sqrt{n}$.
Fix $Y$ for which the supremum is attained.

Note that, by the assumptions on the matrix, each entry $\widetilde a_{ij}$ of $\widetilde A$ satisfies
$\cf\big(\widetilde a_{ij},\frac{1}{\alpha}\big)\leq 1-\frac{p}{\alpha}$, $i=1,2,\dots,n$,
and hence, denoting $\xi_i:=\widetilde a_{ij}Y_i$, we get
$\cf(\xi_i,\frac{1}{\alpha}(2\alpha)^{-C_{\smallrefer{cor: no moderately sparse for smin}}\log^3 n}/\sqrt{n})\leq 1-\frac{p}{\alpha}$,
$i\in\Max_M(Y)$.
Then, by Lemma~\ref{l: rogozin}, the probability $\Prob(\Event_j)$
can be bounded from above as
$$\Prob(\Event_j)\leq\cf\Big(\sum\limits_{i\in \Max_M(Y)}\xi_i,
(2\alpha)^{-C_{\smallrefer{cor: no moderately sparse for smin}}\log^3 n}/(\alpha\sqrt{n})\Big)\leq\frac{C\sqrt{\alpha}}{\sqrt{p M}}\leq
C'\sqrt{\alpha\log(pn)/pn},$$
for universal constants $C,C'>0$.
Thus, we have
$$\Exp\sum\limits_{j=1}^n \indicator_{\Event_j}\leq C'n\sqrt{\alpha\log(pn)/pn}.$$
This, together with \eqref{eq: aux apsofianpfka} and Markov's inequality, implies
$$\Prob(\Event_{smin})\leq C''\sqrt{\alpha}\log^{3/2}(pn)/\sqrt{pn}.$$
It remains to note that $\Prob(\Event_{good})\geq 1-(pn)^{-c}$.
\end{proof}

\section{Randomized restricted invertibility} \label{sec: restricted inv}

We seek to extend the method of bounding the smallest singular value from the previous section
to bounding the $k$-th smallest one.
It would be natural to suggest replacing rank one random projections by the higher rank ones. Such idea was implemented in \cite{RV rectangular} leading to the optimal estimate for the intermediate singular value of a dense matrix (see \cite{Wei} for a matching upper estimate).
However, the method used in \cite{RV rectangular} to construct random test projections provides a probability
estimate which becomes too weak if we consider very sparse matrices.
To improve the probability estimate, we will take advantage of the special structure of a test projection. This will be achieved by applying the restricted invertibility principle originating in the classical work of Bourgain and Tzafriri \cite{BT}.
The argument based on the restricted invertibility was used in the recent papers of Cook \cite{Cook smallest singular} and Nguyen \cite{Nguyen},
but our method will be different.

We need a probabilistic version of the Bourgain--Tzafriri restricted invertibility theorem \cite[Theorem 1.2]{BT}.
\begin{lemma}  \label{lem: Bourgain-Tzafriri}
Let $\eta \in (0,1)$, $\rho>0$;
let $k<n$, and let $V$ be a $k \times n$ matrix with complex entries whose rows $\row_1(V) \etc \row_k(V)$ are orthonormal.
Assume that $(\row_j(V))_{\lfloor\eta n\rfloor}^* \ge \rho/\sqrt{n}$ for all $j \in [k]$.
Let
  \[
    \ell:=\lfloor\tilde{c}_{\smallrefer{lem: Bourgain-Tzafriri}} \eta^3 \rho^2 k\rfloor,
  \]
let $\beta_1 \etc \beta_n$ be independent $\text{\rm Bernoulli}(\ell/n)$ random variables, and set $J:=\{j \in [n]: \ \beta_j=1 \}$.
Denote the columns of $V$ by $V_1 \etc V_n$. Then with probability at least $(\hat{c}_{\smallrefer{lem: Bourgain-Tzafriri}} \eta)^\ell$, the set $J$ satisfies
\begin{enumerate}
   \item $|J| = \ell$;
   \item $\norm{V_j}_2 \le  \sqrt{\frac{C_{\smallrefer{lem: Bourgain-Tzafriri}}k}{\eta n}}$ for all $j \in J$;
   \item $\norm{\sum_{j \in J} z_j V_j}_2 \ge c_{\smallrefer{lem: Bourgain-Tzafriri}} \rho \sqrt{\eta \frac{ k}{n}} \norm{z}_2$ for any $z \in \C^J$.
\end{enumerate}
\end{lemma}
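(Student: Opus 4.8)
The plan is to reduce, by a double counting argument on the entries of $V$, to the classical restricted invertibility theorem of Bourgain and Tzafriri \cite{BT} applied to a family of near--unit vectors with a controlled frame operator, and then to upgrade the resulting deterministic (constant--probability) statement to the stated exponential lower bound on the probability of the random selection, by an elementary averaging argument together with the hereditary nature of the Riesz property.

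\emph{Step 1: locating the good columns.} Since the rows of $V$ are orthonormal, $\sum_{j=1}^n\norm{V_j}_2^2=\norm{V}_{HS}^2=k$ and $\sum_{j=1}^nV_jV_j^*=I_k$. For $a\in[k]$ set $S_a:=\{j\le n:\ |V_{aj}|\ge\rho/\sqrt n\}$; the hypothesis $(\row_a(V))^*_{\lfloor\eta n\rfloor}\ge\rho/\sqrt n$ yields $|S_a|\ge\lfloor\eta n\rfloor$. Writing $d_j:=\#\{a\in[k]:\ j\in S_a\}$ we get $\sum_{j=1}^nd_j=\sum_{a=1}^k|S_a|\ge k\lfloor\eta n\rfloor$ and $\norm{V_j}_2^2\ge d_j\rho^2/n$; since $d_j\le k$, a counting argument shows that $\mathcal G_1:=\{j:\ d_j\ge\eta k/4\}$ has $|\mathcal G_1|\ge\eta n/4$, and every $j\in\mathcal G_1$ satisfies $\norm{V_j}_2^2\ge\eta\rho^2 k/(4n)$. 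Removing from $\mathcal G_1$ the at most $\eta n/8$ indices with $\norm{V_j}_2^2>8k/(\eta n)$ (Markov applied to $\sum_j\norm{V_j}_2^2=k$) leaves a set $\mathcal G$ with $N:=|\mathcal G|\ge\eta n/8$ on which $\eta\rho^2 k/(4n)\le\norm{V_j}_2^2\le 8k/(\eta n)$. The unit vectors $u_j:=V_j/\norm{V_j}_2$, $j\in\mathcal G$, then satisfy $\sum_{j\in\mathcal G}u_ju_j^*\preceq\tfrac{4n}{\eta\rho^2 k}\sum_{j=1}^nV_jV_j^*=M\,I_k$ with $M:=4n/(\eta\rho^2 k)$, and, for $\ell=\lfloor\tilde c_{\smallrefer{lem: Bourgain-Tzafriri}}\eta^3\rho^2 k\rfloor$, one has $\ell\le c_0 N/M$ whenever $\tilde c_{\smallrefer{lem: Bourgain-Tzafriri}}$ is small (since $N/M\asymp\eta^2\rho^2 k$, so $\ell/(N/M)\asymp\eta\le 1$).

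\emph{Step 2: restricted invertibility for a random subset, and assembly.} By the probabilistic form of the Bourgain--Tzafriri theorem \cite[Theorem~1.2]{BT} --- i.e.\ running their random selection scheme (sample the indices of $\mathcal G$ independently at a rate $\lesssim 1/M$, bound the off--diagonal part of the restricted Gram matrix of $\{u_j\}_{j\in\sigma}$ by a non--commutative moment/decoupling estimate, iterate a controlled number of times, and keep track of the success probability, which at each stage is bounded below by a positive constant) --- a Bernoulli$(\lesssim 1/M)$ subset $\sigma\subset\mathcal G$ is, with probability bounded below, such that $\{u_j\}_{j\in\sigma}$ is $c_1$--Riesz, i.e.\ $\norm{\sum_{j\in\sigma}z_ju_j}_2\ge c_1\norm{z}_2$ for all $z$. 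Since the Riesz property is hereditary and $\E|\sigma|\ge 2\ell$, averaging the quantity $\#\{\ell\text{--subsets of }\sigma\text{ that are }c_1\text{--Riesz}\}$ shows that at least a $c_2^{\,\ell}$--fraction of the $\ell$--subsets of $\mathcal G$ are $c_1$--Riesz for an absolute $c_2\in(0,1)$; equivalently, a uniformly random $\ell$--subset $J\subset\mathcal G$ is $c_1$--Riesz with probability at least $c_2^{\,\ell}$. On that event, for $z\in\C^J$,
\[
\Big\|\sum_{j\in J}z_jV_j\Big\|_2=\Big\|\sum_{j\in J}\big(z_j\norm{V_j}_2\big)u_j\Big\|_2\ge c_1\min_{j\in J}\norm{V_j}_2\,\norm{z}_2\ge\tfrac{c_1}{2}\,\rho\sqrt{\eta k/n}\,\norm{z}_2,
\]
which is property (3), while (2) holds because $J\subset\mathcal G$. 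Finally, if $J$ denotes the Bernoulli$(\ell/n)$ set of the lemma, then $\Pr\{|J|=\ell\}\ge c/\sqrt\ell$ (the mode of a binomial), conditionally on $\{|J|=\ell\}$ the set $J$ is uniform over $\ell$--subsets of $[n]$, and since $N\ge\eta n/8$ and $\ell\le\eta n/16$ (which holds for $\tilde c_{\smallrefer{lem: Bourgain-Tzafriri}}$ small) one gets $\Pr\{J\subset\mathcal G\mid|J|=\ell\}=\binom N\ell\big/\binom n\ell\ge(\eta/16)^\ell$, after which $J$ is uniform over $\ell$--subsets of $\mathcal G$; multiplying these three bounds with the one from Step~2 and choosing $\hat c_{\smallrefer{lem: Bourgain-Tzafriri}}$ small enough gives $\Pr\{(1)\wedge(2)\wedge(3)\}\ge(\hat c_{\smallrefer{lem: Bourgain-Tzafriri}}\eta)^\ell$, with $C_{\smallrefer{lem: Bourgain-Tzafriri}},c_{\smallrefer{lem: Bourgain-Tzafriri}},\tilde c_{\smallrefer{lem: Bourgain-Tzafriri}}$ read off along the way.

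\emph{Main obstacle.} Steps~1 and~3 are routine; the difficulty is concentrated in Step~2, namely obtaining the lower frame bound for a random subset of the near--tight frame $\{u_j\}_{j\in\mathcal G}$ of size comparable to $N/M$ \emph{and} with a probability that decays no faster than exponentially in $\ell$ with an absolute base. Elementary second moment (Frobenius--norm) estimates only reach random subsets of size $\sqrt{N/M}$, and a single--round operator--norm (matrix Chernoff) bound requires an unfavorable relation between $k$ and $n$; to cover the full range one genuinely has to run --- and probabilistically track --- the iterative Bourgain--Tzafriri selection, which is the technical heart of the lemma.
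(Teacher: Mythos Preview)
Your Step~1 (double counting to isolate a set $\mathcal G$ of columns with two-sided $\ell_2$ bounds) and the final Bernoulli-to-uniform reduction are correct and essentially match the paper. The gap is in Step~2, and it is more than a missing citation.

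You invoke a ``probabilistic Bourgain--Tzafriri'': a Bernoulli($\lesssim 1/M$) subsample $\sigma\subset\mathcal G$ is \emph{itself} $c_1$--Riesz with probability bounded below by an absolute constant. This is not a stated result in \cite{BT}, and your sketch of how to extract it (``run their random selection scheme, iterate, track the success probability'') does not deliver it: the iterative BT scheme passes to \emph{adaptively} chosen subsets at each round, so its output is not distributed as a simple Bernoulli sample of $\mathcal G$. Your averaging step, however, uses exactly this product structure --- you need $\Prob\{T'\subset\sigma\}=p^{\ell}$ for every fixed $\ell$-set $T'$ in order to pass from $\Exp\big[\#\{\text{Riesz }\ell\text{-subsets of }\sigma\}\big]$ to a lower bound on $|\mathcal R|$. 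So as written, Step~2 assumes what still has to be proved; your ``main obstacle'' paragraph correctly flags this but does not resolve it.

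The paper sidesteps the issue with a \emph{two-level} Bernoulli scheme. First oversample $[n]$ at rate $R\ell/n$ with $R=100/\eta$, producing $J_1=\{j:\sigma_j=1\}$. A one-line decoupling identity (using that $\sigma_i$ is independent of $Q_{J_1\setminus\{i\}}$, the projection onto $\spn\{V_j:j\in J_1\setminus\{i\}\}$),
\[
\Exp\sum_{i\in J_1}\norm{Q_{J_1\setminus\{i\}}V_i}_2^2
=\frac{R\ell}{n}\,\Exp\sum_{i=1}^n\norm{Q_{J_1\setminus\{i\}}V_i}_2^2
\le\frac{R\ell}{n}\,\Exp\big(\norm{Q_{J_1}}_{HS}^2\,\norm{V}^2\big)
\le\frac{(R\ell)^2}{n},
\]
together with Markov and the lower bound $\norm{V_i}_2^2\gtrsim\eta\rho^2 k/n$ on $\mathcal G$, shows that with probability at least $1/2$ there is $J_3\subset J_1\cap\mathcal G$ of size $\gtrsim\eta R\ell$ with $\norm{(\Id-Q_{J_3\setminus\{i\}})V_i}_2^2\ge\tfrac12\norm{V_i}_2^2$ for every $i\in J_3$. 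Only then is \emph{deterministic} BT (\cite[Theorems~1.5 and~1.2]{BT}) invoked to extract a Riesz subset $J_6\subset J_3$ of size exactly $\ell$. Finally, introduce independent Bernoulli($1/R$) variables $\eta_j$ and set $\beta_j=\sigma_j\eta_j$; then $\beta_j$ are Bernoulli($\ell/n$), and conditioned on any good realization of the $\sigma_j$'s one has
\[
\Prob\big(\{j:\beta_j=1\}=J_6\mid \sigma_1,\dots,\sigma_n\big)\ge(1/R)^{\ell}(1-1/R)^{|J_1|-\ell}\ge(c\eta)^{\ell}.
\]
The point is that the paper never needs $J_1$ to be Riesz --- only to \emph{contain} a Riesz $\ell$-set on a constant-probability event --- and the second Bernoulli layer converts this directly into the $(\hat c\,\eta)^{\ell}$ bound, bypassing any counting over $\ell$-subsets of $\mathcal G$. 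The decoupling inequality above and this two-stage sampling trick are precisely what your Step~2 is missing.
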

\begin{proof}
Let $R>1$ be a parameter to be chosen later, and let $\mu_1 \etc \mu_n$ be independent $\text{\rm Bernoulli}(R\ell/n)$ random variables. Denote $J_1:=\{j \in [n]: \ \sigma_j=1 \}$. We will prove that some weaker properties hold for the random set $J_1$ with probability at least $1/2$.
Then we will extract a subset $J$ of cardinality
$\ell$ from each good realization of the set $J_1$ satisfying (1), (2), and (3).
This extraction can be viewed as a random selection using axillary $2R\ell$ independent Bernoulli$(1/R)$ random variables. In this case, the probability that the correct subset $J$ is selected is at least $\exp(-c\ell)$.

We pass to a detailed construction.
First, we select a subset of columns $\hat I$ with upper and lower bounds on the Euclidean norm.
By assumption of the lemma, the matrix $V=(v_{ji})$ satisfies
\[
|I_j|:=|\{i \in [n]: \ |v_{ji}| \ge \rho/\sqrt{n} \}| \ge \lfloor\eta n\rfloor \quad \text{for all } j \in [k].
\]
Denote $Y_i:=|\{j \in [k]: \ i \in I_j\}|$. Then $0 \le Y_i \le k$, and the previous inequality
implies that $\sum_{i=1}^n Y_i \ge \lfloor\eta n\rfloor
k$. Hence,
 \begin{align*}
  |\{i \in [n]: Y_i \ge \eta k/2\}|
  &\ge \frac{1}{k} \sum_{i \in [n]: \ Y_i \ge \eta k/2} Y_i
  \ge \frac{1}{k} \left( \lfloor\eta n\rfloor k - \frac{\eta k}{2} \cdot |\{i \in [n]: Y_i < \eta k/2\}|   \right)\\
  &\ge \frac{\eta n}{2}-1.
 \end{align*}
By the definition of $Y_i$'s, for any $i\leq k$ we have $\norm{V_i}_2\geq \rho \sqrt{Y_i}/\sqrt{n}$, and so
 \[
   |\widetilde{I}|:= \Big| \Big\{i \in [n]: \ \norm{V_i}_2 \ge  \rho \sqrt{\frac{\eta k}{2 n}}\, \Big\} \Big| \ge \frac{\eta n}{2}-1.
 \]
Set $\hat{I}:=\Big\{i \in \widetilde{I}: \ \norm{V_i}_2 \le  \sqrt{\frac{4k}{\eta n}}\, \Big\}$.
By assumption on the matrix $V$, $\sum_{i=1}^n \norm{V_i}_2^2 =k$, so
 \[
   |\hat{I}|
   \ge  |\widetilde{I}|- \bigg| \bigg\{i \in [n]: \ \norm{V_i}_2 \ge  \sqrt{\frac{4k}{\eta n}}\, \bigg\} \bigg| \ge \frac{\eta n}{4}-1.
 \]
We will select a good subset of indices inside $\hat I$.

Let $\Omega_1$ be the event that $\sum_{i \in \hat{I}} \sigma_i \ge \frac{\eta R\ell}{8}$.
Then, by standard concentration inequalities, $\P(\Omega_1^c) \le 1/8$;
moreover, on the event $\Omega_1$, the set
$$J_2:=J_1\cap\hat I=\left\{ i \in J_1: \  \rho \sqrt{\frac{\eta k}{2 n}}
\le \norm{V_i}_2 \le \sqrt{\frac{4k}{\eta n}}\right\}$$
satisfies $|J_2|\ge \frac{\eta R\ell}{8}$.

For any set $I \subset [n]$, let $Q_I: \C^k \to \C^k$ be the orthogonal projection on $\spn\{V_i, \ i \in I\}$.
Notice that for every given $i \in [n]$,
the random variables $\sigma_i$ and $\norm{Q_{J_1 \setminus \{i\}} V_i}_2$ are independent. Therefore,
 \begin{align*}
  \E\sum_{i \in J_1} \norm{Q_{J_1 \setminus \{i\}} V_i}_2^2
  &=\E  \sum_{i=1}^n \sigma_i \norm{Q_{J_1 \setminus \{i\}} V_i}_2^2
  = \frac{R\ell}{n} \E \sum_{i=1}^n \norm{Q_{J_1  \setminus \{i\}} V_i}_2^2 \\
  &\le \frac{R\ell}{n} \E \sum_{i=1}^n \norm{Q_{J_1} V_i}_2^2
  = \frac{R\ell}{n}  \E \norm{Q_{J_1} V}_{HS}^2 \\
  &\le \frac{R\ell}{n}  \E (\norm{Q_{J_1} }_{HS}^2\cdot\|V\|^2)
  \le \frac{R\ell}{n} \cdot R\ell.
 \end{align*}
 Let $\Omega_2$ be the event that $\sum_{i \in J_1} \norm{Q_{J_1 \setminus \{i\}} V_i}_2^2 \le 8 \frac{(R\ell)^2}{n}$.
 By the Markov inequality and the above estimates, $\P(\Omega_2^c) \le 1/8$. On the event $\Omega_2$, we have
 \[
  \big|\big\{i \in J_1: \ \norm{Q_{J_1  \setminus \{i\}} V_i}_2^2 \ge 128R\ell/\eta n \big\}\big| \le \frac{\eta R\ell}{16}.
 \]
 Let us summarize our conclusions. On the event $\Omega_1 \cap \Omega_2$, whose probability is greater than $3/4$, we have
\begin{align*}
|J_2|:=&\bigg| \bigg\{ i \in J_1: \  \rho \sqrt{\frac{\eta k}{2 n}}
\le \norm{V_i}_2 \le \sqrt{\frac{4k}{\eta n}} \bigg\} \bigg| \ge \frac{\eta R\ell}{8},\;\;\mbox{ and }\\
&|\{i \in J_1: \ \norm{Q_{J_1  \setminus \{i\}} V_i}_2^2 \ge 128R\ell/\eta n \}| \le \frac{\eta R\ell}{16}.
\end{align*}
If $J_3:=\{i \in J_2: \ \norm{Q_{J_1  \setminus \{i\}} V_i}_2^2
\le 128R\ell/\eta n \}$, then on this event, $|J_3| \ge  \frac{\eta R\ell}{16}$.  Thus, for any $i \in J_3$,
 \begin{align*}
  \norm{(\Id-Q_{J_3  \setminus \{i\}}) V_i}_2^2
  &= \norm{V_i}_2^2-\norm{Q_{J_3  \setminus \{i\}} V_i}_2^2
  \ge \norm{V_i}_2^2-\norm{Q_{J_1  \setminus \{i\}} V_i}_2^2 \\
  &\ge \norm{V_i}_2^2 - 128R\ell/\eta n
  \ge \norm{V_i}_2^2/2
 \end{align*}
 if we assume that $R$ and $\tilde{c}$ are chosen so that
 \begin{equation}\label{eq: choice of c tilde}
   128 R \tilde{c}_{\smallrefer{lem: Bourgain-Tzafriri}} \eta <\frac{1}{4}.
 \end{equation}
Now, we assume that event $\Omega_1 \cap \Omega_2$ occurs and fix a realization of the set $J_3$.
The rest of the proof follows \cite[Theorem 1.2]{BT} and is deterministic. Arguing exactly as in \cite[Theorem 1.5]{BT},
we conclude that there is a subset $J_4 \subset J_3$ with $|J_4| \ge |J_3|/3$ such that for any $z_1 \etc z_n \in \C$,
 \[
   \norm{ \sum_{i \in J_4} z_i V_i}_2
   \ge \bar{c} \rho \sqrt{\eta \frac{k}{n}} \cdot |J_4|^{-1/2} \sum_{i \in J_4} |z_i|.
 \]
 Then, following the second proof of \cite[Theorem 1.2]{BT} and combining Grothendieck's theorem and the Pietsch factorization,
we find a subset $J_5 \subset J_4$ with $|J_5| \ge |J_4|/2$ such that
 \[
   \norm{ \sum_{i \in J_5} z_i V_i}_2
   \ge c^* \rho \sqrt{\eta \frac{k}{n}} \cdot \left( \sum_{i \in J_5} |z_i|^2 \right)^{1/2}
 \]
 for any $z_1 \etc z_n \in \C$. Here, $|J_5| \ge  \frac{\eta R\ell}{96}$.
 Choosing $R=100/\eta$, we can select an $\ell$-element  subset $J_6 \subset J_5$
such that the previous inequality holds with $J_6$ in place of $J_5$.
Now, choose $\tilde{c}_{\smallrefer{lem: Bourgain-Tzafriri}}$ such that \eqref{eq: choice of c tilde} is satisfied.
We constructed the subset $J_6$ of cardinality $\ell$ for which the assertion (3) of the Lemma holds.
Assertion (2) holds as well since $J_6 \subset J_2$.

It remains to recast the selection of $J_6$ as a random choice. To this end, we introduce independent
$\text{Bernoulli}(1/R)$ random variables $\eta_1 \etc \eta_n$ and set $\beta_j=\sigma_j \eta_j, \ j \in [n]$.
Then $\beta_j$ are independent $\text{Bernoulli}(\ell/n)$ random variables as required.
Recall that $J_1=\{i \in [n]: \ \sigma_i=1\}$.
Let $\Omega_3$ be the event that $(1/2) R\ell \le |J_1| \le 2 R\ell$, so $\P(\Omega_3^c) \le 1/8$,
and thus $\P (\Omega_1 \cap \Omega_2 \cap \Omega_3) \ge 1/2$.
Condition on $\sigma_1 \etc \sigma_n$ for which $\Omega_1 \cap \Omega_2 \cap \Omega_3$ occurs.
Set $J=\{i \in J_1: \ \eta_i=1\}$. Then
\begin{align*}
\P(J=J_6 \mid \sigma_1 \etc \sigma_n)\ge (1/R)^\ell \cdot (1-1/R)^{|J_1|-\ell}
\ge \left( \frac{\eta}{100} \right)^\ell e^{-2\ell} \ge (c \eta)^\ell,
\end{align*}
so the proof is complete.
\end{proof}

\begin{cor} \label{cor: Bourgain-Tzafriri}
Let $\eta, \ell, \rho$, and $V$ be as in Lemma \ref{lem: Bourgain-Tzafriri}. Let $I \subset [n]$ be a random set uniformly chosen among the subsets of $[n]$ of cardinality $\ell$. Then with probability at least $(\hat{c}_{\smallrefer{lem: Bourgain-Tzafriri}} \eta)^\ell$, the set $I$ satisfies
 \begin{enumerate}
   \item $\norm{V_j}_2 \le  \sqrt{\frac{C_{\smallrefer{lem: Bourgain-Tzafriri}}k}{\eta n}}$ for all $j \in I$;
   \item $\norm{\sum_{j \in I} z_j V_j}_2 \ge c_{\smallrefer{lem: Bourgain-Tzafriri}} \rho \sqrt{\eta \frac{ k}{n}} \norm{z}_2$ for any $z \in \C^I$.
 \end{enumerate}
\end{cor}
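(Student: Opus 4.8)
The plan is to derive Corollary~\ref{cor: Bourgain-Tzafriri} from Lemma~\ref{lem: Bourgain-Tzafriri} by a short conditioning argument, using the elementary fact that a family of i.i.d.\ Bernoulli selectors, conditioned on the total number of successes, is distributed exactly as a uniformly random subset of that cardinality.

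First I would record the distributional identity. Let $\beta_1,\dots,\beta_n$ be the independent $\text{Bernoulli}(\ell/n)$ variables from the lemma and $J=\{j\in[n]:\beta_j=1\}$. For every $S\subset[n]$ one has $\Prob\{J=S\}=(\ell/n)^{|S|}(1-\ell/n)^{n-|S|}$, which depends on $S$ only through $|S|$; hence, conditioned on the event $\{|J|=\ell\}$, the set $J$ is uniform over the $\ell$-element subsets of $[n]$, i.e.\ it has the same law as the random set $I$ in the corollary.

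Next I would invoke the lemma. Properties (1) and (2) of the corollary are literally properties (2) and (3) of Lemma~\ref{lem: Bourgain-Tzafriri}, while property (1) of the lemma is the event $\{|J|=\ell\}$. Thus the lemma asserts
$$\Prob\big\{|J|=\ell\ \text{and properties (1),(2) of the corollary hold for }J\big\}\ \ge\ (\hat{c}_{\smallrefer{lem: Bourgain-Tzafriri}}\,\eta)^\ell.$$
Dividing both sides by $\Prob\{|J|=\ell\}\le 1$ gives $\Prob\{\text{(1),(2) hold for }J\mid|J|=\ell\}\ge(\hat{c}_{\smallrefer{lem: Bourgain-Tzafriri}}\,\eta)^\ell$, and combining this with the distributional identity from the previous step yields the claimed bound for the uniform set $I$.

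There is essentially no obstacle in this argument; the only points requiring a little care are matching up which numbered conclusion of the lemma corresponds to which conclusion of the corollary, and observing that one does not even need a lower bound on $\Prob\{|J|=\ell\}$ — the trivial bound $\Prob\{|J|=\ell\}\le 1$ already suffices (a local-limit lower bound of order $\ell^{-1/2}$ would only improve the implicit constant and is unnecessary here).
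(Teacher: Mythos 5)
Your argument is correct and is essentially identical to the paper's: the paper also conditions on $\{|J|=\ell\}$, notes that $J$ is then uniform over $\ell$-element subsets, and observes that the conditional probability is at least the unconditional one (i.e.\ divides by $\Prob\{|J|=\ell\}\le 1$).
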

\begin{proof}
Let $J$ be the set appearing in Lemma \ref{lem: Bourgain-Tzafriri}. Conditionally on the event $|J|=\ell$,
the set $J$ is uniformly distributed among the $\ell$--element subsets of $[n]$.
Since the conditional probability is at least as large as the unconditional one, the corollary follows.
\end{proof}

If $B$ is an $n \times n$ matrix, then
$s_{n-k+1}(B) \le s$ if and only if there exists a linear subspace $E \subset \C^n$ of complex dimension $k$,
such that for any $x \in E$, $\norm{Bx}_2 \le s \norm{x}_2$.
The subspace $E$ can be represented as $V^{\top} \C^k$, where $V$ is
a $k \times n$ matrix with orthonormal rows $\row_1(V) \etc \row_k(V)$.
The bound on the singular value is thus equivalent to $\norm{B V^{\top}} \le s$.
Assume that we managed to construct the matrix $V$ so that its rows are well spread.
Then Corollary~\ref{cor: Bourgain-Tzafriri} allows us to relate the bound on $s_{n-k+1}(B)$
to magnitudes of projections of columns of $B$ onto orthogonal complements to spans of some other columns,
thus eliminating the unknown matrix $V$.
To take advantage of this corollary, we will combine it with the following deterministic lemma.
%The matrix $V$ is not uniquely defined, and later in the proof, we will construct a special matrix $V$
%so that the rows $\row_1(V) \etc \row_k(V)$ are spread.

\begin{lemma}\label{l: BT to dist}
Let $\eta, \ell, \rho$ be as in Lemma \ref{lem: Bourgain-Tzafriri},
and let $\mathcal{V}$ be the set of all $k \times n$ matrices $V$ with orthonormal rows such that
$(\row_j(V))_{\lfloor\eta n\rfloor}^* \ge \rho/\sqrt{n}$ for all $j \in [k]$.
Assume that $B$ is an $n \times n$ matrix such that
$\norm{BV^\top} \le s$ for some $V\in\mathcal V$ and $s>0$.
Let $\mathcal{I}_V$
be the set of all subsets $I \subset [n]$ of cardinality $\ell$ satisfying conditions (1) and (2) of Corollary~\ref{cor: Bourgain-Tzafriri}.
Then for any $I\in \mathcal I_V$ we have
$$\norm{P_I \col_j(B)}_2 \le \frac{\sqrt{2}}{c_{\smallrefer{lem: Bourgain-Tzafriri}} \rho} \sqrt{\frac{n}{\eta k}}s\quad\mbox{ for at least
$\ell/2$ indices }j\in I,$$
where $P_I$ denotes the orthogonal projection onto $\big(\spn\{\col_u(B):\;u\in[n]\setminus I\}\big)^\perp$.
\end{lemma}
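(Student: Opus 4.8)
The plan is to convert the hypothesis $\norm{BV^\top}\le s$ into a Hilbert--Schmidt bound on the $n\times\ell$ matrix whose columns are the projected columns $P_I\col_j(B)$, $j\in I$, and then average over the $\ell$ columns. Fix $V\in\mathcal V$ for which $\norm{BV^\top}\le s$, and fix $I\in\mathcal I_V$ (all transposes below should be read as conjugate transposes when the entries are complex; this does not affect any of the norm estimates). For $j\in I$ set $u_j:=P_I\col_j(B)$, and let $U:=[\,u_j\,]_{j\in I}$ be the $n\times\ell$ matrix with these columns and $W:=[\,\col_j(V)\,]_{j\in I}$ the $k\times\ell$ matrix. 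Since $P_I$ annihilates $\col_u(B)$ for every $u\notin I$, the $m$-th column of $P_I BV^\top$ equals $\sum_{j\in I}V_{mj}\,u_j$, which exhibits the factorization $P_I BV^\top=UW^\top$; as $P_I$ is an orthogonal projection, $\norm{UW^\top}_{op}\le\norm{P_I}\,\norm{BV^\top}_{op}\le s$.

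Next I would invoke condition~(2) in the definition of $\mathcal I_V$, which states precisely that $\norm{\sum_{j\in I}z_jV_j}_2\ge\sigma\norm{z}_2$ for all $z$, with $\sigma:=c_{\smallrefer{lem: Bourgain-Tzafriri}}\,\rho\sqrt{\eta k/n}$; equivalently $s_{\min}(W)\ge\sigma$. Hence $W$ has full column rank $\ell$, the Gram matrix $W^\top W$ is invertible and all its eigenvalues are at least $\sigma^2$, so $\tr\big((W^\top W)^{-1}\big)=\sum_{i=1}^{\ell}s_i(W)^{-2}\le\ell\sigma^{-2}$. Solving the factorization for $U$ through the identity $U=(UW^\top)\,W(W^\top W)^{-1}$, and combining submultiplicativity $\norm{XY}_{HS}\le\norm{X}_{op}\norm{Y}_{HS}$ with $\norm{W(W^\top W)^{-1}}_{HS}^2=\tr\big((W^\top W)^{-1}\big)$, one obtains $\norm{U}_{HS}\le\norm{UW^\top}_{op}\,\norm{W(W^\top W)^{-1}}_{HS}\le s\sqrt{\ell}\,\sigma^{-1}$.

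Finally, since $\norm{U}_{HS}^2=\sum_{j\in I}\norm{P_I\col_j(B)}_2^2\le\ell s^2\sigma^{-2}$ and $|I|=\ell$, a Markov argument shows that fewer than $\ell/2$ indices $j\in I$ can satisfy $\norm{P_I\col_j(B)}_2^2>2s^2\sigma^{-2}$; for the remaining (at least $\ell/2$) indices one has $\norm{P_I\col_j(B)}_2\le\sqrt2\,s/\sigma=\tfrac{\sqrt2}{c_{\smallrefer{lem: Bourgain-Tzafriri}}\,\rho}\sqrt{n/(\eta k)}\,s$, which is the asserted bound. There is no serious obstacle in this lemma; the one point that must not be overlooked is that the averaging has to be run on the \emph{Hilbert--Schmidt} norm of $U$ rather than its operator norm --- the operator-norm version of the very same factorization produces an estimate weaker by a factor of order $\sqrt{n/(\eta k)}$, and it is precisely the spreading condition~(2), which forces $s_{\min}(W)$ to be comparable to a typical singular value of $W$ (a column submatrix of $V$), that makes the Hilbert--Schmidt computation tight. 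Note that condition~(1) of Corollary~\ref{cor: Bourgain-Tzafriri} is not needed here; it is part of the definition of $\mathcal I_V$ because of later applications.
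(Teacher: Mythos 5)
Your proof is correct, and it takes a route genuinely different from the paper's, though both land on the same Hilbert--Schmidt budget. The paper applies the rank-one expansion
$\big\|\sum_{i\in I}V_i X_{(i)}^\top\big\|_{HS}^2 = \sum_{j=1}^n\big\|\sum_{i\in I}x_{(i)j}V_i\big\|_2^2 \ge \sigma^2\sum_{i\in I}\|X_{(i)}\|_2^2$
(with $\sigma := c_{\smallrefer{lem: Bourgain-Tzafriri}}\rho\sqrt{\eta k/n}$ coming from condition (2)), recognizes $\sum_{i\in I}V_i(P_I\col_i(B))^\top$ as $(P_IBV^\top)^\top$, and then bounds $\|P_IBV^\top\|_{HS}\le\|P_I\|_{HS}\,\|BV^\top\|\le\sqrt\ell\, s$. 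You instead factor through the left pseudo-inverse of $W^\top$, writing $U=(UW^\top)\,W(W^\top W)^{-1}$ and computing $\|W(W^\top W)^{-1}\|_{HS}^2=\tr\big((W^\top W)^{-1}\big)\le\ell/\sigma^2$. Both give $\sum_{j\in I}\|P_I\col_j(B)\|_2^2\le\ell s^2/\sigma^2$ and finish with the same Markov step. A minor bonus of your route: the paper's intermediate claim $\|P_I\|_{HS}\le\sqrt\ell$ tacitly assumes $\text{rank}(P_I)=\ell$, i.e.\ that the columns $\col_u(B)$, $u\notin I$, are linearly independent (one can repair it by noting $P_IBV^\top=UW^\top$ has rank at most $\ell$, hence $\|P_IBV^\top\|_{HS}\le\sqrt\ell\,\|P_IBV^\top\|$), whereas your inversion sidesteps the rank of $P_I$ entirely.

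One thing in your closing remark is wrong, and it is worth correcting because it misreads why the argument works: the operator-norm version of your factorization is \emph{not} weaker by a factor of $\sqrt{n/(\eta k)}$ --- it is actually slightly stronger. From $U=(UW^\top)\,W(W^\top W)^{-1}$ one gets $\|U\|\le\|UW^\top\|\cdot\|W(W^\top W)^{-1}\|\le s/s_{\min}(W)\le s/\sigma$, and since $\|u_j\|_2=\|Ue_j\|_2\le\|U\|$ for every $j\in I$, this yields $\|P_I\col_j(B)\|_2\le s/\sigma$ for \emph{all} $\ell$ indices, with no $\sqrt2$ loss and no averaging. The lemma's ``at least $\ell/2$ indices'' formulation is therefore not forced by the mathematics; the Hilbert--Schmidt detour just matches the form in which the paper states and later invokes the bound.
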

\begin{proof}
Fix a set $I \in \mathcal{I}_V$.
We use the following identity valid for all vectors $X_{(i)}=(x_{(i)1},\dots,x_{(i)n})\in\C^n$, $i\in I$:
\begin{align*}
\Big\|\sum_{i\in I}V_i X_{(i)}^\top\Big\|_{HS}^2&=\sum_{j=1}^n \Big\|\sum_{i\in I}x_{(i)j}V_i \Big\|_2^2
\geq \inf\limits_{w\in\C^I,\,\|w\|_2=1}\Big\|\sum_{i\in I}w_i V_i\Big\|_2^2\cdot\sum\limits_{j=1}^n\sum_{i\in I}|x_{(i)j}|^2\\
&=\inf\limits_{w\in\C^I,\,\|w\|_2=1}\Big\|\sum_{i\in I}w_i V_i\Big\|_2^2\cdot\sum_{i\in I}\|X_{(i)}\|_2^2.
\end{align*}
Applying the identity to vectors $P_I \col_i(B)$, $i\in I$,
and using the fact that $I \in \mathcal{I}_V$, we obtain that
 \begin{align*}
    c_{\smallrefer{lem: Bourgain-Tzafriri}}^2 \rho^2 \eta \frac{k}{n} \cdot \sum_{i \in I} \norm{P_I \col_i(B)}_2^2
    &\le \bigg\|  \sum_{i \in I} V_i (P_I \col_i(B))^{\top}   \bigg\|_{HS}^2.
 \end{align*}
The Hilbert--Schmidt norm can be estimated as
 \begin{align*}
  \bigg\|  \sum_{i \in I} V_i (P_I \col_i(B))^{\top}   \bigg\|_{HS}&=
  \bigg\|P_I \Big( \sum_{i \in I} \col_i(B) V_i^\top \Big) \bigg\|_{HS}
  =\bigg\|P_I \Big( \sum_{i=1}^n \col_i(B) V_i^\top \Big) \bigg\|_{HS}\\
  &=\norm{P_I B V^\top}_{HS} \le \norm{P_I}_{HS} \cdot \norm{BV^\top} \le \sqrt{\ell}s.
 \end{align*}
 Hence,
 \begin{equation*}  \label{eq: I'}
  \exists I' \subset I \ |I'|=\lceil\ell/2\rceil \text{ and } \forall i \in I' \
   \norm{P_I \col_i(B)}_2 \le \frac{\sqrt{2} s}{c_{\smallrefer{lem: Bourgain-Tzafriri}} \rho} \sqrt{\frac{n}{\eta k}},
 \end{equation*}
as required.
\end{proof}

To make use of Lemma~\ref{l: BT to dist} in our random model, we will need sufficiently strong anti-concentration
estimates for $\|P_J \col_j(\widetilde A)\|_2$, which are not always available. Indeed, if $\ell\leq e^{-Cpn}n$
then with a large probability the matrix $\widetilde A$ contains at least $\ell$ rows whose only non-zero elements
are the diagonal ones. Then, whenever $J$ is the set of indices of those rows, the kernel $\ker ((\csubm{\widetilde A}{J})^\top)$
is the coordinate subspace, and $\|P_J \col_j(\widetilde A)\|_2=|z|$ with probability close to one for all $j\in J$.
However, with $J$ chosen uniformly at random, we will be able to show that with very large probability
corresponding kernel contains a large orthonormal set of spread vectors, and the random variables
$\ker ((\csubm{\widetilde A}{J})^\top)$ are well spread.
Thus we are forced to introduce the exceptional set of realizations of $J$ for which we do not have a good anti-concentration.
The key property is that the probability of $J$ falling into this exceptional set is much smaller
than the probability of the event described in Corollary~\ref{cor: Bourgain-Tzafriri}.

The next proposition is the main result of this section.
We consider the case when $B$ is a random matrix with independent columns, and the matrix $V$
can be constructed to have sufficiently spread rows.
\begin{prop} \label{prop: via projection}
Let $\eta, \rho>0$, and let $\mathcal{V}$ be the set of all $k \times n$ matrices $V$ with orthonormal rows such that
$(\row_j(V))_{\lfloor\eta n\rfloor}^* \ge \rho/\sqrt{n}$ for all $j \in [k]$.
Let $B$ be an $n \times n$ random matrix with independent columns.
  Let
  \[
    \ell:=\lfloor\tilde{c}_{\smallrefer{lem: Bourgain-Tzafriri}} \eta^3 \rho^2 k\rfloor,
  \]
For $I \subset [n]$, denote by $P_I$ the $n \times n$ orthogonal projection matrix
whose kernel is the linear span of $\col_j(B), \ j \in [n]\setminus I$.

For any $\ell$-element subset $I\subset[n]$, let $\mathcal{F}_\ell(I)$ be a Borel-measurable set of $(n-\ell ) \times n$
matrices with columns indexed by the complement of the set $I$.
Assume that for any $I$, any $j \in I$, and any realization of $\csubm{B}{I}$ from $\mathcal{F}_\ell(I)$, we have
  \begin{equation} \label{eq: projection for intermediate}
   \P \Big\{\norm{P_I \col_j(B)}_2 \le \frac{\sqrt{2}}{c_{\smallrefer{lem: Bourgain-Tzafriri}} \rho} \sqrt{\frac{n}{\eta k}}s\;
   \big|\; \csubm{B}{I} \Big\}
   \le t.
  \end{equation}
  Let $J$ be a random subset of $[n]$ uniformly chosen among the subsets of cardinality $\ell$.
  Let
  \[
   \FF_\ell:= \left\{ M\in\C^{n\times n}: \ \P_{J}\{\csubm{M}{J'} \notin \FF_\ell(J) \} \le (\hat{c}_{\smallrefer{lem: Bourgain-Tzafriri}} \eta/2)^\ell
   \right\}.
  \]
  Then
  \[
   \P \left\{ \exists V \in \mathcal{V}: \ \|BV^\top\| \le s \  \& \  B \in \FF_\ell \right\}
   \le \left(\frac{C_{\smallrefer{prop: via projection}} \sqrt{t} }{\eta}\right)^\ell.
  \]
\end{prop}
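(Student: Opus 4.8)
The plan is to bootstrap the bound from the randomized restricted invertibility of Corollary~\ref{cor: Bourgain-Tzafriri} together with the deterministic Lemma~\ref{l: BT to dist}, via a conditioning argument that plays the randomness of $B$ against the independent randomness of $J$. Write $\Event$ for the event $\{\exists\,V\in\mathcal V:\ \|BV^\top\|\le s\}\cap\{B\in\FF_\ell\}$ that we must bound (the cases $\ell=0$ or $\mathcal V=\emptyset$ being trivial). Put $r:=\frac{\sqrt2}{c_{\smallrefer{lem: Bourgain-Tzafriri}}\rho}\sqrt{n/(\eta k)}\,s$, and for a realization of $B$ and an $\ell$-element set $I\subset[n]$ let $G_{B,I}$ denote the (entirely $V$-free) event that $\|P_I\col_j(B)\|_2\le r$ holds for at least $\lceil\ell/2\rceil$ indices $j\in I$.

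The first step is to show that on $\Event$ the random set $J$ is, with non-negligible probability, simultaneously ``good'' for $G_{B,J}$ and for $\FF_\ell(J)$. Fix a realization of $B\in\Event$ and a witness $V\in\mathcal V$ with $\|BV^\top\|\le s$. Corollary~\ref{cor: Bourgain-Tzafriri} gives $\Prob_J\{J\in\mathcal I_V\}\ge(\hat c_{\smallrefer{lem: Bourgain-Tzafriri}}\eta)^\ell$, while Lemma~\ref{l: BT to dist} yields $\{J\in\mathcal I_V\}\subset G_{B,J}$; on the other hand, $B\in\FF_\ell$ means precisely that $\Prob_J\{\csubm{B}{J}\notin\FF_\ell(J)\}\le(\hat c_{\smallrefer{lem: Bourgain-Tzafriri}}\eta/2)^\ell$. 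Intersecting the two $J$-events, for every $B\in\Event$,
\[
\Prob_J\bigl(G_{B,J}\cap\{\csubm{B}{J}\in\FF_\ell(J)\}\bigr)\ \ge\ (\hat c_{\smallrefer{lem: Bourgain-Tzafriri}}\eta)^\ell-(\hat c_{\smallrefer{lem: Bourgain-Tzafriri}}\eta/2)^\ell\ \ge\ (\hat c_{\smallrefer{lem: Bourgain-Tzafriri}}\eta/2)^\ell .
\]
Multiplying by $\indicator_{\Event}$, taking $\Exp_B$, bounding $\indicator_{\Event}\le1$ on the right, and using that $B$ and $J$ are independent, we get
\[
(\hat c_{\smallrefer{lem: Bourgain-Tzafriri}}\eta/2)^\ell\,\Prob(\Event)\ \le\ \Prob_{B,J}\bigl(G_{B,J}\cap\{\csubm{B}{J}\in\FF_\ell(J)\}\bigr).
\]

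The second step bounds the right-hand side by conditioning first on $J$ and then on $\csubm{B}{J}$. Since the columns of $B$ are independent, $P_J$ is a function of $\csubm{B}{J}$ alone, whereas the columns $\col_j(B)$, $j\in J$, are conditionally independent (given $\csubm{B}{J}$) both of $\csubm{B}{J}$ and of one another. On $\{\csubm{B}{J}\in\FF_\ell(J)\}$, hypothesis \eqref{eq: projection for intermediate} bounds $\Prob_B\{\|P_J\col_j(B)\|_2\le r\mid\csubm{B}{J}\}$ by $t$ for each $j\in J$, so a union bound over the $\binom{\ell}{\lceil\ell/2\rceil}$ choices of an $\lceil\ell/2\rceil$-element subset of $J$ gives
\[
\Prob_B\bigl(G_{B,J}\mid\csubm{B}{J}\bigr)\ \le\ \binom{\ell}{\lceil\ell/2\rceil}t^{\lceil\ell/2\rceil}\ \le\ 2^\ell t^{\ell/2}
\]
(using $t\le1$), whence $\Prob_B\bigl(G_{B,J}\cap\{\csubm{B}{J}\in\FF_\ell(J)\}\mid J\bigr)\le 2^\ell t^{\ell/2}$. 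Averaging over $J$ and combining with the previous display yields $\Prob(\Event)\le 4^\ell t^{\ell/2}/(\hat c_{\smallrefer{lem: Bourgain-Tzafriri}}\eta)^\ell$, which is the asserted estimate with $C_{\smallrefer{prop: via projection}}=4/\hat c_{\smallrefer{lem: Bourgain-Tzafriri}}$.

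I expect the only genuinely delicate point to be the bookkeeping at the interface of the two independent randomnesses. The entire gain comes from the fact that the exponentially small probability $(\hat c_{\smallrefer{lem: Bourgain-Tzafriri}}\eta)^\ell$ that $J$ lands in $\mathcal I_V$ strictly \emph{dominates} the exceptional probability $(\hat c_{\smallrefer{lem: Bourgain-Tzafriri}}\eta/2)^\ell$ built into the definition of $\FF_\ell$, so that a positive proportion of the ``good'' realizations of $J$ also avoids the exceptional set; once that is secured, what remains is a plain union bound over the at most $\ell$ columns indexed by $J$, which is affordable precisely because, after conditioning on $\csubm{B}{J}$, those columns are independent and the per-column failure probability $t$ is handed to us by the hypothesis. (There is also a routine measurability remark: $\{\exists\,V\in\mathcal V:\ \|BV^\top\|\le s\}$ is the projection of a Borel set and hence universally measurable, which is all we use.)
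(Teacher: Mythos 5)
Your proof is correct and follows essentially the same route as the paper: Corollary~\ref{cor: Bourgain-Tzafriri} and the definition of $\FF_\ell$ give a lower bound of the form $(\hat{c}_{\smallrefer{lem: Bourgain-Tzafriri}}\eta/2)^\ell$ on the $J$-probability of a good event, Lemma~\ref{l: BT to dist} converts $J\in\mathcal I_V$ into the $V$-free event $G_{B,J}$, and conditioning on $(J,\csubm{B}{J})$ plus independence of the columns indexed by $J$ gives the $\binom{\ell}{\lceil\ell/2\rceil}t^{\lceil\ell/2\rceil}$ upper bound. The one genuine (if minor) simplification is that by applying Lemma~\ref{l: BT to dist} as a deterministic inclusion \emph{before} integrating over $B$, you never need a single measurable selection $B\mapsto\widetilde V$; the paper introduces such a $\widetilde V$ and then sidesteps the resulting measurability issue by temporarily pretending $B$ takes finitely many values, whereas your remark that $\Event$ is a projection of a Borel set (hence universally measurable) is all that is actually needed.
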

\begin{rem}
In our proof, $\mathcal{F}_\ell(I)$ will be the set of all matrices $\csubm{\widetilde A}{I}$
such that  the kernel of   $(\csubm{\widetilde A}{I})^{\top}$ contains $c\ell$
orthonormal vectors with a good $(cn/\log pn)$-th order statistic, ensuring estimates \eqref{eq: projection for intermediate}.
Thus $\mathcal{F}_\ell$ is the event that the kernel of $(\csubm{\widetilde A}{J})^{\top}$
has $c\ell$ orthonormal vectors with a good order statistic for a random set $J$.
\end{rem}

\begin{proof}
As in Lemma~\ref{l: BT to dist},
for a given $V\in\mathcal V$ let $\mathcal{I}_V$
be the set of all subsets $I \subset [n]$ of cardinality $\ell$ satisfying conditions (1) and (2) of Corollary~\ref{cor: Bourgain-Tzafriri}.

For the random matrix $B$, define a random matrix $\widetilde V$ measurable with respect to $B$,
constructed as follows: whenever for a given realization of $B$ there is a matrix $V \in \mathcal{V}$
with $\norm{B V^{\top}} \le s$, choose $\widetilde V$ to be such a matrix; otherwise, let $\widetilde V$
be any matrix from $\mathcal V$.
To avoid measurability problems, we can assume that $B$ takes finitely many values.
This assumption can be easily removed after the proof of the proposition is complete.
 Denote
 \[
  q:= \P_{B,J} \left\{\big\|B \widetilde V^{\top} \big\| \le s \ \& \ J \in \mathcal{I}_{\widetilde V} \ \& \
\csubm{B}{J} \in \mathcal{F}_\ell(J) \ \& \ B \in \mathcal{F}_\ell \right\}.
 \]

We will estimate this probability  in two ways.
First, by Corollary \ref{cor: Bourgain-Tzafriri}, for any matrices $M \in \mathcal{F}_\ell$
and $V\in\mathcal{V}$ satisfying $\norm{MV^\top} \le s$, we have
\begin{align*}
   \P\big\{J \in \mathcal{I}_V  \ \& \ \csubm{M}{J} \in \mathcal{F}_\ell(J)\big\}
   &\ge  \P\big\{J \in \mathcal{I}_V\big\}
   - \P\big\{\csubm{M}{J} \notin \mathcal{F}_\ell(J)\big\}  \\
   &\ge (\hat{c}_{\smallrefer{lem: Bourgain-Tzafriri}} \eta)^\ell-(\hat{c}_{\smallrefer{lem: Bourgain-Tzafriri}} \eta/2)^\ell
   \ge (\hat{c}_{\smallrefer{lem: Bourgain-Tzafriri}} \eta/2)^\ell.
\end{align*}
Hence,
 \begin{align} \label{eq: lower bound  on q}
  q
   &=\E_{B} \left( \P\big\{J \in \mathcal{I}_{\widetilde V}  \ \& \ \csubm{B}{J} \in \mathcal{F}_\ell(J)\,
\mid\; B\big\} \cdot \mathbf{1}_{\|B\widetilde V^\top\| \le s} \cdot \mathbf{1}_{B \in \mathcal{F}_\ell}
   \right) \notag \\
  &\ge (\hat{c}_{\smallrefer{lem: Bourgain-Tzafriri}} \eta/2)^\ell \cdot
\P\big\{\|B\widetilde V^\top\| \le s \ \& \ B \in \mathcal{F}_\ell\big\}.
 \end{align}
 On the other hand,
\begin{align*}
q&\leq\P_{B,J} \left\{\big\|B \widetilde V^{\top} \big\| \le s \ \& \ J \in \mathcal{I}_{\widetilde V} \ \& \
\csubm{B}{J} \in \mathcal{F}_\ell(J)\right\}\\
&=\E_{J} \big[\E_{\csubm{B}{J} } \big( \E_{\csubm{B}{{J}^c}}
(\mathbf{1}_{\|B\widetilde V^\top\| \le s} \cdot \mathbf{1}_{J \in \mathcal{I}_{\widetilde V}}
\cdot \mathbf{1}_{\csubm{B}{J} \in \mathcal{F}_\ell(J)} \mid J, \csubm{B}{J}) \mid J\big)\big]\\
&= \E_{J} \big[\E_{\csubm{B}{J} } \left( \E_{\csubm{B}{{J}^c}}
\big(\mathbf{1}_{\|B\widetilde V^\top\| \le s} \cdot \mathbf{1}_{J \in \mathcal{I}_{\widetilde V}}\mid J, \csubm{B}{J} \big)  \cdot \mathbf{1}_{\csubm{B}{J} \in \mathcal{F}_\ell(J)} \mid J\right)\big].
\end{align*}
Applying Lemma~\ref{l: BT to dist} we get
that the event $\big\{\|B\widetilde V^\top\| \le s\mbox{ and }J \in \mathcal{I}_{\widetilde V}\big\}$
is contained in the event
$$\Big\{\norm{P_J \col_j(B)}_2 \le \frac{\sqrt{2}}{c_{\smallrefer{lem: Bourgain-Tzafriri}} \rho} \sqrt{\frac{n}{\eta k}}s\quad\mbox{ for at least
$\ell/2$ indices }j\in J\Big\}.$$
Hence,
 \begin{multline*}
 \E_{\csubm{B}{J^c}}(\mathbf{1}_{\|B\widetilde V^\top\| \le s} \cdot \mathbf{1}_{J \in \mathcal{I}_{\widetilde V}}
\mid J,\csubm{B}{J} )
 \\
 \le \P \left( \exists \widetilde J \subset J \ |\widetilde J|=\lceil \ell/2\rceil \ \mbox{ such that } \ \forall i \in \widetilde J \
   \norm{P_J \col_i(B)}_2 \le \frac{\sqrt{2} s}{c_{\smallrefer{lem: Bourgain-Tzafriri}} \rho} \sqrt{\frac{n}{\eta k}} \mid J,\csubm{B}{J}  \right).
 \end{multline*}
Note that conditioned on any realization of $J$ and $\csubm{B}{J}$, the projections $P_J \col_i(B)$, $i\in J$,
are jointly independent.
Therefore, on the event $\{\csubm{B}{J} \in \mathcal{F}_\ell(J)\}$ we can apply \eqref{eq: projection for intermediate}
together with the union bound over all $\lceil\ell/2\rceil$-element subsets of $J$
to get
$$
\E_{\csubm{B}{J^c}}(\mathbf{1}_{\|B\widetilde V^\top\| \le s} \cdot \mathbf{1}_{J \in \mathcal{I}_{\widetilde V}}
\mid J,\csubm{B}{J} )\cdot \mathbf{1}_{\csubm{B}{J} \in \mathcal{F}_\ell(J)}
\leq \binom{\ell}{\lfloor \ell/2\rfloor} \cdot t^{\ell/2}\le (C \sqrt{t})^\ell.
$$
In combination with the above inequalities, this yields
 \[
 q \le (C \sqrt{t})^\ell.
 \]
 Combining it with \eqref{eq: lower bound  on q}, we conclude the proof of the proposition.
\end{proof}

\section{The intermediate singular values} \label{sec: intermediate}

In this section we are concerned with bounding intermediate singular values
$s_{n-k}(\widetilde A)$ for $n/\log^C n\leq k\leq n(pn)^{-c}$.
Note that for $pn$ polylogarithmic in $n$, the interval for $k$ is empty,
and the results of this section do not enter into the proof of the circular law.
The estimates obtained here become important when $pn\leq\log n$,
and will be used in the next section to verify uniform integrability of logarithm with respect
to empirical measures of singular values of $\widetilde A_n$.
Estimating the intermediate singular values in the setting of random directed $d$--regular graphs
was an important step in the proof of the circular law for that model in the regime when the degree $d$ is sub-logarithmic in dimension
\cite{LLTTY circ}. We note that in \cite{LLTTY circ} a completely different approach based on bounding distances between matrix columns
and {\it uniform random normals} to certain random subspaces was employed.

Assume that  the matrix $\widetilde{A}$ is such that the event $\Event_{good}$ occurs.
In this section, we will show that  for a random set $J$ of a fixed cardinality, with high probability  the space $\ker((\csubm{\tilde A}{J})^{\top}$ possesses a large orthonormal system of sufficiently spread vectors.
We start with a deterministic statement asserting the existence of an orthonormal basis of spread vectors in any fixed subspace
(see \cite[Lemma~4.3]{LLTTY circ} for a related statement).
\begin{lemma}[Basis of spread vectors]\label{l: small infinity norm}
 Let $E \subset \C^n$ be a linear subspace of dimension $k \ge C \log n$.
 Let
 \[
   1\leq s \le c_{\smallrefer{l: small infinity norm}} \frac{k}{\log (n/k)},
 \]
where $c_{\smallrefer{l: small infinity norm}}>0$ is a sufficiently small universal constant.
Then there exists an orthonormal basis $u_1 \etc u_k$ in $E$ such that $(u_j)_s^* \ge \frac{1}{2\sqrt{n}}$ for all $j \in [k]$.
\end{lemma}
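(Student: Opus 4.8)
The idea is to produce the basis by a random rotation. Fix an arbitrary orthonormal basis $v_1,\dots,v_k$ of $E$, let $W$ be a Haar-distributed unitary matrix on $\C^k$, and set $u_j:=\sum_{i=1}^k W_{ji}v_i$ for $j\in[k]$. Then $(u_1,\dots,u_k)$ is a ``uniformly random'' orthonormal basis of $E$, and each individual $u_j$ is distributed as a uniform random vector on the unit sphere $S(E)$ of $E$. Hence, by the union bound over $j\in[k]$, it is enough to show
\[
\Prob\big\{(x)_s^*<\tfrac{1}{2\sqrt n}\big\}<\tfrac1k
\]
for a single uniform random $x\in S(E)$. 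We realize such $x$ as $x=Y/\|Y\|_2$, where $Y=\sum_{i=1}^k g_i v_i$ and $g_1,\dots,g_k$ are i.i.d.\ standard complex Gaussians. Then $\|Y\|_2^2=\sum_{i=1}^k|g_i|^2$ is a sum of $k$ i.i.d.\ mean-one exponentials, so $\Prob\{\|Y\|_2^2\notin[\tfrac34 k,\,2k]\}\le e^{-ck}$; and each $|Y_j|^2$ is distributed as $P_{jj}\cdot\mathrm{Exp}(1)$, where $P$ is the orthogonal projection onto $E$, so that $0\le P_{jj}\le 1$ and $\sum_{j=1}^n P_{jj}=\tr P=k$.

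The next step is deterministic. If $(x)_s^*<\tfrac1{2\sqrt n}$, then every coordinate of $x$ outside the set $B_0$ of its $s-1$ largest coordinates has modulus $<\tfrac1{2\sqrt n}$, so $\|\Proj_{B_0^c}x\|_2^2<(n-s+1)/(4n)<\tfrac14$, and therefore $\|\Proj_B x\|_2^2>\tfrac34$ for some $s$-element set $B\subset[n]$, equivalently $\|\Proj_B Y\|_2^2>\tfrac34\|Y\|_2^2$. Thus $\Prob\{(x)_s^*<\tfrac1{2\sqrt n}\}\le\sum_{|B|=s}\Prob\{\|\Proj_B Y\|_2^2>\tfrac34\|Y\|_2^2\}$. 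Fix $B$ with $|B|=s$. The positive semidefinite quadratic form $g\mapsto\|\Proj_B(\sum_i g_i v_i)\|_2^2$ has operator norm at most $1$, rank at most $s$, and trace $\sum_{j\in B}P_{jj}\le s$; diagonalizing it yields $\|\Proj_B Y\|_2^2\stackrel{d}{=}\sum_{i=1}^k\lambda_i|\tilde g_i|^2$ with $\lambda_i\in[0,1]$, at most $s$ of them nonzero, $\sum_i\lambda_i\le s$, and $\|Y\|_2^2=\sum_{i=1}^k|\tilde g_i|^2$ for the \emph{same} i.i.d.\ standard complex Gaussians $\tilde g_i$. The event $\{\|\Proj_B Y\|_2^2>\tfrac34\|Y\|_2^2\}$ therefore forces $\sum_{i\le s}|\tilde g_i|^2>3\sum_{i>s}|\tilde g_i|^2$; since $\sum_{i\le s}|\tilde g_i|^2\sim\Gamma(s,1)$ and $\sum_{i>s}|\tilde g_i|^2\sim\Gamma(k-s,1)$ are independent, a Chernoff estimate (using $s\le k/8$, which we may assume) gives $\Prob\{\|\Proj_B Y\|_2^2>\tfrac34\|Y\|_2^2\}\le e^{-c'k}$, uniformly in $B$.

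Putting this together, $\Prob\{(x)_s^*<\tfrac1{2\sqrt n}\}\le e^{-ck}+\binom{n}{s} e^{-c'k}\le e^{-ck}+e^{\,s\log(en/s)-c'k}$, so it suffices to verify $s\log(en/s)\le\tfrac{c'}{2}k$: then (with $C$ large enough) both terms are below $\tfrac1{2k}$ by the hypothesis $k\ge C\log n$, and the required bound follows. This last inequality is exactly what the bound on $s$ provides: writing $a:=\log(n/k)$ and $t:=s/k\le c_{\smallrefer{l: small infinity norm}}/a$, one has $s\log(en/s)=k\,t\big(1+a+\log(1/t)\big)\le k\big(c_{\smallrefer{l: small infinity norm}}+t+t\log(1/t)\big)$, and $t+t\log(1/t)\to0$ as $c_{\smallrefer{l: small infinity norm}}\to0$, so a small enough choice of $c_{\smallrefer{l: small infinity norm}}$ finishes the argument, at least when $a$ is bounded below (for instance $k\le n/2$). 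When $k$ is very close to $n$ the bound on $s$ ceases to force $s\le k/8$ (indeed $s\le k$ then has to be read as an implicit hypothesis); such an $E$ has small codimension and admits an explicit orthonormal basis of ``flat'' vectors whose coordinates are all of order $k^{-1/2}\ge (2\sqrt n)^{-1}$, and in the applications of the lemma $k$ is in any case kept away from $n$.

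I expect the main obstacle to be precisely this last balancing of constants: the union bound over the $\binom{n}{s}$ coordinate subsets is essentially tight, and survives only because the quantitative hypothesis keeps $s\log(en/s)$ well below $k$, so the argument is quite sensitive to the exact shape of the bound on $s$. The two probabilistic inputs — concentration of $\|Y\|_2^2$ and of each weighted $\chi^2$-type sum $\|\Proj_B Y\|_2^2$, both via Chernoff — are routine, and the deterministic ``mass-concentration forces a heavy $s$-set'' step is elementary; the work is entirely in choosing $c_{\smallrefer{l: small infinity norm}}$ and $C$ consistently with the Chernoff constants $c,c'$.
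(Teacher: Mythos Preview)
Your proof is correct and follows essentially the same strategy as the paper's: both pick a Haar-random orthonormal basis of $E$, reduce via the union bound to a single uniform vector realized as a normalized Gaussian in $E$, and then take a union bound over all $s$-element coordinate sets $B$. The only technical difference is in the tail estimate for $\|\Proj_B Y\|_2^2$: the paper applies Gaussian concentration to the Lipschitz function $g\mapsto\|P_B^{\mathrm{crd}}P_E g\|_2$ (with mean $\le\sqrt{s}$), whereas you diagonalize the rank-$\le s$ quadratic form and compare two independent Gamma variables---both yield a bound that beats $\binom{n}{s}$ under the hypothesis on $s$, and the edge case $k$ close to $n$ is equally informal in the paper.
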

\begin{proof}
 Let $u_E$ be a random vector uniformly distributed on $S^{n-1}(\C) \cap E$.
Let $P_J^{\text{crd}}$ be the coordinate projection on $\C^J, \ J \subset [n]$.
We will show that with large probability for any $s$--element subset $J$ of $[n]$, $u_E$
satisfies $\norm{P_J^{\text{crd}} u_E}_2 <1/2$.
To this end, we  represent $u_E$ as $P_E g/\norm{P_E g}_2$ where $P_E$ is the projection on $E$,
and $g$ is the standard Gaussian vector in $\C^n$. Then by the Gaussian concentration
 \[
 \P\big\{ \norm{P_E g}_2 \le \sqrt{k}/2\big\} \le \exp(-ck).
\]
 Also, $\E \norm{P_J^{\text{crd}} P_E g}_2^2 \le \E \norm{P_J^{\text{crd}}  g}_2^2 =s$ since for any $B\in\C^{n\times n}$,
$\E \norm{Bg}_2^2$ depends only on the singular values of $B$.
Using the Gaussian concentration again, we derive
\[
  \P\big\{\norm{P_J^{\text{crd}} P_E g}_2 > t\big\} \le \exp(-ct^2)
\]
for $t \ge 2\sqrt{s}$. Choosing $t := \sqrt{Cs \log \frac{n}{s}}$ (for a sufficiently large $C>0$), we get
\begin{align*}
  k\binom{n}{s} \cdot \P(\norm{P_J^{\text{crd}} P_E g}_2 > t)
  \le \exp \left(\log k + s \log \frac{en}{s} - C s \log \frac{n}{s} \right)
  \le \frac{1}{4}.
\end{align*}
Hence,
\begin{align*}
   \P \bigg\{&\exists J \in \binom{[n]}{s}: \ \norm{P_J^{\text{crd}} u_E}_2
\ge 2C \sqrt{\frac{s}{k}} \cdot \sqrt{\log \frac{n}{s}} \bigg\} \\
   &\le \P \bigg\{\exists J \in \binom{[n]}{s}: \ \norm{P_J^{\text{crd}} P_E g}_2  \ge C \sqrt{s} \cdot \sqrt{\log \frac{n}{s}}
\ \& \ \norm{P_E g}_2 \ge \frac{\sqrt{k}}{2} \bigg\} \\
   &\hspace{3cm}+ \P \bigg\{ \norm{P_E g}_2 \le \frac{\sqrt{k}}{2} \bigg\}
  \le \frac{1}{2k},
\end{align*}
and by the union bound, a Haar--uniformly distributed random orthonormal basis $u_1 \etc u_k$ in $E$ satisfies
\[
 \forall i \in [k] \  \forall J \in \binom{[n]}{s}: \ \norm{P_J^{\text{crd}} u_i}_2 \le 2C \sqrt{\frac{s}{k}} \cdot \sqrt{\log \frac{n}{s}} \le \frac{1}{2}
\]
with probability at least $1/2$.
For any such realization, we have $\norm{(u_i^*)_{[s:n]}}_2 \ge 1/2$, which implies the lemma.
\end{proof}

Lemma~\ref{l: small infinity norm} above allows to construct an orthonormal basis with a good control of
the $\ell_\infty$--norm of the vectors. Yet, it does not give sufficiently strong information
on the size of the vector support. On the other hand, Proposition~\ref{prop: almost proportional balance}
which we proved earlier in this paper, provides lower bounds on the $cn/\log(pn)$--th order statistics of almost null vectors
but does not imply a strong upper bound on the $\ell_\infty$ norm.

We would like to combine Lemma \ref{l: small infinity norm} with Proposition \ref{prop: almost proportional balance} to improve
the ``spreadness'' property of the vectors in the basis.
%, i.e., to show that for every vector $v$ from the basis, the order statistics
%$v_M^*$ is large for some $M \gg s$.
Yet, this is not always possible since $E:=\ker((\csubm{\widetilde A}{J})^{\top})$
can be a coordinate subspace for some choice of $J$, as we discussed in the previous section.
%Indeed, if $k \le \exp(-Cpn)n$, then the matrix $\widetilde{A}$ contains at least $k$
%rows whose only non-zero entry is the diagonal one. Removing $k$ corresponding columns and taking the transpose,
%we obtain an $(n-k) \times n$ matrix $(\csubm{\widetilde A}{J})^{\top}$ having $k$ zero columns. For a matrix with
%continuously distributed entries, the kernel of such submatrix is a coordinate subspace, and any vector $v$ in it has $v_{k+1}^*=0$.
Fortunately, even if constructing a good orthonormal system in $\ker((\csubm{\widetilde A}{J})^{\top})$
is impossible for all sets $J$, it is possible for a random set $J$ with high probability.
In view of Proposition \ref{prop: via projection}, this would be enough.
To show this property of $\ker((\csubm{\widetilde A}{J})^{\top})$,
we will utilize the concept of the matrix compression: for any realization of $J$,
we replace $\ker((\csubm{\widetilde A}{J})^{\top})$ with its subspace having the form
$\ker(\phi(\widetilde A^{\top}))$, for a specially chosen compression $\phi$.
On the one hand, existence of the compression is guaranteed with high probability by Lemma~\ref{l: phi construction}.
On the other hand, the required structural properties of $\ker(\phi(\widetilde A^{\top}))$
can be verified by combining Proposition~\ref{prop: almost proportional balance}
and Lemma~\ref{l: small infinity norm}.
In this respect, the matrix compression allows to replace the problem of describing the geometry of $\ker((\csubm{\widetilde A}{J})^{\top})$
(which turns out to be a complex mixture of spread and sparse vectors) with studying
a relatively simple subspace $\ker(\phi(\widetilde A^{\top}))$ which typically contains only spread vectors.

\begin{prop} \label{prop: othonormal system}
Let $n$, $p$, $z$ and the matrix $A$
satisfy assumptions \eqref{Asmp on p weak}--\eqref{Asmp on A}--\eqref{Asmp on z}.
Fix a realization of $A$ in $\Event_{good}$.
Let $\ell\geq n^{1/2}$
be a natural number. Let $J$ be a random subset of $[n]$ of cardinality $\ell$ uniformly chosen from the sets of this cardinality.
Let
\[
 M:=\Big\lfloor\tilde{c}_{\smallrefer{prop: almost proportional}} \frac{n}{\log pn}\Big\rfloor,
\]
and for every fixed $I\subset[n]$, $|I|=\ell$, let $\FF_\ell(I)$ be the set of all $(n-\ell) \times n$
matrices $B$ (with rows indexed over $I^c$) such that the kernel of $B$ contains $\lfloor c_{\smallrefer{prop: othonormal system}}\ell\rfloor$
orthonormal vectors $v_1 \etc v_{\lfloor c_{\smallrefer{prop: othonormal system}}\ell\rfloor}$ with
\begin{equation} \label{eq: M order statistic}
  (v_j)_M^*
  \ge \frac{1}{\sqrt{n}} \exp \left(- C_{\smallrefer{prop: othonormal system}} \log^4(pn)
\log^4 \left( \frac{n}{\ell} \right) \right),
\quad\quad j=1,2,\dots,\lfloor c_{\smallrefer{prop: othonormal system}}\ell\rfloor.
\end{equation}
Then
\[
 \P \left\{ (\csubm{\widetilde A}{J})^{\top} \notin \FF_\ell(J)\right\}
 \le \left( \frac{\hat{c}_{\smallrefer{lem: Bourgain-Tzafriri}}/2}{\log pn} \right)^\ell.
\]

\end{prop}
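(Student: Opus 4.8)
The strategy is to show that for a random $\ell$-element set $J$, with probability at least $1-(\hat c_{\smallrefer{lem: Bourgain-Tzafriri}}/(2\log pn))^\ell$ we can construct a compression $\phi$ of $\widetilde A^\top$ which glues together only rows indexed in $J$, such that $\ker(\phi(\widetilde A^\top))\subset\ker((\csubm{\widetilde A}{J})^\top)$ has dimension at least $\lfloor c_{\smallrefer{prop: othonormal system}}\ell\rfloor$ and contains an orthonormal system of vectors whose $M$-th order statistic is bounded below as in \eqref{eq: M order statistic}. The point of passing to the compression is exactly the one emphasized in the text preceding the statement: $\ker((\csubm{\widetilde A}{J})^\top)$ may be a coordinate subspace for bad $J$, but $\ker(\phi(\widetilde A^\top))$ typically contains only spread vectors, and its spreadness can be certified by Proposition~\ref{prop: almost proportional balance}.

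\textbf{Step 1 (construct $\phi$).} Work on the realization of $A$ in $\Event_{good}$; in particular $\Event_{\smallrefer{p: supports}}$ holds and $|\inneigh(\bigcup_{g\geq1}T_{K_0/2,g}(G_{\widetilde A^\top}))|\leq n\,e^{-cpn}\,n$ for a suitable constant (this follows from $\Event_{\smallrefer{p: chains combined}}^1$ applied to $\widetilde A^\top$). Apply Lemma~\ref{l: phi construction} with $K:=K_0/2$ and a small fixed $\varepsilon$: for a uniform random $\ell$-subset $J$, with $\Prob_J$-probability at least $1-2e^{-c_{\smallrefer{l: phi construction}}\ell pn}$ there is a $(\Gr,K_0/2)$-admissible, $(64\varepsilon pn)$-light mapping $\phi:[n]\to[n-\lfloor\varepsilon\ell\rfloor]$ with $\phi^{-1}(\phi(i))=i$ for $i\notin J$. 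Since $pn\geq C_\alpha$, the failure probability $2e^{-c_{\smallrefer{l: phi construction}}\ell pn}$ is far smaller than $(\hat c_{\smallrefer{lem: Bourgain-Tzafriri}}/(2\log pn))^\ell$, because $e^{-c\,pn}\ll (\log pn)^{-1}$ when $pn$ is large (and one can adjust constants if $pn$ is merely bounded below). On the event that $\phi$ exists, $\row_i(\phi(\widetilde A^\top))=\sum_{v\in\phi^{-1}(i)}\row_v(\widetilde A^\top)$, so any $x\in\ker(\phi(\widetilde A^\top))$ lies in $\ker((\csubm{\widetilde A}{J})^\top)$ (as the glued rows have disjoint supports, the individual rows need not vanish, but the sums do — in fact $\ker(\phi(\widetilde A^\top))$ is precisely the subspace of $\ker$ obtained from the coordinate-restricted system, of dimension at least $\ell-\lfloor\varepsilon\ell\rfloor$). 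Hence $\dim\ker(\phi(\widetilde A^\top))\geq\ell-\lfloor\varepsilon\ell\rfloor\geq\ell/2$.

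\textbf{Step 2 (spread basis).} Set $E:=\ker(\phi(\widetilde A^\top))\subset\C^n$, with $\dim E=:k'\geq\ell/2\geq n^{1/2}/2$. Apply Lemma~\ref{l: small infinity norm} to $E$ with $s:=\lceil c_{\smallrefer{l: small infinity norm}}k'/\log(n/k')\rceil$: there is an orthonormal basis $u_1,\dots,u_{k'}$ of $E$ with $(u_j)_s^*\geq\frac{1}{2\sqrt n}$ for all $j$. Now feed each $u_j$ into Proposition~\ref{prop: almost proportional balance}: the hypotheses require a bound $\|\phi(\widetilde A)\,u_j\|_2\leq\frac{\sqrt n}{2\alpha}(2\alpha)^{-C_{\smallrefer{prop: almost proportional balance}}\log^2(4n/q)\log^2(pn+\log(4n/q))}\,(u_j)^*_q$ for an appropriate $q\leq\lfloor c_{\smallrefer{prop: almost proportional balance}}/p\rfloor$. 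But $u_j\in E=\ker(\phi(\widetilde A^\top))$ does \emph{not} immediately say $\phi(\widetilde A)u_j$ is small — one must be slightly careful about which matrix annihilates $u_j$. The clean route is: the kernel relation is for $\phi(\widetilde A^\top)$ acting on $\C^n$; but $\phi(\widetilde A^\top)=(\psi(\widetilde A))^\top$ is not literally the compression of $\widetilde A$, so instead invoke Corollary~\ref{cor: no moderately sparse for smin} (applied to $\widetilde A^\top$, which is licensed since $\Event_{good}$ is symmetric in $\widetilde A,\widetilde A^\top$): any $x$ with $\|\widetilde A^\top x\|_2\leq(2\alpha)^{-C\log^3 n}\|x\|_\infty$ satisfies $\|x\|_\infty\leq(2\alpha)^{C\log^3 n}x^*_M$. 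Since each $u_j\in\ker((\csubm{\widetilde A}{J})^\top)$ has $\|\widetilde A^\top u_j\|_2$ supported only on coordinates in $J$ and those coordinates of $\widetilde A^\top u_j$ are controlled — actually for a vector in the true kernel of the restricted matrix we get $\|(\csubm{\widetilde A}{J})^\top u_j\|_2=0$, and the remaining $\ell$ coordinates of $\widetilde A^\top u_j$ can be bounded using $\EE_{\smallrefer{p: ell one norm}}$ and the $\ell_\infty$ control from Step~1 — one concludes $\|\widetilde A^\top u_j\|_2$ is tiny relative to $\|u_j\|_\infty$, so Corollary~\ref{cor: no moderately sparse for smin} yields $\|u_j\|_\infty\leq(2\alpha)^{C\log^3 n}(u_j)^*_M$. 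Combined with $(u_j)_s^*\geq\frac{1}{2\sqrt n}$ and $s\leq M$ this gives the lower bound $(u_j)_M^*\geq\frac{1}{\sqrt n}\exp(-C_{\smallrefer{prop: othonormal system}}\log^4(pn)\log^4(n/\ell))$ after tracking the exponents (the $\log^4$ factors absorb the $\log^3 n$ and the powers coming from the choice of $s$ and $k'$).

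\textbf{Step 3 (assemble).} Set $c_{\smallrefer{prop: othonormal system}}:=1/2$ (or any constant $\leq 1-\varepsilon$), so that on the event ``$\phi$ exists and $E$ admits the basis from Step~2'', the matrix $(\csubm{\widetilde A}{J})^\top$ lies in $\FF_\ell(J)$: its kernel contains $\geq\lfloor c_{\smallrefer{prop: othonormal system}}\ell\rfloor$ orthonormal vectors satisfying \eqref{eq: M order statistic}. The only randomness is that of $J$ (the matrix $A$ is fixed in $\Event_{good}$ and Lemma~\ref{l: small infinity norm} is deterministic), and the only way to fail is for Lemma~\ref{l: phi construction} to fail, which has probability $\leq 2e^{-c_{\smallrefer{l: phi construction}}\ell pn}\leq(\hat c_{\smallrefer{lem: Bourgain-Tzafriri}}/(2\log pn))^\ell$. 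This is the claimed bound.

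\textbf{Main obstacle.} The delicate point is Step~2: reconciling the kernel relation $u_j\in\ker((\csubm{\widetilde A}{J})^\top)$ with the hypothesis of Corollary~\ref{cor: no moderately sparse for smin}, which wants a bound on $\|\widetilde A^\top u_j\|_2$ over \emph{all} rows, not just those outside $J$. One needs to argue that the $\ell$ ``missing'' rows of $\widetilde A^\top$ contribute negligibly when evaluated on $u_j$ — this uses that $u_j$ has tiny $\ell_\infty$ norm (order $\ell^{-1/2}\log$-factors) on the one hand and that rows of $\widetilde A$ have $\ell_1$-norm $\lesssim pn$ on the event $\EE_{\smallrefer{p: ell one norm}}$ (or, for the $\ell_2$ version, one passes through the compression $\psi(\widetilde A)$ of $\widetilde A$ itself — note the asymmetry: Lemma~\ref{l: phi construction} was applied to $G_{\widetilde A^\top}$, so one should track carefully whether the relevant compressed matrix is $\psi(\widetilde A)$ or $(\phi(\widetilde A^\top))^\top$ and that these coincide up to the identification of glued rows/columns). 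Making this bookkeeping airtight, together with the arithmetic of stacking the exponents $\log^3 n$, $\log(n/\ell)$, $\log(pn)$ into the single clean expression $\log^4(pn)\log^4(n/\ell)$, is where most of the real work lies; everything else is a direct invocation of the cited lemmas.
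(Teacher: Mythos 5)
Your overall roadmap coincides with the paper's: apply Lemma~\ref{l: phi construction} to obtain a light admissible compression $\phi$, pass to $\ker(\phi(\widetilde A^{\top}))$, extract a spread orthonormal system via Lemma~\ref{l: small infinity norm}, and upgrade the spreadness via Proposition~\ref{prop: almost proportional balance}. However, the execution of Step~2 contains a genuine gap, and Step~1 contains a dimension error.

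First, the dimension count: $\phi(\widetilde A^{\top})$ is a $(n-\lfloor\varepsilon\ell\rfloor)\times n$ matrix, so $\dim\ker(\phi(\widetilde A^{\top}))\geq \lfloor\varepsilon\ell\rfloor$, not $\geq \ell-\lfloor\varepsilon\ell\rfloor$ as you claim. Accordingly the constant $c_{\smallrefer{prop: othonormal system}}$ must be taken at most $\varepsilon$, not $1/2$.

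Second, and more substantively, the detour you take in Step~2 is both unnecessary and does not close. The issue you flag --- ``which matrix annihilates $u_j$'' --- is not real: Proposition~\ref{prop: almost proportional balance} is a statement for any matrix satisfying \eqref{Asmp on p weak}--\eqref{Asmp on A}--\eqref{Asmp on z} for which $\Event_{good}$ occurs, and $\Event_{good}$ is defined symmetrically in $\widetilde A$ and $\widetilde A^{\top}$. One simply applies Proposition~\ref{prop: almost proportional balance} with $\widetilde A^{\top}$ in place of $\widetilde A$ and with the mapping $\phi$ constructed from $G_{\widetilde A^{\top}}$. Then the vectors $v_i\in\ker(\phi(\widetilde A^{\top}))$ have $\|\phi(\widetilde A^{\top})v_i\|_2=0$, so the hypothesis of the proposition holds trivially for any admissible $q$, and the conclusion $(v_i)^*_M\geq (2\alpha)^{-C\log^2(4n/q)\log^2(pn+\log(4n/q))}(v_i)^*_q$ follows immediately. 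No comparison between $\phi(\widetilde A^{\top})$ and ``a compression of $\widetilde A$'' is needed. Your alternative via Corollary~\ref{cor: no moderately sparse for smin} does not work: that corollary requires a bound on $\|\widetilde A^{\top}u_j\|_2$ over all $n$ rows, whereas $u_j\in\ker((\csubm{\widetilde A}{J})^{\top})$ only kills the rows indexed by $J^c$. The $\ell$ rows indexed by $J$ contribute inner products bounded only by $\|\row_i(\widetilde A^\top)\|_1\|u_j\|_\infty\lesssim pn\,\|u_j\|_\infty$ each, so $\|\widetilde A^{\top}u_j\|_2$ can be of order $\sqrt{\ell}\,pn\,\|u_j\|_\infty$, nowhere near the threshold $(2\alpha)^{-C\log^3 n}\|u_j\|_\infty$ that the corollary demands. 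You identify this obstacle yourself as ``the main obstacle,'' but it is self-inflicted: the correct reading of Proposition~\ref{prop: almost proportional balance} removes it entirely.
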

We will use this proposition to show that $\EE_{\text{good}}$
can play the role of $\FF_\ell$ in Proposition \ref{prop: via projection}.

\begin{proof}
Define $K_0:=\frac{pn}{2\alpha}$, $K:=K_0/2$ and $\varepsilon:=\frac{1}{2^{15}\alpha}$.
Since $A$ belongs to $\EE_{\smallrefer{p: supports}}\cap \Event_{\smallrefer{p: chains combined}}$, we can use Lemma~\ref{l: phi construction}.
Namely, let $\Event$ be the event (with respect to the randomness of $J$) defined in the lemma.
It is sufficient to show that for any realization of $J$ from $\Event$, we have
$(\csubm{\widetilde A}{J})^{\top} \in \FF_\ell(J)$.

Fix any realization of $J$ from $\Event$ and let $\phi$ be the mapping defined in Lemma~\ref{l: phi construction}.
Observe that the kernel of $(\csubm{\widetilde A}{J})^{\top}$ contains the kernel of the matrix
$\phi(\widetilde A^{\top})$. Further, by Lemma~\ref{l: small infinity norm},
there is an orthonormal basis $v_1,v_2,\dots,v_{\lfloor \varepsilon \ell\rfloor}$ in the kernel of $\phi((\widetilde A)^{\top})$,
such that for
$$q:=\min\Big(\Big\lfloor c_{\smallrefer{l: small infinity norm}}\lfloor \varepsilon \ell\rfloor/\log \frac{n}{\lfloor \varepsilon \ell\rfloor}\Big\rfloor,
\lfloor c_{\smallrefer{prop: no very sparse null vectors}}/p\rfloor\Big)$$
we have $(v_i)^*_q\geq \frac{1}{2\sqrt{n}}$ for all vectors from the basis.

Observe that the matrix $\widetilde A^{\top}$ and the mapping $\phi$ satisfy conditions
of Proposition~\ref{prop: almost proportional balance}.
Hence, we have for all $i$:
$$(v_i)^*_M\geq (2\alpha)^{-C_{\smallrefer{prop: almost proportional balance}}\log^2\frac{4n}{q}\,\log^2(pn+\log\frac{4n}{q})}.$$
This implies that
$(\csubm{\widetilde A}{J})^{\top} \in \FF_\ell(J)$, and the statement follows.
\end{proof}

%\bigskip

We will now establish a bound for intermediate singular values necessary to derive the circular law.
Our main tool is Proposition \ref{prop: via projection} reducing the singular value bound to the bound
for the distances between a random set of rows of the matrix and one row from the complement of this set.
To apply it, we will construct a special projection matrix $P_J$ for any set $J \subset [n], \ |J|=\ell$. Note
that any such projection matrix can be represented as $P_J=Q_J Q_J^{\top}$, where $Q_J$ is an $n \times \ell$
matrix whose columns form an orthonormal basis of the space $H_J=(\Span(\col_j(\widetilde{A}), \ j \notin J))^{\perp}$.
Any vector in such basis is in the kernel of the $(n-\ell) \times n$ matrix $B$ which is obtained from the matrix $\widetilde{A}^{\top}$
by deleting the rows from $J$. We will use Proposition \ref{prop: othonormal system} to construct  an orthonormal basis of spread vectors.

\begin{theor}  \label{thm: intermediate singular}
Let $\beta, \d \in (0,1)$.
Let $n$, $p$, $z$ and the matrix $A$
satisfy assumptions \eqref{Asmp on p weak}--\eqref{Asmp on A}--\eqref{Asmp on z}.
%and, additionally, assume that $pn\leq (\log n)^{100}$.
 Then
 \begin{align*}
   \P \Big\{&\exists k\geq \frac{n}{\log^{100}n} \mbox{ such that}\;\;
    s_{n-k}(\widetilde{A}) \le  e^{-C_{\smallrefer{prop: almost proportional balance}}\log^{100}\frac{4n}{k}\,\log^{100}(pn)}
    \ \& \  \EE_{\text{good}}  \Big\}
   \le \left(\frac{1}{ pn} \right)^{-c_{\smallrefer{thm: intermediate singular}} \sqrt{n}}.
 \end{align*}
\end{theor}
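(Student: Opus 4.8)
The plan is to run Proposition~\ref{prop: via projection}, which converts a bound on $s_{n-k}(\widetilde A)$ into a small‑ball estimate for one column of $\widetilde A$ after projecting away a random set of the other columns; the two main inputs are the randomized restricted invertibility principle (Corollary~\ref{cor: Bourgain-Tzafriri}) and the spread‑kernel statement of Proposition~\ref{prop: othonormal system}. Fix $k$ and set $s:=e^{-C_{\smallrefer{prop: almost proportional balance}}\log^{100}(4n/k)\log^{100}(pn)}$. If $s_{n-k}(\widetilde A)\le s$ then some subspace $E$ of dimension $k+1$ is almost annihilated, $\|\widetilde A x\|_2\le s\|x\|_2$ for $x\in E$, so, writing a $k$–dimensional subspace of $E$ as the row space of a matrix $V$ with orthonormal rows, $\|\widetilde A V^\top\|\le s$. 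Two facts about $E$ are used to force $V$ into the class $\mathcal V=\mathcal V(\eta,\rho)$: Lemma~\ref{l: small infinity norm} produces an orthonormal basis of $E$ with $(\row_j(V))^*_{\lfloor\eta_1 n\rfloor}\ge\tfrac1{2\sqrt n}$ for $\eta_1 n\asymp k/\log(n/k)$ (giving a constant $\rho$), while Corollary~\ref{cor: no moderately sparse for smin} (applicable on $\Event_{good}$ since $s$ is far below $(2\alpha)^{-C\log^3 n}$) shows that every unit vector of $E$ has $x^*_M\ge(2\alpha)^{-C\log^3 n}/\sqrt n$ with $M=\lfloor\tilde c\,n/\log(pn)\rfloor$; depending on the size of $k$ relative to $pn$ one feeds one or the other of these into $\mathcal V$. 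Thus $\Event_{good}\cap\{s_{n-k}(\widetilde A)\le s\}\subset\{\exists V\in\mathcal V:\ \|\widetilde A V^\top\|\le s\}$.

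Next identify $\mathcal F_\ell(I)$ with the set of Proposition~\ref{prop: othonormal system}: $(n-\ell)\times n$ matrices whose transpose has in its kernel $\lfloor c\ell\rfloor$ orthonormal vectors $v_1,\dots,v_{\lfloor c\ell\rfloor}$ with $(v_r)^*_M\ge\frac{1}{\sqrt n}\exp(-C\log^4(pn)\log^4(n/\ell))$. Proposition~\ref{prop: othonormal system} then says that, conditioned on $\Event_{good}$, a random $\ell$–set $J$ satisfies $\csubm{\widetilde A}{J}$ transposed in $\mathcal F_\ell(J)$ with probability at least $1-(\hat c/(2\log pn))^\ell$, i.e. $\widetilde A\in\mathcal F_\ell$ in the sense of Proposition~\ref{prop: via projection}; here one checks that $\ell=\lfloor\tilde c\eta^3\rho^2 k\rfloor\ge n^{1/2}$ and that $(\hat c/(2\log pn))^\ell$ stays below the restricted‑invertibility probability $(\hat c\eta)^\ell$. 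It remains to verify the small‑ball hypothesis \eqref{eq: projection for intermediate}. Condition on the columns of $\widetilde A$ outside $I$; this fixes $P_I$ and, on $\mathcal F_\ell(I)$, the system $v_1,\dots,v_{\lfloor c\ell\rfloor}\subset\operatorname{range}P_I$. Since $\col_j(\widetilde A)$ ($j\in I$) is independent of this data, $\|P_I\col_j(\widetilde A)\|_2^2\ge\sum_r|\langle v_r,\col_j(\widetilde A)\rangle|^2$; conditioning further on the support of $\col_j(\widetilde A)$, using that each $v_r$ has at least $M$ coordinates of size $\ge\rho'/\sqrt n$ with $\rho'=\exp(-C\log^4(pn)\log^4(n/\ell))$, that $\cf(\widetilde a_{ij},\cdot)\le 1-1/\alpha$ at every scale $\le 1/\alpha$, and Lemma~\ref{l: rogozin}, a tensorization over the $v_r$'s gives $\P\{\|P_I\col_j(\widetilde A)\|_2\le\theta\mid\csubm{\widetilde A}{I}\}\le t$ with $t$ small, where $\theta=\frac{\sqrt2}{c\rho}\sqrt{n/(\eta k)}\,s$ is the threshold of \eqref{eq: projection for intermediate}; one checks $\rho'\sqrt{\ell/n}$ exceeds $\theta$, which is exactly where the matched $\log^{100}$ powers in the definition of $s$ are needed.

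Proposition~\ref{prop: via projection} now yields $\P(\{s_{n-k}(\widetilde A)\le s\}\cap\Event_{good})\le(C\sqrt t/\eta)^\ell$, and since on the whole range $\ell\ge c\,n/\mathrm{polylog}(n)$ while $\eta\ge 1/\mathrm{polylog}(n)$, the right‑hand side is at most $(pn)^{-2c_{\smallrefer{thm: intermediate singular}}\sqrt n}$. A union bound over a dyadic set of values of $k$ (only $O(\log n)$ of them, and the slowly varying $s$ lets one pass to the dyadic net) absorbs the extra factor and gives the theorem; the parameters $\beta,\delta$ enter only in the final choice of constants.

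\textbf{Main obstacle.} I expect the small‑ball estimate for $\|P_I\col_j(\widetilde A)\|_2$ in the second step to be the crux. A single vector $v_r$ only produces a probability bound of order $1-1/\alpha$ (or $1/\sqrt{pM}$, which is not small enough once $pn$ is merely moderately large), so the entire orthonormal system $v_1,\dots,v_{\lfloor c\ell\rfloor}$ must be exploited, and showing that $\|P_I\col_j(\widetilde A)\|_2^2$ is, with probability $1-t$ and $t$ small, of order $(\rho')^2\ell/n$ requires combining the spreadness of the $v_r$'s with a tensorization over a carefully chosen set of coordinates of the sparse random vector $\col_j(\widetilde A)$. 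A close second is the bookkeeping of $\eta,\rho,\ell$: they must be chosen so that $V\in\mathcal V$, $\ell\ge n^{1/2}$, and the failure probability of Proposition~\ref{prop: othonormal system} lies below the restricted‑invertibility probability, all simultaneously across the full range of $k$ and $pn$, which forces a case distinction.
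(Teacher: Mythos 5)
Your overall architecture is the same as the paper's: reduce $s_{n-k}(\widetilde A)\le\tau$ to $\|\widetilde A V^\top\|\le\tau$ for some $V\in\mathcal V(\eta,\rho)$, feed this into Proposition~\ref{prop: via projection} with $\mathcal F_\ell(I)$ taken from Proposition~\ref{prop: othonormal system}, verify \eqref{eq: projection for intermediate} via the spread kernel vectors and Lemma~\ref{l: rogozin}, and union-bound over $k$. The genuine gap is in how you place $V$ in $\mathcal V(\eta,\rho)$: neither of your two parameter choices survives the later steps. If you take $\rho=(2\alpha)^{-C\log^3 n}$ from Corollary~\ref{cor: no moderately sparse for smin}, then $\ell=\lfloor\tilde c\,\eta^3\rho^2k\rfloor\le \tilde c\, e^{-c\log^3 n}\,n<1$ for large $n$, so $\ell=0$ and the restricted-invertibility machinery is vacuous. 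If instead you feed in $\eta_1$ with $\eta_1 n\asymp k/\log(n/k)$ and constant $\rho$ from Lemma~\ref{l: small infinity norm} alone, then for $k$ near the lower end of the range $\eta_1$ is of order $1/(\log^{100}n\,\log\log n)$, far below $1/\log(pn)$; consequently the failure probability $\big(\hat c/(2\log pn)\big)^\ell$ of Proposition~\ref{prop: othonormal system} is no longer dominated by the restricted-invertibility probability $(\hat c\,\eta_1)^\ell$ --- the very inequality you say you will check --- and the final bound $(C\sqrt t/\eta_1)^\ell$ with $t\asymp(pn)^{-1/4}$ exceeds $1$ once $pn\le\mathrm{polylog}(n)$, which is precisely the regime where the theorem is needed.

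The missing ingredient is Proposition~\ref{prop: almost proportional balance} applied to the basis vectors of $E$ themselves, with $q=s=\lfloor c\,k/\log(n/k)\rfloor$. Since $k\ge n/\log^{100}n$, one has $\log(4n/s)=O(\log\log n)$, so that proposition upgrades $(v_j)_s^*\ge 1/(2\sqrt n)$ (from Lemma~\ref{l: small infinity norm}) to $(v_j)_M^*\ge\rho/\sqrt n$ with $M=\lfloor\tilde c\, n/\log(pn)\rfloor=\lfloor\eta n\rfloor$ and $\rho=\exp\big(-C\log^4(4n/k)\log^4(pn)\big)$ --- a loss that is only quasi-polylogarithmic, not $e^{-\log^3 n}$. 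This yields simultaneously $\eta\asymp1/\log(pn)$ (compatible with Proposition~\ref{prop: othonormal system}) and $\ell\asymp\eta^3\rho^2k\ge 4\sqrt n$, which is what both the probability comparison inside Proposition~\ref{prop: via projection} and the final estimate $(C\sqrt t/\eta)^\ell\le(pn)^{-c\sqrt n}$ require. The remainder of your outline --- the identification of $\mathcal F_\ell$, the Rogozin bound at scale $\bar\rho/(\alpha\sqrt n)$ using the $M$-th order statistic of the kernel vectors, the handling of the dependence among the inner products $\langle Q_j,\col_i(\widetilde A)\rangle$ (the paper does this by a Markov/counting argument rather than tensorization), and the verification $s'\ge\tau$ --- matches the paper's proof.
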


%\begin{rem}
 %If we assume that the Marchenko-Pastur law holds can be used to determine the individual singular values, then the ideal estimate would be
% \[
%   s_{n-k}(\widetilde{A}) \sim \sqrt{p}\cdot \frac{k}{\sqrt{n}}.
% \]
% The bound we obtain is off the ideal one by a factor of $1/\sqrt{pn}$ up to a term depending on $n/k$ and a term of a smaller order. Since we will use it for $\log s_{n-k}(\tilde{A})$ in the derivation of the circular law, this bound is applicable as long as $k \ll n/\log pn$.
%\end{rem}

\begin{proof}
 Fix an integer $k\geq \frac{n}{\log^{100}n}$.
 Denote
 \[
  \tau :=  e^{-C_{\smallrefer{prop: almost proportional balance}}\log^{100}\frac{4n}{k}\,\log^{100}(pn)}.
 \]
Fix for a moment any realization of $\widetilde A$ such that $\EE_{\text{good}}$ occurs and such that $s_{n-k}(\widetilde{A}) \le \t$.
Let $E$ be the subspace spanned by the $k$ singular vectors of $\widetilde{A}$ corresponding to the smallest singular values.
Then for any $x \in S^{n-1}(\C) \cap E$, we have $\|\widetilde{A}x\|_2 \le \t$.
Choose an orthonormal basis $v_1 \etc v_k$ of $E$ as in Lemma \ref{l: small infinity norm}.
 Then for
 \[
  s:=\Big\lfloor\frac{c_{\smallrefer{l: small infinity norm}} k}{\log(n/k)}\Big\rfloor
 \]
 and for any $j \in [k]$, we have $(v_j)_s^* \ge \frac{1}{2 \sqrt{n}}$.
 Hence, assuming that $C_{\smallrefer{thm: intermediate singular}}$ is large enough, we have
\[
\|\widetilde{A} v_j\|_2
\le \t
\le \sqrt{n} (v_j)_s^*\, \frac{1}{2\alpha}\,(2\alpha)^{-C_{\smallrefer{prop: almost proportional balance}}\log^4\frac{4n}{s}\,\log^4(pn)},
\]
and, as long as $s\leq c/p$, the assumptions of Proposition \ref{prop: almost proportional balance} are satisfied. By this proposition,
\[
(v_j)_M^* \ge (v_j)_s^* \exp \Big( -C'\log^4\frac{4n}{s}\,\log^4(pn)\Big)
\ge \frac{1}{\sqrt{n}} \rho
\]
with
\[
M:= \Big\lfloor\frac{\tilde{c}_{\smallrefer{prop: almost proportional balance}} n}{\log pn}\Big\rfloor =: \lfloor\eta n\rfloor
\quad \text{and} \quad \rho:=
\exp \Big( -C''\log^4\frac{4n}{k}\,\log^4(pn)\Big),
\]
where we used $\log \left( \frac{n}{s} \right) \le 2 \log \left( \frac{n}{k} \right)$ to estimate $\rho$.
On the other hand, for $s\geq c/p$, the bound $(v_j)_M^*\geq \frac{1}{2 \sqrt{n}}$ follows immediately from Lemma~\ref{l: small infinity norm}.
This means that for these $\eta$ and $\rho$, the $k \times n$ matrix $V$ with rows $v_1 \etc v_k$
belongs to the set $\mathcal{V}$ defined in Proposition \ref{prop: via projection}. Thus,
\begin{equation}  \label{eq: norm of AV^T}
   \P \Big\{
    s_{n-k}(\widetilde{A}) \le  \tau
    \ \& \  \EE_{\text{good}} \Big\}
  \le
   \P \Big\{ \exists V \in \mathcal{V} \
    \|\widetilde A V^{\top}\| \le \tau
    \ \& \  \EE_{\text{good}}  \Big\}.
\end{equation}
To apply this proposition, we will construct a projection $P_J$ for a set $J \subset [n]$ with
 \[
   |J|=\ell:=\lfloor \tilde{c} \eta^3 \rho^2 k\rfloor,
 \]
for which $(\csubm{\widetilde A}{J})^{\top} \in \FF_\ell(J)$, where $\FF_\ell(J)$ is defined in Proposition \ref{prop: othonormal system}.
This requires checking that \eqref{eq: projection for intermediate} holds for $P_J$.
By the assumption on $k$,
 $
   4 n^{1/2} %\le  \exp(-(\log \log n)^{1000}) n
   \le \ell.% \le \frac{n}{(\log pn)^{100}}.
  $
%Note that $1/2$ does not play any special role here and may be replaced by any constant smaller than $1$.

Fix an $\ell$--element subset $J$ of $[n]$, and condition on a realization
of $(\csubm{\widetilde A}{J})^{\top}$ from $\FF_\ell(J)$.
We define projection $P_J$ as $P_J:=Q_J Q_J^{\top}$, where $Q_J$ is an $n \times \ell$ matrix whose columns $Q_1 \etc Q_\ell$
form an orthonormal basis of $\ker((\csubm{\widetilde A}{J})^{\top})$.
We will choose a special orthonormal basis. Namely, we will choose
$\lfloor c_{\smallrefer{prop: othonormal system}}\ell\rfloor$
orthonormal vectors $Q_1 \etc Q_{\lfloor c_{\smallrefer{prop: othonormal system}}\ell\rfloor}$
satisfying  the condition \eqref{eq: M order statistic} and complete them (arbitrarily)
to an orthonormal basis of $\ker((\csubm{\widetilde A}{J})^{\top})$.

By \eqref{eq: M order statistic}, for any $j \le c_{\smallrefer{prop: othonormal system}}\ell$, we have
 \[
   (Q_j)_M^*
   \ge \frac{1}{\sqrt{n}} \exp \left(- C_{\smallrefer{prop: othonormal system}} \log^4 \left( \frac{n}{\ell} \right)\log^4(pn) \right),
 \]
 where
 \[
  \log \left( \frac{n}{\ell} \right) \le C \left[ \log \left( \frac{1}{\log pn} \right) + \log^4 \left( \frac{4n}{k} \right)\log^4(pn) \right].
 \]
 Therefore,
 \begin{equation} \label{eq: M coord}
   (Q_j)_M^*
   \ge \frac{1}{\sqrt{n}} \exp \left(-\widetilde C \left[ \log^{20}(pn)\log^{16}\left( \frac{4n}{k} \right) \right] \right)
   =: \frac{\bar{\rho}}{\sqrt{n}}.
 \end{equation}
This estimate will be instrumental in obtaining the small ball probability bound for
$\norm{P_J \col_i(\widetilde{A})}_2, \ i \in J$ which is needed to apply Proposition~\ref{prop: via projection}.

Take $j\leq \lfloor c_{\smallrefer{prop: othonormal system}}\ell\rfloor, \ i \in J$,
and apply the Lemma~\ref{l: rogozin} to the random variable $Y_j=\langle Q_j ,\col_i(\widetilde{A})\rangle$.
In combination with \eqref{eq: M coord}, it yields
 \begin{align*}
   \LL(Y_j, \bar{\rho}/(\alpha\sqrt{n}))
  &= \LL\Big((\alpha\sqrt{n}/\bar{\rho})\sum_{m=1}^n (Q_j)_m \widetilde{a}_{mi}, 1\Big) \\
  &\le \frac{c}{\sqrt{\sum_{m=1}^n [ 1- \LL((\alpha\sqrt{n}/\bar{\rho}) (Q_j)_m \widetilde{a}_{mi}, 1)] }}\\
  &\le \frac{c}{\sqrt{Mp/\alpha}}
  \le c \sqrt{\frac{\alpha\log pn}{pn}}
  \le (pn)^{-1/4}
  =:t/8.
 \end{align*}

Now, we have to turn the estimates for individual inner products $Y_j$
into the bound for $\|P_J \col_i(\widetilde{A})\|_2^2\geq\sum_{j\leq c_{\smallrefer{prop: othonormal system}}\ell}|Y_j|^2$.
We have to deal with dependencies between $Y_j$'s.
Set $Z:=\sum_{j=1}^{\lfloor c_{\smallrefer{prop: othonormal system}}\ell\rfloor}
\mathbf{1}_{[0,\bar{\rho} /\sqrt{n}]} (|\langle Q_j, \col_i(\widetilde{A})\rangle|)$.
Then $\E Z \le (t/8) \lfloor c_{\smallrefer{prop: othonormal system}}\ell\rfloor$, and so the probability that
$Z>\lfloor c_{\smallrefer{prop: othonormal system}}\ell\rfloor/8$ does not exceed $t$.
This means that conditionally on $J$ and $\csubm{\widetilde A}{J}$,
 \[
  \P\left\{ \norm{P_J \col_i(\widetilde{A})}_2 \le \bar{\rho} \frac{\sqrt{c_{\smallrefer{prop: othonormal system}}\ell}}{ \sqrt{2 n}} \right\}
  \leq\P\bigg\{ \sum_{j=1}^\ell |\langle Q_j,
 \col_i(\widetilde{A})\rangle|^2 \le \frac{ \bar{\rho}^2 c_{\smallrefer{prop: othonormal system}}\ell}{2n}
\bigg\} \le t.
 \]

 Define $s'$ via the relation
 \[
  \frac{\sqrt{2}}{c_{\smallrefer{lem: Bourgain-Tzafriri}} \rho} \sqrt{\frac{n}{\eta k}}s'
  = \bar{\rho} \frac{\sqrt{c_{\smallrefer{prop: othonormal system}}\ell}}{ \sqrt{2 n}} .
 \]
 We have checked that $\widetilde{A}$  satisfies \eqref{eq: projection for intermediate} with $s'$ playing the role of $s$.
By Proposition \ref{prop: othonormal system}, $\P_J (\csubm{\widetilde A}{J}) \le (\hat{c}_{\smallrefer{lem: Bourgain-Tzafriri}} \eta/2)^\ell$,
so we can use $\EE_{\text{good}}$ as $\FF_\ell$ in Proposition~\ref{prop: via projection}.

Applying Proposition \ref{prop: via projection}, we conclude that
   \[
   \P \left\{\exists V \in \mathcal{V}: \ \|\widetilde{A}V^\top\| \le s'
   \ \& \  \EE_{\text{good}}   \right\}
   \le \left(\frac{Ct}{\eta}\right)^\ell
   \le \left(\frac{1}{pn} \right)^{-\sqrt{n}}
  \]
since $\ell> 4 \sqrt{n}$.
Substituting the values of $\eta, \rho$, and $\bar{\rho}$, we see that $s' \ge \tau$%if $C_{\ref{thm: intermediate singular}}$ is chosen sufficiently large
.
This inequality, in combination with \eqref{eq: norm of AV^T}, implies the desired estimate for a fixed $k$.
Taking the union bound, we obtain a similar estimate for all $s_{n-k} (\widetilde{A})$
%with $k \in  \left[\frac{n}{\log^{10} n}, \frac{n}{2} \right]$
simultaneously.
The proof is complete.
\end{proof}

\section{Proof of the circular law} \label{sec: proof circular}

In this section, we apply the previously obtained singular value estimates to prove the main result of this paper, Theorem \ref{th: main}.
The derivation of the circular law relies on  \cite[Lemma 4.3]{BC circ} (see also \cite{Tao-Vu circ}), which we restate below.
\begin{lemma} \label{lem: sufficient circular}
 Let $M_n$ be a sequence of $n \times n$ random matrices.
 Denote by $\mu_{n,z}$ the empirical measure of the eigenvalues of $M_n$ and by $\nu_{n,z}$ the empirical measure of the singular values of $M_n -z \Id_n$.
 Assume that for a.e. $z \in \C$,
 \begin{enumerate}
    \item \label{it: uniform int}  the function $f(x)=\log x$ is uniformly integrable with respect to the measures $\nu_{n,z}$, i.e.,
     for any $\e>0$, there exists $T>0$ (determined  by $\e$ and $z$) such that
 \[
  \limsup_{n \in \N} \P \left\{ \int_{|\log s| >T} |\log s | \, d \nu_{n,z}(s) > \e \right\} < \e;
 \]

  \item \label{it: weak conv}  the measures $\nu_{n,z}$ converge vaguely in probability to a deterministic measure $\nu_z$ supported on $(0,\infty)$, i.e.,
 for any compactly supported function $h \in C((0,\infty))$,
 \[
  \int_0^\infty h(s) \, d \nu_{n,z}(s) \to \int_0^\infty h(s) \, d \nu_z(s) \quad \text{in probability}.
 \]
 \end{enumerate}
  Then $\mu_n$ converges weakly in probability to the unique probability measure $\mu$ on $\C$ satisfying the equation
  \[
   \int_{\C} \log |\l-z| \, d \mu(z)= \int_0^\infty \log s \, d \nu_z(s) \quad \text{ for all } z \in \C.
  \]
  \end{lemma}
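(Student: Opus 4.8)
The plan is to run Girko's Hermitization in its classical form. First I would reduce the conclusion to the convergence in probability of $\int_\C\varphi\,d\mu_n$ towards $\int_\C\varphi\,d\mu$ for every fixed $\varphi\in C_c^\infty(\C)$. Since $\frac{1}{2\pi}\log|\cdot|$ is the fundamental solution of the Laplacian on $\C$, and since for every fixed matrix $z\mapsto\det(M_n-z\,\Id_n)$ is a nonzero polynomial and hence vanishes only on a finite set, integration by parts gives, for a.e.\ $z$,
\[
\int_\C\varphi\,d\mu_n=\frac{1}{2\pi}\int_\C\Delta\varphi(z)\,U_n(z)\,dz,\qquad U_n(z):=\int_0^\infty\log s\,d\nu_{n,z}(s)=\frac1n\log\big|\det(M_n-z\,\Id_n)\big|.
\]
Writing $U(z):=\int_0^\infty\log s\,d\nu_z(s)$, the measure $\mu$ is the probability measure whose logarithmic potential equals $U$; it is unique because a probability measure is determined a.e.\ by its logarithmic potential, and since $\mu_n$ and $\mu$ are both probability measures, the vague convergence we obtain upgrades automatically to weak convergence. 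Thus everything reduces to showing $\int_\C\Delta\varphi\,U_n\to\int_\C\Delta\varphi\,U$ in probability.

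The second step is to check that $U_n(z)\to U(z)$ in probability for a.e.\ $z\in\C$. Fix such a $z$ and $\e>0$, and pick a smooth cutoff $\chi_T$ with $\chi_T\equiv1$ on $[-T,T]$ and $\supp\chi_T\subset[-2T,2T]$. The function $s\mapsto(\log s)\,\chi_T(\log s)$ is continuous with compact support in $(0,\infty)$, so by hypothesis \eqref{it: weak conv} one has $\int(\log s)\chi_T(\log s)\,d\nu_{n,z}\to\int(\log s)\chi_T(\log s)\,d\nu_z$ in probability. The complementary part is at most $\int_{|\log s|>T}|\log s|\,d\nu_{n,z}$ in absolute value, which by hypothesis \eqref{it: uniform int} is $<\e$ with probability $\ge1-\e$ for all large $n$; the same tail bound for the limit measure $\nu_z$ follows by Fatou. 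Letting first $T\to\infty$ and then $n\to\infty$ yields $U_n(z)\to U(z)$ in probability.

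The third step — which I expect to be the main obstacle — is to pass from this pointwise-in-$z$ convergence to the convergence of the integrals $\int_\C\Delta\varphi(z)\,U_n(z)\,dz$. Since $\Delta\varphi$ is supported in a ball $B_R$, and step~2 gives $U_n(z)\to U(z)$ in probability for a.e.\ $z\in B_R$, by bounded convergence and Fubini it suffices to prove that the family $\{U_n\}$ is uniformly integrable on $B_R\times\Omega$ equipped with $dz\otimes d\Prob$ (so that the truncation used above may be removed). The positive part is controlled essentially uniformly in $z$ by concavity of the logarithm:
\[
U_n(z)\le\frac12\log\Big(\frac1n\sum_j s_j(M_n-z\,\Id_n)^2\Big)=\frac12\log\Big(\frac1n\|M_n-z\,\Id_n\|_{HS}^2\Big)\le\frac12\log\Big(\frac2n\|M_n\|_{HS}^2+2R^2\Big),
\]
whose expectation is bounded since $\Exp\|M_n\|_{HS}^2$ is bounded in the circular-law setting. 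The genuinely delicate part is the contribution of the small singular values to the negative part of $U_n(z)$, which admits no bound uniform in $z$; here I would use the layer-cake identity $\int_{(0,1)}\log\tfrac1s\,d\nu_{n,z}(s)=\int_0^\infty\nu_{n,z}\big((0,e^{-t})\big)\,dt$ together with hypothesis \eqref{it: uniform int} and a Fubini argument over $B_R\times\Omega$ to obtain the required uniform integrability. Granting this, $\int_{B_R}\Delta\varphi\,U_n\,dz\to\int_{B_R}\Delta\varphi\,U\,dz$ in probability, and combining with steps~1 and~2 and the identification of $\mu$ through its logarithmic potential completes the proof.
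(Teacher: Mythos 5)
The paper does not prove this lemma at all: it is imported verbatim from Bordenave--Chafa\"i \cite[Lemma~4.3]{BC circ}, so there is no internal proof to compare against. What you have written is thus a fresh attempt at the cited result, and your Steps~1 and~2 are fine. Step~3 --- which you yourself flag as ``the main obstacle'' --- is where the attempt falls short, in two concrete ways.

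First, you justify the positive part of $U_n$ on $B_R\times\Omega$ by appealing to $\E\,\|M_n\|_{HS}^2<\infty$, noting this ``holds in the circular-law setting.'' But this is not a hypothesis of the lemma: the lemma is stated for an arbitrary sequence of random matrices satisfying only (1) and (2), and the whole point of isolating it as a black box is that its proof cannot borrow facts about a specific model. The positive part must instead be controlled from hypothesis~(1) itself, for example by combining the uniform-integrability tail bound at $z=0$ with Weyl's majorization $\sum_j\log^+|\lambda_j(M_n)|\le\sum_j\log^+ s_j(M_n)$, which gives $\int\log^+|\lambda|\,d\mu_n(\lambda)\le\int\log^+ s\,d\nu_{n,0}(s)$ and hence a bound on $\int_{B_R}|U_n|\,dz$ that holds with probability $\ge 1-\e$. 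Second, the negative-part step is not carried out, and the route you sketch would not survive scrutiny: hypothesis~(1) provides a threshold $T=T(\e,z)$ that may depend on $z$, and a ``Fubini over $B_R\times\Omega$'' argument for uniform integrability in the product measure needs $T$ to be controlled uniformly over $z\in B_R$ --- something the hypotheses do not supply. Moreover, (1) is a \emph{tail probability} bound, not a moment bound, so it does not by itself yield $L^1(dz\otimes d\Prob)$ integrability of $U_n$; one can make $\int_{|\log s|>T}|\log s|\,d\nu_{n,z}$ small with high probability while its expectation is infinite. The published argument of Bordenave and Chafa\"i avoids both pitfalls by working per realization along subsequences: they prove tightness of $(\mu_n)$ from the $z=0$ case of the hypotheses as above, exploit that $z\mapsto U_n(z)$ is subharmonic and locally uniformly bounded above with high probability, and invoke the classical compactness/convergence theorem for subharmonic functions to obtain $L^1_{\mathrm{loc}}$ convergence of $U_n$ to $U_\mu$, after which uniqueness of logarithmic potentials identifies the subsequential weak limits. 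That is a genuinely different mechanism from the product-space uniform-integrability you propose, and it is the part you would need to supply to close the gap.
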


This lemma was employed in recent works \cite{Cook circ,LLTTY circ} on the spectrum of $d$--regular directed graphs.
Note that  assumption  \eqref{it: weak conv} in \cite[Lemma 4.3]{BC circ} required the weak convergence. However, once the uniform integrability is established, the weak and the vague convergence become equivalent.

  We will apply this lemma with $M_n= \frac{1}{\sqrt{p_n n}}A_n$.
  In our case, $\mu$ will be the uniform measure on the unit disc.
 The derivation of \eqref{it: weak conv} is standard and will be sketched at the end of this section.
We will not calculate the measures $\nu_z$ explicitly. Instead, it will be enough to show that $\nu_{n,z}-\nu_{n,z}^G$ converges vaguely to $0$ in probability for a.e.  $z \in \C$. Here $\nu_{n,z}^G$ is the empirical measure of singular values of $\frac{1}{\sqrt{n}}G_n$, and $G_n$ is the $n \times n$ matrix with i.i.d. $N(0,1)$ entries.
Since the circular law for the Gaussian matrices is known, this  uniquely defines the measures $\nu_z$.

The main effort will be devoted to proving \eqref{it: uniform int}. The logarithmic function has singularities at $0$ and $\infty$. Establishing uniform integrability at $\infty$ is very simple and relies on the fact that $\E \norm{\frac{1}{\sqrt{p_n n}}A_n}_{HS}^2$ is bounded.
The proof of uniform integrability at $0$ uses the estimates for the smallest and the smallish singular values derived in Sections \ref{sec: smallest} and \ref{sec: intermediate} respectively.
Yet, the bound for the singular values $s_{n-k}( \frac{1}{\sqrt{p_n n}}A_n- z \Id_n)$ obtained in Theorem \ref{thm: intermediate singular} is too loose to be applied for all $k$. We will be able to use it only for $k < \frac{n}{\log^C(p_n n)}$. For larger $k$, we need a tighter bound.
To this end, we use the idea of \cite{Cook circ}  based on the comparison of $\nu_{n,z}([0,s])$ and $\nu_{n,z}^G([0,s])$ for sufficiently large $s$.
In our case, the comparison with the Gaussian matrix does not seem to be feasible. Instead, we compare $\nu_{n,z}([0,s])$ with the empirical measure of the singular values of a new random matrix obtained by replacing relatively small values of $ \frac{1}{\sqrt{p_n n}}A_n- z \Id_n$ by i.i.d. $N(0,1)$ variables.
This will require bounding the Stieltjes transform of %K:Gaussian the shifted Gaussian matrix
Gaussian matrices with partially frozen entries. Such bound is obtained in Subsection \ref{sec: shifted}. The uniform integrability is established in Subsection \ref{sec: uniform}. Finally, in Subsection \ref{sec: completion}, we establish the vague convergence and complete the proof of Theorem \ref{th: main}.

\subsection{%K:Gaussian
%Singular values of shifted Gaussian matrices
Gaussian matrices with partially frozen entries} \label{sec: shifted}

%Let us start with some simple remarks about Gaussian matrices with partially ``frozen'' entries.
\begin{lemma}
Let $n>k\geq 1$, and let $E\subset\C^n$ be a linear subspace of co-dimension at least $2k$.
Let $X=(X_1,X_2,\dots,X_n)$ be a random vector in $\C^n$ with mutually independent coordinates,
and assume that at least $n-k$ coordinates are real Gaussian variables of unit variance and possibly different means.
Then
$$\Prob\big\{\dist(X,E)\leq c\sqrt{k}\big\}\leq e^{-ck},$$
where $c>0$ is a universal constant.
\end{lemma}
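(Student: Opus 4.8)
The plan is to condition on the non-Gaussian coordinates and reduce the statement to a quantitative lower bound for a positive semidefinite quadratic form evaluated at a standard Gaussian vector shifted by an arbitrary deterministic vector. Let $I\subset[n]$ be a set of coordinates on which $X_i$ is a real $N(m_i,1)$ variable, chosen with $|I|\geq n-k$, so that $|I^c|\leq k$; the coordinates $(X_i)_{i\in I^c}$ are arbitrary and independent of $(X_i)_{i\in I}$. Write $P:=\Proj_{E^{\perp}}$, so $\tr P=\dim_{\mathbb C}E^{\perp}\geq 2k$ and $P_{ii}=\norm{Pe_i}_2^{2}\in[0,1]$. Decompose $X=G+Y$, where $G=\sum_{i\in I}g_i e_i$ with $g_i$ i.i.d.\ $N(0,1)$ and $Y$ gathers the means on $I$ together with the coordinates on $I^c$; after conditioning on $(X_i)_{i\in I^c}$ the vector $Y$ is deterministic, and $\dist(X,E)=\norm{P(G+Y)}_2=\norm{Tg+w}_2$, where $T\colon\mathbb R^{I}\to\mathbb C^{n}$ is the $\mathbb R$-linear map with columns $Pe_i$ $(i\in I)$ and $w:=PY$. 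Since $\norm{Tg}_2=\norm{PG}_2\leq\norm{G}_2=\norm{g}_2$, the map $T$ has operator norm at most $1$.

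Next I would remove the shift $w$. Splitting $w=w_1+w_2$ into its $\Re\langle\cdot,\cdot\rangle$-orthogonal projection onto $\mathrm{range}(T)$ and its complement yields $\norm{Tg+w}_2^{2}\geq\norm{Tg+w_1}_2^{2}=\norm{T(g+c)}_2^{2}$ for a fixed $c\in\mathbb R^{I}$ with $Tc=w_1$. Set $Q:=T^{*}T$ (adjoint with respect to the real inner product $\Re\langle\cdot,\cdot\rangle$ on $\mathbb C^{n}$), a real symmetric positive semidefinite $I\times I$ matrix with $\norm{Q}_{\mathrm{op}}\leq1$ and $\tr Q=\sum_{i\in I}\norm{Pe_i}_2^{2}=\sum_{i\in I}P_{ii}\geq\tr P-|I^{c}|\geq 2k-k=k$, and note $\norm{T(g+c)}_2^{2}=(g+c)^{\top}Q(g+c)$. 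Thus it suffices to prove $\Prob\big\{(g+c)^{\top}Q(g+c)\leq k/4\big\}\leq e^{-ck}$. Diagonalizing $Q=\sum_j\lambda_j u_j u_j^{\top}$ with $\lambda_j\in[0,1]$ and $\sum_j\lambda_j\geq k$, and observing that the $u_j$-component of $Q^{1/2}c$ vanishes whenever $\lambda_j=0$, one gets $(g+c)^{\top}Q(g+c)=\sum_{j:\lambda_j>0}\lambda_j Z_j^{2}$ with $Z_j$ independent $N(\mu_j,1)$ variables.

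The key point, and the step I expect to carry the argument, is a one-dimensional stochastic domination: for $Z\sim N(\mu,1)$ and $\gamma\sim N(0,1)$, one has $\Prob\{Z^{2}\leq t\}\leq\Prob\{\gamma^{2}\leq t\}$ for every $t\geq0$, because $\Prob\{Z^{2}\leq t\}$ is the standard Gaussian measure of an interval of length $2\sqrt t$ centered at $\mu$, and by unimodality of the Gaussian density this measure is largest when the interval is centered at the origin. Applying this coordinatewise and using the quantile coupling, $\sum_{j:\lambda_j>0}\lambda_j Z_j^{2}$ stochastically dominates $\sum_j\lambda_j\gamma_j^{2}$ for i.i.d.\ standard Gaussians $\gamma_j$; hence $\Prob\big\{(g+c)^{\top}Q(g+c)\leq k/4\big\}\leq\Prob\big\{\sum_j\lambda_j\gamma_j^{2}\leq k/4\big\}$. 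This is precisely the mechanism absorbing both obstacles at once: the arbitrary (possibly very large) shift $c$ and the fact that $Q$ may have only small eigenvalues, so that no single coordinate is quantitatively nondegenerate although $Q$ must then have large effective rank.

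Finally I would invoke the standard chi-square lower tail bound (Laurent--Massart): $\Prob\big\{\sum_j\lambda_j(\gamma_j^{2}-1)\leq-2\norm{\lambda}_2\sqrt x\big\}\leq e^{-x}$. Since $\norm{\lambda}_2^{2}=\tr(Q^{2})\leq\tr Q$ (using $\lambda_j\leq1$) and $\tr Q\geq k$, the choice $x=\tr Q/16$ makes the threshold at least $\tr Q-2\sqrt{\tr Q}\cdot\sqrt{\tr Q/16}=\tr Q/2\geq k/2\geq k/4$, so $\Prob\big\{\sum_j\lambda_j\gamma_j^{2}\leq k/4\big\}\leq e^{-\tr Q/16}\leq e^{-k/16}$. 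Unwinding the reductions, $\Prob\big\{\dist(X,E)\leq\tfrac{1}{2}\sqrt k\big\}\leq e^{-k/16}$ conditionally on $(X_i)_{i\in I^{c}}$, hence unconditionally, and the lemma follows with $c=\tfrac{1}{16}$.
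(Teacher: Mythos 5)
Your proof is correct and follows essentially the same route as the paper's (which, after conditioning on the non-Gaussian coordinates, simply invokes ``standard concentration inequalities''): in both cases the problem reduces to a lower-tail bound for a Gaussian quadratic form $(g+\mu)^{\top}Q(g+\mu)$ with $\|Q\|\leq 1$ and $\tr Q\geq k$. The paper gets there by passing to the coordinate projection onto $\C^{I}$ while you work directly with $\Proj_{E^{\perp}}$, and you make fully explicit the two ingredients the paper leaves implicit, namely the stochastic domination $\Prob\{Z^{2}\leq t\}\leq\Prob\{\gamma^{2}\leq t\}$ that eliminates the arbitrary means and the Laurent--Massart chi-square lower tail.
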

\begin{proof}
Denote by $I$ the set of all indices corresponding to the Gaussian variables, so that $|I|\geq n-k$.
Further, condition on any realization of coordinates $X_i$, $i\in I^c$.

Let $\Proj$ be the coordinate projection onto the span of $e_i$, $i\in I$.
Obviously, we have
$$\dist(X,E)\geq \dist(\Proj(X), \Proj(E)),$$
where $\Proj(E)$ has co-dimension at least $k$, when viewed as a subspace of $\C^I$.
On the other hand, $\Proj(X)$ is a real Gaussian vector in $\C^I$ with identity covariance matrix,
and the statement of the lemma follows as a consequence of standard concentration inequalities.
\end{proof}

A combination of the above lemma with the negative second moment identity yields
\begin{prop}\label{prop: shifted gauss}
Let $n>k\geq 1$, let $V=(v_{ij})$ be an $n\times n$ random matrix with mutually independent entries such that
for any $j\leq n$, at least $n-k$ components of the $j$-th column of $V$
are real Gaussian variables of unit variance. Then
$$\Prob\big\{s_{n-3k+1}(V)\leq ck/\sqrt{n}\big\}\leq ne^{-ck},$$
where $c>0$ is a universal constant.
\end{prop}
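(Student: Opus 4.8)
The plan is to combine the just-proved distance lemma with the classical negative second moment identity, which expresses the sum of the reciprocals of the squared singular values of a square matrix in terms of the distances from each column to the span of the others. Specifically, for an invertible $n\times n$ matrix $V$ with columns $\col_1(V),\dots,\col_n(V)$, one has
$$\sum_{j=1}^n s_j(V)^{-2}=\sum_{j=1}^n \dist\big(\col_j(V),\,H_j\big)^{-2},$$
where $H_j:=\spn\{\col_i(V):\,i\neq j\}$. First I would fix $j\leq n$. Since the columns of $V$ are mutually independent, the subspace $H_j$ is independent of $\col_j(V)$, and by construction $H_j$ has dimension at most $n-1$; but for our purposes we need a codimension comparable to $k$. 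To get that, I would instead apply the identity to a well-chosen submatrix, or more simply observe that if $s_{n-3k+1}(V)\leq ck/\sqrt n$ then at least $3k$ singular values are at most $ck/\sqrt n$, hence $\sum_j s_j(V)^{-2}\geq 3k\cdot n/(c^2 k^2)=3n/(c^2 k)$.

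The next step is to bound each distance $\dist(\col_j(V),H_j)$ from below with high probability. Here is where the assumption enters: the $j$-th column has at least $n-k$ coordinates which are independent real Gaussians of unit variance. To invoke the distance lemma I need a fixed subspace $E\supset H_j$ of codimension at least $2k$. Conditioning on all columns $\col_i(V)$, $i\neq j$, fixes $H_j$; I then enlarge $H_j$ (if necessary) to a subspace of codimension exactly $2k$ — possible as long as $\dim H_j\leq n-2k$, which I can arrange by first discarding columns, i.e. applying the argument to the $n\times(n-k+1)$ submatrix or by noting $H_j$ already has dimension $\leq n-1$ and a generic enlargement works when $2k\leq n$. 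With $E$ fixed and independent of $\col_j(V)$, the distance lemma gives $\Prob\{\dist(\col_j(V),E)\leq c\sqrt k\}\leq e^{-ck}$, and since $\dist(\col_j(V),H_j)\geq \dist(\col_j(V),E)$, the same bound holds for $H_j$. Taking a union bound over $j\leq n$, with probability at least $1-n e^{-ck}$ we have $\dist(\col_j(V),H_j)> c\sqrt k$ for all $j$, hence $\sum_j s_j(V)^{-2}<n/(c^2 k)$. This contradicts the inequality $\sum_j s_j(V)^{-2}\geq 3n/(c^2 k)$ derived from $s_{n-3k+1}(V)\leq ck/\sqrt n$ (after adjusting the numerical value of $c$), which proves $\Prob\{s_{n-3k+1}(V)\leq ck/\sqrt n\}\leq n e^{-ck}$.

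The main obstacle — really the only subtle point — is the codimension bookkeeping: the distance lemma requires codimension $\geq 2k$, whereas $H_j$ only has codimension $\geq 1$ in general. The clean fix is to delete $2k-1$ further columns before forming the span, applying the negative second moment identity to an $n\times(n-2k+1)$ submatrix $V'$ whose singular values interlace those of $V$ (so $s_{n-3k+1}(V)\leq s_{(n-2k+1)-k+1}(V')$, up to reindexing), and for which each column still has $\geq n-k\geq (n-2k+1)-k$ Gaussian coordinates; then the relevant spans have codimension $\geq 2k$ automatically. I would carry out the argument in that order: state the negative second moment identity, pass to the submatrix to guarantee codimension, apply the distance lemma column by column, union bound, and conclude by the reciprocal-sum contradiction.
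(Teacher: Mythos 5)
Your final plan — pass to an $n\times(n-2k+O(1))$ submatrix so that the relevant spans automatically have codimension $\geq 2k$, apply the negative second moment identity to that submatrix, lower-bound each column-to-span distance via the preceding lemma, and finish with a union bound — is exactly the paper's argument (the paper deletes $2k$ columns, you delete $2k-1$; both work). One small slip: the interlacing you quote, $s_{n-3k+1}(V)\leq s_{n-3k+2}(V')$, is the wrong direction and in fact false; removing columns gives $s_j(V)\geq s_j(V')$, and that is precisely the inequality you need (so that $s_{n-3k+1}(V)\leq ck/\sqrt n$ forces the $k+1$ smallest singular values of $V'$ to be $\leq ck/\sqrt n$).
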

\begin{proof}
Let $\widetilde V$ be the $n\times (n-2k)$ matrix obtained from $V$ by removing the last $2k$ columns.
Obviously, we have
$$s_{n-3k+1}(V)\geq s_{n-3k+1}(\widetilde V).$$
Further, to estimate $s_{n-3k+1}(\widetilde V)$, observe that, by the negative second moment identity,
$$k\,s_{n-3k+1}(\widetilde V)^{-2}\leq
\sum_{j=1}^{n-2k}s_{j}^{-2}(\widetilde V)=\sum\limits_{j=1}^{n-2k}\dist(\col_j(\widetilde V),\spn\{\col_i(\widetilde V):\,i\neq j\})^{-2}.$$
By the above Lemma, we have
$$\Prob\big\{\dist(\col_j(\widetilde V),\spn\{\col_i(\widetilde V):\,i\neq j\})\leq c\sqrt{k}\big\}\leq e^{-ck}$$
for any $j\leq n-2k$. Taking the union bound,
we get the result.
\end{proof}

 Given an $n\times n$ matrix $M$ and a complex number $z$, denote by
 $H_z(M)$ the $2n \times 2n$ matrix of the form
 \[
  H_z(M)=
  \begin{pmatrix}
   0 & B_z(M) \\ B_z^*(M) & 0
  \end{pmatrix},
 \]
 where $B_z=\frac{1}{\sqrt{n}} M- z\,\Id_n$.
 The eigenvalues of $H_z(M)$ are the singular values of $B_z$ and their negatives.
 Further, given $w\in\C$, denote by $m_w(M)=m_{w,z}(M)$
 the Stieltjes transform of the empirical measure of the eigenvalues of $H_z(M)$:
 \[
  m_w(M):= \frac{1}{2n} \cdot \tr (H_z(M)-w\,\Id_{2n})^{-1}.
 \]
As an immediate corollary of Proposition~\ref{prop: shifted gauss}, we get
\begin{cor}\label{cor: shifted gauss}
Let $n>k\geq 1$,
and let $V$ be an $n\times n$ matrix
with mutually independent entries such that
for every $j\leq n$, at least $n-k$ coordinates of the $j$-th column of $V$ are real Gaussian variables of unit variance.
Let $z\in\C$ and let $w\in\C$ be such that $\Re(w)=0$ and $\Im(w)\geq k/n$.
Then with probability at least $1-n^2\,e^{-ck}$ we have
$$\Im(m_w(V))\leq C,$$
for some universal constants $C,c>0$.
In particular, if $k\geq C'\log n$ for a sufficiently large constant $C'>0$ then necessarily
$$\Exp\,\Im(m_w(V))\leq C'.$$
\end{cor}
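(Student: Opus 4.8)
The plan is to reduce the bound on $\Im(m_w(V))$ to a bound on a few smallest singular values of $B_z = \frac{1}{\sqrt n} V - z\,\Id_n$, which are controlled by Proposition~\ref{prop: shifted gauss}, and then combine the resulting high-probability estimate with a crude deterministic bound to pass to the expectation. Recall that the eigenvalues of $H_z(V)$ are $\pm s_i$, where $s_1 \ge \dots \ge s_n \ge 0$ are the singular values of $B_z$. Writing $w = i\gamma$ with $\gamma = \Im(w) \ge k/n > 0$, a direct computation gives
\[
m_w(V) = \frac{1}{2n}\sum_{j=1}^{2n}\frac{1}{\lambda_j(H_z(V)) - i\gamma}
= \frac{1}{2n}\sum_{i=1}^n\left(\frac{1}{s_i - i\gamma} + \frac{1}{-s_i - i\gamma}\right)
= \frac{1}{n}\sum_{i=1}^n\frac{i\gamma}{s_i^2 + \gamma^2},
\]
so that
\[
\Im(m_w(V)) = \frac{1}{n}\sum_{i=1}^n\frac{\gamma}{s_i^2 + \gamma^2}.
\]

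First I would split this sum according to whether $s_i$ is large or small relative to $\gamma$. Since $\gamma \ge k/n$, the matrix $V$ has the property that at least $n-k$ entries in each column are standard real Gaussians, so Proposition~\ref{prop: shifted gauss} applies: with probability at least $1 - n e^{-ck}$ we have $s_{n-3k+1}(V) > c k/\sqrt n$, hence (after rescaling by $1/\sqrt n$, which is harmless up to adjusting constants, or by applying the proposition directly to $\frac{1}{\sqrt n}V$ with the shift $z$) the $n-3k$ largest singular values of $B_z$ satisfy $s_i \gtrsim k/n \gtrsim \gamma$. On the event where this holds, each of these $n-3k$ terms contributes at most $\gamma/s_i^2 \le C\gamma/\gamma^2 = C/\gamma \cdot (\gamma/s_i)^2 \le C'$ — more precisely $\gamma/(s_i^2+\gamma^2) \le 1/s_i \cdot (\text{bounded factor})$; summing and using that the bulk of the singular values of $B_z$ is of order one (via the trivial bound $\sum_i s_i^{-2} = \|B_z^{-1}\|_{HS}^2$ restricted to the controlled range, or simply $\gamma/(s_i^2+\gamma^2)\le 1/\gamma\cdot\gamma^2/(s_i^2) $ — the cleanest route is $\frac{\gamma}{s_i^2+\gamma^2}\le\frac{1}{2s_i}$, and then bound $\frac1n\sum \frac{1}{s_i}$). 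Actually the cleanest estimate: for $s_i \ge \gamma$ one has $\frac{\gamma}{s_i^2+\gamma^2} \le \frac{\gamma}{s_i^2}$, and for the remaining $3k$ indices one uses the trivial bound $\frac{\gamma}{s_i^2+\gamma^2} \le \frac{1}{2\gamma} \le \frac{n}{2k}$, so those contribute at most $\frac{1}{n}\cdot 3k \cdot \frac{n}{2k} = \frac32$ to $\Im(m_w(V))$. For the first $n-3k$ terms, $\frac{1}{n}\sum_{i\le n-3k}\frac{\gamma}{s_i^2} \le \frac{1}{n}\sum_{i\le n-3k}\frac{\gamma}{(ck/n)^2}$ is too lossy; instead one should note $\frac{\gamma}{s_i^2+\gamma^2}\le\frac{1}{s_i^2+\gamma^2}\cdot\gamma$ and use that, since $V/\sqrt n$ has Hilbert–Schmidt norm of order $\sqrt n$ with high probability (Markov), $\sum_i s_i^2 \le Cn$, hence the number of $i$ with $s_i \le 1$ is $\le Cn$ trivially and for those $\frac{\gamma}{s_i^2+\gamma^2}\le\frac1{2s_i}$; one then invokes a standard bound $\frac1n\sum_i \frac{1}{s_i}\le C$ holding with high probability — this last bound is itself a consequence of Proposition~\ref{prop: shifted gauss} applied to suitable sub-ranges, or of the negative second moment identity together with the distance bound from the lemma. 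The upshot is $\Im(m_w(V)) \le C$ with probability $\ge 1 - n^2 e^{-ck}$ (the extra factor of $n$ absorbing the union bounds over sub-ranges).

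For the final sentence, take $k \ge C'\log n$ with $C'$ large. On the complement of the good event, which has probability $\le n^2 e^{-ck} \le n^{-10}$ say, I use the deterministic bound $\Im(m_w(V)) = \frac1n\sum_i \frac{\gamma}{s_i^2+\gamma^2} \le \frac1n \cdot n \cdot \frac{1}{2\gamma} \le \frac{1}{2}\cdot\frac{n}{k} \le n$. Therefore
\[
\Exp\,\Im(m_w(V)) \le C\cdot\Prob(\text{good}) + n\cdot\Prob(\text{bad}) \le C + n\cdot n^{-10} \le C',
\]
which is the claimed bound. The main obstacle is the first step: cleanly bounding $\frac1n\sum_i \frac{\gamma}{s_i^2+\gamma^2}$ by a constant on the good event, since one needs control not just of $s_{n-3k+1}$ but effectively of $\frac1n\sum_i 1/s_i$ (or an $L^1$-type control of the singular value distribution near zero). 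This is where Proposition~\ref{prop: shifted gauss} must be applied at several scales — or, alternatively, where one invokes the negative second moment identity to write $\sum_j s_j^{-2} = \sum_j \dist(\col_j,\spn\{\col_i:i\ne j\})^{-2}$ and uses the preceding Lemma together with a layered union bound — so that after discarding $O(k)$ exceptional singular values the remaining sum is $O(n)$ and the whole average is $O(1)$.
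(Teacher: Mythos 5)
Your overall skeleton is right — split $\Im(m_w(V))=\frac1n\sum_i\frac{\gamma}{s_i^2+\gamma^2}$ into the $O(k)$ smallest singular values (trivially bounded by $\frac1n\cdot 3k\cdot\frac1{2\gamma}\le\frac32$), the well-separated bulk, and a bad event handled by the crude bound $\Im(m_w)\le 1/\gamma\le n$ together with $k\gtrsim\log n$. But the heart of the argument, which you yourself flag as ``the main obstacle,'' is precisely the step you leave unresolved, and the two routes you sketch would not close it.

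The ``cleanest route'' $\frac{\gamma}{s^2+\gamma^2}\le\frac1{2s}$ discards the factor $\gamma$, which is exactly what makes the bound work. The best one-sided control on the bulk obtainable from Proposition~\ref{prop: shifted gauss} (applied at every scale $i\ge Ck$ with a union bound over the at most $n$ scales, which is where the extra factor of $n$ in the probability comes from) is $s_{n-i}(B_z)\ge ci/n$ for all $i\ge Ck$. Plugging this into $\frac1n\sum_i\frac1{2s_i}$ yields $\frac1{2c}\sum_{i\ge Ck}\frac1i\approx\frac1{2c}\log\frac nk$, a logarithmic quantity, not a constant. Similarly, the claim that the negative-second-moment argument gives $\sum_j s_j^{-2}=O(n)$ after discarding $O(k)$ terms is too strong: the multiscale bound gives $\sum_{i\ge Ck} s_{n-i}^{-2}\le\sum_{i\ge Ck}\frac{n^2}{c^2 i^2}\lesssim\frac{n^2}{k}$, which is $n/k$ times larger than $n$.

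The resolution, as in the paper, is to keep the factor of $\gamma$ and normalize $\gamma$ first. One observes that it suffices to treat $\gamma=k/n$ (for larger $\gamma$, apply the argument with $k'=\lceil\gamma n\rceil\ge k$, noting the Gaussian-entry hypothesis is monotone in $k$; for $\gamma>1$ the crude bound $\Im(m_w)\le 1/\gamma\le1$ is already enough). Then
\[
\frac1n\sum_{i\ge Ck}\frac{\gamma}{s_{n-i}^2+\gamma^2}
\le\frac{\gamma}{n}\sum_{i\ge Ck}\frac{1}{s_{n-i}^2}
\le\frac{k}{n^2}\sum_{i\ge Ck}\frac{n^2}{c^2 i^2}
\le\frac{k}{c^2}\cdot\frac{1}{Ck}
=\frac{1}{c^2 C},
\]
which is the constant bound you need; the $n/k$ deficit in $\sum s_j^{-2}$ is exactly canceled by $\gamma=k/n$. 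Without this interplay the bulk term is not $O(1)$, so as written your proposal has a genuine gap at the central step.
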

\begin{proof}
Without loss of generality, we can assume that $\Im(w)=k/n$.
First, by applying Proposition~\ref{prop: shifted gauss} to matrix $\sqrt{n} B_z(V)=V-z\sqrt{n}\,\Id_n$,
we get with probability at least $1-n^2\,e^{-ck}$:
$$s_{n-i}(B_z(V))\geq \frac{ci}{n}\;\;\mbox{ for all }i\geq Ck.$$
On this event we have
\begin{align*}
\Im(m_w(V))&= \frac{1}{2n}\sum\limits_{i=1}^n \Im\bigg(\frac{1}{s_i(B_z(V))-w}+\frac{1}{-s_i(B_z(V))-w}\bigg)\\
&= \frac{1}{2n}\sum\limits_{i=1}^n \frac{2\Im(w)}{|w|^2+s_i^2(B_z(V))}
\leq \tilde C
+
\tilde Ck\sum\limits_{i=k+1}^n \frac{1}{i^2}\leq \bar C.
\end{align*}
On the complement of this event we can use the trivial bound $\Im(m_w(V))\leq \frac{1}{\Im(w)}\leq n$.
Hence, if $k\leq C_1\log n$ for a sufficiently large constant $C_1$, the combination of the two bounds
gives $\Exp\,\Im(m_w(V))\leq C_2$.
\end{proof}

\subsection{Uniform integrability of the logarithm} \label{sec: uniform}
Here is the main result of the subsection:
\begin{prop}[Uniform Integrability]  \label{prop: uniform integrability}
Let $A_n$ be a sequence of random matrices as in Theorem~\ref{th: main}.
For any $z \in \C$ denote by $\nu_{n,z}$ the empirical measure of the singular values of the matrix
$\frac{1}{\sqrt{p_n n}}A_n-z\Id_n$.
Then for any $z \in \C$ with $\Im(z)\neq 0$,
the function $f(x)= \log x$ is uniformly integrable with respect to measures
$\nu_{n,z}$, i.e., for any $\e>0$, there exists $T>0$ (determined by $\e$ and $z$) such that
 \[
  \limsup_{n \in \N} \P \left\{ \int_{|\log s| >T} |\log s | \, d \nu_{n,z}(s) > \e \right\} < \e.
 \]
\end{prop}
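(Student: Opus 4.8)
The plan is to fix $\varepsilon>0$ and $z\in\C$ with $\Im(z)\neq0$ and to produce $T=T(\varepsilon,z)$ together with an event of probability $1-o(1)$ on which $\int_{|\log s|>T}|\log s|\,d\nu_{n,z}<\varepsilon$; as $\{|\log s|>T\}=\{s>e^T\}\cup\{s<e^{-T}\}$, this splits into the tails at $\infty$ and at $0$. Write $\widetilde A_n:=\tfrac1{\sqrt{p_nn}}A_n-z\,\Id_n$. After the routine reductions (truncating $\xi$ and passing to an a.s.\ bounded shift, cf.\ Subsection~\ref{subs: assump}) the matrix $\sqrt{p_nn}\,\widetilde A_n=A_n-z'\Id_n$ with $z':=z\sqrt{p_nn}$ falls under \eqref{Asmp on p weak}--\eqref{Asmp on A}--\eqref{Asmp on z} for $n$ large (indeed $|z'|\le p_nn$, $|\Im(z')|\ge1/\alpha$ and $\cf(\xi,1/\alpha)\le1-1/\alpha$ for a suitable $\alpha=\alpha(z)$), so the singular value bounds proved above apply here, up to a factor $\sqrt{p_nn}\le\sqrt n$; moreover, when $p_nn$ exceeds a fixed power of $\log n$, classical polynomial lower bounds on $s_{\min}$ together with the negative second moment identity yield uniform integrability directly, so I would reduce to the regime $p_nn\le\log^{C}n$, in which $\log(p_nn)\le C\log\log n$. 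The tail at $\infty$ is then trivial: $\Exp\|\widetilde A_n\|_{HS}^2\le C(1+|z|^2)n$, so $\tfrac1n\|\widetilde A_n\|_{HS}^2\le C'$ with probability $1-o(1)$ by Markov, and there $\int_{s>e^T}|\log s|\,d\nu_{n,z}\le e^{-T}\int s^2\,d\nu_{n,z}=e^{-T}\tfrac1n\|\widetilde A_n\|_{HS}^2\le C'e^{-T}<\varepsilon/3$ once $T$ is large. For the tail at $0$ I would use the layer--cake identity
\[
\int_{s<e^{-T}}|\log s|\,d\nu_{n,z}(s)=T\,\nu_{n,z}([0,e^{-T}))+\int_T^\infty\nu_{n,z}([0,e^{-u}))\,du,
\]
which reduces everything to bounding the counting function $\epsilon\mapsto\nu_{n,z}([0,\epsilon))$.

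I would estimate $\nu_{n,z}([0,\epsilon))$ in three ranges of $\epsilon$. For $\epsilon<e^{-C_0\log^3n}$, Theorem~\ref{th: bound on smin} gives $s_{\min}(\widetilde A_n)\ge e^{-C_0\log^3n}$ with probability $1-(p_nn)^{-c}$, so $\nu_{n,z}([0,\epsilon))=0$. For $e^{-C_0\log^3n}\le\epsilon<\eta_n$, where $\eta_n$ is a level of order $p_n$ determined by the comparison below, Theorem~\ref{thm: intermediate singular} gives, with probability $1-(p_nn)^{-c}$, that $s_{n-k}(\widetilde A_n)\ge e^{-C\log^{100}(4n/k)\log^{100}(p_nn)}$ for all $k\ge n/\log^{100}n$, and together with the trivial count of the at most $n/\log^{100}n$ smallest singular values this yields
\[
\nu_{n,z}([0,\epsilon))\le\frac1{\log^{100}n}+4\exp\!\Big(-\big((\log\tfrac1\epsilon)/(C\log^{100}(p_nn))\big)^{1/100}\Big).
\]
Finally, for $\eta_n\le\epsilon\le1$ I would invoke Cook's comparison: couple $\widetilde A_n$ with a reference matrix $V$ obtained by Gaussianizing the small entries of $\widetilde A_n$ (with the appropriate variance normalization) while keeping the few large entries frozen; discarding the $o(n)$ atypically heavy columns changes $\nu_{n,z}$ by $o(1)$, and on the structural events of Section~\ref{s: graph section} (in particular Proposition~\ref{p: supports}) each surviving column has at least $n-k$ i.i.d.\ standard Gaussian coordinates with $k=O(p_nn)$, so that $\eta_n:=k/n=O(p_n)$. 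Corollary~\ref{cor: shifted gauss} then gives $\Im(m_{i\epsilon}(V))=O(1)$ with probability $1-n^2e^{-ck}$ for all $\epsilon\ge\eta_n$, and since $\Im\tfrac1{s-i\epsilon}=\epsilon/(s^2+\epsilon^2)\ge1/(2\epsilon)$ whenever $s<\epsilon$ we get $\nu^V_{n,z}([0,\epsilon))\le2\epsilon\,\Im(m_{i\epsilon}(V))\le C\epsilon$; a resolvent--swap/interlacing comparison between $\widetilde A_n$ and $V$ transfers this to $\nu_{n,z}([0,\epsilon))\le C'\epsilon$ on $[\eta_n,1]$.

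Assembling: with $\eta_n=O(p_n)$ and $p_nn\le\log^Cn$ we have $\log(1/\eta_n)\ge\log n-C\log\log n\to\infty$, so for $n$ large $T<\log(1/\eta_n)$ and
\[
\int_T^\infty\nu_{n,z}([0,e^{-u}))\,du=\int_T^{\log(1/\eta_n)}+\int_{\log(1/\eta_n)}^{C_0\log^3n}
\le C'\!\int_T^\infty e^{-u}\,du+r_n,
\]
where the second piece $r_n$ is estimated from the displayed bound of the previous paragraph: its first term contributes $O(\log^{-97}n)$, and the substitution $v=((\log\tfrac1\epsilon)/(C\log^{100}(p_nn)))^{1/100}$ turns the second term into $C''\log^{100}(p_nn)\int_{v_1}^\infty e^{-v}v^{99}\,dv$ with $v_1\gtrsim(\log n/(C\log\log n)^{100})^{1/100}\to\infty$, so that $r_n\to0$ (here the reduction to $p_nn\le\log^Cn$ is used). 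The term $T\,\nu_{n,z}([0,e^{-T}))$ is likewise $\le T\cdot C'e^{-T}<\varepsilon/3$ for $T$ large. Collecting the three regimes, on an event of probability $1-(p_nn)^{-c}$ the tail at $0$ is at most $C'e^{-T}+\varepsilon/3+r_n$, which is $<2\varepsilon/3$ for $T$ large and $n$ large; adding the $\infty$--tail bound gives $\int_{|\log s|>T}|\log s|\,d\nu_{n,z}<\varepsilon$ there, hence $\limsup_n\Prob\{\int_{|\log s|>T}|\log s|\,d\nu_{n,z}>\varepsilon\}=0$.

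I expect the main obstacle to be Cook's comparison in the second paragraph: one must design the freezing rule so that $V$ simultaneously (i) keeps enough i.i.d.\ Gaussian coordinates per column for Corollary~\ref{cor: shifted gauss} to reach $\epsilon$ down to $\eta_n=O(p_n)$, and (ii) stays close enough to $\widetilde A_n$ that the resolvent--comparison error is $o(1)$ uniformly on $[\eta_n,1]$ --- this requires careful bookkeeping of the frozen entries on the good events of Section~\ref{s: graph section}, and one must also reconcile the normalisation between $\widetilde A_n=\tfrac1{\sqrt n}M_n-z\Id_n$ (with $M_n:=p_n^{-1/2}A_n$ of roughly unit variance) and the matrices $B_z(M)=\tfrac1{\sqrt n}M-z\Id_n$ for which the Stieltjes--transform estimates of Subsection~\ref{sec: shifted} are stated. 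By contrast, the reductions and the estimates based on Theorems~\ref{th: bound on smin} and~\ref{thm: intermediate singular} are routine.
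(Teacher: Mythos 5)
Your overall architecture matches the paper's: the tail at $\infty$ via the Hilbert--Schmidt norm and Markov; the tail at $0$ split into a very small range (the smallest singular value, Theorem~\ref{th: bound on smin}), an intermediate range (Theorem~\ref{thm: intermediate singular}), and a bulk range handled by comparison with a Gaussian-frozen reference matrix (Subsection~\ref{sec: shifted}, Lemma~\ref{l: measure via comparison}). That much is the same route.

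There is, however, a quantitative gap that your own estimates then turn into a divergence. You take $\eta_n=O(p_n)$ as the lower end of the comparison range, based on the observation that Corollary~\ref{cor: shifted gauss} tolerates $\Im(w)\ge k/n$ with $k=O(p_nn)$ frozen entries per column. That corollary is not the bottleneck; the Lindeberg/Chatterjee resolvent swap transferring the Gaussian-side estimate back to $\widetilde A_n$ is. With truncation level $L=\eta^{-2}$, the swap error in Lemma~\ref{l: measure via comparison} is of order $L\,p_n^{-1/2}n^{-1/2}\eta^{-4}=(p_nn)^{-1/2}\eta^{-6}$, and this is $O(\eta)$ only when $\eta\gtrsim(p_nn)^{-1/14}$. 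No freezing rule fixes this: for any $p_nn\le n^{1/8}$ one has $p_n\ll(p_nn)^{-1/14}$, so item (ii) in your closing paragraph genuinely fails --- it is the reason the paper states Lemma~\ref{l: measure via comparison} only for $\eta\ge(p_nn)^{-c}$. This is not bookkeeping but an intrinsic limitation of the comparison.

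The error propagates because your intermediate-range estimate is calibrated to $\log(1/\eta_n)\approx\log n$, which is what drives $v_1\to\infty$ in your substitution. With the correct $\eta_n=(p_nn)^{-c}$ one gets $v_1\approx(c/C)^{1/100}\log^{-99/100}(p_nn)\to0$, and your ``$r_n$'' term evaluates to roughly $C\cdot99!\cdot\log^{100}(p_nn)\to\infty$: the contribution of the intermediate range diverges. The paper avoids this by \emph{not} converting Theorem~\ref{thm: intermediate singular} into a pointwise bound on $\nu_{n,z}([0,\epsilon))$ and integrating via the layer-cake. Instead it introduces quantiles $t_1$ and $t_2\approx\log^{-400}(p_nn)$ and bounds $\int_{t_1}^{t_2}|\log s|\,d\nu_{n,z}$ directly as $\frac1n\sum_{k=k_1}^{k_2}|\log s_{n-k}|$ over the narrow range $k\in[n/\log^4 n,\,n/\log^{200}(p_nn)]$, with the width of the range controlled by the comparison event $\EE_2$ at level $t_2$. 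Your layer-cake approach could also be salvaged --- one must first invoke monotonicity to extend the comparison bound to $\nu_{n,z}([0,e^{-u}))\le C(p_nn)^{-c}$ for all $u\ge c\log(p_nn)$, and switch to the intermediate-SV bound only at $u_\star\asymp\log^{200}(p_nn)$, where the two bounds meet --- but as written the argument has a hole, and the auxiliary reduction to $p_nn\le\log^C n$ (which the paper never needs) does not close it.
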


Before proving the proposition, let us consider some auxiliary lemmas.
The first is an elementary observation on conditional distributions.
\begin{lemma}\label{l: elem condit}
Let $\Lambda=(\xi_{ij})$ be an $n\times n$ random matrix with i.i.d real valued entries of mean $\theta$
and unit variance. Further, for any $L\geq 1$ and any subset $Q\subset[n]\times[n]$,
let $\Event_{L,Q}$ be the event that $|\xi_{ij}-\theta|> L$ for all $(i,j)\in Q$ and
$|\xi_{ij}-\theta|\leq L$ for all $(i,j)\in Q^c$. Then
\begin{itemize}

\item Conditioned on any $\Event_{L,Q}$ with $\Prob(\Event_{L,Q})>0$, the entries of $\Lambda$ are mutually independent;
\item There is $C_\xi>0$ determined by the distribution of $\xi_{ij}$'s
such that, whenever $L\geq C_\xi$ and $Q\subset[n]\times[n]$
satisfy $\Event_{L,Q}\neq \emptyset$, for any $(i,j)\in Q^c$ we have $|\Exp(\xi_{ij}\,|\,\Event_{L,Q})-\theta|\leq \frac{2}{L}$
and $\frac{1}{2}\leq \Var(\xi_{ij}\,|\,\Event_{L,Q})\leq 1$.

\end{itemize}
Moreover, denoting by $\mathcal P_L$ the collection of all subsets $Q\subset[n]\times [n]$
such that $\Prob(\Event_{L,Q})>0$ and
$$|\{i\leq n:\;(i,j)\in Q\}|,|\{i\leq n:\;(j,i)\in Q\}|\leq \frac{2n}{L^2}\;\;\mbox{ for all }\;\;j\in[n],$$
we have for all $L\geq 1$:
$$\Prob\Big(\bigcup\limits_{Q\in\mathcal P_L}\Event_{L,Q}\Big)\geq 1-2n\,e^{-2n/L^4}.$$
\end{lemma}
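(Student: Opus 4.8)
The plan is to verify the three assertions of Lemma~\ref{l: elem condit} in turn, the first two being straightforward consequences of independence and truncation estimates, and the third a routine union bound.

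For the first assertion, observe that $\Event_{L,Q}$ is a product event: it is the intersection over $(i,j)\in Q$ of $\{|\xi_{ij}-\theta|>L\}$ and over $(i,j)\in Q^c$ of $\{|\xi_{ij}-\theta|\leq L\}$, and each of these atomic events depends on a distinct entry $\xi_{ij}$. Since the entries are jointly independent, conditioning on $\Event_{L,Q}$ simply replaces the law of each $\xi_{ij}$ by its conditional law given the corresponding one-dimensional event, and the entries remain mutually independent. For the second assertion, fix $(i,j)\in Q^c$, so we are conditioning $\xi_{ij}$ on $\{|\xi_{ij}-\theta|\leq L\}$. Write $\zeta:=\xi_{ij}-\theta$, a mean-zero unit-variance variable. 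We have $\Exp(\zeta\,|\,|\zeta|\leq L)=-\Exp(\zeta\,\indicator_{|\zeta|>L})/\Prob\{|\zeta|\leq L\}$, and $|\Exp(\zeta\,\indicator_{|\zeta|>L})|\leq \Exp(\zeta^2\,\indicator_{|\zeta|>L})/L\leq 1/L$, while $\Prob\{|\zeta|>L\}\leq 1/L^2$, so for $L$ larger than some absolute constant the denominator exceeds $1/2$; this yields $|\Exp(\xi_{ij}\,|\,\Event_{L,Q})-\theta|\leq 2/L$. For the variance, $\Var(\zeta\,|\,|\zeta|\leq L)\leq \Exp(\zeta^2\,|\,|\zeta|\leq L)\leq \Exp(\zeta^2)/\Prob\{|\zeta|\leq L\}\leq 1/(1-1/L^2)$, and since unit variance and small tails force $\Exp(\zeta^2\,\indicator_{|\zeta|\leq L})\geq 1-1/L^{?}$ to be close to $1$ while the conditional mean is $O(1/L)$, one gets the lower bound $1/2$ once $L\geq C_\xi$ for a constant $C_\xi$ depending only on the law of $\xi_{ij}$ (here one uses nothing beyond finiteness of the second moment). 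I would just remark that these are standard truncation estimates and state the constant $C_\xi$.

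For the last assertion, fix $L\geq 1$ and let $\Event_{L}^{bad}$ be the complement of $\bigcup_{Q\in\mathcal P_L}\Event_{L,Q}$; this is exactly the event that for some $j\in[n]$ either the $j$-th column or the $j$-th row of $(\xi_{ij})$ contains more than $2n/L^2$ entries with $|\xi_{ij}-\theta|>L$. For a fixed column $j$, the number of such entries is a sum of $n$ i.i.d.\ Bernoulli variables with success probability $\Prob\{|\xi_{ij}-\theta|>L\}\leq 1/L^2$; by Bernstein's inequality (Lemma~\ref{l: bernsteins}) applied with $t=n/L^2$, this sum exceeds $2n/L^2$ with probability at most $\exp(-c\,(n/L^2)^2/(2n/L^2))=\exp(-c'n/L^4)$. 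Summing over the $n$ columns and $n$ rows and adjusting the constant gives $\Prob(\Event_L^{bad})\leq 2n\,e^{-2n/L^4}$, which is the claimed bound. I would also note that $\mathcal P_L$ is by definition contained in the set of $Q$ with $\Prob(\Event_{L,Q})>0$, so the events $\Event_{L,Q}$, $Q\in\mathcal P_L$, are disjoint with positive probability and genuinely partition (a subset of) the sample space.

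The only mild subtlety — and the one place I would be slightly careful — is the lower bound $\Var(\xi_{ij}\,|\,\Event_{L,Q})\geq 1/2$ in the second assertion: it requires that $L$ be large enough that truncation at level $L$ around $\theta$ removes at most a small fraction of the second moment, and the threshold for this depends on the tail of $\xi$ (not just its variance), which is why the constant is named $C_\xi$ rather than being universal. Everything else is either a direct consequence of independence or a one-line application of Bernstein's inequality together with the Chebyshev/truncation bound $\Prob\{|\xi_{ij}-\theta|>L\}\leq L^{-2}$.
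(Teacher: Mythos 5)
Your argument tracks the paper's proof step by step: independence of entries conditioned on a product event, truncation estimates for the conditional mean via Cauchy--Schwarz and Chebyshev, and Bernstein together with a union bound over rows and columns for $\mathcal P_L$. There is, however, one point worth flagging: your chain $\Var(\zeta\,|\,|\zeta|\le L)\le \Exp(\zeta^2)/\Prob\{|\zeta|\le L\}\le (1-1/L^2)^{-1}$ yields a bound \emph{greater} than $1$, so it does not actually verify the stated upper bound $\Var(\xi_{ij}\,|\,\Event_{L,Q})\le 1$. To obtain it one should use the sharper estimate $\Exp(\zeta^2\indicator_{|\zeta|\le L})=1-\Exp(\zeta^2\indicator_{|\zeta|>L})\le 1-L^2 p$ with $p:=\Prob\{|\zeta|>L\}$, whence $\Exp(\zeta^2\,|\,|\zeta|\le L)\le(1-L^2 p)/(1-p)\le 1$ whenever $L\ge 1$, which then gives $\Var\le 1$ directly. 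The paper's own proof also leaves this implicit (it only verifies the lower bound $\ge 1/2$), so it is a shared imprecision rather than an error specific to your write-up. A minor computational slip: $(n/L^2)^2/(2n/L^2)=n/(2L^2)$, not $n/L^4$; since $L\ge 1$ makes $n/(2L^2)\ge n/(2L^4)$, the claimed form $e^{-c' n/L^4}$ still follows, but you should not present the equality as written.
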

\begin{proof}
Without loss of generality, $\theta=0$.
The mutual independence of the entries conditioned on $\Event_{L,Q}$ is obvious.
Further, we have for any $(i,j)\in Q^c$:
$$|\Exp(\xi_{ij}\,|\,\Event_{L,Q})|
=\frac{|\Exp(\xi_{ij}\indicator_{|\xi_{ij}|\leq L})|}{\Prob\{|\xi_{ij}|\leq L\}}
=\frac{|\Exp(\xi_{ij}\indicator_{|\xi_{ij}|> L})|}{\Prob\{|\xi_{ij}|\leq L\}}
\leq \frac{1}{L\Prob\{|\xi_{ij}|\leq L\}}\leq \frac{2}{L},$$
if $L\geq \sqrt{2}$, where we used Cauchy--Schwartz' and Markov's inequalities.
Denote $\psi:=\Exp(\xi_{ij}\,|\,\Event_{L,Q})$. Then
$$\Var(\xi_{ij}\,|\,\Event_{L,Q})=\Exp(\xi_{ij}^2\,|\,\Event_{L,Q})-\psi^2
=\frac{\Exp(\xi_{ij}^2\indicator_{|\xi_{ij}|\leq L})}{\Prob\{|\xi_{ij}|\leq L\}}-\psi^2
\geq \frac{1}{2},$$
provided that $L$ is sufficiently large.

Finally, observe that for any $i\leq n$, the event
$$\big\{|\{j\leq n:\;|\xi_{ij}|>L\}|\geq 2n/L^2\big\}$$
has probability at most $e^{-2n/L^4}$ (by applying Bernstein's inequality).
Taking the union bound and combining this with the definition of $\mathcal P_L$, we get the result.
\end{proof}

 In what follows, we will need the next result of Chatterjee \cite[Theorem 1.1]{Cha}.
 \begin{theor} \label{thm: Chatterjee}
  Let $N$ be a natural number, and let $X$ and $W$ be independent random vectors in $\R^N$ with independent components satisfying
  $\E X_j=\E W_j, \ \E X_j^2= \E W_j^2$ for any $j \in [N]$.
  Assume that
  \[
  \gamma_3=: \max_{j \in [N]} \max \left( \E |X_j|^3, \E |W_j|^3 \right) < \infty.
  \]
  Let $f \in C^3(\R^N)$ and denote
  \[
   \l_3(f)= \sup_{x \in \R^N} \max_{r=1,2,3} \max_{J \in [N]^3} |\partial_J^r f(x)|^{3/r}.
  \]
  Then
  \[
  |\E f(X)- \E f(W)| \le C \gamma_3 \l_3(f) N,
  \]
 where $C$ is a universal constant.
 \end{theor}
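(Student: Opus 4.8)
The plan is to prove the theorem by the Lindeberg \emph{exchange} (replacement) method: swap the coordinates of $X$ for those of $W$ one at a time. For $0\le i\le N$ put
\[
Z^{(i)}:=(W_1,\dots,W_i,X_{i+1},\dots,X_N),
\]
so that $Z^{(0)}=X$, $Z^{(N)}=W$, and by telescoping
\[
\E f(X)-\E f(W)=\sum_{i=1}^N\big(\E f(Z^{(i-1)})-\E f(Z^{(i)})\big).
\]
It therefore suffices to bound each summand by $C\gamma_3\lambda_3(f)$. We may assume $\lambda_3(f)<\infty$, for otherwise there is nothing to prove; in that case the partial derivatives of $f$ of orders $1,2,3$ are uniformly bounded, $f$ has at most linear growth, and every expectation appearing below is finite because $X$ and $W$ have finite third moments.

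Fix $i$ and let $U^{(i)}$ be the vector obtained from $Z^{(i-1)}$ (equivalently from $Z^{(i)}$) by replacing its $i$-th coordinate with $0$, so that $Z^{(i-1)}=U^{(i)}+X_i e_i$ and $Z^{(i)}=U^{(i)}+W_i e_i$, where $e_i$ is the $i$-th standard basis vector of $\R^N$. The point is that $U^{(i)}$ is independent of $X_i$ and of $W_i$, since $X$ and $W$ are independent of each other and each has independent components. I would then Taylor-expand the one-variable function $t\mapsto f(U^{(i)}+t e_i)$ to second order with Lagrange remainder:
\[
f(U^{(i)}+X_i e_i)=f(U^{(i)})+X_i\,\partial_i f(U^{(i)})+\tfrac12 X_i^2\,\partial_i^2 f(U^{(i)})+R_X,\qquad |R_X|\le\tfrac16|X_i|^3\sup_x|\partial_i^3 f(x)|,
\]
and likewise $f(U^{(i)}+W_i e_i)=f(U^{(i)})+W_i\,\partial_i f(U^{(i)})+\tfrac12 W_i^2\,\partial_i^2 f(U^{(i)})+R_W$ with $|R_W|\le\tfrac16|W_i|^3\sup_x|\partial_i^3 f(x)|$.

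Next I would take expectations. By the independence of $U^{(i)}$ from $X_i$ and from $W_i$ we have $\E[X_i\,\partial_i f(U^{(i)})]=\E[X_i]\,\E[\partial_i f(U^{(i)})]$, and similarly for the quadratic term and for $W_i$; since $\E X_i=\E W_i$ and $\E X_i^2=\E W_i^2$, the zeroth-, first-, and second-order contributions to $\E f(Z^{(i-1)})-\E f(Z^{(i)})$ cancel exactly, leaving only $\E R_X-\E R_W$. Using $\E|X_i|^3,\E|W_i|^3\le\gamma_3$ together with $\sup_x|\partial_i^3 f(x)|=\sup_x|\partial_{(i,i,i)}^3 f(x)|^{3/3}\le\lambda_3(f)$, we get $|\E f(Z^{(i-1)})-\E f(Z^{(i)})|\le\E|R_X|+\E|R_W|\le\tfrac13\gamma_3\lambda_3(f)$. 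Summing over $i=1,\dots,N$ yields $|\E f(X)-\E f(W)|\le\tfrac13\gamma_3\lambda_3(f)N$, which is the assertion with $C=\tfrac13$. There is no genuine obstacle here — this is the classical Lindeberg argument; the only points requiring care are the independence bookkeeping that produces the exact cancellation of the low-order terms (which relies on $X$ and $W$ being mutually independent with independent coordinates) and the a priori integrability of $f(X),f(W)$ and their derivatives, which is ensured by the boundedness of the derivatives forced by $\lambda_3(f)<\infty$ and the finiteness of the third moments.
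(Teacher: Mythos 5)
Your proof is correct. The paper does not prove this statement itself but cites it as Theorem~1.1 of Chatterjee's \emph{A simple invariance theorem}; your argument — the Lindeberg coordinate-swapping scheme, Taylor expansion of each hybrid vector to second order in the replaced coordinate, exact cancellation of the constant, linear, and quadratic terms via independence and the matching of first and second moments, and control of the cubic Lagrange remainder by $\gamma_3$ and the diagonal third derivatives (which are dominated by $\lambda_3(f)$) — is precisely the standard proof of that theorem, giving the stated bound with $C=1/3$.
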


The next lemma appears as a combination
of results from \cite{Cook circ} and observations made in the previous subsection.
\begin{lemma}\label{l: measure via comparison}
Let $(A_n)$ be a sequence of random matrices from Theorem~\ref{th: main}, and set $\theta:=\Exp\xi$.
For any $z\in\C$ with $\Im(z)\neq 0$ we have
$$\E\,\nu_{n,z}([0,\eta])\leq C_{\smallrefer{l: measure via comparison}}\eta\quad\mbox{for all }\eta\geq (p_n n)^{-c},$$
where $\nu_{n,z}$ is defined as in Proposition~\ref{prop: uniform integrability},
$C_{\smallrefer{l: measure via comparison}}>0$ depends only on $z$ and $\theta$ (and not on $n$) and $c>0$ is a universal constant.
\end{lemma}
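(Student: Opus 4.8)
\textbf{Proof proposal for Lemma~\ref{l: measure via comparison}.}
The plan is to bound $\nu_{n,z}([0,\eta])$ by controlling the Stieltjes transform $m_{w,z}(A_n)$ at spectral parameter $w = i\eta$, using the standard fact that $\nu_{n,z}([0,\eta]) \le C\eta\,\Im\big(m_{i\eta,z}(A_n)\big)$. Since the singular values of $\frac{1}{\sqrt{p_n n}}A_n - z\Id_n$ interlace nicely with those of perturbations, it suffices to show $\E\,\Im(m_{i\eta,z}(A_n)) \le C(z,\theta)$ for all $\eta \ge (p_n n)^{-c}$. Following the comparison idea of \cite{Cook circ}, I would not estimate this transform directly but instead compare $A_n$ with a modified matrix $V_n$ obtained by replacing the entries $\delta_{ij}\xi_{ij}$ whose absolute deviation from $\theta\delta_{ij}$ is at most some large constant $L$ by independent $N(0,1)$ variables (after a suitable recentering to match first and second moments). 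The key is that, by Lemma~\ref{l: elem condit}, conditioning on the location pattern $\Event_{L,Q}$ of the ``large'' entries, the entries of $A_n$ become mutually independent, at most $2n/L^2$ of them lie in each row and column, and the ``small'' entries have conditional mean within $2/L$ of $\theta$ and conditional variance in $[1/2,1]$. Thus conditioned on a typical $\Event_{L,Q}$ with $Q \in \mathcal P_L$, the matrix $V_n$ obtained by resampling the small entries as Gaussians has at least $n - 2n/L^2$ genuine unit-variance real Gaussians in each column.

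The heart of the argument is a two-step estimate. First, Corollary~\ref{cor: shifted gauss} applied to $V_n$ (with $k \asymp n/L^2$ and $w = i\eta$ where $\eta \ge k/n \asymp L^{-2}$, which is guaranteed once $L$ is chosen as a large constant and $\eta \ge (p_n n)^{-c}$ is eventually larger than $cL^{-2}$ — more precisely one takes $L$ depending on $\eta$ at scale $\eta^{-1/2}$ as long as that stays $\lesssim (p_nn)^{c/2}$, so $k = Cn\eta$) gives $\E\,\Im(m_{i\eta,z}(V_n)) \le C$ on the event $\bigcup_{Q\in\mathcal P_L}\Event_{L,Q}$, and the exceptional probability $2n\,e^{-2n/L^4}$ is negligible against the trivial bound $\Im(m) \le 1/\eta \le (p_n n)^c$. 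Second, I invoke Chatterjee's comparison theorem (Theorem~\ref{thm: Chatterjee}) to show $|\E\,\Im(m_{i\eta,z}(A_n)) - \E\,\Im(m_{i\eta,z}(V_n))|$ is small: here $f$ is $\Im\circ m_{i\eta,z}$ viewed as a function of the real and imaginary parts of the matrix entries, which is a resolvent functional whose first three derivatives in any entry are bounded by $C\eta^{-r}$ (with an extra $n^{-r/2}$ from the $1/\sqrt{n}$ normalization of $B_z$), so $\lambda_3(f) \lesssim \eta^{-3} n^{-3/2}$; the number of variables is $N = 2n^2$, and $\gamma_3$ — the third absolute moment — is bounded because, conditioned on $\Event_{L,Q}$, the small entries are truncated (so their third moments are bounded by $L$ times second moments) while the few large entries, after dividing by $\sqrt{n}$, contribute $o(1)$. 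This yields $|\E f(X) - \E f(W)| \le C\gamma_3\lambda_3(f)N \lesssim \eta^{-3} n^{-3/2}\cdot n^2 = \eta^{-3}\sqrt{n}$, which is $o(1)$ provided $\eta \ge n^{-1/6+\epsilon}$ — so the constant $c$ in the statement must be chosen accordingly, and one must also absorb the centering mismatch $|\Exp(\xi_{ij}\mid\Event_{L,Q})-\theta|\le 2/L$ into an additional rank-controlled or operator-norm-controlled perturbation using resolvent identities.

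Assembling: on the typical event, $\E\,\Im(m_{i\eta,z}(A_n)) \le \E\,\Im(m_{i\eta,z}(V_n)) + o(1) \le C(z) + o(1)$, hence $\E\,\nu_{n,z}([0,\eta]) \le C\eta\,\E\,\Im(m_{i\eta,z}(A_n)) \le C_{\smallrefer{l: measure via comparison}}\,\eta$. The main obstacle I anticipate is the bookkeeping around Chatterjee's bound: matching moments exactly requires resampling Gaussians with the \emph{conditional} mean and variance of the truncated entries (not just $N(0,1)$), and then the statement of Corollary~\ref{cor: shifted gauss} — which wants genuine unit-variance Gaussians — must be applied after rescaling each such entry, producing an auxiliary diagonal conjugation that one has to check does not disturb $\Im(m)$ by more than a constant. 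Controlling the derivative bounds $\lambda_3(f)$ uniformly over all the conditioning atoms $\Event_{L,Q}$, and verifying that the resulting threshold on $\eta$ is compatible with some fixed universal exponent $c$ (rather than one depending on $z$), is where the care is needed; everything else is routine resolvent manipulation and Bernstein-type tail bounds already available in the paper.
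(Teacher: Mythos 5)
Your plan is the paper's plan: condition on the location pattern $\Event_{L,Q}$ of large entries via Lemma~\ref{l: elem condit}, replace the conditionally-truncated small entries by independent Gaussians with matching conditional mean and variance, control the Gaussianized matrix via Corollary~\ref{cor: shifted gauss}, and close the comparison with Chatterjee's Theorem~\ref{thm: Chatterjee}. The decomposition, the three lemmas invoked, and the overall logic are identical to the paper's proof.

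However, the quantitative part of your argument does not close as written, for two concrete reasons. First, you claim $\gamma_3$ is ``bounded''; it is not. After rescaling $A_n$ so that the conditionally-truncated entries have unit variance (the paper works with $M=(p_n\tau)^{-1/2}A_n$), a nonzero entry of $M$ has size roughly $p_n^{-1/2}\xi$, so its conditional third moment is of order $Lp_n^{-1/2}$, which grows as $p_n\to 0$. Second, your estimate $\lambda_3(f)\lesssim \eta^{-3}n^{-3/2}$ is missing two factors: the Stieltjes transform carries a $\tfrac{1}{2n}\tr$, which contributes an extra $n^{-1}$, and each differentiation of the trace of the resolvent produces an extra power of $\eta^{-1}$ (resolvent squared), so the correct bound is $\lambda_3(f)\lesssim n^{-5/2}\eta^{-4}$ as the paper imports from \cite{Cook circ}. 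With your numbers the resulting Chatterjee error is $\gamma_3\lambda_3 N\sim \eta^{-3}\sqrt{n}$, which cannot be small for any $\eta\le 1$ (your stated threshold ``$\eta\ge n^{-1/6+\epsilon}$'' is sign-reversed; $\eta^{-3}\sqrt{n}=o(1)$ would force $\eta\gg n^{1/6}>1$). With the corrected $\gamma_3\lesssim Lp_n^{-1/2}$ and $\lambda_3\lesssim n^{-5/2}\eta^{-4}$ the product is $L(p_nn)^{-1/2}\eta^{-4}$, and choosing $L=\eta^{-2}$ (as the paper does, not $L\sim\eta^{-1/2}$) gives $(p_nn)^{-1/2}\eta^{-6}\le C\eta$ once $\eta\ge(p_nn)^{-1/20}$, which is exactly what the paper verifies. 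So the right idea is there, but your arithmetic is internally inconsistent and, if followed literally, proves nothing; the fix is to track the trace normalization and the third-moment scaling and then to take $L$ as a negative power of $\eta$ (rather than a constant or $\eta^{-1/2}$).
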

\begin{proof}
 An elementary comparison between the indicator function and the Poisson kernel implies that for any $\eta>0$, $n\in\N$
 and $z\in\C$, for any $n\times n$ matrix $\widetilde M$, and for $m_{i\eta}$ defined the same way as in the previous subsection,
 we have
 \begin{equation} \label{eq: Stieltjes}
    \E \nu([0,\eta]) \le \widetilde C \eta\, \E\, \Im(m_{i \eta} (\widetilde M)),
 \end{equation}
 where $\nu$ denotes the normalized counting measure of singular values of $\frac{1}{\sqrt{n}}\widetilde M-z\,\Id_n$
 (see, e.g. \cite[Section~2.4.3]{Tao book}).

 Fix $z\in\C$ with $\Im(z)\neq 0$.
 Let $\eta\in [(p_n n)^{-1/20},c']$, for a small enough constant $c'>0$.
 Set $L:=\eta^{-2}$ and
 fix for a moment any subset $Q\in\mathcal P_L$, where $\mathcal P_L$
 is defined as in Lemma~\ref{l: elem condit}.
 Set $\psi:=\frac{\Exp(\xi\indicator_{|\xi-\theta|\leq L})}{\Prob\{|\xi-\theta|\leq L\}}$ and
 $\tau:=\Var(\xi\;|\;|\xi-\theta|\leq L)$.
 We assume that constant $c'$ is sufficiently small so that all assertions of Lemma~\ref{l: elem condit} hold true for $L$;
 in particular, $|\psi-\theta|\leq \frac{2}{L}$ and $\tau\in[1/2,1]$.
 Observe that the random matrix
 $M:=(p_n\tau)^{-1/2}A_n=(p_n\tau)^{-1/2}(\delta_{ij}\xi_{ij})_{ij}$ has mutually independent entries; moreover, conditioned on $\Event_{L,Q}$,
 for each $(i,j)\in Q^c$ the $(i,j)$--th entry of $M$ has unit variance,
 and all entries corresponding to $(i,j)\in Q^c$
 are uniformly bounded (by absolute value) by $(L+|\theta|)(p_n\tau)^{-1/2}$.

 Let us represent the probability space $\Omega$ as the product space $\Omega:=\Omega_Q\times \Omega_{Q^c}$,
 where the decomposition is generated by partitioning the set of entries of $M$ into the subset indexed over $Q$
 and the subset indexed over $Q^c$. Fix any point $(\omega_Q,\omega_{Q^c})\in\Event_{L,Q}$, and define
 $\widetilde \Event_{L,Q}:=(\{\omega_{Q}\}\times\Omega_{Q^c})\cap \Event_{L,Q}$.
 This way, everywhere on $\widetilde \Event_{L,Q}$ the entries of $M$ indexed over $Q$ are frozen whereas
 the conditional distribution of the entries indexed over $Q^c$ is the same when conditioned on $\widetilde \Event_{L,Q}$
 and when conditioned on $\Event_{L,Q}$.
 Further, let $N$ be the cardinality of $Q^c$, let $X=(X_s)_{s\in Q^c}$ be the random vector of entries of $M$ indexed over $Q^c$,
 and let $W=(W_s)_{s\in Q^c}$ be the vector of independent real Gaussian variables of unit variance and
 mean $\sqrt{\frac{p_n}{\tau}}\psi$, indexed over $Q^c$.

 We will apply Theorem~\ref{thm: Chatterjee} to vectors $X$ and $W$.
 Conditioned on $\widetilde\Event_{L,Q}$, we have
 $$\Exp (X_s\mid \widetilde\Event_{L,Q})
 =\Exp\big((p_n\tau)^{-1/2}\delta\xi \mid |\xi-\theta|\leq L\big)=\sqrt{\frac{p_n}{\tau}}\psi=\Exp W_s,
 \quad s\in Q^c.$$
 Further, $\Exp (X_s^2\mid \widetilde\Event_{L,Q})=1$ and
 \begin{align*}
 \Exp(|X_s|^3\mid \widetilde\Event_{L,Q})
 &=\Exp\big((p_n\tau)^{-3/2}\delta\xi^3 \mid |\xi-\theta|\leq L\big)\\
 &\leq p_n^{-1/2}\tau^{-3/2}(L+|\theta|)\E (\xi^2\;|\;|\xi|\leq L)\leq C'(L+|\theta|)p_n^{-1/2}(1+\theta^2),\;\;s\in Q^c.
 \end{align*}
 Thus,
 \begin{align*}
  \gamma_3 &\le C'(L+|\theta|)p_n^{-1/2}(1+\theta^2)\leq C_\theta L p_n^{-1/2},
 \end{align*}
 where $\gamma_3$ is defined as in Theorem~\ref{thm: Chatterjee} and $C_\theta>0$ may only depend on $\theta$.
 Now, we construct the function $f:\R^{Q^c}\to \R_+$ as follows.
 Take any vector $V=(V_s)_{Q^c}$ indexed over $Q^c$. Then we construct an $n\times n$ matrix $\widetilde V=(\widetilde v_{ij})$
 by setting $\widetilde v_{ij}:=V_{(ij)}$ whenever $(i,j)\in Q^c$, and setting $\widetilde v_{ij}$, $(i,j)\in Q$
 to the values of the entries of $M$ fixed by our choice of event $\widetilde \Event_{L,Q}$.
 Finally, we set $f(V):=\Im(m_{i\eta}(\widetilde V))$, where $m_{i\eta}$ is defined as in the previous subsection.
 The following bound for $\l_3(f)$ can be extracted from \cite[Proof of Proposition 8.2]{Cook circ}:
 \[
  \l_3(f) \le \frac{C}{n^{5/2} \eta^4}.
 \]

 Substituting the two above estimates in Theorem~\ref{thm: Chatterjee}, we obtain
 \begin{align*}
  \big| \E \big(f(X)\;|\;\widetilde\Event_{L,Q}\big)- \E f(W)\big|
  \le C''L p_n^{-1/2} \cdot\frac{1}{n^{5/2} \eta^4}\cdot n^2
  \le C'''\eta,
 \end{align*}
 by our choice of $L$ and since $\eta\geq (p_n n)^{-1/20}$.
 Next, we estimate $\E f(W)$ using Corollary~\ref{cor: shifted gauss}.
 Set $k:=\lfloor 2n/L^2\rfloor$ and observe that, by the definition of $\mathcal P_L$, we have
 $|\{i\leq n:\;(i,j)\in Q^c\}|\geq n-k$ for all $j\in[n]$. Further, by our choice of $L$ and $\eta$ we clearly have $\eta\geq k/n$.
 Thus, by our definition of $f$ and by Corollary~\ref{cor: shifted gauss}, we get
 $\E f(W)\leq \tilde C$. Note that the above estimate does not depend on the particular realization of elements of $M$ indexed over $Q$.
 This implies
 $$\Exp(m_{i \eta}(M)\,|\,\Event_{L,Q})\leq \bar C\eta,$$
 and so, by \eqref{eq: Stieltjes},
 $$\Exp(\nu_{z,M}([0,\eta])\,|\,\Event_{L,Q})\leq C''\eta\;\;\; \mbox{uniformly for all $z\in\C$ and $\eta\in[(p_n n)^{-1/20},c']$},$$
 where $\nu_{z,M}$ denotes the singular value distribution of the matrix $\frac{1}{\sqrt{n}}M-z\,\Id_n$.
% Next, we need to relate the measures $\nu_{z,M}$ and $\nu_{n,z}$.
% Observe that $(p_n\tau)^{1/2}M=A_n-R$, where $R=(\psi\delta_{ij})$ is the random $n\times n$ matrix such that
% $\Exp(\|R\|_{HS}^2\,|\,\Event_{L,Q})=\Exp\|R\|_{HS}^2=p_n n^2\,\psi^2$.
% Using a perturbation inequality for the singular values (see, for example, \cite[Theorem~A.37]{BS2010}),
% we get a deterministic identity
% $$\sum\limits_{i=1}^n \Big|s_i\Big(\frac{1}{\sqrt{p_n n}}A_n-z\,\Id_n\Big)
%-s_i\Big(\sqrt{\frac{\tau}{n}}M-z\,\Id_n\Big)\Big|^2\leq \frac{1}{p_n n}\|R\|_{HS}^2.$$
% On the other hand, it is not difficult to see that
%\begin{align*}
%&\sum\limits_{i=1}^n \Big|s_i\Big(\frac{1}{\sqrt{p_n n}}A_n-z\,\Id_n\Big)
%-s_i\Big(\sqrt{\frac{\tau}{n}}M-z\,\Id_n\Big)\Big|^2\\
%&\hspace{1cm}\geq \frac{\eta^2}{4}\Big|\Big\{i\leq n:\;
%s_i\Big(\frac{1}{\sqrt{p_n n}}A_n-z\,\Id_n\Big)\leq \frac{\eta}{2}\mbox{ and }
%s_i\Big(\frac{1}{\sqrt{n}}M-z\tau^{-1/2}\,\Id_n\Big)\geq \eta\tau^{-1/2}\Big\}\Big|\\
%&\hspace{1cm}\geq \frac{\eta^2}{4}\big(\nu_{n,z}([0,\eta/2])-\nu_{\tau^{-1/2}z,M}\big([0,\eta\tau^{-1/2}]\big)\big).
%\end{align*}
% Combining the above inequalities, we get
% $$\Exp\big(\nu_{n,z}([0,\eta/2])\,|\,\Event_{L,Q}\big)\leq C'''\eta.$$
 Using that $\tau\geq 1/2$ and in view of the identity $\nu_{n,z}([0,\tau^{1/2} t])=\nu_{z,M}([0,t])$, $t\in\R_+$, we get
 $$\Exp(\nu_{n,z}([0,\eta/2])\,|\,\Event_{L,Q})\leq C''\eta\;\;\; \mbox{uniformly for all $z\in\C$ and $\eta\in[(p_n n)^{-1/20},c']$},$$
 where $C''>0$ may only depend on $\theta$.
 As a final step, note that, by Lemma~\ref{l: elem condit}, the union of the events $\Event_{L,Q}$, with $Q\in\mathcal P_L$,
 has probability at least $2n\,e^{-2n/L^4}$.
 The result follows.
\end{proof}

\begin{proof}[Proof of Proposition~\ref{prop: uniform integrability}]
 For each $n,z$, denote the matrix $\frac{1}{\sqrt{p_n n}}A_n-z\Id_n$ by $V_{n,z}$.
 The function $f(x)=\log(x)$ is unbounded as $x \to \infty$ and $x \to 0$. The first singularity is much easier to handle.
 Let $T$ be such that $|z| <e^{T/2}/2$.
 Assume that $T \ge 1$.
 Since the function $\frac{\log x}{x^2}$ is decreasing for $x \ge e$, we have
 \[
 \int_{s>e^T} |\log s | \, d \nu_{n,z}(s)
 \le  \int_{s>e^T} T e^{-2T} s^2 \, d \nu_{n,z}(s)
 = \frac{ T e^{-2T}}{n} \sum_{s_j(V_{n,z}) >e^T } s_j^2(V_{n,z})
% &\le \frac{2 T e^{-2T}}{n} \sum_{j=1}^n \left( s_j^2(V_{n,z}) + |z|^2 \right) \\
 \leq \frac{T e^{-2T}}{n} \|V_{n,z}\|_{HS}^2.
 \]
 Since
 \[
  \E \frac{1}{n} \|V_{n,z}\|_{HS}^2 \leq 2+2|z|^2\leq 2+e^{T}\leq 2e^T,
 \]
the uniform integrability at $\infty$ follows from Markov's inequality.

\medskip

Let us prove the uniform integrability at $0$. Again, we take a parameter $T\geq 1$.
For any $n$ and $z$, let $\EE_{\min}=\EE_{min}(n,z)$ be the event that
\[
 s_{\min} \left(\frac{1}{\sqrt{p_n n}}A_n-z\Id_n \right)  \ge \exp \left(-C_z \log^3 n \right),
\]
where $C_z>0$ depends only on $z$ and is chosen in such a way that
$\Prob(\EE_{\min})\geq 1-C_z(p_nn)^{-c_{\smallrefer{th: bound on smin}}}$
(this can be done because of Theorem~\ref{th: bound on smin}).
Further, let $\EE_1=\EE_{1}(n,z)$
be the event that for any $k\geq \frac{n}{\log^{100} n}$, we have
\begin{equation}\label{eq: int sv}
 s_{n-k} \left(\frac{1}{\sqrt{p_n n}}A_n-z\Id_n \right)
\ge \exp \left(-C \log^{100} \left(\frac{4n}{k} \right) \cdot  \log^{100} (p_n n) \right),
\end{equation}
where $C>0$ (independent of $n$) is chosen so that $\Prob(\EE_1)\geq 1-(p_n n)^{-c'}$
(this is possible by Theorem~\ref{thm: intermediate singular}).
Furthermore, let $\EE_2=\EE_2(n,z)$ be the event that  for any $\eta \in[ \log^{-300} (p_n n),e^{-T}]$, we have
\begin{equation}\label{eq: sqrt eta}
 \nu_{n,z}([0,\eta])\le C_{\smallrefer{l: measure via comparison}} \sqrt{\eta},
\end{equation}
where the constant $C_{\smallrefer{l: measure via comparison}}$ is taken from Lemma~\ref{l: measure via comparison}.
Observe that
$$\Prob(\EE_2^c)\leq\Prob\big\{\exists i\in [T,300\log\log(p_n n)]\mbox{ such that $\nu_{n,z}([0,e^{-i}])\geq
C_{\smallrefer{l: measure via comparison}} e^{-i/2-1/2}$}\big\}.$$
Combining this with the bound for the expectation of $\nu_{n,z}([0,e^{-i}])$ from Lemma~\ref{l: measure via comparison}
and Markov's inequality, we get
$$\Prob(\EE_2)\geq 1-C''e^{-T/2}.$$

Let us introduce two quantiles of the measure $ \nu_{n,z}$. Set
\begin{align*}
 t_1 &= \sup \left \{ t \ge 0: \  \nu_{n,z}([0,t]) \le \frac{1}{\log^4 n} \right \}, \quad \text{and} \\
  t_2 &= \min\bigg(\sup \left \{ t \ge 0: \  \nu_{n,z}([0,t]) \le \frac{1}{\log^{200} (p_n n)} \right \},\frac{1}{\log^{400} (p_n n)}\bigg).
\end{align*}
Note that on the event $\EE_2$ we have $t_2 \ge   \frac{c}{\log^{400} (p_n n)}$.
%and that on the event $\EE_1$ we have
%$t_1\geq \log^{100} (p_n n)$
Assume that the event $\EE_{min} \cap \EE_1 \cap \EE_2$ occurs.
Then
\[
 \int_0^{t_1} |\log s| \, d \nu_{n,z}(s)
 \le C' \log^3 n \cdot \nu_{n,z}([0,t_1])
 \le \frac{C'}{\log n}.
\]
Assume for a moment that $t_1 \le t_2$.
Denote
\[
 k_1 := \Big\lfloor\frac{n}{\log^4 n}\Big\rfloor  \quad \text{and} \quad
  k_2  :=  \Big\lceil\frac{n}{\log^{200} (p_n n)}\Big\rceil.
\]
Then, by \eqref{eq: int sv},
\begin{align*}
  &\int_{t_1}^{t_2}  |\log s| \, d \nu_{n,z}(s)
 \quad \leq \frac{1}{n} \sum_{k=k_1}^{k_2} \Big|\log s_{n-k}\Big(\frac{1}{\sqrt{p_n n}}A_n-z\,\Id_n\Big)\Big| \\
 &\le \frac{1}{n} \sum_{k=k_1}^{k_2} C \log^{100} \left(\frac{4n}{k} \right) \cdot  \log^{100} (p_n n)
 \le C' \log^{100} (p_n n) \cdot \int_0^{\log^{-200} (p_n n)} \log^{100} \left( \frac{4}{x} \right) \, dx \\
 &\le \frac{1}{\log^{50} (p_n n)}.
\end{align*}
Set $t_3:=\max(t_1,t_2)$.
For $s \in [t_3,e^{-T}]$ we use the bound
\[
 |\log s| \le C \sum_{m=T}^{\log (1/t_3)} \mathbf{1}_{[0,e^{-m}]}(s),
\]
which, by \eqref{eq: sqrt eta}, yields
\[
   \int_{t_3}^{e^{-T}}  |\log s| \, d \nu_{n,z}(s)
   \le C \sum_{m=T}^{\log (1/t_3)} \nu_{n,z} ( [0,e^{-m}])
   \le C'' e^{-T/2}.
\]
Combining the three previous inequalities, we conclude that
\[
    \int_0^{e^{-T}}  |\log s| \, d \nu_{n,z}(s) \le C e^{-T/2} +\beta(n,p_n),
\]
where $\beta(n,p_n)$ is a deterministic term which tends to $0$ as $p_n n \to \infty$.
Since $\P(\EE_{min} \cap \EE_1 \cap \EE_2) \to 1$ as $p_n n,T \to \infty$, the uniform integrability is proved.
\end{proof}

\subsection{Completion of the proof} \label{sec: completion}
Let $\mu$ be the uniform measure on the unit disc in $\C$.
To complete the proof of Theorem \ref{th: main}, we have to check the vague convergence of the measures $\nu_{n,z}$ to some deterministic measures $\nu_z$ such that
  \begin{equation} \label{eq: log potential}
   \int_{\C} \log |\l-z| \, d \mu(z)= \int_0^\infty \log s \, d \nu(s).
  \end{equation}
  As in \cite[Lemma 9.1]{BR circ}, it is enough to prove this convergence, assuming that the random variable $\xi$ (and so all entries of $A_n$) are  bounded. The proof of this fact is standard  and relies on truncation, an application of the Hoffman-Weilandt inequality and the fact that the weak convergence is metrized by the bounded Lipschitz metric.
  We omit the details as they appear in a number of random matrix papers (see e.g., \cite[Proposition 4.1]{BDJ}).

  For the empirical measures $\nu_{n,z}^G$ of singular values of real Gaussian matrices, this convergence and \eqref{eq: log potential} are known, see, e.g. \cite{Edelman}. Thus, it is enough to prove that the measures $\nu_{n,z}-\nu_{n,z}^G$ converge to $0$ vaguely in probability.
  This step closely follows the argument of \cite{Cook circ}, so we will only sketch it.
  Without loss of generality, we can check the vague convergence only for Lipschitz functions.
  By \cite[Lemma 9.2]{BR circ}, which is a variant of \cite[Lemma 9.1]{Cook circ},
  \[
   \int f(s) \, d \nu_{n,z}(s) - \E  \int f(s) \, d \nu_{n,z}(s) \to 0 \quad \text{in probability}
  \]
 for any Lipschitz $f:(0,\infty) \to \R$ with compact support. By the same lemma it also holds for the measures $\nu_{n,z}^G$. Therefore, it is enough to prove that
 \[
   \E \int f(s) \, d \nu_{n,z}(s) - \E  \int f(s) \, d \nu_{n,z}^G(s) \to 0.
 \]
 This convergence would follow if we prove the convergence of the expectations of Stieltjes transforms, more precisely from
\[
 \E m_w \left( \frac{1}{\sqrt{p_n}} A_n \right) - \E m_w \left( \frac{1}{\sqrt{n}} G_n \right) \to 0
 \quad \text{for all } w \in \C \text{ with } \Im(w)>0,
\]
where $G_n$ is the standard $n \times n$ Gaussian matrix. The convergence above follows in turn from \cite[Lemma 9.4]{BR circ}, which is an extension of \cite[Lemma 8.2]{Cook circ} to general random matrices with bounded entries.

This completes the proof of Theorem \ref{th: main}.

\end{document}